\documentclass[a4paper,11pt]{amsart}

\usepackage{amsmath, amsthm, amsfonts, amssymb, enumerate}

\usepackage[bookmarks=false, hidelinks]{hyperref}

\title{On ultraproduct approximations and property (T) factors}
\author{Jesse Peterson}
\address{Department of Mathematics, Vanderbilt University, 1326 Stevenson Center, Nashville, TN 37240, USA}
\address{Department of Pure Mathematics, University of Waterloo, 200 University Avenue West, Waterloo, Ontario, N2L 3G1, Canada}
\email{jesse.peterson@uwaterloo.ca}

\newtheorem{thm}{Theorem}[section]
\newtheorem{prop}[thm]{Proposition}
\newtheorem{cor}[thm]{Corollary}
\newtheorem{lem}[thm]{Lemma}
\theoremstyle{definition}
\newtheorem{rem}[thm]{Remark}
\newtheorem{examp}[thm]{Example}
\newtheorem*{note}{Notation}

\newcommand{\Ad}{\operatorname{Ad}}
\newcommand{\Aut}{\operatorname{Aut}}
\newcommand{\SL}{\operatorname{SL}}
\newcommand{\PSL}{\operatorname{PSL}}
\newcommand{\ovt}{\, \overline{\otimes}\,}
\newcommand{\oovt}[1]{\, \overline{\otimes}_{#1}\,}
\newcommand{\actson}{{\, \curvearrowright \,}}

\begin{document}

\begin{abstract}
We introduce a framework allowing for key aspects of deformation/rigidity theory to be used in the study of continuous model theory of II$_1$ factors. Using this framework, we solve several well-known open problems in the area. For example, we show that the group von Neumann algebras $L(\SL_3(\mathbb Z))$ and $L \mathbb F_2$ are not elementarily equivalent, and we show that the group von Neumann algebra $L\mathbb F_2$ is not pseudomatricial. We also show a Bass-Serre type strong rigidity result in the setting of ultraproducts to provide an infinite family of pairwise non-elementarily equivalent full factors, each of which embeds into an ultraproduct of the hyperfinite II$_1$ factor. Building on previous work of Boutonnet, Chifan and Ioana, we also provide a continuum of pairwise non-elementarily equivalent full factors, which we can take to be group von Neumann algebras or group-measure space constructions. 
\end{abstract}

\maketitle

\section{Introduction}

The foundations for the model theoretic study of II$_1$ factors were laid out by Farah, Hart and Sherman in the series of papers \cite{FaHaSh13, FaHaSh14a, FaHaSh14} where, motivated by a question of Popa on the isomorphism of matrix ultraproducts, they introduced a natural language for which the class of II$_1$ factors becomes axiomatizable. In \cite{FaHaSh14} they introduce the notion of elementary equivalence between two factors, written $M \equiv N$, by requiring that they satisfy the same first order sentences. By the continuous version of the Keisler-Shelah Theorem, this is equivalent to asking that there exist some ultrafilters $\mathcal U$ and $\mathcal V$ on some sets so that we have an isomorphism $M^{\mathcal U} \cong N^{\mathcal V}$. 

Ultrapowers have long played a prominent role in the development of the theory of II$_1$ factors and, in particular, were used to great effect by McDuff when she exhibited the first infinite families of pairwise non-isomorphic II$_1$ factors \cite{Mc69a, Mc69}. Historically, ultraproducts of II$_1$ factors have been used mostly as a tool for studying the central sequence algebra, and it wasn't until the introduction of the model theory of II$_1$ factors that the question of classifying II$_1$ factors up to isomorphism of ultrapowers, that is, up to elementary equivalence, became of importance (see \cite{Go23book} for a survey of the developments in this direction over the last 15 years).

A natural question then arises in the study of model theory of II$_1$ factors, which is to find invariants to distinguish the complete theories. Initially, progress in this direction was slow and mimicked the earlier development of II$_1$ factors. It was shown in \cite{FaHaSh14} that property Gamma and the McDuff property were invariants up to elementary equivalence so that at least three complete theories were known to exist. Much as McDuff's result revealed a rich and complicated structure to the theory of II$_1$ factors, Boutonnet, Chifan and Ioana showed in \cite{BoChIo17} that there is also a rich and complicated structure to the model theory of II$_1$ factors. In their work, they exhibited a continuum of pairwise non-elementarily equivalent II$_1$ factors; in fact, somewhat surprisingly, the factors considered in \cite{BoChIo17} are the same as those considered originally by McDuff. 

Since Boutonnet, Chifan and Ioana considered the same factors of McDuff, the central sequence algebra likewise played a prominent role in their work. The natural question then arose as to what can be done in the setting of full factors where there are no non-trivial central sequences. Much of the difficulty in the setting of full factors is that some of the most powerful tools to distinguish full factors up to isomorphism invoke some form of approximation property, or lack thereof. Examples of this include extensively studied notions such as (relative) property (T) \cite{Co80, CoJo85, BeHaVa08}, the Haagerup property \cite{Ch83, ChCoJoJuVa01}, or Cowling and Haagerup's weak amenability \cite{CoHa89}. Juxtaposing approximation properties with rigidity properties forms the cornerstone of the powerful deformation/rigidity theory, which has been used over the last 25 years to provide some of the most striking rigidity phenomena up to isomorphisms (see, e.g., the surveys \cite{Po07a, Va10, Io18}).

Exploiting these techniques in the study of elementary equivalence is challenging because elementary equivalence does not preserve these approximation properties, as can be seen by the basic fact from \cite{FaHaSh14} that if $N$ is any separable II$_1$ factor and if $M \subset R^{\mathcal U}$ is any separable subfactor, then there exists a II$_1$ factor $\tilde N$ such that $N \equiv \tilde N$ and $M$ embeds into $\tilde N$. Since there is no universal separable factor containing all separable factors that embed into $R^\mathcal U$ \cite{NiPoSa07}, it follows that for every complete theory $\operatorname{Th}(N)$ there are uncountably many pairwise non-isomorphic II$_1$ factors $M$ such that $M \equiv N$. This means that it is also not possible to use separability arguments such as in \cite{Co80, Oz04, NiPoSa07} to exploit rigidity properties such as property (T).  There have, however, been applications of property (T) in other aspects of the model theory of II$_1$ factors, mainly through the use of its spectral gap characterization \cite{Go23b}.

The Connes Embedding Problem asked if every separable II$_1$ factor embedded into $R^{\mathcal U}$. One of the most studied problems in all of operator algebras, a negative solution was recently obtained in \cite{JiNaViWrYu20} using quantum complexity theory. Thus, there is some separable II$_1$ factor $M$ such that $M$ does not embed into $R^{\mathcal U}$, and it follows that $M * L\mathbb Z$ is a full factor that is not elementarily equivalent to any $R^{\mathcal U}$ embeddable full factor such as $L\mathbb F_2$. Exploiting the results in \cite{JiNaViWrYu20}, it was shown by Goldbring and Hart in \cite[Corollary 5.5]{GoHa24} that one may obtain a decreasing sequence of separable II$_1$ factors $(N_n)_n$ so that for each $n \geq 1$, $N_n$ does not embed into $N_{n + 1}^{\mathcal U}$. Since $N_n * L\mathbb Z$ always embeds into $N_n^{\mathcal U}$ \cite{Po95}, it then follows that $\{  N_n * L\mathbb Z \}_n$ gives a family of pairwise non-elementarily equivalent full II$_1$ factors. The factors $N_n$, however, are far from explicit. For example, it is still a major open problem if one can take such factors to be group von Neumann algebras. 

A simpler construction of non-elementarily equivalent full factors was found in \cite{ChIoKE23}, where the authors use a procedure involving inductive limits of amalgamated free products to produce an ``exotic'' II$_1$ factor $N$ that is not elementarily equivalent to the free group factor $L\mathbb F_2$. Their method for showing  that $N$ is not elementarily equivalent to $L\mathbb F_2$ is that their factor has ``sequential commutation'', in the sense that for any two unitaries $u_1, u_2 \in \mathcal U( N^{\mathcal U} )$ satisfying $u_1^2 = u_2^3 = 1$ and $\{ u_1 \}'' \perp \{ u_2 \}''$, there exist Haar unitaries $v_1, v_2 \in \mathcal U( N^{\mathcal U})$ such that $[u_1, v_1] = [v_1, v_2] = [v_2, u_2] =  0$. The free group factor $L\mathbb F_2$ cannot have this property due to entropy constraints from \cite{Vo94, Ju07a, Ha18} (see  \cite{HoIo24}, or also \cite{JeKE26}, where an obstruction is obtained that does not rely upon entropy considerations). The construction of full factors with sequential commutation has since been simplified and refined in \cite{KEP25, GaKEPa25, GaJeKEPa26}, though the simplest process still involves taking successive inductive limits of amalgamated free products. In particular, it is not clear if such factors can be group factors, and it is not clear if this process leads to more than two elementary equivalence classes of full factors. Also, as noted in \cite[p.\ 1246]{ChIoKE23} (see also \cite[Question 4.4]{GoHa23} and \cite[Problems 16, 17]{KE25}), even with these recent advances, it is still open as to whether or not there is more than one elementary equivalence class of full factors that embed into $R^{\mathcal U}$. We are able to address this question here.

\begin{thm}\label{thm:mainfg}
The free group factors $L\mathbb F_n$  are not elementarily equivalent to any property (T) factor. In particular, we have $L\mathbb F_n \not\equiv L(\PSL_m(\mathbb Z))$ for $n \geq 2$ and $m \geq 3$. 
\end{thm}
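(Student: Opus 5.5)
The plan is to find an ultraproduct-invariant form of deformation/rigidity. On the free group side I use a deformation that survives passage to ultrapowers; on the property (T) side I use the spectral gap characterization of (T), which is a uniform statement and therefore passes to ultrapowers. Suppose, toward a contradiction, that $L\mathbb F_n \equiv M$ with $M$ a property (T) factor. By the continuous Keisler--Shelah theorem we then have an isomorphism $\theta\colon M^{\mathcal V}\cong (L\mathbb F_n)^{\mathcal U}$. In particular $M$, embedded diagonally, sits inside $(L\mathbb F_n)^{\mathcal U}$.

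The key steps are as follows.

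\textbf{Step 1: a deformation on the ultrapower.} I take the malleable deformation of $L\mathbb F_n$, namely the free Gaussian/rotation deformation $\alpha_t$ of $L\mathbb F_n * L\mathbb F_n$, or alternatively the length deformation $\phi_t(u_g)=e^{-t|g|}u_g$. These are trace-preserving, and their convergence to the identity is uniform on words of bounded length. Applying them coordinatewise gives a deformation $\alpha_t^{\mathcal U}$ of $(L\mathbb F_n)^{\mathcal U}$, sitting inside $(L\mathbb F_n * L\mathbb F_n)^{\mathcal U}$.

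\textbf{Step 2: rigidity of $M$.} Property (T) of $M$ gives a finite set $F\subset M$ and a constant $\kappa>0$ such that every $M$-bimodule vector $\xi$ satisfies $\|\xi - P\xi\|\le\kappa\sum_{x\in F}\|x\xi-\xi x\|$, where $P$ is the projection onto the central vectors. Since $F$ is finite, this estimate is uniform. Hence $\alpha_t^{\mathcal U}\to\mathrm{id}$ uniformly in $\|\cdot\|_2$ on the unit ball of $M$. The same holds on the unit ball of $M^{\mathcal V}$, because the spectral gap inequality is preserved under ultraproducts. So $\theta(M^{\mathcal V})$, and in fact the whole factor $(L\mathbb F_n)^{\mathcal U}$, is rigid for the deformation.

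\textbf{Step 3: contradiction.} Here I exploit that the rigidity covers all of $(L\mathbb F_n)^{\mathcal U}$, which contains sequences of increasingly long words. Take $x=(u_{g_k})_k$ with $|g_k|\to\infty$. Then $\|\phi_t^{\mathcal U}(x)-x\|_2=\lim_k\bigl(1-e^{-t|g_k|}\bigr)=1$ for every $t>0$. This contradicts uniform convergence on the unit ball.

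The main obstacle is Step 2: showing that $\theta(M)$ being rigid forces uniform convergence on the whole ultrapower. The uniform spectral gap only controls the deformation on the unit ball of $M^{\mathcal V}$ as an abstract algebra. The deformation, however, is defined via the coordinates of $(L\mathbb F_n)^{\mathcal U}$, and $\theta$ does not respect those coordinates. The clean fix is to apply the spectral gap directly to the bimodule $L^2((L\mathbb F_n*L\mathbb F_n)^{\mathcal U})$ over $\theta(M^{\mathcal V})=(L\mathbb F_n)^{\mathcal U}$. The finite set $\theta(F)$ then consists of elements of the ultrapower, each represented by finitely many sequences, and $\alpha_t$ converges uniformly on these finitely many elements. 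The delicate point is this last convergence: it requires representative sequences with uniformly controlled tails in word length. If that uniform control fails, I would instead use Popa's intertwining in ultraproducts together with the solidity/deformation of $L\mathbb F_n$ to rule it out. This is where I expect the bulk of the work.
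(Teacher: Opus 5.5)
Your Step~2 fails, and the failure is the heart of the matter rather than a technicality. Property (T) gives uniform convergence on the unit ball of $\theta(M)$ only after you know that $\alpha_t^{\mathcal U}\to\mathrm{id}$ on the finite set $\theta(F)$. Nothing gives you that. Lifts of elements of $(L\mathbb F_n)^{\mathcal U}$ carry no uniform control on word length, which is exactly the phenomenon of your Step~3. Any such control can be destroyed by replacing $\theta$ with $\Ad(w)\circ\theta$ for a unitary $w=(w_\iota)^{\mathcal U}$ with $w_\iota$ supported on ever longer words, e.g.\ $w_\iota=u_{a_1}^{k_\iota}$ with $k_\iota\to\infty$ fast enough.

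Worse, even granting convergence on $\theta(F)$, the Kazhdan inequality only puts vectors near $\theta(M)$-central vectors. So you get rigidity of $\theta(M)$, never of $\theta(M^{\mathcal V})=(L\mathbb F_n)^{\mathcal U}$. The spectral gap inequality passes to $M^{\mathcal V}$ only for maps that are themselves ultraproducts of maps on $M$, and $\theta^{-1}\circ\alpha_t^{\mathcal U}\circ\theta$ is not of that form. For instance, $\mathbb E_M\colon M^{\mathcal V}\to M$ is the identity on $F$ but vanishes on $M^{\mathcal V}\ominus M$. The global rigidity that Step~3 contradicts is therefore unreachable. Your fallback (intertwining plus solidity) is left unspecified precisely where all the content of the theorem lies.

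You are missing three ideas.

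First, any proof must use a feature of $\theta(M)$ that distinguishes it from the many property (T) subfactors of $(L\mathbb F_n)^{\mathcal U}$. These exist, e.g.\ inside $\prod_{\iota\to\mathcal U}\mathbb M_{k_\iota}(\mathbb C)\subset(L\mathbb F_n)^{\mathcal U}$ by Proposition~\ref{prop:irredpseudocpt}. The paper uses that $\theta(M)'\cap(L\mathbb F_n)^{\mathcal U}=\theta(M'\cap M^{\mathcal V})=\mathbb C$ because $M$ is full; your proposal never invokes this.

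Second, the paper sidesteps $t\to 0$ entirely. It works with the single automorphism $\alpha$ of $\tilde N=L\mathbb F_{2n}$ at the end of the deformation path, which lies in the connected component of the identity. Theorem~\ref{thm:propTultra} then shows that $\alpha^{\mathcal U}$ is implemented by a unitary on $\theta(M)$. The argument is a clopen one: for $\mathcal U$-almost every $\iota$, the automorphisms that are inner up to $1/2c$ on lifts of the Kazhdan set form an open and closed set. Openness comes from property (T) together with Proposition~\ref{prop:intertwinerinner}, which needs a factorial relative commutant. Since $L^2\tilde N\ominus L^2 L\mathbb F_n$ and $L^2\tilde N_\alpha$ are weakly coarse, this gives $L^2\theta(M)\prec\prod_{\iota\to\mathcal U}(L^2 L\mathbb F_n\ovt L^2 L\mathbb F_n)$. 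The other case, where the relative commutant grows inside $\tilde N^{\mathcal U}$, gives the same weak containment directly.

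Third, the quantitative Connes--Haagerup result, Theorem~\ref{thm:amenimplieshyperfinite}, turns this weak containment into $\theta(M)\subset\prod_{\iota\to\mathcal U}A_\iota$ with $A_\iota\subset L\mathbb F_n$ finite-dimensional subfactors. Then $\prod_{\iota\to\mathcal U}(A_\iota'\cap L\mathbb F_n)\subset\theta(M)'\cap(L\mathbb F_n)^{\mathcal U}$, contradicting fullness. The contradiction comes from the relative commutant, not from long words, and only $\theta(M)$, never $\theta(M^{\mathcal V})$, is ever controlled.
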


This result also sheds some light on pseudomatricial factors. A II$_1$ factor $N$ is said to be pseudomatricial if there exists an ultrafilter $\mathcal U$ such that $N \equiv \prod_{\iota \to \mathcal U} \mathbb M_{n_\iota}(\mathbb C)$. Equivalently, $N$ has some ultrapower that is isomorphic to an ultraproduct of matrix algebras. This property was defined more generally in the continuous model theory setting as pseudocompact in \cite{GoLo15}, generalizing the notion of pseudofiniteness from discrete model theory, though this terminology may be somewhat confusing in the setting of von Neumann algebras and so we use the more descriptive terminology pseudomatricial here. Obviously, pseudomatricial II$_1$ factors embed into $R^{\mathcal U}$. It was shown in \cite[Section 5]{FaHaSh14}, using von Neumann's result in \cite{VN42}, that pseudomatricial factors are also full (see also \cite{FaHa11}). This was also earlier observed by Popa, who instead used irreducible representations of property (T) groups. 

Using similar techniques as in Theorem~\ref{thm:mainfg}, we are able to use irreducible representations of property (T) groups in a manner similar to Popa in order to give the first examples of full II$_1$ factors that embed into $R^{\mathcal U}$ but are not pseudomatricial. In fact, we can show that free group factors themselves are not pseudomatricial, answering a well-known question in von Neumann algebras that has been considered for some time by experts, and has more recently been asked explicitly in \cite[Question 4.6]{GoHa23}, \cite[Question 5.1]{GoPi25}, or \cite[Problems 3, 4]{KE25}.

\begin{thm}\label{thm:freenotpseudomatricial}
The free group factors $L\mathbb F_n$ are not pseudomatricial for $1 < n \leq \infty$.
\end{thm}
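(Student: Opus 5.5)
Since $L\mathbb F_n \equiv L\mathbb F_m$ is not yet known in general, I will treat every $1<n\le\infty$ uniformly. The idea is to compare the approximation/rigidity behaviour of two objects that would have to be isomorphic if $L\mathbb F_n$ were pseudomatricial.

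\emph{Setup.} Suppose for contradiction that $(L\mathbb F_n)^{\mathcal V} \cong \prod_{\iota\to\mathcal U}\mathbb M_{k_\iota}(\mathbb C) =: \mathcal M$. Fix an infinite property (T) group $\Gamma$ with infinitely many pairwise inequivalent finite-dimensional irreducible unitary representations, for instance $\SL_3(\mathbb Z)$ through its congruence quotients. Following Popa's argument for fullness, tensor powers and direct sums of these irreducibles produce unital representations $\pi_\iota\colon \Gamma\to \mathcal U(\mathbb M_{k_\iota})$ that are asymptotically trace-free on nontrivial elements. (If necessary, first pass to a matrix amplification or to a different ultrafilter, which does not change the theory.) The ultraproduct $\pi=(\pi_\iota)_\iota$ then gives a trace-preserving embedding $L\Gamma\hookrightarrow \mathcal M$. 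The point is that $\pi$ is built from finite-dimensional blocks, each of which carries spectral gap uniformly because of property (T): Kazhdan's constant is uniform over all representations. So $L\Gamma$ sits inside $\mathcal M$ in a way that is ``coordinatewise rigid''.

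\emph{Deformation.} Transfer to $(L\mathbb F_n)^{\mathcal V}$ the free malleable deformation, or the Haagerup/word-length deformation given by the unital completely positive maps $\phi_t(u_g)=e^{-t|g|}u_g$. Taking these maps coordinatewise defines deformations $\Phi_t$ on $(L\mathbb F_n)^{\mathcal V}$ that converge to the identity pointwise in $\|\cdot\|_2$ only on the image of bounded-length ultraproduct elements, which is not uniform. Here I use the framework the paper announces: ultraproduct approximations. Property (T) of $\Gamma$, together with the spectral gap of $\pi$ being uniform in $\iota$, should force $\Phi_t\to \mathrm{id}$ uniformly on the unit ball of $\pi(L\Gamma)$. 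Via Popa's intertwining, this means $\pi(L\Gamma)$ essentially lives in an ultraproduct of \emph{finite-length} pieces. Concretely, it embeds, up to a corner, into $\prod_{\mathcal V} P_j$ where each $P_j$ is a ``bounded-length'' part of $L\mathbb F_n$. These parts are compact perturbations of Haagerup-type objects.

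\emph{Contradiction.} This is the main obstacle. To close, I would exploit that $L\mathbb F_n$ has the Haagerup property with uniformly bounded compact approximations (weak amenability with constant $1$). A uniformly rigid diffuse subalgebra of the ultrapower must then be intertwined into an ultraproduct of finite-dimensional pieces along the $\mathcal V$ side. That in turn gives almost-invariant, almost-central finite-rank projections, and hence an amenable quotient behaviour of $\pi$, which contradicts the infinite property (T) group acting with trace vanishing. The subtle step is making the rigidity uniform across the isomorphism $\theta\colon\mathcal M\to (L\mathbb F_n)^{\mathcal V}$, since $\theta$ does not respect coordinates. I would handle this by expressing ``$L\Gamma$ has uniform spectral gap inside $\mathcal M$'' as a first-order, sup-inf condition involving only the Kazhdan pair. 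Such a condition is automatically preserved by $\theta$, and the deformation argument can then be run on the $L\mathbb F_n$ side.
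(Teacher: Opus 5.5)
Your proposal has two genuine gaps: one in the rigidity step and one in the final contradiction.

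\textbf{The rigidity step.} You claim that property (T) plus ``uniform spectral gap'' forces $\Phi_t\to\mathrm{id}$ uniformly on the unit ball of $\theta(\pi(L\Gamma))$. This is not just unproved but false. Property (T) only upgrades convergence on a Kazhdan set to uniform convergence, and, as you observe, $\Phi_t=(\phi_t)^{\mathcal V}$ does not converge pointwise on $(L\mathbb F_n)^{\mathcal V}$. In fact, for $n<\infty$ it cannot converge uniformly on the unit ball of \emph{any} diffuse $Q\subset(L\mathbb F_n)^{\mathcal V}$. Word balls are finite, so uniform convergence would give $t,L$ with $\sum_{|g|\le L}|\tau(uu_g^*)|^2\ge 1/4$ for all $u\in\mathcal U(Q)$. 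This fails for $u=w^m$ with $w\in Q$ a Haar unitary and $m\to\infty$. The same computation rules out the free malleable deformation, since $\mathbb E_{L\mathbb F_n}\circ\alpha_t$ is the multiplier $u_g\mapsto c_t^{|g|}u_g$ with $0<c_t<1$.

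The paper never asks a deformation to converge on the ultrapower. It uses only the endpoint automorphism $\alpha$, which lies in the connected component of the identity of each $\Aut(\tilde N_\iota)$. Connes's clopen argument (Theorem~\ref{thm:propTultra}) then shows that $\alpha^{\mathcal U}$ is \emph{inner} on $M$. Upgrading an intertwiner to a unitary there is exactly where factoriality of the relative commutant is needed (Proposition~\ref{prop:intertwinerinner}). An inner twist places $L^2M$ inside the weakly coarse bimodule $L^2(\prod_{\iota\to\mathcal U}\tilde N_\iota)_\alpha$. Theorem~\ref{thm:amenimplieshyperfinite} (a quantitative form of Connes's theorem, via Anantharaman-Delaroche and Haagerup) then yields finite-dimensional $A_\kappa\subset L\mathbb F_n$ with $M\subset\prod_{\kappa\to\mathcal V}A_\kappa$. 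The Haagerup-property route you suggest is precisely what the paper leaves open as a conjecture.

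\textbf{The contradiction.} Even granting such finite-dimensional intertwining, there is no contradiction with property (T). $\pi(L\Gamma)$ lies in $\prod_{\iota\to\mathcal U}\mathbb M_{k_\iota}(\mathbb C)$ by construction, as it must, since $\SL_3(\mathbb Z)$ is residually finite. Spectral gap is not a useful invariant to transport either, because every property (T) subalgebra has weak spectral gap in every inclusion. The invariant that has to be carried across $\theta$ is irreducibility, and your construction does not provide it: direct sums of irreducibles have commutant containing the block projections.

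The paper obtains irreducibility in Proposition~\ref{prop:irredpseudocpt}:
\begin{itemize}
\item take $\pi_n=\rho_k\oplus\mathrm{id}_{n-k}$ with $\rho_k$ irreducible and $k/n\to 1$, using Steinberg representations of $\SL_3(\mathbb Z/p\mathbb Z)$ (of dimension $p^3$) and the prime number theorem;
\item compute $\pi(\SL_3(\mathbb Z))'\cap\prod_{\iota\to\mathcal U}\mathbb M_{n_\iota}(\mathbb C)=\mathbb C$ coordinatewise via property (T);
\item identify the generated factor with $L(\SL_3(\mathbb Z))$ via Bekka's superrigidity.
\end{itemize}
Irreducibility supplies the factorial relative commutant hypothesis of Theorem~\ref{thm:main}. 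The contradiction is then that $M\subset\prod_{\kappa\to\mathcal V}A_\kappa$ forces $M'\cap(L\mathbb F_n)^{\mathcal V}\supset\prod_{\kappa\to\mathcal V}(A_\kappa'\cap L\mathbb F_n)$, which is a II$_1$ factor.
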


Our proof shows that a pseudomatricial factor cannot even existentially embed into an ultrapower of a free group factor, nor can it have the same $\forall \exists$-theory, answering \cite[Question 5.4]{GoPi25}. This should be contrasted with \cite[Remark 4.5]{JeKE26} where it is shown that pseudomatricial factors and free group factors agree on certain $\exists \forall$-sentences. Whether or not a free group factor can existentially embed into a pseudomatricial factor remains an interesting open problem (see \cite[Problem 13]{KE25}).

Theorems~\ref{thm:mainfg}  and \ref{thm:freenotpseudomatricial} are obtained by overcoming some of the previous difficulties that prevented the incorporation of deformation/rigidity techniques in the ultraproduct setting.  On the deformation side, we find a quantitative version of Connes's characterization of amenability from \cite{Co76} that states that if a factor is close to possessing central vectors in the coarse bimodule, then it must be close to being contained in a finite-dimensional matrix subalgebra. In terms of ultraproducts, this means that if we have a separable subfactor $M \subset N^{\mathcal U}$ such that the ultrapower of the coarse bimodule $( L^2N \overline \otimes L^2 N)^{\mathcal U}$ contains $M$-central vectors, then $M$ itself must already be contained in an ultraproduct of matrix algebras $M \subset \prod_{\iota \to \mathcal U} \mathbb M_{n_\iota}(\mathbb C) \subset N^{\mathcal U}$ (see Theorem~\ref{thm:amenimplieshyperfinite} for the precise statement). To prove this result, we have to go through Connes's proof that semi-discreteness implies hyperfiniteness for II$_1$ factors. Connes's proof builds on the work of Effros and Lance in \cite{EfLa77} and involves an intricate and technical analysis of the automorphisms of a semi-discrete II$_1$ factor. Fortunately, Connes's argument has been somewhat simplified over the years, and using in particular the work of Anantharaman-Delaroche on amenable correspondences from \cite{AD95} in lieu of \cite{EfLa77} and using the simplification of Connes's theorem given by Haagerup in \cite{Ha85}, most of the heavy lifting has already been completed so that we may simply put the existing pieces together. 

On the rigidity side, we improve Connes's rigidity argument from \cite{Co80} by showing that in property (T) factors not only is the set of inner-automorphisms open, but, in fact, if a property (T) factor $M$ is only approximately embedded in a factor $N$, then the set of approximate inner-endomorphisms from $M$ to $N$ is still open. In terms of ultrapowers, this means that if $M \subset N^{\mathcal U}$ is a property (T) subfactor and $\alpha \in \operatorname{Aut}(N)$ is in the connected component of the identity, then the ultrapower automorphism $\alpha^{\mathcal U}$ is inner when restricted to $M$. To control the intertwining partial isometries that arise from Connes's argument, it becomes necessary to impose the condition that the relative commutant $M' \cap N^{\mathcal U}$ is a factor, which is the reason for this assumption throughout Section~\ref{sec:embedultra}. Combining these deformation and rigidity arguments with Popa's technique on malleable deformations \cite{Po07a}, we obtain a stronger conclusion than Theorem~\ref{thm:mainfg}. In fact, we obtain that, under the assumption of factorial relative commutant, there is essentially no non-trivial embedding of a property (T) factor into an ultraproduct of $L\mathbb F_2$.

\begin{thm}\label{thm:main}
Suppose $n > 1$ and $M \subset (L\mathbb F_n)^{\mathcal U}$ is a property (T) subfactor such that $M' \cap (L \mathbb F_n)^{\mathcal U}$ is a factor, then there exist finite-dimensional subfactors $A_\iota \subset L\mathbb F_n$ so that 
\[
M \subset \prod_{\iota \to \mathcal U} A_\iota \subset (L\mathbb F_n)^{\mathcal U}.
\]
\end{thm}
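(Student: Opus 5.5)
The plan combines Popa's malleable deformation of $L\mathbb F_n$ with the two ingredients described above: the ultraproduct rigidity result for property (T) factors and the quantitative version of Connes's amenability theorem (Theorem~\ref{thm:amenimplieshyperfinite}). We write $N = L\mathbb F_n$ and $\tilde N = L\mathbb F_n * L\mathbb F_n \supset N$. On $\tilde N$ we use the free malleable deformation $\alpha_t \in \operatorname{Aut}(\tilde N)$, $t\in\mathbb R$, with $\alpha_0 = \mathrm{id}$ and $\alpha_1(N) = 1 * L\mathbb F_n$. It is obtained by rotating each generator $u_g$ into $u_g v_g$ by $t$-powers of Haar unitaries $v_g$ in the second copy. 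This $\alpha_t$ is a continuous path from the identity, so $\alpha_1$ lies in the connected component of the identity in $\operatorname{Aut}(\tilde N)$.

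\textbf{Step 1 (rigidity).} We apply the improved Connes rigidity argument of Section~\ref{sec:embedultra}. Since $M \subset N^{\mathcal U} \subset \tilde N^{\mathcal U}$ has property (T) and the relative commutant is a factor, $\alpha_1^{\mathcal U}$ restricted to $M$ should be inner. Concretely, there is a unitary (or at least a nonzero partial isometry) $w \in \tilde N^{\mathcal U}$ with $\alpha_1^{\mathcal U}(x) w = w x$ for all $x \in M$.

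A subtlety arises here because the relative commutant of $M$ inside $\tilde N^{\mathcal U}$, rather than inside $N^{\mathcal U}$, must be controlled. I would first try to show that factoriality passes to $M'\cap \tilde N^{\mathcal U}$. If it does not, I would instead run the rigidity argument directly with the path $\alpha_t$, for $t$ small and then iterating, which only needs uniform convergence $\alpha_t \to \mathrm{id}$ on the unit ball of $M$. That uniform convergence follows from property (T) and the spectral gap, applied to the $M$-bimodules $L^2(\tilde N^{\mathcal U})$.

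\textbf{Step 2 (transfer to the coarse bimodule).} Since $\alpha_1(N) = 1*L\mathbb F_n$ is free from $N$, the intertwiner $w$ should produce $M$-central vectors in the ultrapower of the $N$-$N$ bimodule $L^2(\tilde N)\ominus L^2(N)$. As an $N$-$N$ bimodule, this space is a multiple of the coarse bimodule $L^2N\ovt L^2N$. So we obtain nonzero $M$-central vectors in $(L^2N\ovt L^2 N)^{\mathcal U}$. We must check that the vector obtained is nonzero and actually lies in the orthocomplement. The component in $L^2(N^{\mathcal U})$ itself has to be dealt with, using that $w$ implements $\alpha_1$ on all of $M$ and that $\alpha_1(M)$ is asymptotically free from $N^{\mathcal U}$. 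A weak-mixing type argument should kill that component. We may also need to cut $w$ by projections in the factor $M'\cap N^{\mathcal U}$ and use factoriality to ensure the central vectors have full support.

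\textbf{Step 3 (conclusion).} We apply Theorem~\ref{thm:amenimplieshyperfinite} to conclude that $M \subset \prod_{\iota\to\mathcal U} A_\iota$ with finite-dimensional subfactors $A_\iota \subset N$.

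I expect Steps 1 and 2 to be the main obstacle: obtaining a genuinely nonzero, fully supported intertwiner inside the ultrapower, and ensuring the resulting central vectors lie in the coarse part. This is exactly where the factoriality hypothesis on $M'\cap N^{\mathcal U}$ must be used.
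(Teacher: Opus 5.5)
Your architecture matches the paper's: a deformation in the connected component of the identity, Theorem~\ref{thm:propTultra} to make $\alpha_1^{\mathcal U}$ inner on $M$, a weakly coarse twisted bimodule, and then Theorem~\ref{thm:amenimplieshyperfinite}. However, Step~1 has a genuine gap.

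\textbf{Step 1.} Theorem~\ref{thm:propTultra} must be applied inside $\tilde N^{\mathcal U}$, so it needs $M'\cap\tilde N^{\mathcal U}$ to be a factor. You give no argument for this, and your fallback does not work. Property (T) only upgrades convergence on a finite Kazhdan set to uniform convergence, and here that input is missing. The deformation converges to the identity pointwise on $\tilde N$, but not uniformly on the unit ball of $N$. Hence $(\alpha_t)^{\mathcal U}$ need not converge to the identity on $N^{\mathcal U}$, so nothing gives convergence on a Kazhdan set of $M$. Concretely, for your $\alpha_t(u_{a_k})=u_{a_k}v_k^t$, freeness gives $\tau(\alpha_t(u_{a_1}^m)u_{a_1}^{-m})=\tau(v_1^t)^m$. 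So for $x=(u_{a_1}^{m_\iota})^{\mathcal U}$ with $m_\iota\to\infty$ we get $\|(\alpha_t)^{\mathcal U}(x)-x\|_2=\sqrt2$ for every fixed $t\in(0,1]$. This is exactly why the paper proves Theorem~\ref{thm:propTultra} through the clopen sets $\mathcal G_\iota\subset\operatorname{Aut}(N_\iota)$ rather than along the path.

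The missing idea is a dichotomy that makes the factoriality question moot.
\begin{itemize}
\item If $M'\cap\tilde N^{\mathcal U}\neq M'\cap N^{\mathcal U}$, take $x$ in the former but not the latter. Since $\mathbb E_{N^{\mathcal U}}$ is $N^{\mathcal U}$-bimodular, $x-\mathbb E_{N^{\mathcal U}}(x)$ is a nonzero $M$-central vector in $\prod_{\iota\to\mathcal U}(L^2\tilde N\ominus L^2N)$. For $\tilde N=N*L\mathbb F_n$ this bimodule is weakly coarse by Proposition~\ref{prop:weakcontainmentultra}. Hence $L^2M\prec\prod_{\iota\to\mathcal U}(L^2N\ovt L^2N)$, and you are already done.
\item Otherwise $M'\cap\tilde N^{\mathcal U}=M'\cap N^{\mathcal U}$ is a factor, and Theorem~\ref{thm:propTultra} gives a unitary $w$ with $\alpha_1^{\mathcal U}(x)w=wx$ for $x\in M$.
\end{itemize}

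\textbf{Step 2.} This step aims at the wrong bimodule. The relation $\alpha_1^{\mathcal U}(x)w=wx$ makes $w$ central for the twisted structure $x\cdot\xi\cdot y=\alpha_1(x)\xi y$. For that structure $L^2N$ is not a sub-bimodule, since it is not invariant under left multiplication by $\alpha_1(N)$. So there is no $L^2(N^{\mathcal U})$-component to remove, and no weak-mixing argument is needed.

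What you need, and what is true, is that the entire twisted $N$-$N$ bimodule ${}_{\alpha_1}L^2\tilde N$ is a multiple of the coarse bimodule.
\begin{itemize}
\item If $\alpha_1(N)=1*L\mathbb F_n$, then $\tilde N=N*\alpha_1(N)$, and $L^2(A*B)$ is a multiple of the coarse $A$-$B$ bimodule.
\item Your formula actually gives $\alpha_1(u_{a_k})=u_{a_k}u_{b_k}$, which does not map $N$ onto $1*L\mathbb F_n$. In that case the claim is Lemma~\ref{lem:coarseautomorphism}, since $t\mathbb F_nt^{-1}\cap\alpha(\mathbb F_n)=\{e\}$.
\end{itemize}
Then $w$, being a unitary, is a nonzero left-right bounded $M$-central vector in $\prod_{\iota\to\mathcal U}{}_{\alpha_1}L^2\tilde N$. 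Proposition~\ref{prop:weakcontainmentultra} gives $L^2M\prec\prod_{\iota\to\mathcal U}(L^2N\ovt L^2N)$. No cutting by projections is needed: any nonzero central vector for the II$_1$ factor $M$ already spans a copy of $L^2M$.
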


We conjecture that Theorem~\ref{thm:main} should hold for all II$_1$ factors with the Haagerup property (see \cite[Problem 1.1]{AIM}). While we are not able to extend this theorem to that level of generality, the theorem provides strong evidence for this conjecture. We also show that Theorem~\ref{thm:main} holds more generally if $L\mathbb F_n$ is replaced by any finite tensor product of II$_1$ factors possessing a weakly coarse malleable deformation in the sense of Popa, that is, those $N$ that embed into a larger factor $N \subset \tilde N$ with the property that there exists an automorphism $\alpha \in \Aut(\tilde N)$ in the connected component of the identity such that we have weak containment of $N$-$N$ Hilbert bimodules
\[
L^2 \tilde N \ominus L^2 N \prec L^2 N \ovt L^2 N, \ \ \ \ \ \ L^2 \tilde N_\alpha \prec L^2 N \ovt L^2 N,
\]
where $L^2 \tilde N_\alpha$ denotes the $\alpha$-twisted $N$-$N$ bimodule structure on $L^2 \tilde N$. Examples of factors in this class include tensor products of all amalgamated free products of amenable II$_1$ factors, so that Theorem~\ref{thm:main} holds also in the cases when $L\mathbb F_n$ is replaced by $L( \mathbb Z \wr \mathbb F_n)$, which has a Cartan subalgebra, or for $L(\mathbb Z^2 \rtimes \SL_2(\mathbb Z))$, which has a diffuse subalgebra with relative property (T), or for $L\mathbb F_2 \ovt L\mathbb F_2$, which is not prime.

In particular, free group factors (or even any factor that is elementarily equivalent to a finite tensor product of factors having a weakly coarse malleable deformation) cannot existentially embed into a property (T) factor. This was previously shown for the case of free group factors in \cite[Theorem C]{Ke23} using strongly $1$-boundedness of property (T) factors from \cite{HaJeKE25}.

Using the framework of deformation/rigidity theory in the ultraproduct setting, we prove a strong Bass-Serre rigidity result for asymptotically commutative factors that contain a property (T) type II$_1$ subfactor with factorial relative commutant (see Theorem~\ref{thm:bassserreee}). This extends to the setting of ultraproducts the deformation/rigidity techniques used in \cite{IoPePo08} (cf.\ \cite{Oz06}, \cite{Pe09}). A consequence of this is that we are able to give explicit infinite families of full factors that are pairwise non-elementarily equivalent. We also give examples of factors having a non-elementarily equivalent index-2 subfactor (Theorem~\ref{thm:finotee}). Moreover, this construction is concrete enough that we can ensure that these factors are neither pseudomatricial nor elementarily equivalent to free group factors, and we can ensure that these factors have desirable structural properties, such as being group von Neumann algebras, group-measure space constructions, and/or being $R^{\mathcal U}$ embeddable. For example, the following result is obtained using this strong Bass-Serre rigidity result, answering \cite[Problems 18, 20]{KE25}.

\begin{thm}\label{thm:pairwisenonconj}
Let $M$ be a property (T) II$_1$ factor and let $N = M^{\overline \otimes \mathbb N}$, then the full factors $\{ *^k N \}_{k = 2}^\infty$ are pairwise non-elementarily equivalent where $*^k$ indicates the $k$-fold free product. None of these factors are pseudomatricial, nor are any of these elementarily equivalent to a free group factor. The same statement is true for the super McDuff factors $\{ (*^k N ) \ovt R \}_{k = 2}^\infty$, and also for the non-McDuff property Gamma factors $\{ (*_D^k (N \ovt R)) *_D  R \}_{k = 2}^\infty$, where $D \subset R$ is a Cartan subalgebra.
\end{thm}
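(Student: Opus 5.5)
The plan is to derive Theorem~\ref{thm:pairwisenonconj} from the strong Bass-Serre rigidity result for ultraproducts (Theorem~\ref{thm:bassserreee}), together with Theorem~\ref{thm:main} and its generalization to factors with weakly coarse malleable deformations. The invariant I would isolate is the following. For $P = *^k N$ and an ultrafilter $\mathcal U$, consider embeddings $\pi$ of the property (T) factor $N_0 = M^{\overline\otimes m}$ (or of $N$ itself) into $P^{\mathcal U}$ with factorial relative commutant. Bass-Serre rigidity should show that every such embedding is unitarily conjugate into some $N_i^{\mathcal U}$, where $N_i$ is one of the free factors. The integer $k$ should then be read off from how many mutually ``free'' (non-intertwinable) copies of such property (T) subfactors can sit inside the ultrapower.

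First I would verify the hypotheses. Since $M$ has property (T), $N = M^{\overline\otimes\mathbb N}$ is McDuff, and hence asymptotically commutative: it contains property (T) subfactors $M \otimes 1$ with large relative commutant. The relative commutant of $M$ inside $N^{\mathcal U}$ contains $(1\otimes M^{\overline\otimes \mathbb N})^{\mathcal U}$. Using factoriality of ultrapowers of factors and the tensor splitting, I would check that this relative commutant is a factor.

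The heart of the argument is the invariant. Suppose $(*^k N)^{\mathcal U} \cong (*^l N)^{\mathcal V}$ with $k<l$. Take the $l$ copies $N_1,\dots,N_l \subset (*^l N)^{\mathcal V}$; each contains property (T) subfactors $Q_j \subset N_j$ with factorial relative commutant. Transport them to $(*^k N)^{\mathcal U}$. By Theorem~\ref{thm:bassserreee}, each $Q_j$, and in fact its relative-commutant-enlarged algebra $N_j$, is unitarily conjugate into some $N_{i(j)}^{\mathcal U}$. By pigeonhole, two indices $j\ne j'$ go to the same $i$. I would then apply the rigidity in reverse: the algebra generated by $N_j$ and $N_{j'}$ lies, after conjugation, in a single $N_i^{\mathcal U}$. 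I expect the main obstacle to be the contradiction at this step. In the original algebra, $N_j$ and $N_{j'}$ are freely independent, and their join $N_j * N_{j'}$ would have to sit, up to conjugation, inside the asymptotically commutative $N_i^{\mathcal U}$. This should contradict the fact that a free product diffuse subalgebra is full relative to the ultrapower. A cleaner route is to use asymptotic commutativity directly: elements of $N_j$ and $N_{j'}$ can be approximated by property (T) pieces that commute in $N_i^{\mathcal U}$. After conjugation they commute with large subalgebras, which contradicts freeness via a spectral gap estimate. One must also control unitary conjugation in the ultrapower, not just intertwining.

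For the remaining assertions, I would proceed as follows.
\begin{itemize}
\item Non-pseudomatricial and not elementarily equivalent to $L\mathbb F_n$: the property (T) subfactor $M\subset N\subset *^kN$ has factorial relative commutant. By Theorem~\ref{thm:main} and its matrix-algebra analogue (the factor $\prod \mathbb M_{n_\iota}$ admits a coarse deformation in the ultraproduct sense via the quantitative Connes theorem, Theorem~\ref{thm:amenimplieshyperfinite}), any such embedding into an ultrapower of $L\mathbb F_n$ or into a matrix ultraproduct would have to lie in an ultraproduct of finite-dimensional algebras. That is impossible for a property (T) factor with factorial relative commutant by Connes's rigidity (inner-openness) argument.
\item The variants $(*^kN)\overline\otimes R$ and the amalgamated versions over the Cartan $D$: I would run the same argument, using the relative (amalgamated over $R$ or $D$) form of the Bass-Serre rigidity. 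Tensoring with $R$ preserves factoriality of relative commutants, and the intertwining analysis is relative to the amenable part.
\end{itemize}
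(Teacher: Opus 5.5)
You name Theorem~\ref{thm:bassserreee} as the engine but never actually apply it, and the pigeonhole argument you use instead has a real gap.

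The direct route works. Check that $N=M^{\overline\otimes\mathbb N}\in\mathcal C_T$: the subfactor $M\otimes 1$ has factorial relative commutant, and condition~(\ref{item:BCT}) holds with $P=M$ via the embeddings into the $n$-th tensor slot, which have uniform weak spectral gap by property (T). (This comes from the tensor-shift structure, not from McDuffness, which does not imply asymptotic commutativity.) Then Theorem~\ref{thm:bassserreee} with $N_0=\tilde N_0=\mathbb C$ and trivial amalgams gives $k=\ell$ at once. The other two families are the cases $*^k_R(N\ovt R)\cong(*^kN)\ovt R$, and $N_0=\tilde N_0=R$ with all amalgams $D$.

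In your route, write $\tilde N_j$ for the copies in $*^\ell N$. There are two problems.
\begin{enumerate}
\item The conjugacy of a transported property (T) subfactor into some $N_i^{\mathcal U}$ comes from Theorem~\ref{thm:ultrafreeembedding}, not Theorem~\ref{thm:bassserreee}. It requires the transported subfactor to be matricially inapproximable with respect to $*^kN$, which is not automatic. Theorem~\ref{thm:genspectralgapapprox} secures it only on one side, possibly after exchanging the two factors, so you cannot decide in advance to push the $\ell$ copies into the $k$ copies.
\item The pigeonhole step does not close. $\tilde N_j$ and $\tilde N_{j'}$ are conjugated into $N_i^{\mathcal U}$ by different unitaries, so their join is not. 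Commutation arranged inside $N_i^{\mathcal U}$ after two different conjugations says nothing about the original free copies, and two separate conjugations into the same $N_i^{\mathcal U}$ are not contradictory in themselves.
\end{enumerate}
The missing ingredient is the reverse inclusion. The paper uses (\ref{item:BCT}) and Proposition~\ref{prop:relcommutant} twice to upgrade $u_i\theta(M_i)u_i^*\subset\tilde N_{j_i}^{\mathcal V}$ to $\theta(N_i^{\mathcal U})\subset u_i^*\tilde N_{j_i}^{\mathcal V}u_i$. It then uses Proposition~\ref{prop:ultrabackandforth} to run the argument from the other side. Finally, Remark~\ref{rem:normalizer} (a conjugate of $\tilde N_j^{\mathcal V}$ inside $\tilde N_{j'}^{\mathcal V}$ forces $j=j'$) makes $i\mapsto j_i$ a bijection.

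Your argument for the last two assertions is incorrect. A property (T) factor with factorial, even trivial, relative commutant can lie in an ultraproduct of finite-dimensional subfactors. Proposition~\ref{prop:irredpseudocpt} gives an irreducible copy of $L(\SL_3(\mathbb Z))$ in $\prod_{\iota\to\mathcal U}\mathbb M_{n_\iota}(\mathbb C)$, and the proof of Lemma~\ref{lem:Tfactorialrel} places it in $(L\mathbb F_n)^{\mathcal U}$ with factorial relative commutant. So the conclusion of Theorem~\ref{thm:main} is not a contradiction, and Connes's argument gives no obstruction. In $\prod_\kappa\mathbb M_{n_\kappa}(\mathbb C)$ every subalgebra is trivially matricially approximable, so there is no ``matrix analogue'' to invoke. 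In Theorem~\ref{thm:mainfg} the contradiction came from fullness, $\theta(M)'\cap(L\mathbb F_n)^{\mathcal V}=\mathbb C$. Here $M'\cap(*^kN)^{\mathcal U}=(M'\cap N)^{\mathcal U}$ is a large II$_1$ factor, so that route is unavailable.

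For the pseudomatricial case you must go the other way. Pull back the irreducible $L(\SL_3(\mathbb Z))$. Being irreducible, it is matricially inapproximable with respect to $*^kN$. So Theorem~\ref{thm:ultrafreeembedding}, iterated over the free factors, conjugates it into some $N_i^{\mathcal U}$. There (\ref{item:BCT}) puts a copy of the II$_1$ factor $P$ into its relative commutant, contradicting irreducibility.

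For $L\mathbb F_n$ you need two free factors.
\begin{enumerate}
\item Theorem~\ref{thm:main} gives $\theta(M_1)\subset\prod_\kappa B_\kappa$ and $\theta(M_2)\subset\prod_\kappa C_\kappa$ for $M_i\subset N_i$.
\item Then $\prod_\kappa(B_\kappa'\cap L\mathbb F_n)\subset\theta(M_1'\cap(*^kN)^{\mathcal U})\subset\theta(N_1^{\mathcal U})$, the last inclusion by Proposition~\ref{prop:relcommutant}.
\item Each $B_\kappa'\cap L\mathbb F_n$ is a II$_1$ factor, so a unitary conjugates $\prod_\kappa C_\kappa$ into $\prod_\kappa(B_\kappa'\cap L\mathbb F_n)$. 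Hence $M_2\subset N_2$ becomes unitarily conjugate into $N_1^{\mathcal U}$.
\item The argument of Proposition~\ref{prop:ultrabackandforth} shows this forces $M_2$ to be amenable, a contradiction.
\end{enumerate}
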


The strong Bass-Serre rigidity result for ultraproducts can also be used to show the expected (but nettlesome) result that taking free products does not preserve elementary equivalence (see Theorem~\ref{thm:freeproductee}), answering \cite[Question 3.8]{GoPi25}. Using the strong Bass-Serre rigidity result for ultraproducts, together with \cite[Theorem 1.2]{BoChIo17}, we may also provide a continuum of pairwise non-elementarily equivalent full factors. This realizes an approach suggested by Remi Boutonnet from \cite[Problem 1.3]{AIM}. The same technique can also provide a continuum of pairwise non-elementarily equivalent II$_1$ factors that are super McDuff, or that have property Gamma but are not McDuff. The groups $T_\alpha(\Gamma)$ for $\alpha \in 2^{\omega}$ are obtained using McDuff's functor (see Section~\ref{sec:continuum}).

\begin{thm}\label{thm:uncountablemcduff}
Let $\Gamma$ be a non-trivial i.c.c.\ property (T) group. The factors 
\[
\left\{ L(  T_\alpha(\Gamma) * T_\alpha(\Gamma) ), R \ovt L(  T_\alpha(\Gamma) * T_\alpha(\Gamma) ), R *_D ( R \ovt L(  T_\alpha(\Gamma) * T_\alpha(\Gamma) ) \right\}_{\alpha \in 2^{\omega}}
\]
are pairwise non-elementarily equivalent, where $D \subset R$ is a Cartan subalgebra. 
\end{thm}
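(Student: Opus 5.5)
The plan is to separate the three families by pairwise non-elementary equivalence within each family, and then across the families. The tool is the strong Bass--Serre rigidity result for ultraproducts (Theorem~\ref{thm:bassserreee}), combined with the invariant of \cite[Theorem 1.2]{BoChIo17}. Write $G_\alpha = T_\alpha(\Gamma) * T_\alpha(\Gamma)$.

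\textbf{Step 1: the three families are mutually separated.} $L(G_\alpha)$ is full, being a free product of non-amenable factors. The factors $R \ovt L(G_\alpha)$ are McDuff. The factors $R *_D (R \ovt L(G_\alpha))$ have property Gamma but are not McDuff, as in Theorem~\ref{thm:pairwisenonconj}. Fullness, property Gamma and the McDuff property are invariants of elementary equivalence \cite{FaHaSh14}, so no factor in one family is elementarily equivalent to a factor in another.

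\textbf{Step 2: the key reduction within a family.} Suppose $L(G_\alpha)^{\mathcal U} \cong L(G_\beta)^{\mathcal V}$. McDuff's groups $T_\alpha(\Gamma)$ contain copies of $\Gamma$, and more generally of infinite tensor powers of property (T) i.c.c.\ groups. I will choose a property (T) subfactor $M \subset L(T_\alpha(\Gamma))$ whose relative commutant inside $L(T_\alpha(\Gamma))^{\mathcal U}$ is a factor. Such an $M$ should come from the infinite tensor product structure $\bigotimes_n L\Gamma$ built into McDuff's construction, in the same way that $N = M^{\ovt \mathbb N}$ is used in Theorem~\ref{thm:pairwisenonconj}. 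Theorem~\ref{thm:bassserreee} then says that the image of each free factor $L(T_\alpha(\Gamma))^{\mathcal U}$, or a suitably large piece of it containing the relevant central-sequence structure, is unitarily conjugate into some $L(T_\beta(\Gamma))^{\mathcal V}$. Running the argument symmetrically, I expect to obtain $L(T_\alpha(\Gamma))^{\mathcal U} \cong L(T_\beta(\Gamma))^{\mathcal V}$ up to amplification or corners. At that point it is enough to show that the \cite{BoChIo17} invariant passes to corners and amplifications; I expect this to follow from their central-sequence characterization, because it is formulated in terms of iterated commutation patterns in the ultrapower. This gives $T_\alpha(\Gamma)$-factors that are elementarily equivalent, hence $\alpha = \beta$. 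For the McDuff family I will tensor with $R$ throughout. For the third family I will apply the amalgamated version of the Bass--Serre rigidity over $D$, which locates $R \ovt L(G_\alpha)$ and then $L(G_\alpha) \ovt R$.

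\textbf{Main obstacle.} The hardest step is extracting from the Bass--Serre conclusion a genuine elementary equivalence between the pieces $L(T_\alpha(\Gamma))$. Theorem~\ref{thm:bassserreee} only gives intertwining of subalgebras generated by rigid parts with factorial relative commutant. The \cite{BoChIo17} invariant, however, concerns central sequences, which lie in the relative commutant of the separable factor rather than in the rigid part. My first attempt is to show that the relative commutant of the conjugated rigid subalgebra inside the free product ultrapower is trapped inside the same free factor's ultrapower, using a free-product control-of-relative-commutants lemma. Once that holds, the iterated relative commutant chains defining the invariant can be compared directly on both sides.
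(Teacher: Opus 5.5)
Your Step 1 is fine and you have named the right tools, but Step 2 has a genuine gap, and it comes from misreading Theorem~\ref{thm:bassserreee}. That theorem does not merely intertwine rigid pieces. Its conclusion is already $N_i \equiv \tilde N_i$, because its proof shows that $\theta(N_i^{\mathcal U})$ is unitarily conjugate onto some $\tilde N_j^{\mathcal V}$. This conjugacy is exact by Remark~\ref{rem:normalizer}, so no corners or amplifications appear; they would be harmless anyway, since McDuff factors have full fundamental group.

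What the theorem requires is that the free factors lie in $\mathcal C_T$. You never address condition (\ref{item:BCT}) of that definition, and this is exactly the ingredient that overcomes your ``main obstacle''. Suppose $u\theta(M)u^*\subset \tilde N_j^{\mathcal V}$ for the rigid $M$, and take any separable $B\subset N_i^{\mathcal U}$ containing $M$. Condition (\ref{item:BCT}) gives an ultraproduct embedding $\alpha$ of a rigid factor $P$ with $B\subset \alpha(P)'\cap N_i^{\mathcal U}$. Hence $\alpha(P)\subset M'$, and Proposition~\ref{prop:relcommutant} puts $u\theta(\alpha(P))u^*$ inside $\tilde N_j^{\mathcal V}$. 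A second application puts $u\theta(B)u^*$ there, with the same $u$. So all of $\theta(N_i^{\mathcal U})$ is intertwined, and \cite[Theorem 1.2]{BoChIo17} applies as a black box.

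Your proposed substitute does not achieve this. Trapping $\theta(M)'\cap \tilde N^{\mathcal V}$ in one free factor only controls $(M'\cap N_i)^{\mathcal U}$, which is not the central sequence algebra of $L(T_\alpha(\Gamma))$. ``Comparing iterated relative commutant chains directly'' would mean re-deriving the invariant of \cite{BoChIo17} in a form you have not specified. Their theorem, as stated, needs an actual elementary equivalence of the whole factors.

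Concretely, the missing verification is Lemma~\ref{lem:mcduffC}.
\begin{itemize}
\item Condition (1): take $M = L\Gamma_2$, a single copy of $\Gamma$. An infinite tensor power, as you suggest, does not have property (T). Since $\Gamma$ is i.c.c., every orbit of $\Gamma_2$ acting by conjugation on $T_\alpha(\Gamma)=T_{\alpha_1}(T_{\alpha'}(\Gamma))$ is trivial or infinite. Hence $L\Gamma_2'\cap L(T_\alpha(\Gamma)) = L(C_{T_\alpha(\Gamma)}(\Gamma_2))$, and this centralizer is i.c.c.
\item Condition (\ref{item:BCT}): use the copies $L\Gamma_n$ with $n\to\infty$. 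The group $\Gamma_n$ commutes with the other copies, with $\Lambda_j$ for $j\le n$, and with permutations fixing $n$. So $\|\mathbb E_{L\Gamma_n'\cap L(T_\alpha(\Gamma))}(x)-x\|_2\to 0$ for every $x$, and property (T) of $\Gamma$ gives the uniform weak spectral gap.
\end{itemize}

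For the other two families, do not tensor with $R$ throughout or try to locate $R\ovt L(G_\alpha)$ first. Instead re-associate:
\begin{itemize}
\item $R\ovt L(G_\alpha)=(R\ovt L(T_\alpha(\Gamma)))*_R(R\ovt L(T_\alpha(\Gamma)))$;
\item $R*_D(R\ovt L(G_\alpha))=R*_D(R\ovt L(T_\alpha(\Gamma)))*_R(R\ovt L(T_\alpha(\Gamma)))$.
\end{itemize}
Here $N_0=R$ is amenable, so it trivially has a weakly coarse malleable deformation. The amalgams $D$ and $R$ are amenable, and $R\ovt L(T_\alpha(\Gamma))\cong L(T_\alpha(\Gamma))\in\mathcal C_T$ by McDuffness. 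One application of Theorem~\ref{thm:bassserreee} in each family then gives $L(T_\alpha(\Gamma))\equiv L(T_\beta(\Gamma))$, hence $\alpha=\beta$.
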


We may similarly define a continuum of groups whose probability measure-preserving actions have non-elementarily equivalent crossed products. 

\begin{thm}\label{thm:uncountablemcduffactions}
Let $\Gamma$ be a non-trivial i.c.c.\ property (T) group. For each $\alpha \in 2^{\omega}$ we set $\Gamma_\alpha = T_\alpha(\Gamma) * T_\alpha(\Gamma)$. Suppose $\alpha, \beta \in 2^{\omega}$ with $\alpha \not= \beta$. If $\Gamma_\alpha \actson (X, \mu)$ and $\Gamma_\beta \actson (Y, \nu)$ are any ergodic probability measure-preserving actions, then we have 
\[
L^\infty(X, \mu) \rtimes \Gamma_\alpha \not\equiv L^\infty(Y, \mu) \rtimes \Gamma_\beta.
\] 
\end{thm}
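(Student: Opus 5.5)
The plan is to reduce this to the strong Bass--Serre rigidity result for ultraproducts (Theorem~\ref{thm:bassserreee}), applied to the free products $\Gamma_\alpha = T_\alpha(\Gamma) * T_\alpha(\Gamma)$, and then to reuse the group-level distinction from \cite[Theorem 1.2]{BoChIo17} exactly as in the proof of Theorem~\ref{thm:uncountablemcduff}.

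Suppose, for a contradiction, that $P = L^\infty(X)\rtimes\Gamma_\alpha \equiv Q = L^\infty(Y)\rtimes\Gamma_\beta$. By Keisler--Shelah there are ultrafilters $\mathcal U,\mathcal V$ and an isomorphism $\theta\colon P^{\mathcal U}\cong Q^{\mathcal V}$.

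\begin{enumerate}[(1)]
\item \emph{Rigid subfactor.} Since $\Gamma$ is a non-trivial i.c.c.\ property (T) group, $T_\alpha(\Gamma)$ contains copies of $\Gamma$. Fix such a copy inside a free factor of $\Gamma_\alpha$; then $M = L\Gamma \subset P \subset P^{\mathcal U}$ is a property (T) II$_1$ subfactor.
\item \emph{Factorial relative commutant.} We need $M'\cap P^{\mathcal U}$ to be a factor. The McDuff functor makes $T_\alpha(\Gamma)$ built from infinitely many commuting and non-commuting copies of $\Gamma$, so the relative commutant of $L\Gamma$ can be computed inside the ultrapower. If this fails for a single copy, I would replace $M$ by a subfactor $L(\Gamma^{(k)})$ sitting well inside the inductive structure. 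I expect this is what Theorem~\ref{thm:bassserreee} is set up to handle, as it was in the group von Neumann algebra case.
\item \emph{Apply Bass--Serre rigidity.} View $Q = (L^\infty(Y)\rtimes T_\beta(\Gamma)) *_{L^\infty(Y)} (L^\infty(Y)\rtimes T_\beta(\Gamma))$ as an amalgamated free product over the abelian algebra $L^\infty(Y)$. Apply Theorem~\ref{thm:bassserreee} to $\theta(M) \subset Q^{\mathcal V}$. The conclusion should be that $\theta(M)$ unitarily intertwines into an ultraproduct of one vertex algebra, $\prod_{\mathcal V} L^\infty(Y)\rtimes T_\beta(\Gamma)$. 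It cannot intertwine into the abelian edge algebra, since $M$ is a II$_1$ factor.
\item \emph{Upgrade to the whole vertex algebra.} Propagate the intertwining from $M$ to the entire vertex algebra $L(T_\alpha(\Gamma))$. This uses that $T_\alpha(\Gamma)$ is generated by copies of $\Gamma$ with controlled relative commutants. The symmetric argument applied to $\theta^{-1}$ gives the reverse intertwining.
\item \emph{Transfer of the theory.} Use the resulting intertwinings of vertex ultrapowers to show that the property distinguishing $\alpha$ from $\beta$ in \cite{BoChIo17} passes between the two sides. That property is a sentence about central sequences of copies of $L\Gamma$ in the vertex algebras. The abelian Cartan part $L^\infty(Y)$ should be irrelevant here, because the relevant central sequences live in the copies of $L\Gamma$ and their relative commutants. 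This would force $\alpha=\beta$, a contradiction.
\end{enumerate}

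The main obstacle is step (5). From a corner intertwining in the ultrapower one must deduce elementary-equivalence-type information that contradicts \cite[Theorem 1.2]{BoChIo17}, while the crossed product with an arbitrary ergodic action (in particular, possibly non-free) perturbs the vertex algebras. The approach I would try first is to follow the proof of Theorem~\ref{thm:uncountablemcduff} verbatim, with $R$ there replaced by $L^\infty$. The key check is that \cite{BoChIo17}'s invariant is detected at the level of $\forall\exists$-statements about $L\Gamma$-central sequences in ultraproducts of vertex algebras. Those statements are insensitive to the ambient abelian algebra via a conditional expectation argument.
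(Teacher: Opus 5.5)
Your overall frame matches the paper's. You write $L^\infty(Y)\rtimes\Gamma_\beta=(L^\infty(Y)\rtimes T_\beta(\Gamma))*_{L^\infty(Y)}(L^\infty(Y)\rtimes T_\beta(\Gamma))$, apply the ultraproduct Bass--Serre rigidity, and finish with \cite[Theorem 1.2]{BoChIo17}. The gap is step (5), which as written is a hope rather than an argument. You propagate the intertwining only to $L(T_\alpha(\Gamma))$ rather than to the whole vertex algebra $L^\infty(X)\rtimes T_\alpha(\Gamma)$. You are then left with one-sided inclusions, and you propose to carry the Boutonnet--Chifan--Ioana invariant across by asserting that it is a $\forall\exists$-statement about $L\Gamma$-central sequences, insensitive to the abelian algebra ``via a conditional expectation argument''. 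None of this is justified, and the picture behind it fails in general. If $T_\alpha(\Gamma)\actson X$ is mixing and $g_n\in\Gamma_n\setminus\{e\}$, then for $f\in L^\infty(X)$,
\[
\|u_{g_n}fu_{g_n}^*-f\|_2\to\sqrt2\,\|f-\tau(f)\|_2 .
\]
So the copies $L\Gamma_n$ used in Lemma~\ref{lem:mcduffC} are not asymptotically central in $L^\infty(X)\rtimes T_\alpha(\Gamma)$, and $L^\infty(X)$ cannot simply be discarded.

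The missing idea is that no such transfer is needed. Apply Theorem~\ref{thm:bassserreee} as a black box with $k=\ell=2$, $N_0=\tilde N_0=\mathbb C$, $A_1=L^\infty(X)$, $\tilde A_1=L^\infty(Y)$, $N_1=N_2=L^\infty(X)\rtimes T_\alpha(\Gamma)$ and $\tilde N_1=\tilde N_2=L^\infty(Y)\rtimes T_\beta(\Gamma)$. It then directly gives $L^\infty(X)\rtimes T_\alpha(\Gamma)\equiv L^\infty(Y)\rtimes T_\beta(\Gamma)$. Its proof upgrades your one-sided inclusions to conjugacies $\theta(N_i^{\mathcal U})=u_i\tilde N_i^{\mathcal V}u_i^*$ of the entire vertex ultrapowers, $L^\infty$ included, using the reverse inclusion and Remark~\ref{rem:normalizer}. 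Moreover, \cite[Theorem 1.2]{BoChIo17} is invoked in the paper for exactly such crossed products of $T_\alpha(\Gamma)$, not only for $L(T_\alpha(\Gamma))$, so it contradicts this directly; that is the paper's one-line proof.

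Two further corrections. In step (3) you are really using Theorem~\ref{thm:ultrafreeembedding}. Its first alternative, matricial approximability of $\theta(M)$, must be excluded by the back-and-forth of Theorem~\ref{thm:genspectralgapapprox}, possibly after swapping the two sides, and you skip this. In step (2), Theorem~\ref{thm:bassserreee} does not produce factorial relative commutants; it \emph{assumes} the vertex algebras lie in $\mathcal C_T$. You must verify both conditions for $L^\infty(X)\rtimes T_\alpha(\Gamma)$; the paper appeals to Lemma~\ref{lem:mcduffC} here. By the mixing computation above, the witnesses used there for condition (\ref{item:BCT}) no longer work for mixing actions, so this check is not a formality.
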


In Section~\ref{sec:jung}, we consider another important problem in the model theory of II$_1$ factors, which is whether or not an enforceable II$_1$ factor exists. An enforceable factor, should one exist, would be a canonical object in the theory of II$_1$ factors, analogous to the Rado graph in the theory of graphs. In \cite{Go21}, Goldbring defined a property of II$_1$ factors to be enforceable if the property can be forced by a player in a certain two player game used to construct II$_1$ factors (see \cite[Definition 2.1]{Go21}). A II$_1$ factor $S$ is enforceable if the property of being isomorphic to $S$ is an enforceable property. It follows, trivially, that if an enforceable factor exists, then it must be unique up to isomorphism. 

Goldbring showed in \cite{Go21, Go23} that enforceable factors must be locally universal (and hence could not be hyperfinite by \cite{JiNaViWrYu20}), but they nonetheless share many properties in common with a hyperfinite II$_1$ factor $R$. For instance, if $S$ is an enforceable factor, then $S$ is existentially closed, embeds into any other existentially closed II$_1$ factor, and satisfies the Jung property meaning that any two embeddings of $S$ into its ultrapower $S^{\mathcal U}$ are unitarily conjugate. 

It is an open problem as to whether or not a non-hyperfinite factor can have the Jung property. Extending ideas of Jung from \cite{Ju07}, it was shown by Atkinson and Kunnawalkam Elayavalli that this property characterizes hyperfiniteness among those factors that embed into $R^{\mathcal U}$ \cite{AtKE21}. Building on these ideas, we show that, among locally universal II$_1$ factors, the Jung property would also characterize enforceability (see Corollary~\ref{cor:jungenforceable}). Such a strong homogeneity property perhaps suggests that an enforceable II$_1$ factor should not exist. The seemingly tamer property of super McDuffness, which asks simply that $N' \cap N^{\mathcal U}$ be a II$_1$ factor, seems, by contrast, to be quite a reasonable property to expect to be enforceable (it is easy to see that McDuffness itself is an enforceable property). We show that, in fact, enforceability of these two properties is the same. This gives evidence to the fact that an enforceable II$_1$ perhaps does exist.

\begin{thm}\label{thm:enforceable}
The following statements are equivalent:
\begin{enumerate}
\item An enforceable II$_1$ factor exists.
\item The property of being super McDuff is enforceable.
\item There is some super McDuff finitely generic factor. 
\end{enumerate}
Moreover, all super McDuff finitely generic factors are enforceable (and hence are isomorphic). 
\end{thm}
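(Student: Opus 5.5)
We will prove the cycle $(1)\Rightarrow(2)\Rightarrow(3)\Rightarrow(1)$, and obtain the ``moreover'' statement inside $(3)\Rightarrow(1)$. We use Goldbring's results from \cite{Go21, Go23}. An enforceable factor $S$, if it exists, is finitely generic, locally universal and existentially closed, and it has the Jung property. A property $P$ is enforceable exactly when the finitely generic objects have $P$, in the sense that $\exists$ has a strategy forcing $P$. We also use the corollary announced above (Corollary~\ref{cor:jungenforceable}): a locally universal II$_1$ factor with the Jung property is enforceable.

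For $(1)\Rightarrow(2)$, let $S$ be enforceable. Since enforceable properties are closed under taking consequences, it suffices to show that $S$ is super McDuff. By the Jung property, every embedding $S\to S^{\mathcal U}$ is unitarily conjugate to the diagonal embedding. Take the embedding $S\cong S\ovt R\ni x\otimes 1$ followed by the diagonal inclusion into $S^{\mathcal U}$ (McDuffness is enforceable, hence $S\cong S\ovt R$). Its relative commutant contains $R^{\mathcal U}$-type central sequences. A better choice is to use that $S$ is existentially closed together with the Jung property to show that $S'\cap S^{\mathcal U}$ is a factor. Suppose $z$ is a nontrivial central projection of $S'\cap S^{\mathcal U}$ with trace $t$. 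Using McDuffness, we can produce a second embedding of $S$ whose relative commutant has a different ``factoriality type''. Concretely, we compare the embedding $x\mapsto x$ with $x\mapsto \theta(x)$, where $\theta:S\to S\ovt R\subset (S\ovt R)^{\mathcal U}$ moves $S$ into a corner that commutes with a copy of $R^{\mathcal U}$. Unitary conjugacy transports relative commutants, so all relative commutants of embeddings of $S$ are isomorphic. Existential closedness lets us realize an embedding whose relative commutant is a factor (namely $R^{\mathcal U}$-absorbing). Hence $S'\cap S^{\mathcal U}$ is a factor, and it is type II$_1$ by McDuffness.

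The implication $(2)\Rightarrow(3)$ is immediate: if super McDuffness is enforceable, then every finitely generic factor is super McDuff, and finitely generic factors exist.

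For $(3)\Rightarrow(1)$ together with the ``moreover'' clause, let $S$ be a super McDuff finitely generic factor. Finitely generic factors are existentially closed, hence locally universal by \cite{Go21}. By Corollary~\ref{cor:jungenforceable}, it suffices to prove that $S$ has the Jung property. This is the main step, and we expect it to be the main obstacle. Let $\pi:S\to S^{\mathcal U}$ be any embedding. Since $S$ is existentially closed, we may assume after passing to an iterated ultrapower that $\pi$ is an existential embedding. Then we run a back-and-forth argument in the style of Jung \cite{Ju07} and \cite{AtKE21}. Using the factoriality of $S'\cap S^{\mathcal U}$, any two embeddings whose images generate, with finite pieces, the same types are approximately unitarily conjugate. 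The idea is that factoriality of the relative commutant makes the partial isometries intertwining the two embeddings extend to unitaries, which is the same mechanism used in Section~\ref{sec:embedultra}. The finite genericity of $S$ ensures that every finitely generated subalgebra realizes a generic type, and that the types of $\pi(F)$ and $F$ agree for finite sets $F$. We would first try to extract $\epsilon$-conjugating unitaries on finite sets from the genericity and then use factoriality of the relative commutant to glue them by a diagonal argument. Once the Jung property holds, $S$ is enforceable, and uniqueness of enforceable factors gives that all super McDuff finitely generic factors are isomorphic.
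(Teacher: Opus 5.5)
Your skeleton agrees with the paper's: the cycle of implications, the immediacy of (2)$\Rightarrow$(3), and the reduction of (3)$\Rightarrow$(1) and the ``moreover'' clause to the Jung property via Corollary~\ref{cor:jungenforceable}. However, both substantive steps are missing.

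\textbf{The gap in (1)$\Rightarrow$(2).} The decisive claim, that ``existential closedness lets us realize an embedding whose relative commutant is a factor,'' is unsupported. In that generality it cannot be expected to hold. Finitely generic factors are existentially closed and have the generalized Jung property, so such an embedding would make them super McDuff. By this very theorem, that amounts to the open problem of whether an enforceable factor exists. The Jung property has to be used to \emph{create} factoriality, not merely to transport it.

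The paper does this as follows. Take $p\in\mathcal P(S'\cap S^{\mathcal U})$ with $\tau(p)=1/2$ and a unitary $u\in S^{\mathcal U}$ with $pu=up^\perp$. By McDuffness, take an isomorphism $\theta:pS^{\mathcal U}p\to S^{\mathcal U}$. The two embeddings $w\mapsto\theta(pw)$ and $w\mapsto\theta(uwp^\perp u^*)$ of $S$ are conjugate by some unitary $v$, by the Jung property. This gives $pw\,\theta^{-1}(v)u\,w^*=\theta^{-1}(v)u\,p^\perp$ for all $w\in\mathcal U(S)$. Averaging over $w$, i.e.\ applying $\mathbb E_{S'\cap S^{\mathcal U}}$, produces $y=p\,\mathbb E_{S'\cap S^{\mathcal U}}(\theta^{-1}(v)u)\in S'\cap S^{\mathcal U}$ with $y^*y=p^\perp$. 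Hence $p\sim p^\perp$ in $S'\cap S^{\mathcal U}$. So the center of $S'\cap S^{\mathcal U}$ has no projection of trace $1/2$, cannot be diffuse, and is therefore trivial.

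\textbf{The gap in (3)$\Rightarrow$(1).} Here you give only a heuristic, and it misplaces where the unitaries come from. What finite genericity provides, in the precise form the paper uses, is Goldbring's generalized Jung property \cite{Go21}: any two embeddings of $S$ into $S^{\mathcal U}$ differ by an \emph{automorphism} of $S^{\mathcal U}$. This yields no approximately conjugating unitaries, so there is nothing to ``glue'' by a diagonal argument. The difference between conjugacy by automorphisms and by unitaries is exactly the difference between the generalized Jung property and the Jung property.

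Super McDuffness closes this gap, for example by the $2\times 2$ trick behind \cite[Proposition 6.3.5]{AtGoKE22}, which the paper cites. Apply the generalized Jung property to $x\mapsto x\oplus\pi(x)$, viewed in $\mathbb M_2(S^{\mathcal U})\cong S^{\mathcal U}$. Its relative commutant is then isomorphic to the II$_1$ factor $S'\cap S^{\mathcal U}$, so $e_{11}\sim e_{22}$ there. The implementing partial isometry has the form $e_{21}\otimes w$ with $w\in\mathcal U(S^{\mathcal U})$ and $wx=\pi(x)w$, which is unitary conjugacy of $\pi$ with the diagonal embedding. Without these two ingredients, your (3)$\Rightarrow$(1) step is not a proof.
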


\subsection*{Acknowledgements} We thank Srivatsav Kunnawalkam Elayavalli for encouragement and numerous insightful conversations on this work and on the topic of the model theory of von Neumann algebras in general. We thank Isaac Goldbring for several useful discussions regarding the content of Section~\ref{sec:jung}. We thank Sorin Popa for remarks on some of the ideas used in this article, and we thank David Jekel for insightful comments and conversations. The author is supported, in part, by NSF Grant DMS-2400040.

\section{Preliminaries}

For our purposes, an ultrafilter on a nonempty set $I$ is simply a point in the Stone-\v{C}ech compactification $\mathcal U \in \beta I$. It is a principal ultrafilter if $\mathcal U \in I \subset \beta I$. Given a function $a: I \to K$ with $K$ a compact Hausdorff space, we let $\lim_{\iota \to \mathcal U} a_\iota$ denote the value at $\mathcal U$ of the continuous extension of $a$ to $\beta I$.  We say that some property holds for $\mathcal U$-almost every $\iota$ if the set of $\iota \in I$ where it holds is contained in $\mathcal U$, i.e., if $A \subset I$ denotes the set of points where the property holds, then we have $\lim_{\iota \to \mathcal U} 1_A(\iota) = 1$. 

If $(M_\iota, \tau_\iota)_{\iota \in I}$ is a family of tracial von Neumann algebras, then we let $\ell^\infty(M_\iota)$ denote the $C^*$-algebra of bounded functions $(a_\iota)_\iota$ on $I$ such $a_\iota \in M_\iota$ for each $\iota \in I$. The set 
\[
\mathcal I = \{ (a_\iota)_\iota \in \ell^\infty(M_\iota) \mid \lim_{\iota \to \mathcal U} \| a_\iota \|_2 = 0 \}
\] 
is a closed self-adjoint ideal in $\ell^\infty(M_\iota)$ and the quotient 
\[
\prod_{\iota \to \mathcal U} (M_\iota, \tau_\iota) := \ell^\infty(M_\iota)/ \mathcal I
\] 
is a $W^*$-algebra with a normal faithful trace $\tau$ satisfying $\tau( ( a_\iota)_\iota + \mathcal I ) = \lim_{\iota \to \mathcal U} \tau_\iota( a_\iota)$. If each $M_\iota$ is a finite factor with $\lim_{\iota \to \mathcal U} \dim M_\iota = \infty$, then this gives a II$_1$ factor. 

In order to simplify notation, when the traces are all understood from the context (e.g., if each $M_\iota$ is a factor) we will use the same notation $\tau$ for all of the traces and we will denote the ultraproduct as $\prod_{\iota \to \mathcal U} M_\iota$. If $(a_\iota)_\iota \in \ell^\infty(M_\iota)$, then we denote by $(a_\iota)^{\mathcal U}$ its image in $\prod_{\iota \to \mathcal U} M_\iota$. Similarly, if we have a family of trace-preserving unital completely positive (u.c.p.)\ maps $\{ \phi_\iota \}_\iota$ where $\phi_\iota: N_\iota \to M_\iota$, we use the notation $(\phi_\iota)^{\mathcal U}: \prod_{\iota \to \mathcal U} N_\iota \to \prod_{\iota \to \mathcal U} M_\iota$ for the corresponding ultraproduct of maps, which satisfy $(\phi_\iota)^{\mathcal U} ( (a_\iota)^{\mathcal U} ) = ( \phi_\iota(a_\iota) )^{\mathcal U}$, for $(a_\iota)^{\mathcal U} \in \prod_{\iota \to \mathcal U} M_\iota$.

If $a \in \prod_{\iota \to \mathcal U} M_\iota$, then $a$ has a lift $a = (a_\iota)^{\mathcal U}$ where $\| a_\iota \| \leq \| a \|$ for each $\iota \in I$. We will freely use the fact that if $a$ is self-adjoint, or unitary, or a partial isometry, or a projection, then we may choose a lift so that each $a_\iota$ has the corresponding property.

For most of the results here, it is already sufficiently interesting to consider only the case $I = \mathbb N$. The proofs change very little, however, by considering an arbitrary set $I$, and the advantage of this more general setting is that, unconditional of the continuum hypothesis, we have a connection to the model theory of tracial von Neumann algebras by the Keisler-Shelah Theorem, from which it follows (see \cite{FaHaSh14}) that that two tracial von Neumann algebras $M$ and $N$ are elementarily equivalent if and only if there exist ultrafilters $\mathcal U$ and $\mathcal V$ on some sets so that $M^{\mathcal U}$ and $N^{\mathcal V}$ are isomorphic. For our purpose, we will take this theorem as a definition of elementary equivalence and we write $M \equiv N$ when this holds. 

An ultrafilter $\mathcal U$ on $I$ is countably incomplete if there exists a decreasing sequence of sets $I_n \subset I$ so that $\mathcal U(I_n) = 1$ and $\mathcal U( \cap_n I_n ) = 0$. If $I$ is countable, then every nonprincipal ultrafilter on $I$ is countably incomplete. In general, the existence of nonprincipal countably complete ultrafilters is a large cardinal axiom that is independent of ZFC. 

In working with elementary equivalence, there is not much lost in only considering those ultrafilters that are countably incomplete. Every complete theory of a tracial von Neumann algebra is the complete theory of a separable tracial von Neumann algebra, and if $M$ is a separable tracial von Neumann algebra and $\mathcal U$ is a countably complete ultrafilter, then the diagonal embedding $M \ni x \mapsto (x)^{\mathcal U} \in M^{\mathcal U}$ is an isomorphism \cite[Lemma 2.2]{BoChIo17}. More generally, the proof of \cite[Lemma 2.2]{BoChIo17} can be adapted to show that if $(N_\iota)_\iota$ is a family of separable tracial von Neumann algebras and if $\mathcal U$ is countably complete, then there is a separable tracial von Neumann algebra $N_0$ such that for $\mathcal U$-almost every $\iota$ we have an isomorphism $\theta_\iota: N_0 \to N_\iota$, and the corresponding embedding $N_0 \ni x \mapsto (\theta_\iota(x) )^{\mathcal U} \in  \prod_{\iota \to \mathcal U} N_\iota$ is an isomorphism. Because of this, we will often assume that our ultrafilter is countably incomplete, and we leave it to the reader to note in each case that, in light of the stated results above, this is justified. 

\begin{note}
Throughout this article, $\mathcal U$ (resp.\ $\mathcal V$) will be used to denote a nonprincipal ultrafilter on some set $I$ (resp.\ $K$). We will use $\iota$ (resp.\ $\kappa$) to denote the elements of $I$ (resp.\ $K$).
\end{note}

\section{Weak containment and ultraproducts of correspondences}\label{sec:weakcontain}

If $M$ and $N$ are von Neumann algebras, a Hilbert $M$-$N$ bimodule (or $M$-$N$ correspondence) consists of a Hilbert space $\mathcal H$, together with commuting $*$-representations $\pi_L: M \to \mathbb B(\mathcal H)$ and $\pi_R: N^{\rm op} \to \mathbb B(\mathcal H)$ (see, e.g., \cite{Po86, AD95}). Throughout this article we will only consider normal bimodules, meaning that the maps $\pi_L$ and $\pi_R$ will be assumed to be normal maps. Given $x \in M$, $y \in N$ and $\xi \in \mathcal H$, we write $x \xi y$ for the vector $\pi_L(x) \pi_R(y^{{\rm op}}) \xi$.

The $C^*$-algebra $M \otimes_{\rm bin} N^{\rm op}$ is the $C^*$-algebra generated by all normal Hilbert $M$-$N$ bimodules. We may then think of a normal Hilbert bimodule $\mathcal H$ as a representation of $M \otimes_{\rm bin} N^{\rm op}$ with the property that it is normal when restricted to $M$ or $N^{\rm op}$. In particular, if $\mathcal H$ and $\mathcal K$ are two Hilbert $M$-$N$ bimodules with corresponding representations $\pi$ and $\rho$, respectively, we say that the Hilbert bimodule $\mathcal H$ is weakly contained $\mathcal K$ and write $\mathcal K \prec \mathcal K$ if we have weak containment of the corresponding representations $\pi \prec \rho$, i.e., if $\| \pi(z) \| \leq \| \rho(z) \|$ for each $z \in M \otimes_{\rm bin} N^{\rm op}$. The trivial bimodule is $L^2 N$, the coarse bimodule is $L^2 N \ovt L^2N$, and a von Neumann algebra is amenable (or semi-discrete) if we have weak containment $L^2N \prec L^2 N \ovt L^2 N$. 

If $M$ and $N$ are tracial von Neumann algebras and $\mathcal H$ is a Hilbert $M$-$N$ bimodule, then a vector $\xi \in \mathcal H$ is right-bounded if there exists $K > 0$ so that $\| \xi y \| \leq K \| y \|_2$ for $y \in N$. The infimum of all such $K$ defines a norm $\| \xi \|_N$ on the dense space of right-bounded vectors. Left-bounded vectors and the norm $\| \xi \|_M$ on the space of left-bounded vectors are defined similarly. As an example,  for the trivial bimodule $L^2M$ the notions of left and right boundedness coincide and describe the image of $M$ in $L^2 M$, and the norms above agree with the uniform norm on $M$.

Ultraproducts of Hilbert bimodules for tracial von Neumann algebras $M$ and $N$ were studied in \cite{GoHaSi19} and we now recall this construction. If $(\mathcal H_\iota)_\iota$ is a family of normal Hilbert $M_\iota$-$N_\iota$ bimodules for tracial von Neumann algebras $M_\iota$ and $N_\iota$, we let $\prod_{\iota \to \mathcal U}^{\rm Hilb} \mathcal H_\iota$ denote the ultraproduct of Hilbert spaces $(\mathcal H_\iota)_\iota$, which is again a Hilbert space with inner-product $\langle (\xi_\iota)^{\mathcal U}, (\eta_\iota)^{\mathcal U} \rangle = \lim_{\iota \to \mathcal U} \langle \xi_\iota, \eta_\iota \rangle$. A vector $\xi  \in \prod_{\iota \to \mathcal U}^{\rm Hilb} \mathcal H_\iota$ is left-right bounded if it has a representation $\xi = (\xi_\iota)^{\mathcal U}$ such that $\lim_{\iota \to \mathcal U} \max \{ \| \xi_\iota \|_{M_\iota},  \| \xi_\iota \|_{N_\iota} \} < \infty$. The correspondence ultraproduct is the closure in $\prod_{\iota \to \mathcal U}^{\rm Hilb} \mathcal H_\iota$ of the space of left-right bounded vectors. We denote the correspondence ultraproduct as $\prod_{\iota \to \mathcal U} \mathcal H_\iota$. 

The correspondence ultraproduct $\prod_{\iota \to \mathcal U} \mathcal H_\iota$ is naturally a $\left( \prod_{\iota \to \mathcal U} M_\iota \right)$-$\left( \prod_{\iota \to \mathcal U} N_\iota \right)$ bimodule, with bimodule structure satisfying $( x_\iota)^{\mathcal U} \cdot (\xi_\iota)^{\mathcal U} \cdot (y_\iota)^{\mathcal U} = ( x_\iota \xi_\iota y_\iota)^{\mathcal U}$ for $(x_\iota)^{\mathcal U} \in \prod_{\iota \to \mathcal U} M_\iota$, $(y_\iota)^{\mathcal U} \in \prod_{\iota \to \mathcal U} N_\iota$, and $(\xi_\iota)^{\mathcal U} \in \prod_{\iota \to \mathcal U} \mathcal H_\iota$ such that $\lim_{\iota \to \mathcal U} \max \{ \| \xi_\iota \|_{M_\iota}, \| \xi_\iota \|_{N_\iota} \}$. Note that, in this case, we have 
\[
\| ( x_\iota )^{\mathcal U} \cdot (\xi_\iota)^{\mathcal U} \cdot (y_\iota)^{\mathcal U}  \| 
= \lim_{\iota \to \mathcal U} \| x_\iota \xi_\iota y_\iota \| 
\leq  \lim_{\iota \to \mathcal U} \| x_\iota \|_2 \| \xi_\iota \|_{M_\iota} \| y_\iota \|
= \| (x_\iota)^{\mathcal U} \|_2  \| (y_\iota)^{\mathcal U} \| \lim_{\iota \to \mathcal U} \| \xi_\iota \|_{M_\iota}
\]
and similarly we also have
\[
\| ( x_\iota )^{\mathcal U} \cdot (\xi_\iota)^{\mathcal U} \cdot (y_\iota)^{\mathcal U}  \| 
\leq \| (x_\iota)^{\mathcal U} \|  \| (y_\iota)^{\mathcal U} \|_2 \lim_{\iota \to \mathcal U} \| \xi_\iota \|_{N_\iota}.
\]
It therefore follows that the bimodule structure given above is well-defined and also shows that $\prod_{\iota \to \mathcal U} \mathcal H_\iota$ is a normal $\left( \prod_{\iota \to \mathcal U} M_\iota \right)$-$\left( \prod_{\iota \to \mathcal U} N_\iota \right)$ bimodule.  For example, if $M_\iota$ are tracial von Neumann algebras, then it follows that the correspondence ultraproduct $\prod_{\iota \to \mathcal U} L^2 M_\iota \subset \prod_{\iota \to \mathcal U}^{\rm Hilb} L^2 M_\iota$ can be identified with $L^2 \left( \prod_{\iota \to \mathcal U} M_\iota \right)$ as Hilbert $\prod_{\iota \to \mathcal U} M_\iota$ bimodules.

We leave the following lemmas to the reader.

\begin{lem}\label{lem:ultradirectsum}
Let $\mathcal H_\iota$ and $\mathcal K_\iota$ be normal $M_\iota$-$N_\iota$ Hilbert bimodules.  We have a canonical isomorphism $\prod_{\iota \to \mathcal U} ( \mathcal H_\iota \oplus \mathcal K_{\iota} ) \cong \left( \prod_{\iota \to \mathcal U} \mathcal H_\iota \right) \oplus \left( \prod_{\iota \to \mathcal U} \mathcal K_\iota \right)$.
\end{lem}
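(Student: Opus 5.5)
The plan is to start from the canonical isometry at the level of Hilbert space ultraproducts and then check that it respects the bounded-vector subspaces. Define
\[
\Phi: \prod^{\rm Hilb}_{\iota \to \mathcal U} (\mathcal H_\iota \oplus \mathcal K_\iota) \to \Big(\prod^{\rm Hilb}_{\iota \to \mathcal U} \mathcal H_\iota\Big) \oplus \Big(\prod^{\rm Hilb}_{\iota \to \mathcal U} \mathcal K_\iota\Big), \quad (\xi_\iota \oplus \eta_\iota)^{\mathcal U} \mapsto (\xi_\iota)^{\mathcal U} \oplus (\eta_\iota)^{\mathcal U}.
\]
It is well defined and isometric because $\|\xi_\iota\oplus\eta_\iota\|^2 = \|\xi_\iota\|^2+\|\eta_\iota\|^2$ and limits along $\mathcal U$ are additive. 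It is surjective because any pair $(\xi_\iota)^{\mathcal U}\oplus(\eta_\iota)^{\mathcal U}$ with bounded representatives comes from $(\xi_\iota\oplus\eta_\iota)^{\mathcal U}$. Since $\Phi$ is coordinatewise, it intertwines the actions of $(x_\iota)^{\mathcal U}$ and $(y_\iota)^{\mathcal U}$ on left-right bounded vectors. The bimodule structure on each side is determined by these vectors by continuity, so $\Phi$ will be a bimodule map once we know it maps the correspondence ultraproduct onto the direct sum of the correspondence ultraproducts.

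The key step is to compare the bounded norms. Write $P_\iota$ for the projection onto $\mathcal H_\iota$. It commutes with both actions, so for any $\zeta = \xi\oplus\eta$ we have $\|\xi y\| \le \|\zeta y\|$, and hence $\|\xi\|_{N_\iota} \le \|\zeta\|_{N_\iota}$; the same holds for the left norm. Consequently, if $\zeta = (\zeta_\iota)^{\mathcal U}$ is left-right bounded, its two components are left-right bounded. Conversely,
\[
\|(\xi\oplus\eta)y\|^2 = \|\xi y\|^2+\|\eta y\|^2 \le (\|\xi\|_{N}^2+\|\eta\|_N^2)\|y\|_2^2,
\]
so a direct sum of left-right bounded representatives is left-right bounded. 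Therefore $\Phi$ maps the space of left-right bounded vectors onto the algebraic direct sum of the two spaces of left-right bounded vectors.

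Taking closures, $\Phi$ maps $\prod_{\iota \to \mathcal U} (\mathcal H_\iota \oplus \mathcal K_\iota)$ onto the closure of that algebraic sum, which is $\big(\prod_{\iota \to \mathcal U} \mathcal H_\iota\big)\oplus\big(\prod_{\iota \to \mathcal U} \mathcal K_\iota\big)$. Because $\Phi$ is isometric and intertwines the bimodule actions on a dense set, it is the desired unitary bimodule isomorphism.

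The only mildly delicate point is that the left-right bounded vectors in the ultraproduct of the sum are exactly the sums of left-right bounded vectors in the factors, with representatives chosen uniformly. The compression inequality and the Pythagorean estimate above handle both directions, so there is no real obstacle.
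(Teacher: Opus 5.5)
Your proposal is correct. The paper leaves this lemma to the reader, and your argument is exactly the routine verification intended: the coordinatewise unitary between Hilbert-space ultraproducts, the compression bound $\|P_\iota \zeta_\iota y\| \le \|\zeta_\iota y\|$ together with the Pythagorean estimate to show that it matches up the left-right bounded vectors, and then passage to closures and continuity of the actions.
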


\begin{lem}\label{lem:ultrafusion}
Let $(M_\iota)_\iota$, $(N_\iota)_\iota$, and $(P_\iota)_\iota$ be families of tracial von Neumann algebras and suppose $(\mathcal H_\iota)_\iota$ and $(\mathcal K_\iota)_\iota$ are families of $M_\iota-N_\iota$ and $N_\iota-P_\iota$ normal Hilbert bimodules, respectively. We then have an inclusion of $\prod_{\iota \to \mathcal U} M_\iota$-$\prod_{\iota \to \mathcal U} P_\iota$ Hilbert bimodules
\[
\left( \prod_{\iota \to \mathcal U} \mathcal H_\iota \right) \oovt{\left( \prod_{\iota \to \mathcal U} N_\iota \right)} \left( \prod_{\iota \to \mathcal U} \mathcal K_\iota \right)
\subset \prod_{\iota \to \mathcal U} \mathcal H_\iota \oovt{N_\iota} \mathcal K_\iota.
\]
\end{lem}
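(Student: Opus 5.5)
The plan is to write down an explicit isometry from the algebraic tensor product of bounded vectors, check that it intertwines the bimodule actions, and extend by density. I write $M = \prod_{\iota \to \mathcal U} M_\iota$, $N = \prod_{\iota \to \mathcal U} N_\iota$, $P = \prod_{\iota \to \mathcal U} P_\iota$, $\mathcal H = \prod_{\iota \to \mathcal U} \mathcal H_\iota$ and $\mathcal K = \prod_{\iota \to \mathcal U} \mathcal K_\iota$.

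\textbf{Setting up the domain and the map.} Recall that $\mathcal H \oovt{N} \mathcal K$ is the completion of the algebraic tensor product $\mathcal H^0 \odot \mathcal K$, where $\mathcal H^0$ is the space of right $N$-bounded vectors in $\mathcal H$. The inner product is
\[
\langle \xi \otimes \eta, \xi' \otimes \eta' \rangle = \langle \langle \xi', \xi\rangle_N \, \eta, \eta' \rangle,
\]
with $\langle \xi', \xi \rangle_N = L_{\xi'}^* L_\xi \in N$ and $L_\xi : L^2 N \to \mathcal H$ given by $y \mapsto \xi y$. It suffices to use a dense set of vectors on each side: the left-right bounded vectors $\xi = (\xi_\iota)^{\mathcal U}$ with $\sup_\iota \max\{\|\xi_\iota\|_{M_\iota}, \|\xi_\iota\|_{N_\iota}\} < \infty$, and similarly $\eta = (\eta_\iota)^{\mathcal U}$. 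On such elementary tensors I define
\[
V(\xi \otimes \eta) = (\xi_\iota \otimes_{N_\iota} \eta_\iota)^{\mathcal U}.
\]
This vector has uniformly bounded left $M_\iota$-norm and right $P_\iota$-norm, since for instance
\[
\|x(\xi_\iota \otimes \eta_\iota)\| \le \|x\xi_\iota\| \, \|\eta_\iota\|_{N_\iota} \le \|x\|_2 \, \|\xi_\iota\|_{M_\iota} \, \|\eta_\iota\|_{N_\iota}.
\]
So $V(\xi \otimes \eta)$ lies in the correspondence ultraproduct.

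\textbf{The key step: $V$ is isometric.} I need
\[
\lim_{\iota \to \mathcal U} \langle \langle \xi'_\iota, \xi_\iota \rangle_{N_\iota} \, \eta_\iota, \eta'_\iota \rangle = \langle \langle \xi', \xi \rangle_N \, \eta, \eta' \rangle.
\]
This reduces to showing $\langle \xi', \xi \rangle_N = (\langle \xi'_\iota, \xi_\iota \rangle_{N_\iota})^{\mathcal U}$. The right-hand side is a uniformly bounded family, so it defines an element of $N$. To identify it, I pair both sides against $y = (y_\iota)^{\mathcal U} \in N$ in trace:
\[
\tau(\langle \xi', \xi \rangle_N \, y) = \langle \xi y, \xi' \rangle = \lim_{\iota \to \mathcal U} \langle \xi_\iota y_\iota, \xi'_\iota \rangle,
\]
using the formula for the bimodule action. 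By faithfulness of $\tau$ the two elements agree. Then, since each $\langle \xi'_\iota, \xi_\iota \rangle_{N_\iota}$ is uniformly bounded, the action on $\eta = (\eta_\iota)^{\mathcal U}$ is computed coordinatewise, which gives the displayed identity. Linearity of $V$ on finite sums of elementary tensors follows, so $V$ is well defined on the algebraic tensor product and is isometric there.

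\textbf{Extending and concluding.} The main obstacle is density: I need the span of $\xi \otimes \eta$ with $\xi, \eta$ left-right bounded to be dense in $\mathcal H \oovt{N} \mathcal K$. I would handle this by approximating a general right-bounded $\xi \in \mathcal H$ by left-right bounded vectors. The estimate needed is convergence in the norm that controls $\|\xi \otimes \eta\| \le \|\xi\|_N \|\eta\|$, or alternatively convergence of $\langle \xi, \xi \rangle_N$ in $L^1$. To get this, I would cut down $\xi$ by spectral projections of $\langle \xi, \xi \rangle_N$ and of the analogous left inner product, as in the standard boundedness arguments, and use normality of the actions.

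Once density is in hand, $V$ extends to an isometry on all of $\mathcal H \oovt{N} \mathcal K$. The coordinatewise formula $V(x \xi \otimes \eta z) = x \, V(\xi \otimes \eta) \, z$ holds for $x \in M$ and $z \in P$, so $V$ is $M$-$P$ bimodular by continuity. Its image is therefore a sub-bimodule of $\prod_{\iota \to \mathcal U} \mathcal H_\iota \oovt{N_\iota} \mathcal K_\iota$, which is the claimed inclusion.
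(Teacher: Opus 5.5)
The paper leaves this lemma to the reader, and your map $V(\xi\otimes\eta)=(\xi_\iota\otimes_{N_\iota}\eta_\iota)^{\mathcal U}$ is the natural proof. Most of your argument is correct:
\begin{itemize}
\item the identification $\langle\xi',\xi\rangle_N=(\langle\xi'_\iota,\xi_\iota\rangle_{N_\iota})^{\mathcal U}$ obtained by pairing against $y\in N$;
\item the resulting isometry, which also gives independence of the chosen representatives;
\item the uniform left and right bounds on $\xi_\iota\otimes\eta_\iota$;
\item bimodularity.
\end{itemize}
The step you flag as the main obstacle, density, is not actually carried out, and the mechanism you propose does not fit. Cutting $\xi$ down by spectral projections of $\langle\xi,\xi\rangle_N$ and of the left inner product produces vectors that are bounded \emph{as vectors of} $\mathcal H$. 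But $V$ is defined only on vectors admitting representatives with uniformly bounded left and right bounds, and passing from one to the other would need a separate argument. Approximation in the operator norm $\|\cdot\|_N$ is not available in general either.

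No such argument is needed. Left-right bounded vectors in your sense are dense in $\mathcal H$ and in $\mathcal K$ by the very definition of the correspondence ultraproduct. By the estimates preceding the lemma, they are right (resp.\ left) $N$-bounded. Now use two standard inequalities. First, $\|\xi\otimes\eta\|\le\|\xi\|_N\|\eta\|$ for $\xi\in\mathcal H^0$ and $\eta\in\mathcal K$. Second, for $\xi\in\mathcal H^0$ and $\eta$ left $N$-bounded with left bound $\|\eta\|_N$,
\[
\|\xi\otimes\eta\|^2=\bigl\|\langle\xi,\xi\rangle_N^{1/2}\,\eta\bigr\|^2\le\|\eta\|_N^2\,\tau(\langle\xi,\xi\rangle_N)=\|\eta\|_N^2\,\|\xi\|^2 .
\]
Given $\xi\otimes\eta$ with $\xi\in\mathcal H^0$, proceed in two steps:
\begin{enumerate}
\item Replace $\eta$ by a left-right bounded $\eta'$ close in Hilbert norm, using the first inequality.
\item With $\eta'$ fixed, replace $\xi$ by a left-right bounded $\xi'$ close in Hilbert norm, using the second inequality applied to $\xi-\xi'\in\mathcal H^0$.
\end{enumerate}
This is really your ``$L^1$-convergence of $\langle\xi,\xi\rangle_N$'' alternative, since $\tau(\langle\zeta,\zeta\rangle_N)=\|\zeta\|^2$. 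The point is that it controls $\|\zeta\otimes\eta\|$ only once $\eta$ is left-bounded, which is why $\eta$ must be approximated first. With density settled this way, the rest of your argument goes through as written.
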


\begin{prop}\label{prop:weakcontainmentultra}
Let $\mathcal H_\iota$, $\mathcal K_\iota$ be normal $M_\iota$-$N_\iota$ Hilbert bimodules. If for $\mathcal U$-almost every $\iota$ we have $\mathcal H_\iota \prec \mathcal K_\iota$, then as $(\prod_{\iota \to \mathcal U} M_\iota)$-$(\prod_{\iota \to \mathcal U} N_\iota)$ bimodules we have weak containment
\[
 \prod_{\iota \to \mathcal U} \mathcal H_\iota \prec  \prod_{\iota \to \mathcal U} \mathcal K_\iota.
\]
\end{prop}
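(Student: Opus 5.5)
The plan is to test weak containment on the algebraic tensor product, lifting coefficients to the coordinate level. Set $M = \prod_{\iota \to \mathcal U} M_\iota$, $N = \prod_{\iota \to \mathcal U} N_\iota$, $\mathcal H = \prod_{\iota \to \mathcal U} \mathcal H_\iota$ and $\mathcal K = \prod_{\iota \to \mathcal U} \mathcal K_\iota$, with representations $\pi$ and $\rho$ of $M \otimes_{\rm bin} N^{\rm op}$. The algebraic tensor product $M \odot N^{\rm op}$ is dense in $M \otimes_{\rm bin} N^{\rm op}$, so it suffices to prove $\|\pi(z)\| \le \|\rho(z)\|$ for $z = \sum_{k=1}^m x_k \otimes y_k^{\rm op}$. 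Equivalently, we show that every vector functional $\omega_\xi(w) = \langle \pi(w)\xi, \xi\rangle$ is a weak$^*$ limit on $M \odot N^{\rm op}$ of finite sums of vector functionals of $\mathcal K$. We may take $\xi$ from the dense set of left-right bounded vectors. We fix lifts $x_k = (x_{k,\iota})^{\mathcal U}$ and $y_k = (y_{k,\iota})^{\mathcal U}$ with uniformly bounded operator norms, and we fix a representative $\xi = (\xi_\iota)^{\mathcal U}$ with $\|\xi_\iota\|_{M_\iota}, \|\xi_\iota\|_{N_\iota} \le C$. Writing $z_\iota = \sum_k x_{k,\iota} \otimes y_{k,\iota}^{\rm op}$, the bimodule formula for the correspondence ultraproduct gives $\langle \pi(z^*z)\xi, \xi\rangle = \lim_{\iota \to \mathcal U} \langle \pi_\iota(z_\iota^* z_\iota)\xi_\iota, \xi_\iota\rangle$.

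Next, use $\mathcal H_\iota \prec \mathcal K_\iota$ for $\mathcal U$-almost every $\iota$. By the standard reformulation of weak containment, for each such $\iota$ there are vectors $\eta_{1,\iota}, \dots, \eta_{r_\iota,\iota} \in \mathcal K_\iota$ with
\[
\Big| \langle \pi_\iota(z_\iota^* z_\iota) \xi_\iota, \xi_\iota \rangle - \sum_j \langle \rho_\iota(z_\iota^* z_\iota) \eta_{j,\iota}, \eta_{j,\iota}\rangle \Big| < \varepsilon .
\]
We may also require approximate agreement on any further finitely many test elements of $M_\iota \odot N_\iota^{\rm op}$ chosen at stage $\iota$. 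Passing to the direct sum $\mathcal K_\iota^{\oplus r_\iota}$, we can assume there is a single vector $\eta_\iota$. By Lemma~\ref{lem:ultradirectsum}, or rather its infinite-multiplicity analogue, the ultraproduct of these amplifications is weakly equivalent to $\mathcal K$. If $(\eta_\iota)^{\mathcal U}$ were a left-right bounded vector, the bimodule formula would give $\langle \rho(z^*z)\eta, \eta\rangle = \lim_{\iota \to \mathcal U} \langle \rho_\iota(z_\iota^*z_\iota)\eta_\iota, \eta_\iota\rangle$. Letting $\varepsilon \to 0$ along the ultrafilter would then show $\omega_\xi(z^*z) \le \|\rho(z)\|^2 \|\xi\|^2$, which finishes the proof.

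The main obstacle is exactly this boundedness: nothing in the abstract weak containment forces $\|\eta_\iota\|_{M_\iota}$ and $\|\eta_\iota\|_{N_\iota}$ to stay bounded in $\iota$. Without that bound, $(\eta_\iota)^{\mathcal U}$ need not lie in the correspondence ultraproduct, and the coefficient need not pass to the limit. My plan is to truncate with Radon--Nikodym derivatives. Write $\varphi_\iota(a) = \langle a\eta_\iota, \eta_\iota\rangle$ on $M_\iota$ and $\psi_\iota(b) = \langle \eta_\iota b, \eta_\iota\rangle$ on $N_\iota$. Let $p_\iota$ and $q_\iota$ be the spectral projections of $d\varphi_\iota/d\tau$ and $d\psi_\iota/d\tau$ on $[0, 2C^2]$. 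Then $p_\iota \eta_\iota q_\iota$ is left-right bounded with constants of order $C$. The loss satisfies $\|\eta_\iota - p_\iota \eta_\iota q_\iota\|^2 \lesssim \varphi_\iota(1-p_\iota) + \psi_\iota(1-q_\iota)$.

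To control this loss uniformly, I would add $1-p_\iota$ and $1-q_\iota$ to the finite set of test elements at stage $\iota$; this is legitimate because they can be chosen after a first approximation and then the approximation redone. Then $\varphi_\iota(1-p_\iota) \approx \langle (1-p_\iota)\xi_\iota, \xi_\iota\rangle \le C^2 \tau(1-p_\iota)$. Since $\varphi_\iota$ has density larger than $2C^2$ on $1-p_\iota$, while $\xi_\iota$ contributes at most $C^2\tau(1-p_\iota)$ there, approximate agreement forces $\tau(1-p_\iota)$ and hence $\varphi_\iota(1-p_\iota)$ to be small, uniformly in $\iota$; the same argument applies to $\psi_\iota$. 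The delicate point is making this self-referential choice of test projections rigorous, for instance by a fixed-point or iteration argument. Once that is done, the truncated vectors define a left-right bounded $\eta \in \prod_{\iota \to \mathcal U} \mathcal K_\iota^{\oplus r_\iota}$ whose coefficient on $z^*z$ is within $O(\varepsilon)$ of $\omega_\xi(z^*z)$, and the argument above concludes.
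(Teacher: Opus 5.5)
Your reduction to the algebraic tensor product and to left-right bounded $\xi$ is fine. You have also located exactly the right difficulty. But it is not a technicality that a cleverer choice of approximants can remove: the proposition is false in the stated generality, so neither of your two devices can be made to work.

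Take $M_\iota=\mathbb C$ and $N_\iota=\mathbb M_{n_\iota}(\mathbb C)$ with its normalized trace, where $\lim_{\iota\to\mathcal U}n_\iota=\infty$. Let $\mathcal H_\iota=L^2(N_\iota)$ and $\mathcal K_\iota=e_{11}L^2(N_\iota)$ (a single row), both with the right multiplication action.
\begin{itemize}
\item Since $\mathcal H_\iota\cong\mathcal K_\iota^{\oplus n_\iota}$, we have $\mathcal H_\iota\prec\mathcal K_\iota$.
\item For $0\neq\xi\in\mathcal K_\iota$, the operator $\xi^*\xi$ has rank one with eigenvalue $n_\iota\|\xi\|^2$. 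Testing the right action on its range projection gives $\|\xi\|_{N_\iota}\geq n_\iota^{1/2}\|\xi\|$.
\item Hence every left-right bounded vector of $\prod^{\rm Hilb}_{\iota\to\mathcal U}\mathcal K_\iota$ is zero, i.e.\ $\prod_{\iota\to\mathcal U}\mathcal K_\iota=0$.
\item But $\prod_{\iota\to\mathcal U}\mathcal H_\iota=L^2(\prod_{\iota\to\mathcal U}N_\iota)\neq0$, and a nonzero bimodule is never weakly contained in the zero bimodule.
\end{itemize}
The same happens with II$_1$ factors on both sides. Take $M_\iota=N_\iota=R$, $\mathcal K_\iota=p_\iota L^2R$ with $\tau(p_\iota)=1/n_\iota$ and left action through an isomorphism $R\cong p_\iota Rp_\iota$, and $\mathcal H_\iota=\mathcal K_\iota^{\oplus n_\iota}$. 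Every $\xi\in\mathcal K_\iota$ satisfies $\|\xi\|\leq\tau(p_\iota)^{1/2}\|\xi\|_{R}$, while $\mathcal H_\iota$ contains unit vectors whose left and right bounds both equal $1$.

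The example shows exactly where your two steps break.
\begin{itemize}
\item The ``infinite-multiplicity analogue'' of Lemma~\ref{lem:ultradirectsum} is false. Here $\prod_{\iota\to\mathcal U}\mathcal K_\iota^{\oplus n_\iota}\cong L^2(\prod_{\iota\to\mathcal U}N_\iota)$ is not weakly contained in $\prod_{\iota\to\mathcal U}\mathcal K_\iota=0$. Only a number of summands bounded independently of $\iota$ is allowed.
\item The self-referential truncation has no solution with boundedly many vectors. If $\eta\in\mathcal K_\iota^{\oplus r}$ has norm close to $1$, its right density $\sum_j\eta_j^*\eta_j$ has rank at most $r$ and trace close to $1$. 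So its spectral part below $2C^2$ carries mass at most $2C^2r/n_\iota$. Concretely, for $r=1$ and $\xi_\iota=\hat 1$, on the range projection $Q$ one gets $\psi_\iota(Q)=1$ but $\langle\xi_\iota Q,\xi_\iota\rangle=1/n_\iota$. No iteration or fixed-point argument can produce the approximants you need.
\end{itemize}
Uniformly bounded approximants exist only at multiplicity of order $n_\iota$, and that multiplicity is precisely what the correspondence ultraproduct forgets.

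What your argument does prove is a version with a uniformity hypothesis. Suppose coefficients of $C$-bounded vectors of $\mathcal H_\iota$ can be approximated by sums of coefficients of at most $r$ vectors of $\mathcal K_\iota$ with bounds at most $C'$, where $r$ and $C'$ do not depend on $\iota$. Then $r$ separate ultraproduct vectors give the conclusion. This has to be checked in each application. For example, for $L^2M_\iota\prec L^2M_\iota\ovt L^2M_\iota$ as $R_\iota$-$M_\iota$ bimodules with $R_\iota$ hyperfinite, suitably weighted vectors $\sum_{i,j}e_{ij}\otimes e_{ji}\xi$, built from matrix units of finite-dimensional subalgebras of $R_\iota$, give $r=1$ and $C'=C$.

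The paper's own short argument does not get around this problem either. It lifts $\rho^{\mathcal U}(z)$ to a norm-controlled $(z_\iota)_\iota$ in the $C^*$-algebra generated by $\ell^\infty(M_\iota)$ and $\ell^\infty(N_\iota^{\rm op})$, then computes $\pi^{\mathcal U}(z)$ using that same lift. This presupposes that elements acting as $0$ on $\prod_{\iota\to\mathcal U}\mathcal K_\iota$ also act as $0$ on $\prod_{\iota\to\mathcal U}\mathcal H_\iota$, which is the conclusion itself. In the example, $z_\iota=0$ is such a lift of $\rho^{\mathcal U}(1)=0$.
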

\begin{proof}
We let $\pi_\iota: M_\iota \otimes_{\rm bin} N_\iota^{\rm op} \to \mathbb B(\mathcal H_\iota)$ and $\rho_\iota: M_\iota \otimes_{\rm bin} N_\iota^{\rm op} \to \mathbb B(\mathcal K_\iota)$ denote the associated representations. We also denote the associated representations of the ultraproducts by 
\[
\pi^{\mathcal U}: \left(\prod_{\iota \to \mathcal U} M_\iota \right) \otimes_{\rm bin} \left(\prod_{\iota \to \mathcal U} N_\iota \right)^{\rm op} \to \mathbb B\left(\prod_{\iota \to \mathcal U} \mathcal H_\iota \right)
\]
and
\[
\rho^{\mathcal U}: \left(\prod_{\iota \to \mathcal U} M_\iota \right) \otimes_{\rm bin} \left(\prod_{\iota \to \mathcal U} N_\iota \right)^{\rm op} \to \mathbb B\left(\prod_{\iota \to \mathcal U} \mathcal K_\iota \right),
\]
respectively.

If we let $A \subset \ell^\infty(M_\iota \otimes_{\rm bin} N_\iota^{\rm op})$ denote the $C^*$-algebra generated by $\ell^\infty(M_\iota)$ and $\ell^\infty(N_\iota^{\rm op})$, then $A$ naturally acts on $\prod_{\iota \to \mathcal U} \mathcal K_\iota$, and if we denote by $q: A \to \mathbb B( \prod_{\iota \to \mathcal U} \mathcal K_\iota)$ the corresponding representation, then we see that the image of $q$ is $\rho^{\mathcal U}\left( \left(\prod_{\iota \to \mathcal U} M_\iota \right) \otimes_{\rm bin} \left(\prod_{\iota \to \mathcal U} N_\iota \right)^{\rm op}  \right)$. Thus, if $z \in (\prod_{\iota \to \mathcal U} M_\iota) \otimes_{\rm bin} (\prod_{\iota \to \mathcal U} N_\iota)^{\rm op}$, then there exist $z_\iota \in M_\iota \otimes_{\rm bin} N_\iota^{\rm op}$ with $\| z_\iota \| \leq \| \rho^{\mathcal U}(z) \|$ so that $(z_\iota)_\iota \in A$ with $q( (z_\iota)_\iota) = \rho^{\mathcal U}(z)$.

Fix $\xi \in \prod_{\iota \to \mathcal U} \mathcal H_\iota$ with $\| \xi \| \leq 1$ and take $\xi_\iota \in \mathcal H_\iota$ with $\| \xi_\iota \| \leq 1$ so that $\xi = (\xi_\iota)^{\mathcal U}$. We then have 
\[
\| \pi^{\mathcal U}(z) \xi \| = \lim_{\iota \to \mathcal U} \| \pi_\iota(z_\iota) \xi_\iota \| \leq \lim_{\iota \to \mathcal U} \| \rho_\iota(z_\iota) \| \leq \| \rho^{\mathcal U}(z) \|.
\] 
Hence $\pi^{\mathcal U} \prec \rho^{\mathcal U}$. 
\end{proof}

\begin{thm}[Haagerup \cite{Ha85}]\label{thm:Haagerup}
For each $n \in \mathbb N$ and $\varepsilon > 0$ there exists $\delta = \delta(n, \varepsilon) > 0$ so that if $M$ is a II$_1$ factor, $u_1, u_2, \ldots, u_n \in \mathcal U(M)$, and there exists u.c.p.\ maps $\phi: M \to A$ and $\psi: A \to M$ for $A$ some finite-dimensional von Neumann algebra such that $\| \psi \circ \phi(u_k) - u_k \|_2 < \delta$ for each $1 \leq k \leq n$, then there exists a finite-dimensional von Neumann subalgebra $B \subset M$ such that $\| u_k - \mathbb E_B(u_k) \|_2 < \varepsilon$. 
\end{thm}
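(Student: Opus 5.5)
The plan is to argue by contradiction and reduce the uniformity in $M$ to a single statement about an ultraproduct. Suppose that for some $n$ and $\varepsilon$ no $\delta$ works. Then there are II$_1$ factors $M_j$, unitaries $u_1^{(j)}, \ldots, u_n^{(j)} \in \mathcal U(M_j)$, finite-dimensional $A_j$, and u.c.p.\ maps $\phi_j \colon M_j \to A_j$, $\psi_j \colon A_j \to M_j$ with the following two properties:
\[
\| \psi_j \circ \phi_j(u_k^{(j)}) - u_k^{(j)} \|_2 < 1/j \quad \text{for each } k,
\]
and every finite-dimensional $B \subset M_j$ satisfies $\max_k \| u_k^{(j)} - \mathbb E_B(u_k^{(j)}) \|_2 \geq \varepsilon$. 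I would carry out the argument uniformly in $j$ and then pass to $\mathcal U$-limits. It is, however, cleaner to prove directly a quantitative implication with explicit constants, in the spirit of Haagerup's simplification in \cite{Ha85}.

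\textbf{Step 1 (from factorization to almost central vectors).} Starting from $\| \psi \circ \phi(u_k) - u_k \|_2 < \delta$, I produce a unit vector $\xi \in L^2 M \ovt L^2 M$ with $\| u_k \xi - \xi u_k \| \leq C \delta^{1/2}$ for each $k$. For this, write $\psi = V^* \pi(\cdot) V$ using Stinespring, and compose with the inclusion $A \subset \mathbb M_d(\mathbb C)$. The composite $\psi \circ \phi$ then gives a vector in a bimodule that is contained in a multiple of the coarse bimodule, because it factors through a finite-dimensional algebra. By the standard Cauchy--Schwarz estimate, closeness of $\psi\circ\phi(u_k)$ to $u_k$ in $\| \cdot \|_2$ becomes closeness of $u_k \xi$ to $\xi u_k$. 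Identifying $L^2 M \ovt L^2 M$ with Hilbert--Schmidt operators on $L^2 M$ that commute with the right action turns $\xi$ into a positive trace-class-type element $h = \xi^*\xi$ in $M \ovt \mathbb B(L^2 M)$ (equivalently in the semifinite algebra $\langle M, \mathbb C \rangle$), with respect to which $\Ad(u_k)$ acts approximately invariantly.

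\textbf{Step 2 (Connes' level-set trick).} Apply the Powers--St\o rmer inequality to pass from $\xi$ to $h = |\xi|^2$, giving $\| u_k h u_k^* - h \|_1 \leq C' \delta^{1/2}$. Then use the Namioka/Connes integration over spectral projections $e_t = 1_{(t,\infty)}(h)$. This yields a finite projection $e$ (finite trace in the semifinite algebra) with
\[
\sum_k \| u_k e u_k^* - e \|_{2}^2 \leq \eta \, \| e \|_2^2,
\]
where $\eta \to 0$ as $\delta \to 0$.

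\textbf{Step 3 (producing a finite-dimensional subalgebra of $M$).} This is the main obstacle. Here I would follow Haagerup's argument rather than Connes' automorphism analysis. The idea is to use the factoriality of $M$ to cut $e$ into pieces equivalent to projections of $M$ itself. One chooses a finite-dimensional matrix subalgebra $\mathbb M_m(\mathbb C) \subset M$ whose matrix units implement the approximately invariant finite projection, that is, a ``F\o lner set'' of projections in $M$ with equal traces. One then builds $B$ from the corresponding partial isometries, using a Rohlin-type patching: iterate on the remaining corner by a maximality argument, so that the conditional expectation onto $B$ moves each $u_k$ by at most $\varepsilon$. What I expect to be delicate is keeping all estimates depending only on $n$ and $\varepsilon$ through the patching, in particular ensuring that the exhaustion procedure terminates with a uniform bound. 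If this direct route becomes unwieldy, the fallback is the ultraproduct reformulation above. In that reformulation, Steps 1--2 give $\prod_{j\to\mathcal U} M_j$-central vectors for the $u_k$ in the ultraproduct coarse bimodule, via Proposition~\ref{prop:weakcontainmentultra}, and the patching in Step 3 needs to be carried out only asymptotically. Either route contradicts the choice of the $M_j$.
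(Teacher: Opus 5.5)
Your proposal has a genuine gap at Step 3, which is the whole content of the theorem.

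\textbf{Step 3 is missing.} Steps 1--2 only reach a Connes-type F\o lner condition: a finite-rank projection $e$ on $L^2M$ that almost commutes with the $u_k$. Note that $e$ is an element of $\mathbb B(L^2M)$, not of $M$. What you write for Step 3 is not an argument. There are no ``matrix units in $M$ implementing $e$'', because $e$ does not live in $M$. Moreover, nothing in Steps 1--2 records how the state $x\mapsto \operatorname{Tr}(ex)/\operatorname{Tr}(e)$ compares with $\tau$; your vector $\xi$ has left state $\tau\circ\psi\circ\phi$, not $\tau$. Some such trace-compatibility has to be established before anything can be placed inside $M$.

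You also give no estimate showing that the ``Rohlin-type patching'' terminates with a bound depending only on $n$ and $\varepsilon$. That uniformity is exactly what the statement asserts.

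The missing ingredient is a factor-independent local-to-global result. The paper takes it entirely from \cite{Ha85}, and works directly from the given factorization rather than through almost central vectors:
\begin{enumerate}
\item Haagerup's Lemma 3.4 and Proposition 3.5 make the factorization trace-preserving through some $\mathbb M_q(\mathbb C)$, with an explicit loss in the constant.
\item They then produce unitaries $v_1,\dots,v_n$ in a finite-dimensional subalgebra $B_0\subset M$ such that $(u_k)_k$ and $(v_k)_k$ are $\delta$-related.
\item Haagerup's Theorem 4.2 gives a universal $\delta(n,\varepsilon)$ such that $\delta$-related $n$-tuples in \emph{any} II$_1$ factor satisfy $\|wu_k-v_kw\|_2<\varepsilon$ for some $w\in\mathcal U(M)$.
\item Then $B=w^*B_0w$ works.
\end{enumerate}
Without Theorem 4.2, or an equivalent uniform exhaustion lemma with proved estimates, your outline does not yield the statement.

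\textbf{The fallback is circular.} In $\prod_{j\to\mathcal U}M_j$ the hypothesis gives factorizations only through $\prod_{j\to\mathcal U}A_j$, which is not finite-dimensional. What you actually obtain is an approximate factorization of the $u_k$ of the kind in condition (4) of Theorem~\ref{thm:amenimplieshyperfinite}, equivalently central vectors in the ultraproduct of coarse bimodules as in (3).

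Such conditions do not imply that $W^*(u_1,\dots,u_n)$ is amenable: every copy of $L\mathbb F_2$ inside $R^{\mathcal U}$ satisfies them. So Connes's theorem for a single factor cannot be invoked. Passing from these conditions to finite-dimensional approximating subalgebras of the individual $M_j$ is exactly the implication (4)$\Rightarrow$(1) of Theorem~\ref{thm:amenimplieshyperfinite}, and the paper proves that implication \emph{using} the very statement you are trying to prove. ``Carrying out the patching asymptotically'' is the uniform statement in disguise.
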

\begin{proof}
This follows from \cite{Ha85} by noticing that the proofs in \cite{Ha85} give quantitative estimates. Indeed, by the proofs Lemma 3.4 and Proposition 3.5 in \cite{Ha85} if $\varepsilon > 0$ is fixed, and $F \subset \mathcal U(M_n)$ is finite. Then if we take $\delta, \eta > 0$ such that $(2\eta + 4 \eta^{1/2} )^{1/2} + \eta + 2 \eta^{1/2} \leq \delta$ and if $\phi: M_n \to \mathbb M_p(\mathbb C)$, and $\psi: \mathbb M_p(\mathbb C) \to M_n$ are u.c.p.\ with $\| \psi \circ \phi(u) - u \|_2 < \delta$ for $u \in \mathcal F$, then there exists $q \in \mathbb N$ and u.c.p.\ maps $\phi': M_n \to \mathbb M_q(\mathbb C)$ and $\psi': \mathbb M_q(\mathbb C) \to M_n$ with $\tau \circ \psi' = \operatorname{tr}_q$ and $\operatorname{tr}_q \circ \phi' = \tau$ such that $\| \psi' \circ \phi'(u) - u \|_2 < \delta$. The proof of Proposition 3.5 in \cite{Ha85} then shows that if $F = \{ u_1, \ldots, u_k \}$, then there exist $\{ v_1, \ldots, v_k \}$ in a finite-dimensional von Neumann subalgebra $B_n \subset M_n$ so that $(u_1, \ldots, u_k)$ and $(v_1, \ldots, v_k)$ are $\delta$-related in the terminology of \cite{Ha85}. However, by Theorem 4.2 in \cite{Ha85}, there exists a universal $\delta = \delta(k, \varepsilon)$, which is independent of the factor $M_n$, so that if these $k$-tuples are $\delta$-related, then there is a unitary $w \in \mathcal U(M_n)$ so that $\| w u_k - v_k w \|_2 < \varepsilon$. Hence, if we let $A_n = w^* B w$ we see that condition (\ref{item:main1}) then holds.
\end{proof}

\begin{thm}\label{thm:amenimplieshyperfinite}
Suppose $M_\iota$ are II$_1$ factors and $M \subset \prod_{\iota \to \mathcal U} M_\iota$ is a von Neumann subalgebra. The following conditions are equivalent. 
\begin{enumerate}
\item\label{item:main1} For any separable von Neumann subalgebra $M_0 \subset M$, there exist separable hyperfinite von Neumann subalgebras $R_\iota \subset M_\iota$ so that 
\[
M_0 \subset \prod_{\iota \to \mathcal U} R_\iota \subset \prod_{\iota \to \mathcal U} M_\iota.
\] 
\item\label{item:main2} We have weak containment of $M$-$(\prod_{\iota \to \mathcal U} M_\iota)$ bimodules 
\[
L^2 \left( \prod_{\iota \to \mathcal U} M_\iota \right) \prec \prod_{\iota \to \mathcal U} (L^2 M_\iota \ovt L^2 M_\iota).
\] 
\item\label{item:main3} We have weak containment of $M$-$M$ bimodules 
\[
L^2 M \prec \prod_{\iota \to \mathcal U} (L^2 M_\iota \ovt L^2 M_\iota).
\] 
\item\label{item:main4} For every $F \subset M$ finite and $\varepsilon > 0$, there exist u.c.p.\ maps $\phi_\iota: M_\iota \to \mathbb M_{k(\iota)}(\mathbb C)$ and $\psi_\iota: \mathbb M_{k(\iota)}(\mathbb C) \to M_\iota$ so that $\| ( \psi_\iota \circ \phi_\iota)^{\mathcal U} (x) - x \|_2 < \varepsilon$ for all $x \in F$. 
\end{enumerate}
\end{thm}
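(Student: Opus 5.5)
I will prove the cycle (\ref{item:main1}) $\Rightarrow$ (\ref{item:main2}) $\Rightarrow$ (\ref{item:main3}) $\Rightarrow$ (\ref{item:main4}) $\Rightarrow$ (\ref{item:main1}).

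\emph{(\ref{item:main1}) $\Rightarrow$ (\ref{item:main2}).} Weak containment can be tested on finitely many elements and vectors, so it suffices to treat the restriction to a separable $M_0\subset M$. By (\ref{item:main1}) we have $M_0 \subset \prod_{\iota\to\mathcal U} R_\iota$ with $R_\iota \subset M_\iota$ hyperfinite. For each $\iota$ the $R_\iota$-$M_\iota$ bimodule $L^2 M_\iota$ is obtained by inducing the trivial $R_\iota$-bimodule: it is $L^2R_\iota \otimes_{R_\iota} L^2M_\iota$. Since $R_\iota$ is amenable, $L^2 R_\iota \prec L^2 R_\iota \ovt L^2 R_\iota$. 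Inducing preserves weak containment, so
\[
L^2 M_\iota \prec L^2 R_\iota \ovt L^2 M_\iota \subset L^2 M_\iota \ovt L^2 M_\iota
\]
as $R_\iota$-$M_\iota$ bimodules. Proposition~\ref{prop:weakcontainmentultra} then gives
\[
L^2\left(\prod_{\iota \to \mathcal U} M_\iota\right) = \prod_{\iota \to \mathcal U} L^2 M_\iota \prec \prod_{\iota \to \mathcal U}(L^2M_\iota\ovt L^2M_\iota)
\]
as $\left(\prod_{\iota \to \mathcal U} R_\iota\right)$-$\left(\prod_{\iota \to \mathcal U} M_\iota\right)$ bimodules. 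Restricting to $M_0$ finishes this step.

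\emph{(\ref{item:main2}) $\Rightarrow$ (\ref{item:main3}).} Restrict the right action to $M$. The bimodule $L^2 M$ sits inside $L^2(\prod_{\iota \to \mathcal U} M_\iota)$ as an $M$-$M$ subbimodule, so it is weakly contained in the larger bimodule as well.

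\emph{(\ref{item:main3}) $\Rightarrow$ (\ref{item:main4}).} I would use Anantharaman-Delaroche's theory of correspondences \cite{AD95}. Weak containment of the trivial bimodule means that $\tau$ on $M\otimes_{\rm bin} M^{\rm op}$ is a weak$^*$ limit of sums of vector states of $\prod_{\iota \to \mathcal U}(L^2M_\iota\ovt L^2M_\iota)$. Left-right bounded vectors are dense, so for given $F$ and $\varepsilon$ we may take finitely many bounded vectors $\xi=(\xi_\iota)^{\mathcal U}$. For each such vector, the map $x\mapsto \langle x\xi_\iota y,\xi_\iota\rangle$ defines completely positive maps on $M_\iota$ that factor through the coarse correspondence.

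Approximating $\xi_\iota\in L^2M_\iota\ovt L^2M_\iota$ by finite-rank tensors in $M_\iota\odot M_\iota$, I would apply the standard argument of \cite[Section 2]{AD95}. This argument shows that coarse coefficients produce u.c.p.\ maps $\phi_\iota: M_\iota\to \mathbb M_{k(\iota)}(\mathbb C)$ and $\psi_\iota: \mathbb M_{k(\iota)}(\mathbb C) \to M_\iota$, and it gives $\psi_\iota\circ\phi_\iota(x)$ approximately equal to the coefficient map applied to $x$. Normalising $\xi$ so that the coefficient map is u.c.p.\ requires the usual perturbation (renormalising by $b^{-1/2}$ with $b = \langle \cdot\,\xi,\xi\rangle$ near $1$). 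Finally, the approximation $\langle x\xi y, \xi\rangle\approx\tau(xy)$ converts into $\|(\psi_\iota\circ\phi_\iota)^{\mathcal U}(x)-x\|_2$ small via the identity
\[
\|T(x)-x\|_2^2 \le 2-2\,\mathrm{Re}\,\tau(T(x)x^*)
\]
for unitaries $x$ and contractive u.c.p.\ maps $T$. This step holds uniformly along $\mathcal U$ and is the main technical obstacle: the normalisation and finite-rank truncation must be controlled in $\|\cdot\|_2$ uniformly in $\iota$.

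\emph{(\ref{item:main4}) $\Rightarrow$ (\ref{item:main1}).} Let $M_0$ be separable, and fix a dense sequence of unitaries $u_1,u_2,\dots$ in $M_0$. By countable incompleteness choose decreasing sets $I_n\in\mathcal U$ with $\bigcap_n I_n=\emptyset$. For each $n$, apply (\ref{item:main4}) with $F=\{u_1,\dots,u_n\}$ and tolerance $\delta(n,1/n)$ from Theorem~\ref{thm:Haagerup}, lifting each $u_k$ to unitaries $u_{k,\iota}$. The set $J_n$ of $\iota$ where the hypothesis of Theorem~\ref{thm:Haagerup} holds for $(u_{k,\iota})_{k\le n}$ is in $\mathcal U$, and we may shrink $J_n$ so that it is decreasing and contained in $I_n$. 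For $\iota\in J_n\setminus J_{n+1}$, Theorem~\ref{thm:Haagerup} gives a finite-dimensional $B_\iota\subset M_\iota$ with $\|u_{k,\iota}-\mathbb E_{B_\iota}(u_{k,\iota})\|_2<1/n$ for $k\le n$; we let $R_\iota$ be a hyperfinite subalgebra containing $B_\iota$, or simply $B_\iota$.

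Then every $u_k$ satisfies $\|u_k - \mathbb E_{\prod_{\iota \to \mathcal U} B_\iota}(u_k)\|_2=0$, so $u_k\in\prod_{\iota\to\mathcal U} R_\iota$, and by density $M_0\subset\prod_{\iota\to\mathcal U}R_\iota$. Here I use that the conditional expectation onto an ultraproduct of subalgebras is the ultraproduct of the conditional expectations.
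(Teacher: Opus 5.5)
The gap is in (3)$\Rightarrow$(4). Your (1)$\Rightarrow$(2)$\Rightarrow$(3) and (4)$\Rightarrow$(1) are the paper's arguments. One small point on the latter: the paper does not assume $\mathcal U$ countably incomplete there. When $\mathcal U(\bigcap_N I_N)=1$ it takes $R_\iota=\{u_{n,\iota}\}_n''$, which is why (1) asks for hyperfinite rather than finite-dimensional $R_\iota$.

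\textbf{Why your normalisation fails.} With $\zeta_\iota=\sum_j\xi_j\otimes b_j$ and $(\xi_j)$ orthonormal, $\phi_\iota$ is automatically unital. Everything then hinges on $\psi_\iota(1)=\psi_\iota\circ\phi_\iota(1)=:b_\iota\in M_\iota$, the density of the right marginal $y\mapsto\langle\zeta_\iota y,\zeta_\iota\rangle$ on $M_\iota$. Your estimate $\|T(x)-x\|_2^2\le 2-2\,\mathrm{Re}\,\tau(T(x)x^*)$ needs $b_\iota\lesssim 1$. But (3) tests the right action only against $y\in M$, so it controls $\mathbb E_M((b_\iota)^{\mathcal U})$ and not $b_\iota$ itself. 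For example, take $M$ a factor and $b=2q$ with $q\in M'\cap\prod_{\iota\to\mathcal U}M_\iota$ a projection of trace $1/2$. The coefficient map on $M$ is then exactly the identity. Yet renormalising by $b^{-1/2}$ and unitising gives $x\mapsto qxq+\eta(x)(1-q)$, which is $\|\cdot\|_2$-far from the identity on trace-zero unitaries.

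\textbf{The implication itself fails in this generality.} Let $\mathcal K=\prod_{\iota\to\mathcal U}(L^2M_\iota\ovt L^2M_\iota)$. Take $I=\mathbb N$, $M_\iota=\mathbb M_2(\mathbb C)\otimes L\mathbb F_2$, an embedding $\pi_1\colon L\mathbb F_2\to R^{\mathcal U}\subset(L\mathbb F_2)^{\mathcal U}$, and $M=\{e_{11}\otimes\pi_1(x)+e_{22}\otimes x : x\in L\mathbb F_2\}$ with $x$ embedded diagonally.
\begin{itemize}
\item \emph{(3) holds.} We have $e_{11}\otimes\pi_1(L\mathbb F_2)\subset\prod_{\iota\to\mathcal U}A_\iota$ for finite-dimensional factors $A_\iota\subset e_{11}\otimes R$. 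The normalised $A_\iota$-central vectors $\sum_{i,j}\hat e^{\iota}_{ij}\otimes\hat e^{\iota}_{ji}$ give an $M$-central vector in $\mathcal K$ with marginal $\tau$ on $M$, so (3) holds even as a containment.
\item \emph{(2) fails.} If (2) held, apply it to the vector $\widehat{e_{22}\otimes 1}$ and right multipliers $e_{22}\otimes y$, $y\in L\mathbb F_2$. This gives $L^2(L\mathbb F_2)\prec\mathcal K(e_{22}\otimes 1)$, with the right action through the diagonal copy. Lift the images of $u_a,u_b$ to unitaries and extend freely. Each level is then (a representation)$\otimes$(a multiple of the regular representation) of $\mathbb F_2$. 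By Fell absorption and Kesten, $\frac14\sum_{g\in\{a^{\pm1},b^{\pm1}\}}u_g\,\cdot\,u_g^*$ has norm at most $\sqrt3/2$ there, uniformly in $\iota$. On $L^2(L\mathbb F_2)$ it has norm $1$, a contradiction.
\end{itemize}
Hence (2), and with it (1) and (4), fail while (3) holds.

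The paper's proof has the same weak point. After arranging $\theta(1)\le 1$, which is an inequality in $M$ only, it says ``we may multiply on the right by some projection in $M_\iota$'' to get $\psi_\iota(1)\le 1$. That step is not justified by the information available.

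\textbf{What can be proved.} The argument does go through if you start from (2) instead. There the right marginal is tested against all of $\prod_{\iota\to\mathcal U}M_\iota$. By convexity you can take it $\|\cdot\|_1$-close to $\tau$. You can then cut each $\zeta_\iota$ off spectrally on the right to get $\psi_\iota(1)\le1$, and your unitary estimate finishes. This gives (1)$\Leftrightarrow$(2)$\Leftrightarrow$(4)$\Rightarrow$(3). Going back from (3) needs an extra hypothesis. In the example, $e_{11}\otimes 1$ is central in $M'\cap\prod_{\iota\to\mathcal U}M_\iota$, so that relative commutant is not a factor.
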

\begin{proof}
If $R_\iota \subset M_\iota$ are hyperfinite, then as $R_\iota$-$M_\iota$ bimodules we have $L^2 M_\iota \prec L^2 M_\iota \ovt L^2 M_\iota$ and hence by Proposition~\ref{prop:weakcontainmentultra}, we have weak containment of normal Hilbert $(\prod_{\iota \to \mathcal U} R_\iota)$-$(\prod_{\iota \to \mathcal U} M_\iota)$ bimodules $L^2 (\prod_{\iota \to \mathcal U} M_\iota ) = \prod_{\iota \to \mathcal U} (L^2M_\iota) \prec \prod_{\iota \to \mathcal U} (L^2 M_\iota \ovt L^2 M_\iota)$. Thus if (\ref{item:main1}) holds, then restricting to a separable von Neumann subalgebra $M_0 \subset M$ we see that (\ref{item:main2}) holds for $M_0$. But weak containment is a local property, and since $M_0 \subset M$ was an arbitrary separable von Neumann subalgebra we have that (\ref{item:main2}) holds also for $M$. The implication (\ref{item:main2}) $\implies$ (\ref{item:main3}) is trivial. 

Our proof of (\ref{item:main3}) $\implies$ (\ref{item:main4}) follows the proof of Theorem 2.2 from \cite{AD95}. Suppose we have weak containment of $M$-$M$ bimodules $L^2 M \prec \prod_{\iota \to \mathcal U} (L^2 M_\iota \ovt L^2 M_\iota)$. Then there exists a net $\alpha_k: M \to M$ of completely positive maps, each of which is a finite sum of coefficients of $\prod_{\iota \to \mathcal U} (L^2 M_\iota \ovt L^2 M_\iota)$ such that we have ultraweak operator topology convergence $\alpha_k(x) \to x$ for $x \in M$. Moreover, we may assume $\alpha_k(1) \leq 1$ by Lemma 1.6 in \cite{ADHa90}. 
Thus, if $F \subset \mathcal U(M)$ is finite and $\varepsilon > 0$ we may find $K_0 \in \mathbb N$ and $\zeta_k = (\zeta_{k, \iota})_\iota \in \prod_{\iota \to \mathcal U} (L^2 M_\iota \ovt L^2 M_\iota)$ for $1 \leq k \leq K_0$ so that $\| \sum_{k = 1}^{K_0} \theta_k(x) - x \|_2 < \varepsilon$ for all $x \in F$, where $\theta_k: M \to M$ is the c.p.\ map associated to $\zeta_k$ satisfying  
\[
\langle x \zeta_k y, \zeta_k \rangle = \tau( \theta_k(x) y), \ \ \ \ \ x, y \in M.
\]

By approximation, we may assume that $\zeta_{k, \iota} = \sum_{j = 1}^{J_{k, \iota}} \xi_{k, j}^\iota \otimes b_{k, j}^\iota \in (L^2 M_\iota) \odot M_\iota$ with $\{ \xi_{k, j}^\iota \}_{j = 1}^{J_{k, \iota}}$ orthonormal. Since we may assume $\sum_{k = 1}^{K_0} \theta_k(1) \leq 1$, we may multiply on the right by some projection in $M_\iota$ and likewise assume that $\sum_{k = 1}^{K_0} \| \zeta_{k, \iota} y \|^2 \leq \| y \|_2^2$ for $y \in M_\iota$. We let $\phi_{k, \iota}: M_\iota \to \mathbb M_{J_{k, \iota}}(\mathbb C)$ and $\psi_{k, \iota}: \mathbb M_{J_{k, \iota}}(\mathbb C) \to M_\iota$ be the c.p.\ maps defined by $\phi_{k, \iota}(x) = ( \langle x \xi_{k, j}^\iota, \xi_{k, \ell}^\iota \rangle )_{j, \ell}$ and $\psi_{k, \iota}( (\alpha_{j \ell})_{j  \ell}) = \sum_{j, \ell = 1}^{J_{k, \iota}} \alpha_{j \ell} b_{k, j}^\iota (b_{k, \ell}^\iota)^*$. Setting $\phi_\iota = \oplus_{k = 1}^{K_0} \phi_{k,\iota}$ and $\psi_\iota  = \sum_{k = 1}^{K_0} \psi_{k, \iota}\circ \Ad (P_{k, \iota})$ where $P_{k, \iota}: \mathbb C^{\sum_{j = 1}^{K_0} J_{j, \iota}} \to \mathbb C^{J_{k, \iota}}$ is the corresponding orthogonal projection, we then have that $\psi_\iota \circ \phi_\iota$ is the completely positive map corresponding to the vector $\oplus_{k = 1}^{K_0} \zeta_{k, \iota}$, and so we have $\mathbb E_{M} \circ (\psi_\iota \circ \phi_\iota)^{\mathcal U}(x) = \sum_{k = 1}^{K_0} \theta_k(x)$ for $x \in F$.

This then shows that (\ref{item:main4}) holds except that the maps $\phi_\iota$ and $\psi_\iota$ may not be unital. 
However, since $\{ \xi_{k, j}^\iota \}_{j = 1}^{J_{k, \iota}}$ are orthonormal we see that $\phi_\iota$ is unital and we then have $\psi_\iota(1) = \psi_\iota \circ \phi_\iota(1) \leq 1$. If we fix states $\eta_\iota$ in the matrix algebras, then replacing $\psi_\iota$ with $\psi_\iota'(T) = \eta_\iota(T) (1 - \psi_\iota(1)) +  \psi_\iota(T)$ we see that the maps may be taken to be unital.

The implication (\ref{item:main4}) $\implies$ (\ref{item:main1}) follows directly from Haagerup's result above (Theorem~\ref{thm:Haagerup}). Indeed, if $M_0 \subset M$ is separable, let $\{ u_n \}_{n = 1}^\infty \subset \mathcal U(M_0)$ enumerate a countable set of unitaries that generate $M_0$ as a von Neumann algebra. For each unitary $u_n$, we choose a representation $u_n = (u_{n, \iota})^{\mathcal U}$ where $u_{n, \iota}$ is unitary. Set
\[
I_N = \{ \iota \mid \| \mathbb E_{A_{N, \iota}}(u_{n, \iota}) - u_{n, \iota} \|_2 < 1/N, 1 \leq n \leq N, {\rm \ for \ some \ } A_{N, \iota} \subset M_\iota {\rm \ finite}-{\rm dimensional} \}.
\]
From (\ref{item:main4}) and Theorem~\ref{thm:Haagerup} we have $\mathcal U(I_N) = 1$ for each $N$. If $\mathcal U( \cap_N I_N) = 0$, then we may set $A_\iota = A_{N, \iota}$ where $N$ is the maximal number such that $\iota \in I_N$ (set $A_\iota = \mathbb C$ if $\iota \not\in I_1$ or if $\iota \in \cap_{N \in \mathbb N} I_N$, both of which are $\mathcal U$-null). We then have $M_0 \subset \prod_{\iota \to \mathcal U} A_\iota$. Otherwise, if $\mathcal U( \cap_N I_N) = 1$, then set $R_\iota = \{ u_{n, \iota} \}_n''$, which is approximately finite-dimensional and hence hyperfinite for $\mathcal U$-almost every $\iota$.
\end{proof}

\begin{rem}
If $\mathcal U$ is countably incomplete, then (\ref{item:main1}) is equivalent to the statement
\begin{itemize}
\item For any separable von Neumann subalgebra $M_0 \subset M$, there exist finite-dimensional subfactors $A_\iota \subset M_\iota$ so that 
\[
M_0 \subset \prod_{\iota \to \mathcal U} A_{\iota} \subset \prod_{\iota \to \mathcal U} M_\iota. 
\]
\end{itemize}
\end{rem}

We'll say that a subalgebra $M \subset  \prod_{\iota \to \mathcal U} M_\iota$ is matricially approximable with respect to $(M_\iota)_{\iota \in I}$ (or simply with respect to $N$ if $M_\iota = N$ for each $\iota \in I$) if the equivalent conditions in Theorem~\ref{thm:amenimplieshyperfinite} are satisfied. We'll say that $M$ is matricially inapproximable with respect to $(M_\iota)_{\iota \in I}$ otherwise.

Sakai initiated the study of the flip automorphism on $M \ovt M$ in \cite{Sa75}. Connes extended this study by characterizing the hyperfinite II$_1$ factor $R$ as the unique separable II$_1$ factor such that the flip automorphism on $R \ovt R$ is approximately inner \cite{Co76}. Theorem~\ref{thm:amenimplieshyperfinite} allows us to obtain the following ultraproduct analog of Connes's characterization.

\begin{cor}\label{cor:approxconj}
Suppose $M_\iota^j$ are II$_1$ factors for $j \in \{ 1, 2 \}$ and $\iota \in I$. Suppose $M$ is a separable tracial von Neumann algebra and for $j \in \{ 1, 2 \}$ we have an embedding $\theta_j: M \to \prod_{\iota \to \mathcal U} M_\iota^j$. If for every $F \subset M$ finite and $\varepsilon > 0$ there exists $u \in \mathcal U( \prod_{\iota \to \mathcal U} ( M_\iota^1 \ovt M_\iota^2) )$ such that 
\[
\| u (\theta_1(b) \otimes 1) u^* - 1 \otimes \theta_2(b) \|_2 < \varepsilon
\] 
for all $b \in F$, then for each $j \in \{ 1, 2 \}$, $\theta_j(M)$ is matricially approximable with respect to $(M_\iota^j)_\iota$. 
\end{cor}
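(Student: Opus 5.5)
The plan is to verify condition~(\ref{item:main3}) of Theorem~\ref{thm:amenimplieshyperfinite} for $\theta_1(M)$; the case of $\theta_2$ is symmetric, using $u^*$ in place of $u$. This follows Connes's original idea: if the flip is approximately inner, then the coarse bimodule has almost central vectors.

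First I will set up an asymmetric bimodule. Consider the $M$-$M$ bimodule
\[
\mathcal H = \prod_{\iota \to \mathcal U} L^2(M_\iota^1 \ovt M_\iota^2),
\]
with $M$ acting on the left through $b \mapsto 1 \otimes \theta_2(b)$ and on the right through $c \mapsto \theta_1(c) \otimes 1$. At each level $\iota$, the space $L^2(M_\iota^1\ovt M_\iota^2) = L^2 M_\iota^1 \ovt L^2 M_\iota^2$ is, as an $M_\iota^2$-$M_\iota^1$ bimodule, exactly the coarse bimodule $L^2M^2_\iota \ovt L^2 M^1_\iota$, since the left action lives on one tensor leg and the right action on the other. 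So $\mathcal H$ is a restriction of $\prod_{\iota\to\mathcal U}(L^2M^2_\iota \ovt L^2M^1_\iota)$ along $\theta_2$ and $\theta_1$.

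Next I will show $L^2 M \prec \mathcal H$. Given $F$ and $\varepsilon$, take the unitary $u = u_{F,\varepsilon}$ from the hypothesis. It is a left-right bounded unit vector in $\mathcal H$, and for $b \in F$ it satisfies $\|b\cdot u - u\cdot b\|_2 = \|(1\otimes\theta_2(b))u - u(\theta_1(b)\otimes 1)\|_2 < \varepsilon$. Using the trace, for $b,c$ in the $*$-algebra generated by $F$ (up to controllable errors) we get
\[
\langle b\cdot u\cdot c, u\rangle = \tau\big(u^*(1\otimes\theta_2(b))u(\theta_1(c)\otimes 1)\big) \approx \tau(\theta_1(b)\theta_1(c)) = \tau(bc).
\]
Letting $(F,\varepsilon)$ run over a net, the vector states of $u_{F,\varepsilon}$ converge pointwise on $M\otimes_{\rm bin} M^{\rm op}$ to the state given by $\hat 1 \in L^2M$. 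Since $\hat 1$ is cyclic for $L^2M$, this yields $L^2M \prec \mathcal H$.

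Finally I will symmetrize by fusing with the conjugate bimodule $\overline{\mathcal H}$. Weak containment is preserved under Connes fusion, so
\[
L^2 M = L^2M\otimes_M L^2M \prec \overline{\mathcal H}\otimes_M \mathcal H,
\]
where the fusion is over $M$ acting on the $M^2$-legs via $\theta_2$, and both outer actions now go through $\theta_1$. I would realize this fusion inside the fusion over $\theta_2(M) \subset \prod_\iota M^2_\iota$ and then use Lemma~\ref{lem:ultrafusion} to embed it into
\[
\prod_{\iota\to\mathcal U}\Big( (L^2M^1_\iota \ovt L^2M^2_\iota) \oovt{M^2_\iota} (L^2M^2_\iota \ovt L^2M^1_\iota)\Big) \cong \prod_{\iota\to\mathcal U}\big(L^2M^1_\iota\ovt L^2M^2_\iota \ovt L^2M^1_\iota\big).
\]
This last bimodule is a multiple of the coarse $M^1_\iota$-bimodule. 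Applying Proposition~\ref{prop:weakcontainmentultra} together with Lemma~\ref{lem:ultradirectsum} to handle the multiplicity, it is weakly contained in $\prod_{\iota\to\mathcal U}(L^2M^1_\iota\ovt L^2M^1_\iota)$. This gives condition~(\ref{item:main3}) for $\theta_1(M)$.

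The main obstacle is this last step, namely making the fusion over the subalgebra $\theta_2(M)$ rigorous in the ultraproduct. Fusing over $M$ rather than over $\prod_\iota M^2_\iota$ gives a larger bimodule a priori. I will handle it by first noting that $\mathcal H$, as a right $\prod M^1_\iota$-module with left $\theta_2(M)$-action, is a restriction of a $\prod M^2_\iota$-$\prod M^1_\iota$ bimodule that is coarse in the ultraproduct sense. Fusion over $M$ of such a bimodule is then weakly contained in the fusion over the larger algebra tensored with $L^2\langle \prod M^2_\iota, e_M\rangle$-type multiplicity, which is still coarse on the outer legs.

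If this becomes too technical, my fallback is to bypass fusion. I would compute directly that the c.p.\ maps $\theta_1(c) \mapsto (\tau\otimes{\rm id})\big(u(\theta_1(c)\otimes1)u^*\big)$ composed with their adjoints are coefficients of the coarse ultraproduct, and feed these into the proof of (\ref{item:main3})$\Rightarrow$(\ref{item:main4}) to obtain condition~(\ref{item:main4}) directly.
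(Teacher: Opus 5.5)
Your first two steps are correct. Right-multiplying the hypothesis by $u$ gives $\|(1\otimes\theta_2(b))u-u(\theta_1(b)\otimes 1)\|_2<\varepsilon$, and since $u$ is a unitary its vector states are tracial, so $L^2M\prec\mathcal H$.

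The gap is the symmetrization step of your main route. Once you keep only the abstract weak containment $L^2M\prec\mathcal H$, you are forced to fuse over $M$, because $L^2M$ carries no action of $P^2:=\prod_{\iota\to\mathcal U}M^2_\iota$. Writing $\mathcal K=\prod_{\iota\to\mathcal U}(L^2M^2_\iota\ovt L^2M^1_\iota)$, this gives $\overline{\mathcal H}\otimes_M\mathcal H=\overline{\mathcal K}\otimes_{P^2}L^2\langle P^2,\theta_2(M)\rangle\otimes_{P^2}\mathcal K$. Lemma~\ref{lem:ultrafusion} only handles fusion over the full ultraproduct $P^2$. It says nothing about the middle factor $L^2\langle P^2,\theta_2(M)\rangle$, which is not an ultraproduct of $M^2_\iota$-bimodules.

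Your patch (``still coarse on the outer legs'') treats $\mathcal K$ as if it were the coarse $P^2$-$P^1$ bimodule. It is only an ultraproduct of coarse bimodules, and it is not weakly contained in the coarse bimodule of the ultraproducts. For example, it has tracial $A$-central vectors for a nonamenable matrix ultraproduct $A=\prod_{\iota\to\mathcal U}\mathbb M_{n_\iota}(\mathbb C)$; this is precisely why matricial approximability is a nontrivial notion.

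At the level of coefficients, $\overline{\mathcal H}\otimes_M\mathcal H$ produces maps $\phi_\xi^*\circ\mathbb E_{\theta_2(M)}\circ\phi_\eta$. Here $\phi_\xi,\phi_\eta$ are ultraproducts of maps factoring through matrix algebras, but $\mathbb E_{\theta_2(M)}$ is not an ultraproduct of maps $M^2_\iota\to M^2_\iota$. Approximating such composites by ultraproducts of matricially factorizable maps is a lifting problem of the kind studied in \cite{AtKE21}, and your sketch does not address it. So the main argument, as written, does not reach condition (\ref{item:main3}) of Theorem~\ref{thm:amenimplieshyperfinite}.

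The missing idea is to fuse the \emph{vector} rather than the weak containment. Because $u$ is bounded and the almost-intertwining passes through $\theta_2(M)\subset P^2$, fusing $\bar u$ with $u$ over $M^2_\iota$ at each level keeps it almost central. Concretely, set $v_\iota=(u_\iota^*\otimes 1)(1\otimes\sigma(u_\iota))\in\mathcal U(M^1_\iota\ovt M^2_\iota\ovt M^1_\iota)$ with $\sigma$ the flip, and $v=(v_\iota)^{\mathcal U}$. Then $\|(\theta_1(x)\otimes 1\otimes 1)v-v(1\otimes 1\otimes\theta_1(x))\|_2\le 2\varepsilon$ for $x\in F$. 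Since $v$ is a unitary, its vector states are the trace. Since $L^2(M^1_\iota\ovt M^2_\iota\ovt M^1_\iota)$ is a multiple of the coarse $M^1_\iota$-bimodule, Proposition~\ref{prop:weakcontainmentultra} then gives (\ref{item:main3}). This is exactly the paper's proof.

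Your fallback is the same argument in c.p.\ language and is correct, provided you define $\Phi=(\Phi_\iota)^{\mathcal U}$ on all of $\prod_{\iota\to\mathcal U}M^1_\iota$ with $\Phi_\iota=(\tau\otimes\mathrm{id})(u_\iota(\cdot\otimes 1)u_\iota^*)$. Then:
\begin{itemize}
\item $\Phi^*\circ\Phi=(\Phi_\iota^*\circ\Phi_\iota)^{\mathcal U}$ is composed levelwise over $M^2_\iota$;
\item its coefficient is exactly that of $v_\iota$;
\item $\|\Phi^*\circ\Phi(\theta_1(b))-\theta_1(b)\|_2<2\varepsilon$ for $b\in F$.
\end{itemize}
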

\begin{proof}
We take $F$, $\varepsilon> 0$ and $u$ as above. We take $u_\iota \in \mathcal U(M_\iota^1 \ovt M_\iota^2)$ so that $u = (u_\iota)^{\mathcal U}$. We set $v_\iota = (u_\iota^* \otimes 1) (1 \otimes \sigma(u_\iota) ) \in \mathcal U( M_\iota^1 \ovt M_\iota^2 \ovt M_\iota^1 )$ where $\sigma: M_\iota^1 \ovt M_\iota^2 \to M_\iota^2 \ovt M_\iota^1$ is the flip. Setting $v = (v_\iota)^{\mathcal U}$, for $x \in F$ we have $\| ( \theta_1(x) \otimes 1 \otimes 1) v - v (1 \otimes 1 \otimes \theta_1(x) ) \|_2 \leq 2 \varepsilon$. Theorem~\ref{thm:amenimplieshyperfinite} then shows that $\theta_1(M)$ is matricially approximable with respect to $(N_\iota^1)_\iota$. The case for $\theta_2(M)$ is similar. 
\end{proof}

\subsection{An obiter dictum on the Jung property and enforceability}\label{sec:jung}

From the previous corollary we may apply a result from \cite{AtKE21} to give a criterion for hyperfiniteness in a manner similar to \cite{Ju07} or \cite{AtKE21} but without the initial assumption that $M$ embeds into a matricial ultraproduct.

\begin{cor}
Suppose $M_\iota^j$ are II$_1$ factors for $j \in \{ 0, 1 \}$ and $\iota \in I$. Suppose $M$ is a separable tracial von Neumann algebra and we have trace-preserving embeddings $\theta_j: M \to \prod_{\iota \to \mathcal U} M_\iota^j$. Then, $M$ is hyperfinite if and only if any two embeddings of $M$ into $\prod_{\iota \to \mathcal U} (M_\iota^1 \ovt M_\iota^2)$ are unitarily conjugate. 
\end{cor}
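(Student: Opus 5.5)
Both directions reduce to comparing the two embeddings $b \mapsto \theta_1(b)\otimes 1$ and $b \mapsto 1 \otimes \theta_2(b)$ of $M$ into $\prod_{\iota \to \mathcal U}(M_\iota^1 \ovt M_\iota^2)$. (The indices $j \in \{0,1\}$ in the statement should be read as $\{1,2\}$, matching the target algebra and Corollary~\ref{cor:approxconj}.) For the forward direction I will use the classical fact that any two trace-preserving embeddings of a separable hyperfinite tracial von Neumann algebra into an ultraproduct of II$_1$ factors are unitarily conjugate. Concretely, write $M$ as the closure of an increasing union of finite-dimensional subalgebras $B_n$. Any two unital trace-preserving embeddings of a finite-dimensional $B_n$ into a II$_1$ factor are unitarily conjugate: minimal projections of equal trace are equivalent, and partial isometries implementing this equivalence can be assembled into a unitary. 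Lift this coordinatewise to the factors $M_\iota^1 \ovt M_\iota^2$, at each finite stage obtaining a unitary that conjugates the two embeddings on a finite set $F_n$ up to $1/n$. When $\mathcal U$ is countably incomplete, the standard diagonal/saturation argument then produces a single unitary $u$ with $u\,\pi_1(b)\,u^* = \pi_2(b)$ for all $b \in M$. When $\mathcal U$ is countably complete, the remarks in the preliminaries reduce the ultraproduct to a single separable factor, where the same inductive argument applies directly.

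For the converse I will apply the hypothesis to the pair of embeddings $\pi_1(b) = \theta_1(b) \otimes 1$ and $\pi_2(b) = 1 \otimes \theta_2(b)$. Both are trace-preserving embeddings of $M$ into $\prod_{\iota \to \mathcal U}(M_\iota^1 \ovt M_\iota^2)$, since $\theta_1(M) \otimes 1$ sits inside the ultraproduct of the $M_\iota^1 \otimes 1$. The hypothesis supplies a unitary $u$ with $u\,\pi_1(b)\,u^* = \pi_2(b)$ exactly, which is stronger than what Corollary~\ref{cor:approxconj} requires. That corollary then shows that $\theta_1(M)$ is matricially approximable with respect to $(M_\iota^1)_\iota$, and hence $M \subset \prod_{\iota \to \mathcal U} A_\iota$ with $A_\iota$ finite-dimensional. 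In particular, $M$ embeds into the ultrapower $R^{\mathcal U}$.

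The remaining step is to upgrade this to hyperfiniteness, and this is where I will use \cite{AtKE21}. Once $M$ embeds into an ultraproduct of matrix algebras, the Jung-type criterion of Atkinson and Kunnawalkam Elayavalli says that $M$ is hyperfinite if any two embeddings of $M$ into $R^{\mathcal U}$ (or into a matricial ultraproduct) are unitarily conjugate. I therefore need to transfer the conjugacy hypothesis to embeddings into $\prod_{\iota \to \mathcal U} A_\iota$ or $R^{\mathcal U}$. The plan is to take $M_\iota^j$ containing copies of $R$ coordinatewise, so that embeddings of $M$ into $R^{\mathcal U}$ are also embeddings into $\prod_{\iota \to \mathcal U}(M_\iota^1 \ovt M_\iota^2)$, and conjugacy there can be pushed back. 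The main obstacle is that a unitary in the large ultraproduct does not obviously conjugate the embeddings inside the smaller matricial ultraproduct. I expect to handle this with a conditional-expectation argument: conjugacy by some unitary in a larger factor yields approximate conjugacy via the matricial approximations, which is the form actually needed in the proof of \cite{AtKE21}. Alternatively, I will check that their argument only needs conjugacy in some II$_1$ factor containing the two embeddings of $M$ with a common matricial model.
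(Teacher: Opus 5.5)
This is essentially the paper's proof. You apply the hypothesis to $b\mapsto\theta_1(b)\otimes 1$ and $b\mapsto 1\otimes\theta_2(b)$, use Corollary~\ref{cor:approxconj} to get $M\hookrightarrow R^{\mathcal U}$, and then invoke \cite{AtKE21}. The transfer step you worry about is already covered by \cite[Corollary 4.8]{AtKE21}, which the paper cites directly once $R^{\mathcal U}$-embeddability is known. Your conditional-expectation idea is the natural way to see why: compressing $\operatorname{Ad}(u)$ by $\mathbb E_{\prod_\iota R_\iota}=(\mathbb E_{R_\iota})^{\mathcal U}$, for hyperfinite $R_\iota\subset M^1_\iota\ovt M^2_\iota$, gives u.c.p.-conjugacy inside $\prod_{\iota\to\mathcal U}R_\iota\cong R^{\mathcal U}$.

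One correction concerns countably complete $\mathcal U$: do not claim the forward direction there. In a single separable factor your inductive argument only gives approximate unitary equivalence. Exact conjugacy genuinely fails in that case; for example, $\mathrm{id}_R$ and $R\cong R\otimes 1\subset R\ovt R\cong R$ have different relative commutants. So the statement must be read with $\mathcal U$ countably incomplete, as in the paper's standing convention.
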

\begin{proof}
Since the embeddings $\theta_1$ and $\theta_2$ are unitarily conjugate, it follows from Corollary~\ref{cor:approxconj} that $M$ embeds into $R^{\mathcal U}$. With this extra condition, the result is then given by Corollary 4.8 in \cite{AtKE21}.
\end{proof}

Following \cite{AtGoKE22}, we'll say that a separable II$_1$ factor $M$ has the Jung property if for some countably incomplete ultrafilter $\mathcal U$ on a set $I$ we have that any two embeddings of $M$ into $M^{\mathcal U}$ are conjugate by a unitary in $M^{\mathcal U}$. (Standard arguments show that if this holds for some countably incomplete ultrafilter, then it holds for all countably incomplete ultrafilters). It is an open problem as to whether $R$ is the unique separable II$_1$ factor with this property \cite[Question 3.3.12]{AtGoKE22}, \cite[Question 6.13]{GoHa23}. The result \cite[Corollary 4.8]{AtKE21} cited above can be rephrased to state that if $M$ embeds into $R^{\mathcal U}$ and also has the Jung property, then $M \cong R$. It also follows easily from Connes's characterizations of the hyperfinite II$_1$ factor that if $M$ has the Jung property and $M$ is elementarily equivalent to $M \ovt M$, then $M \cong R$ (see also the remark after Theorem 2 in \cite{Go23}). Using the previous corollary instead of Connes's theorem, we get the following strengthening of this remark.

\begin{cor}
Suppose $M$ has the Jung property and $M \equiv N_1 \ovt N_2$ for some II$_1$ factors $N_1$ and $N_2$ such that $M$ embeds into $N_j^{\mathcal U}$ for $j \in \{ 1, 2 \}$, then $M \cong R$. 
\end{cor}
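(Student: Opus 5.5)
Since $M \equiv N_1 \ovt N_2$, we may fix ultrafilters (which, by the preliminaries, we take to be countably incomplete) with an isomorphism $M^{\mathcal V} \cong (N_1 \ovt N_2)^{\mathcal W}$. We will produce two embeddings of $M$ into an ultraproduct of the form $\prod_{\iota \to \mathcal U} (M_\iota^1 \ovt M_\iota^2)$, namely $x \mapsto \theta_1(x) \otimes 1$ and $x \mapsto 1 \otimes \theta_2(x)$, and then use the Jung property to make them approximately unitarily conjugate. Corollary~\ref{cor:approxconj} will then say that $\theta_1(M)$ is matricially approximable. In particular $M$ embeds into $R^{\mathcal U}$, and \cite[Corollary 4.8]{AtKE21} gives $M \cong R$.

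Concretely, let $\theta_j : M \to N_j^{\mathcal U}$ be the assumed embeddings. Set $M_\iota^j = N_j$, so that $\theta_j(M) \subset \prod_{\iota \to \mathcal U} M_\iota^j$. Then $\sigma_1(x) = \theta_1(x) \otimes 1$ and $\sigma_2(x) = 1 \otimes \theta_2(x)$ are two embeddings of $M$ into $(N_1 \ovt N_2)^{\mathcal U}$. Next we pass to an ultrapower of $M$. Taking a further ultrapower and using the isomorphism $(N_1 \ovt N_2)^{\mathcal W} \cong M^{\mathcal V}$, we view $\sigma_1, \sigma_2$ as embeddings of $M$ into an ultrapower $M^{\mathcal U'}$. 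Here we use that iterated ultrapowers are ultrapowers over a product ultrafilter, together with the diagonal inclusion $N_1 \ovt N_2 \subset (N_1 \ovt N_2)^{\mathcal W}$. The Jung property holds for all countably incomplete ultrafilters, so there is $u \in \mathcal U(M^{\mathcal U'})$ with $u \sigma_1(x) u^* = \sigma_2(x)$ for all $x \in M$.

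For a fixed finite $F \subset M$ and $\varepsilon > 0$, we pull $u$ back to a unitary in the iterated ultraproduct of $N_1 \ovt N_2$. Writing this as a single ultraproduct over a product index set, we obtain a unitary $u \in \prod (N_1^{(\kappa)} \ovt N_2^{(\kappa)})$ over the combined index set. It satisfies $\| u (\theta_1'(b) \otimes 1) u^* - 1 \otimes \theta_2'(b) \|_2 < \varepsilon$ for $b \in F$, where $\theta_j'$ are the embeddings $\theta_j$ viewed in the larger ultraproduct. Alternatively, one can extract from the exact conjugacy a single index-level approximate conjugacy. Corollary~\ref{cor:approxconj} then applies with $M_\kappa^j = N_j$.

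The main obstacle is bookkeeping. We must make sure the embeddings and the conjugating unitary all live in an ultraproduct of tensor products $M_\kappa^1 \ovt M_\kappa^2$ with the tensor splitting respected. The isomorphism $M^{\mathcal V} \cong (N_1 \ovt N_2)^{\mathcal W}$ does not respect the legs at the ultrapower level, so it is important to build $\sigma_j$ on the $N_1 \ovt N_2$ side first. We transport through the isomorphism only to invoke the Jung property, and then transport the unitary back. Once matricial approximability of $\theta_1(M)$ with respect to $N_1$ is obtained, condition (\ref{item:main1}) of Theorem~\ref{thm:amenimplieshyperfinite} gives $M \subset \prod A_\iota$ with $A_\iota$ finite-dimensional. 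Hence $M \hookrightarrow R^{\mathcal U}$, and the Atkinson--Kunnawalkam Elayavalli result finishes the proof.
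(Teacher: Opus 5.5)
Your proposal is correct and takes essentially the paper's route. The paper obtains this corollary from the preceding corollary, whose proof is exactly your argument: apply Corollary~\ref{cor:approxconj} to the leg embeddings $\theta_1(\cdot)\otimes 1$ and $1\otimes\theta_2(\cdot)$, made unitarily conjugate via the Jung property after transporting through $M^{\mathcal V}\cong (N_1\ovt N_2)^{\mathcal W}$, and then invoke \cite[Corollary 4.8]{AtKE21}; your bookkeeping (building the embeddings on the $N_1\ovt N_2$ side, and using iterated ultrapowers, which remain countably incomplete) is sound, and it makes the pulled-back conjugacy exact, so the $\varepsilon$-approximation step and the ``alternative'' index-level extraction are unnecessary.
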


A trace-preserving embedding of tracial von Neumann algebras $\theta: N \to P$ is existential if there exists an ultrafilter $\mathcal U$ and an embedding $\alpha: P \to N^{\mathcal U}$ so that $\alpha_{\theta(N)}$ gives the diagonal embedding of $N \subset N^{\mathcal U}$. A factor $N$ is existentially closed if every embedding of $N$ into any other tracial von Neumann algebra is existential. 

The special case of the previous corollary when $N_1$ and $N_2$ are existentially closed can be shown to follow more directly from Connes's characterization of amenability by utilizing the following strengthening of Atkinson and Kunnawalkam Elayavalli's result from \cite{AtKE21}, which is of independent interest.

\begin{thm}\label{thm:conjinclusion}
Suppose $\mathcal U$ is countably incomplete, $N \subset \tilde N$ is an inclusion of tracial von Neumann algebras, and $M \subset N^{\mathcal U}$ is a separable von Neumann algebra such that for any embedding $\rho: M \to N^{\mathcal U} \subset \tilde N^{\mathcal U}$ there exists a unitary $u \in \tilde N^{\mathcal U}$ with $\rho(x) = uxu^*$ for all $x \in M$. Then $M$ embeds into $\tilde N$. Moreover, if the inclusion $N \subset \tilde N$ is existential, then $M$ embeds into $N$.  
\end{thm}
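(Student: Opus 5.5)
Our plan is to follow the Jung/Atkinson--Kunnawalkam Elayavalli strategy, replacing the hyperfinite target by $N$ and the matrix-algebra conjugacy by conjugacy inside $\tilde N^{\mathcal U}$.

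Fix a countable generating set $\{x_n\}_n$ of $M$ in the unit ball. Write each $x_n = (x_{n,\iota})^{\mathcal U}$ with $x_{n,\iota} \in N$. We argue by contradiction: suppose $M$ does not embed into $\tilde N$.

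\textbf{Step 1 (two embeddings).} The first embedding is the given inclusion $M \subset N^{\mathcal U}$. For the second, take a ``diagonally shifted'' embedding. Since $\mathcal U$ is countably incomplete, fix decreasing $I_k \in \mathcal U$ with $\bigcap_k I_k = \emptyset$, and let $k(\iota)$ be the largest $k$ with $\iota \in I_k$. Define $\rho(x_n) = (x_{n,\sigma(\iota)})^{\mathcal U}$ for a reindexing $\sigma: I \to I$ chosen so that the pair of embeddings looks ``independent''. Concretely, we want the joint distribution of $(x_{n,\iota}, x_{m,\sigma(\iota)})$, restricted to words of length $\le k(\iota)$ and $n,m\le k(\iota)$, to approximate the tensor-type (free or otherwise non-conjugate-compatible) joint distribution. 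This is the Jung-type trick. A cleaner alternative is to use saturation of $N^{\mathcal U}$ (countable saturation holds for countably incomplete $\mathcal U$): realize inside $N^{\mathcal U}$ a copy $\rho(M)$ such that $\rho(M)$ and $M$ satisfy some prescribed relative position.

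\textbf{Step 2 (extract an embedding into $\tilde N$).} By hypothesis there is $u = (u_\iota)^{\mathcal U} \in \mathcal U(\tilde N^{\mathcal U})$ with $u x u^* = \rho(x)$. Then for $\mathcal U$-almost every $\iota$, $u_\iota$ approximately conjugates the tuple $(x_{n,\iota})$ to $(x_{n,\sigma(\iota)})$ in $\tilde N$, up to $1/k$ on the first $k$ generators. Choosing $\rho$ so that, for each fixed $\iota_0$, the tuple $(x_{n,\iota_0})_n$ appears as $x_{n,\sigma(\iota)}$ on a $\mathcal U$-large set, we get approximate unitary conjugacy inside $\tilde N$ between the \emph{fixed} finite tuple $(x_{n,\iota_0})_{n\le k}$ and tuples whose moments converge along $\mathcal U$ to those of $M$. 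Hence the moments of $(x_{n,\iota_0})$ are within $O(1/k)$ of those of $M$. A diagonal/compactness argument then yields, in $\tilde N$ itself, tuples whose $*$-moments converge to those of $(x_n)$, and in fact converge in $\|\cdot\|_2$ after conjugating by unitaries of $\tilde N$. Such $\|\cdot\|_2$-Cauchy tuples produce a trace-preserving embedding $M \to \tilde N$.

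The main obstacle is Step 1 combined with the Cauchy-ness in Step 2. We need a choice of $\rho$ for which conjugacy forces the approximating tuples to be $\|\cdot\|_2$-convergent within $\tilde N$, rather than merely to have convergent moments, since convergent moments only give an embedding into $\tilde N^{\mathcal U}$. We would first try the Atkinson--Kunnawalkam Elayavalli construction: build $\rho$ by concatenating, along $\mathcal U$, the representatives at consecutive levels $k$ and $k+1$. Then a conjugating $u$ gives unitaries $w_k \in \tilde N$ with $\|w_k x_{n}^{(k)} w_k^* - x_n^{(k+1)}\|_2 < 2^{-k}$. Composing these makes the sequence $w_{k-1}\cdots w_1$-conjugated tuples Cauchy.

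\textbf{Moreover part.} If $N \subset \tilde N$ is existential, compose the embedding $M \to \tilde N$ with $\alpha: \tilde N \to N^{\mathcal V}$. This embeds $M$ into $N^{\mathcal V}$ in a manner compatible with the diagonal. We then rerun the argument with the ambient algebra $N$ in place of $\tilde N$, using that the conjugating unitaries pulled back through $\alpha$ can be approximated by unitaries in $N$ on finite sets, since existential embeddings preserve solvability of such approximate conjugation conditions. This gives Cauchy tuples in $N$, hence $M \hookrightarrow N$.
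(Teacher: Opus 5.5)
There is a genuine gap, exactly at the step you flag as the main obstacle.

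\textbf{Why a single embedding cannot give the level-wise bounds.} Applying the hypothesis to one pre-chosen embedding $\rho$ (a reindexing $(x_{n,\sigma(\iota)})^{\mathcal U}$, ``concatenated'' across levels $k$ and $k+1$) gives a unitary $u=(u_\iota)^{\mathcal U}$. Its conjugation errors $\epsilon_\iota$ satisfy only $\lim_{\iota\to\mathcal U}\epsilon_\iota=0$. Every level set $I_k\setminus I_{k+1}$ is $\mathcal U$-null, so this yields no bound at any prescribed level. It certainly does not yield the bound $2^{-k}$ at every level that your telescoping needs: errors of size $1/k(\iota)$ tend to $0$ along $\mathcal U$ but are not summable.

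Nor can you chain indices. You would need $\sigma(\iota)$ to lie in the next good set $\{\epsilon<2^{-k-1}\}$, but that set depends on $u$, which depends on $\sigma$.

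The variant in Step~2 fails more basically. Requiring $(x_{n,\iota_0})_n$ to appear as $(x_{n,\sigma(\iota)})_n$ on a $\mathcal U$-large set forces $\sigma\equiv\iota_0$ $\mathcal U$-a.e. This can hold for at most one $\iota_0$. Then $\rho$ is the diagonal image of a fixed tuple, which is not a homomorphism of $M$ unless that tuple already has exactly $M$'s distribution. The ``independent'' or ``prescribed relative position'' choices in Step~1 are irrelevant, since the hypothesis concerns only unitary conjugacy.

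\textbf{The missing idea.} You need to use the hypothesis inside a contradiction argument, to upgrade ``every embedding is conjugate'' into a uniform, index-wise statement (a Jung-type tubularity). View the generators as images of the universal $C^*$-algebra $C$ on contractions $x_1,x_2,\dots$ under $\theta:C\to M$, with lifts $\theta_\iota:C\to N$.

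The paper shows the following. For each $n$ there are $m$ and an anchor index $\iota_n$ such that every $*$-homomorphism $\rho:C\to N$ with trace $1/m$-close to $\tau_M\circ\theta$ is $1/n$-conjugate, on $x_1,\dots,x_n$ and by a unitary of $\tilde N$, to $\theta_{\iota_n}$. If this failed, you could pick a bad $\rho_\iota$ at every index of every level $m$. Then $(\rho_\iota)^{\mathcal U}$ would be an embedding of $M$ into $N^{\mathcal U}$ that is not conjugate in $\tilde N^{\mathcal U}$ to the inclusion, a contradiction.

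With this in hand, run the claim at scale $2^{-n}$ and pass through a $\mathcal U$-generic $\theta_\iota$. This gives $v_n\in\mathcal U(\tilde N)$ with $\|v_n\theta_{\iota_{n+1}}(x_k)v_n^*-\theta_{\iota_n}(x_k)\|_2\le 2^{-n+1}$ for $k\le n$. Your intended telescoping then works.

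Your existential case is fine once these $v_n$ exist. They conjugate tuples lying in $N$, so the approximate conjugacy conditions have parameters in $N$ and can be solved in $N$.
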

\begin{proof}
We let $C$ denote the universal $C^*$-algebra generated by a sequence of contractions $\{ x_n \}_{n = 1}^\infty$. We fix a $*$-homomorphism $\theta: C \to M \subset N^{\mathcal U}$ with weakly dense image and we choose $*$-homomorphisms $\theta_\iota: C \to N$ so that $\theta(x_n) = (\theta_\iota(x_n))^{\mathcal U}$ for each $n \in \mathbb N$. We also let $d$ be a metric on the state space of $C$ that induces the weak$^*$-topology and we choose a decreasing sequence of sets $I_n \subset I$ so that $\mathcal U(I_n) = 1$ and $\cap_n I_n = \emptyset$.

We claim that for each $n \in \mathbb N$, there exist $m \geq n$ and $\iota_{n} \in I_m \setminus I_{m-1}$ having the property that whenever $\rho: C \to N$ is $*$-homomorphism satisfying $d( \tau_N \circ \rho, \tau_M \circ \theta) < 1/m$, then there exists a unitary $u \in \mathcal U(\tilde N)$ so that $\| u \rho(x_k) u^* - \theta_{\iota_{n}}(x_k) \|_2 \leq 1/n$ for all $k \leq n$. Otherwise, we could find some $n_0 \in \mathbb N$ and choose for each $m \geq n_0$ and $\iota \in I_m \setminus I_{m-1}$ a $*$-homomorphism $\rho_\iota: C \to N$ satisfying $d( \tau_N \circ \rho_\iota, \tau_M \circ \theta) < 1/m$ and having the property that 
\[
\inf_{u \in \mathcal U(\tilde N)} \max_{k \leq n_0} \{ \| u \rho_\iota(x_k) u^* - \theta_\iota(x_k) \|_2 \} \geq 1/n_0.
\]
But then, setting $\rho(x) = (\rho_\iota(x))^{\mathcal U}$ we see that $\rho$ gives a well-defined  $*$-homomorphism from $C$ into  $N^{\mathcal U}$ that satisfies $\tau_{N^{\mathcal U}} \circ \rho = \tau_M \circ \theta$ so that this defines an embedding of $M$ into $N^{\mathcal U}$. Moreover, we would have 
\[
\inf_{u \in \mathcal U(\tilde N^{\mathcal U})} \max_{k \leq n_0} \{ \| u \rho(x_k) u^* - x_k \|_2 \} \geq 1/n_0,
\] 
contradicting the hypothesis.

Having established this claim, we inductively define sequences $m_n \in \mathbb N$, and $\iota_n \in I_{m_n} \setminus I_{m_n - 1}$ by setting $m_0 = 1$, and choosing $m_n \geq \max \{2^n, m_{n-1} \}$ and $\iota_n \in  I_{m_n} \setminus I_{m_n - 1}$ so that the above claim holds for the integer $\max \{ 2^{n}, m_{n-1} \}$. Note that we have $d( \tau_N \circ \theta_{\iota_n}, \tau_M \circ \theta) \to 0$. 

Since $\lim_{\iota \to \mathcal U} d(\tau_N \circ \theta_\iota, \tau_M \circ \theta) = 0$, we see that for each $n \in \mathbb N$ and $\mathcal U$-almost every $\iota \in I$ there exists $u_\iota \in \mathcal U(\tilde N)$ so that $\|  u_\iota \theta_\iota(x_k) u_\iota^* -  \theta_{\iota_n}(x_k) \|_2 \leq 2^{-n}$ for $k \leq n$. By the triangle inequality we may then find $v_{n} \in \mathcal U(\tilde N)$ so that for $k \leq n$ we have
\begin{equation}\label{eq:approxconj}
\| v_n \theta_{\iota_{n + 1}}(x_k) v_{n}^* - \theta_{\iota_{n}}(x_k) \|_2 \leq 2^{-n + 1}.
\end{equation}

We now consider the sequence of $*$-homomorphisms $\alpha_n: C \to \tilde N$ given by 
\[
\alpha_n(x) = v_1 \cdots v_{n-1} \theta_{\iota_n}(x) v_{n-1}^* v_{n-2}^* \cdots v_1^*.
\] 
From the discussion above we have $\| \alpha_{n + 1}(x_k) - \alpha_n(x_k) \|_2 \leq 2^{-n + 1}$ for $k \leq n$, and hence it follows that for each $k \in \mathbb N$ the sequence $\{ \alpha_n(x_k) \}_n$ is Cauchy in $\| \cdot \|_2$ and hence converges to some $a_k \in \tilde N$. The $*$-homomorphism $\alpha: C \to N$ defined by $\alpha(x_k) = a_k$ then satisfies $\tau_N \circ \alpha = \tau_M \circ \theta$ and so defines a trace-preserving embedding of $M$ into $\tilde N$. 

If the inclusion $N \subset \tilde N$ is existential, then we may find the $v_n$ from (\ref{eq:approxconj}) inside of $N^{\mathcal V}$ for some ultrafilter $\mathcal V$, and it is then easy to see that writing $v_n = (v_{n, \kappa})^{\mathcal V}$ we have that for $\mathcal V$-almost every $\kappa$, (\ref{eq:approxconj}) is satisfied for the unitary $v_{n, \kappa}$. Thus, in this case we may assume $v_n \in N$ so that $\alpha$ maps into $N \subset \tilde N$.
\end{proof}

\begin{rem}
If in the previous theorem one starts with the weaker hypothesis that any two embeddings of $M$ into $N^{\mathcal U}$ are u.c.p.-equivalent in the sense of \cite{AtKE21} (by taking conditional expectations we then no longer need the ambient algebra $\tilde N$), then the argument can be adapted to show that $M$ then has an embedding into $N^{\mathcal U}$ with a u.c.p.\ lift (and hence every embedding of $M$ in to $N^{\mathcal U}$ must have a u.c.p.\ lift). 
\end{rem}

\begin{cor}\label{cor:embedsubalg}
If a II$_1$ factor $M$ has the Jung property and if $N$ is a separable II$_1$ factor such that $M$ embeds into an ultrapower of $N$ and $N$ existentially embeds into an ultrapower of $M$ (e.g., if $M \equiv N$ and if $N$ is existentially closed in its universal theory), then $M$ embeds into $N$. 
\end{cor}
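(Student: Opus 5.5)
The plan is to apply Theorem~\ref{thm:conjinclusion} with the inclusion $N \subset \tilde N$ taken to be the existential embedding of $N$ into an ultrapower of $M$. Fix a countably incomplete ultrafilter $\mathcal U$ for which $M$ has the Jung property. By the standard remark following the definition of the Jung property, this property then holds for every countably incomplete ultrafilter. Let $\alpha: N \to M^{\mathcal V}$ be an existential embedding; we write $\tilde N = M^{\mathcal V}$ and identify $N$ with $\alpha(N) \subset \tilde N$. We also fix an embedding of $M$ into $N^{\mathcal U}$, which we may take with respect to the same countably incomplete $\mathcal U$ after enlarging the index set if necessary; since $M$ is separable this loses nothing.

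We verify the hypothesis of Theorem~\ref{thm:conjinclusion}: any embedding $\rho: M \to N^{\mathcal U} \subset \tilde N^{\mathcal U}$ should be unitarily conjugate, inside $\tilde N^{\mathcal U}$, to the fixed embedding. Here $\tilde N^{\mathcal U} = (M^{\mathcal V})^{\mathcal U}$, which is an ultrapower $M^{\mathcal W}$ of $M$ for the product ultrafilter $\mathcal W = \mathcal U \otimes \mathcal V$ on $I \times K$. This ultrafilter is countably incomplete because $\mathcal U$ is. Both the fixed embedding and $\rho$ are therefore embeddings of $M$ into $M^{\mathcal W}$. The Jung property, applied for $\mathcal W$, gives a unitary $u \in M^{\mathcal W} = \tilde N^{\mathcal U}$ conjugating one to the other. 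Theorem~\ref{thm:conjinclusion} then shows that $M$ embeds into $\tilde N$. Since the inclusion $N \subset \tilde N$ is existential, the ``moreover'' part of the theorem gives that $M$ embeds into $N$.

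The step needing the most care is the identification $(M^{\mathcal V})^{\mathcal U} \cong M^{\mathcal U \otimes \mathcal V}$ together with the passage of the Jung property to this iterated ultrapower. The identification is standard: a bounded family indexed by $I \times K$ defines an element of the iterated ultraproduct. The transfer of the Jung property to $\mathcal W$ is the remark already made in the paper. One could alternatively argue directly that any two embeddings into $(M^{\mathcal V})^{\mathcal U}$ are conjugate, via saturation.

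For the parenthetical claim, suppose $M \equiv N$ and $N$ is existentially closed in its universal theory. Then $N$ embeds into $M^{\mathcal V}$, since they have the same universal theory, and this embedding is existential by the existential closedness of $N$ among models of its universal theory, $M^{\mathcal V}$ being one such model. Likewise $M$ embeds into an ultrapower of $N$ because $M \equiv N$.
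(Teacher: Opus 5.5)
Your proposal is correct and is essentially the paper's proof: take $\tilde N$ to be the ultrapower of $M$ into which $N$ embeds existentially, use the Jung property for the iterated (product) ultrapower $\tilde N^{\mathcal U}\cong M^{\mathcal W}$ to verify the hypothesis of Theorem~\ref{thm:conjinclusion}, and then apply its ``moreover'' clause. The only loose phrase is your ``enlarging the index set'' remark. As in the paper, it is enough to note that the ultrafilter of the given embedding of $M$ into an ultrapower of $N$ may be assumed countably incomplete, since otherwise that ultrapower is already isomorphic to $N$ because $N$ is separable.
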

\begin{proof}
Suppose we have $M \subset N^{\mathcal V}$ and we have an existential embedding $N \subset M^{\mathcal U}$. We may assume that $\mathcal V$ is countably incomplete.  By the Jung property, any two embeddings of $M$ into $N^{\mathcal V} \subset (M^{\mathcal U})^{\mathcal V}$ are conjugate by a unitary in $(M^{\mathcal U})^{\mathcal V} \cong M^{\mathcal U \times \mathcal V}$ and so by Theorem~\ref{thm:conjinclusion} we have an embedding of $M$ into $N$.  
\end{proof}

A II$_1$ factor $M$ is locally universal if any separable II$_1$ factor embeds into an ultrapower of $M$. Note that by considering the embeddings $M \subset M \ovt N$ we see that any existentially closed II$_1$ factor is locally universal and McDuff. A II$_1$ factor $M$ is a prime model of its theory if $M$ embeds elementarily into any factor $N$ such that $N \equiv M$ \cite[Section 3.2]{Go21}. 

A II$_1$ factor $M$ is enforceable if it is existentially closed and embeds into any existentially closed factor (this is not the original definition, but is equivalent to it by \cite[Corollary 6.26]{Go21}). Compactness arguments can be used to show that locally universal, and even existentially closed II$_1$ factors, exist (in fact there are continuum many that are pairwise non-isomorphic by \cite[Corollary 1.3]{FaGoHaSh16} \cite[Theorem E]{IoTa24}), but it is open as to whether or not an enforceable II$_1$ factor exists. If such a factor does exist, then it was shown by Goldbring  \cite{Go21} that it is unique up to isomorphism and is not only locally universal but also has the Jung property \cite[Theorem 2]{Go23}. Here we show the converse of Goldbring's result. 

\begin{cor}[cf.\ {\cite[Proposition 2.15]{FaHaRoTi17}}]\label{cor:jungenforceable}
A separable II$_1$ factor is enforceable if and only if it is locally universal and has the Jung property. If an enforceable II$_1$ factor $M$ exists, then it is a prime model of its theory. 
\end{cor}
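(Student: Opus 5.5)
The forward direction is Goldbring's: an enforceable factor is locally universal, and by \cite[Theorem 2]{Go23} it has the Jung property. So the work is the converse, together with the prime model statement. Let $M$ be separable, locally universal, and Jung. Using the working definition recalled above, I must show two things: $M$ is existentially closed, and $M$ embeds into every existentially closed factor.

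For the embedding, let $N$ be existentially closed; we may take $N$ separable by passing to a separable elementary substructure. Local universality gives an embedding of $M$ into $N^{\mathcal V}$. Since $N$ is existentially closed, any embedding of $N$ into some $M^{\mathcal U}$ is existential, so I need such an embedding to exist. Because $M$ is locally universal, the separable factor $N$ embeds into $M^{\mathcal U}$, and this embedding is existential by existential closedness of $N$. Corollary~\ref{cor:embedsubalg} now applies and gives $M \hookrightarrow N$.

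For existential closedness of $M$, I would first fix an existentially closed separable factor $S$, which exists by compactness. The previous paragraph gives $M \subset S$. Since $S$ is locally universal, $S$ embeds into $M^{\mathcal U}$, and I can arrange the composite $M \subset S \subset M^{\mathcal U}$ to be the diagonal embedding up to unitary conjugacy. The Jung property gives exactly this: any embedding of $M$ into $M^{\mathcal U}$ is unitarily conjugate to the diagonal, so after conjugating the embedding of $S$ by a unitary of $M^{\mathcal U}$ the restriction to $M$ is diagonal. Hence $M \subset S$ is existential, and $M$ inherits existential closedness from $S$. Concretely, take any extension $M \subset P$. Then $M \subset P \subset P *_M S$, or some amalgam, embeds existentially into an ultrapower of $S$ over $M$, hence into an ultrapower of $M$ over $M$. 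I expect the main obstacle is making this amalgamation step precise, namely ensuring the embedding of $P$ into $S^{\mathcal W}$ restricts correctly on $M$. I would handle it by using existential closedness of $S$ applied to $S \subset S *_M P$, which is a tracial amalgamated free product. That gives $P \hookrightarrow S^{\mathcal W}$ over $M$, and composing with $S \hookrightarrow M^{\mathcal U}$, which is diagonal on $M$, yields $P \hookrightarrow M^{\mathcal U\times\mathcal W}$ over $M$.

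For the prime model claim, let $M$ be enforceable and $N \equiv M$, with $N$ separable. Then $N$ is existentially closed: $M$ is existentially closed and model complete, since its theory is that of an enforceable, hence existentially closed, factor, and being existentially closed passes to elementarily equivalent factors here because the theory of $M$ is model-complete. Hence $M$ embeds into $N$. To upgrade this embedding to an elementary one, I would invoke model completeness again. Alternatively, I would use Theorem~\ref{thm:conjinclusion} and the Jung property to show the embedding is existential, then iterate the back-and-forth to get elementarity.
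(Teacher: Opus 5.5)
Your argument for the equivalence is essentially the paper's. The forward direction is Goldbring's, and for the converse you show $M$ is existentially closed and then apply Corollary~\ref{cor:embedsubalg} to an existentially closed $N$.

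Two attributions need fixing:
\begin{itemize}
\item $M \hookrightarrow N^{\mathcal V}$ uses local universality of $N$, which holds because $N$ is existentially closed.
\item $S \hookrightarrow M^{\mathcal U}$ uses local universality of $M$, not of $S$.
\end{itemize}
Your detour through $S$ and $S *_M P$ is valid but unnecessary. The paper's ``follows easily'' is the direct route: given $M \subset P$ (after reducing to a separable II$_1$ factor), embed $P$ into $M^{\mathcal U}$ by local universality of $M$, then use the Jung property to conjugate so the restriction to $M$ is diagonal.

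The prime model part has a genuine gap. Nothing makes $\operatorname{Th}(M)$ model complete, and in fact it cannot be. Suppose every $N \equiv M$ were existentially closed. Then every embedding between models of $\operatorname{Th}(M)$ would be existential, so $\operatorname{Th}(M)$ would be model complete by Robinson's test. Since $M$ is locally universal, $\operatorname{Th}(M)$ would then be a model companion of the theory of II$_1$ factors. By a theorem of Goldbring, Hart and Sinclair, no such model companion exists. So both the claim that $N$ is existentially closed and the planned upgrade to elementarity via model completeness fail. Your fallback (an existential embedding plus an unspecified back-and-forth) gives no reason for elementarity, since existential embeddings need not be elementary.

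The paper does not argue this part directly; it cites \cite[Corollary 3.16]{Go21}. If you want a self-contained route from the Jung property, here is one.
\begin{itemize}
\item For separable $N \equiv M$ you do not need $N$ to be existentially closed. $M$ embeds into an ultrapower of $N$, and $N$ embeds elementarily, hence existentially, into an ultrapower of $M$. So Corollary~\ref{cor:embedsubalg} already gives $M \hookrightarrow N$.
\item To make this embedding elementary, run the proof of Theorem~\ref{thm:conjinclusion} in its existential case, starting from an elementary $\theta \colon M \to N^{\mathcal U}$.
\item Choose each $\iota_n$ so that, in addition, finitely many prescribed formulas evaluated at $(\theta_{\iota_n}(x_1), \ldots, \theta_{\iota_n}(x_n))$ are within $1/n$ of their values at $(x_1, \ldots, x_n)$. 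These prescribed formulas should exhaust a countable dense family as $n \to \infty$. This is a $\mathcal U$-large condition by \L{}o\'s's theorem, so the proof of the claim there is unaffected.
\item The $v_n$ then lie in $\mathcal U(N)$. Formulas are automorphism invariant and uniformly $\|\cdot\|_2$-continuous on bounded sets, so the limit embedding preserves all formulas.
\end{itemize}
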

\begin{proof}
Suppose $M$ is locally universal and has the Jung property. It follows easily that $M$ is existentially closed. If $N$ is any existentially closed II$_1$ factor, then $M$ and $N$ embed into each other's ultrapowers. Hence, $M$ embeds into $N$ by the previous corollary. The fact that $M$ is a prime model of its theory then follows from \cite[Corollary 3.16]{Go21}.
\end{proof}

Note that if $M$ is existentially closed and self-tracially stable (see \cite[Definition 3.3.13]{AtGoKE22} for this definition), then $M$ is locally universal and has the Jung property by \cite{Go23}, thus the above corollary generalizes one direction of Theorem 8 in \cite{Go23}. Further properties of an enforceable II$_1$ factor, should one exist, can be found in \cite{Go21, Go22}.

A II$_1$ factor $M$ is super McDuff if $M' \cap M^{\mathcal U}$ is a II$_1$ factor for every ultrafilter. It is known that being McDuff (even being existentially closed) is an enforceable property \cite{Go21}, however it is open if the property of being super McDuff is enforceable (see \cite[Section 5]{GoJeKEPi25}). We now show that this is equivalent to the existence of an enforceable factor, strengthening \cite[Proposition 6.3.9]{AtGoKE22}.

\begin{proof}[Proof of Theorem~\ref{thm:enforceable}]
Suppose first that $M$ is an enforceable II$_1$ factor, then $M$ is McDuff and has the Jung property. Let $\mathcal U$ be a countably incomplete ultrafilter and let $p \in \mathcal P(M' \cap M^{\mathcal U})$ be a projection with trace $1/2$. Take $u \in \mathcal U(M^{\mathcal U})$ so that $pu = up^\perp$. Since $M$ is McDuff, there exists an isomorphism $\theta: p M^{\mathcal U} p \to M^{\mathcal U}$. By the Jung property there exists $v \in \mathcal U(M^{\mathcal U})$ so that for $w \in \mathcal U(M)$ we have $\theta(p w ) = v \theta( u w p^\perp u^* ) v^*$, and hence $p w \theta^{-1}(v) u w^* = \theta^{-1}(v) u p^\perp$ for all $w \in \mathcal U(M)$. Since $E_{M' \cap M^{\mathcal U}}( \theta^{-1}(v) u)$ is in the strong closure of convex combinations of the form $w \theta^{-1}(v) u w^*$ with $w \in \mathcal U(M)$, we then see that $p E_{M' \cap M^{\mathcal U}}( \theta^{-1}(v) u) = \theta^{-1}(v) u p^\perp$, from which we deduce $E_{M' \cap M^{\mathcal U}}( u^* \theta^{-1}(v^*) ) p E_{M' \cap M^{\mathcal U}}( \theta^{-1}(v) u) = p^\perp$, which shows that $p \sim p^\perp$ in $M' \cap M^{\mathcal U}$. Hence, we see that $\mathcal Z( M' \cap M^{\mathcal U} )$ cannot contain a projection of trace $1/2$ and so cannot be diffuse, which forces $\mathcal Z( M' \cap M^{\mathcal U} ) = \mathbb C$. Hence, $M$ is super McDuff and thus the property of being super McDuff is enforceable.

If being super McDuff is enforceable, then some finitely generic factor is super McDuff. Thus, to finish the proof we may show that every super McDuff finitely generic factor is enforceable. Finitely generic factors are locally universal and have the generalized Jung property \cite{Go21}. From \cite[Proposition 6.3.5]{AtGoKE22}, it then follows that if $M$ is finitely generic and super McDuff, then $M$ has the Jung property. Since $M$ is also locally universal it then follows from the previous corollary that $M$ is an enforceable factor. 
\end{proof}

In \cite{Go20} Goldbring showed that every property (T) factor embeds into an existentially closed II$_1$ factor with factorial relative commutant and hence also embeds with factorial relative commutant into the ultrapowers of this existentially closed II$_1$ factor. It was also observed in \cite[Section 5]{GoJeKEPi25} that if this property were to hold for an ultrapower of a finitely generic factor $N$, then it would follow that $N$ is super McDuff, and hence the previous result would show that an enforceable II$_1$ factor exists. We thus have that either an enforceable II$_1$ factor exists, or else the finitely generic factors cannot be elementarily equivalent to the infinitely generic factors. (This can also be seen using the above results, together with \cite[Theorem 4.12]{GoJeKEPi25}.) It is an open problem if there exist existentially closed II$_1$ factors that are not elementarily equivalent \cite[Question 5.9]{GoHa23}. It is also an open problem whether or not every factor $N$ satisfies the property that any property (T) factor $M$ that embeds into $N^{\mathcal U}$ must also do so in a way that has factorial asymptotic relative commutant. For the case $N = R$, Popa has conjectured that this holds even without the property (T) assumption.

\section{Embeddings into ultraproducts of amenable von Neumann algebras}

 In order to exploit the equivalences given by Theorem~\ref{thm:amenimplieshyperfinite}, it will be useful to collect some techniques to ensure that subalgebras $M \subset \prod_{\iota \to \mathcal U} M_\iota$ are matricially inapproximable with respect to $(M_\iota)_{\iota \in I}$. The purpose of this section is to provide some examples of this. 

First note that if $M \subset \prod_{\iota \to \mathcal U} M_\iota$ is a separable von Neumann subalgebra such that  $M$ does not embed into $R^{\mathcal U}$, then $M$ is matricially inapproximable with respect to $(M_\iota)_{\iota \in I}$. Examples of the former type are extremely difficult to produce \cite{JiNaViWrYu20}, though we note that many of the applications in Section~\ref{sec:embedultra} will become easier to prove if we work with a class of von Neumann algebras that do not embed into $R^{\mathcal U}$. 

The following is also a simple observation, and it is much easier to find examples where this applies. 

\begin{prop}\label{prop:amenmatriciallyapprox}
Suppose $N$ is a II$_1$ factor and $M \subset N \subset N^{\mathcal U}$ is a von Neumann subalgebra, then $M$ is matricially inapproximable with respect to $N$ if and only $M$ is nonamenable. 
\end{prop}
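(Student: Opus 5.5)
We prove the contrapositive in both directions: $M$ is matricially approximable with respect to $N$ if and only if $M$ is amenable. Here $M \subset N$ sits in $N^{\mathcal U}$ via the diagonal embedding, and we use the equivalent conditions of Theorem~\ref{thm:amenimplieshyperfinite} with $M_\iota = N$ for all $\iota$.

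Suppose first that $M$ is amenable. Then $L^2 M \prec L^2 M \ovt L^2 M$ as $M$-$M$ bimodules. Since $M \subset N$, the bimodule $L^2 M \ovt L^2 M$ is a sub-bimodule of the restriction of $L^2 N \ovt L^2 N$ to $M$, so we have $L^2 M \prec L^2 N \ovt L^2 N$. It remains to embed the constant family $L^2 N \ovt L^2 N$ into the ultraproduct $\prod_{\iota \to \mathcal U}(L^2 N \ovt L^2 N)$ as $M$-$M$ bimodules. The diagonal map $\xi \mapsto (\xi)^{\mathcal U}$ sends left-right bounded vectors, for instance elementary tensors $a \otimes b$ with $a,b \in N$, to left-right bounded vectors. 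It is isometric and intertwines the actions of $N \subset N^{\mathcal U}$ on both sides, so it extends to an isometric bimodule embedding. This gives condition~(\ref{item:main3}), hence matricial approximability. Alternatively, Connes's theorem makes every separable subalgebra $M_0 \subset M$ hyperfinite, and we may take $R_\iota = M_0$ in condition~(\ref{item:main1}) directly; separability is harmless because condition~(\ref{item:main1}) only involves separable $M_0$.

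Conversely, suppose $M$ is matricially approximable. We use condition~(\ref{item:main4}): given $F \subset M$ finite and $\varepsilon > 0$, there are u.c.p.\ maps $\phi_\iota : N \to \mathbb M_{k(\iota)}(\mathbb C)$ and $\psi_\iota : \mathbb M_{k(\iota)}(\mathbb C) \to N$ with $\|(\psi_\iota \circ \phi_\iota)^{\mathcal U}(x) - x\|_2 < \varepsilon$ for $x \in F$. Because $x$ is a constant sequence, this says $\|\psi_\iota \circ \phi_\iota(x) - x\|_2 < \varepsilon$ for $\mathcal U$-almost every $\iota$. Fixing one such $\iota$, we obtain u.c.p.\ maps through a matrix algebra approximating the identity on $F$ in $\|\cdot\|_2$. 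Composing with the normal conditional expectation $\mathbb E_M$ gives maps $M \to \mathbb M_k(\mathbb C) \to M$ approximating the identity pointwise in $\|\cdot\|_2$ on finite sets, since $\mathbb E_M$ is $\|\cdot\|_2$-contractive and fixes $M$. This is the $\|\cdot\|_2$-form of semi-discreteness for the tracial algebra $M$, so $M$ is amenable. Equivalently, one can pass back to condition~(\ref{item:main3}) and use that coefficients of $L^2N \ovt L^2N$ restricted to $M$ are coefficients of multiples of the coarse $M$-bimodule.

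The main obstacle is that the approximating maps $\phi_\iota$ need not be normal, and that convergence holds only in $\|\cdot\|_2$. The cleanest route is therefore the bimodule formulation: from condition~(\ref{item:main3}), form the ultraproduct of the coarse bimodule restricted to constant sequences. The remaining issue is whether $L^2 M \prec \prod_{\iota \to \mathcal U}(L^2N \ovt L^2N)$ as $M$-$M$ bimodules forces $L^2 M \prec L^2 N \ovt L^2 N$. Here we would argue that $\prod_{\iota \to \mathcal U}(L^2 N \ovt L^2 N)$ is weakly contained in $L^2 N \ovt L^2 N$ as $N$-$N$ bimodules, because ultraproduct coefficients are pointwise limits of coefficients. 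Combining this with the fact that the coarse $N$-bimodule restricts to a multiple of the coarse $M$-bimodule yields $L^2M \prec L^2M \ovt L^2M$, that is, $M$ is amenable.
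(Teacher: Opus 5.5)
Your proof is correct, but for ``matricially approximable $\Rightarrow$ amenable'' it takes a different route from the paper. Both of your arguments for the other direction work, and that direction is the one the paper treats as immediate. In the Connes route, $M_0$ is amenable because it is the range of the trace-preserving conditional expectation $\mathbb E_{M_0}$ on $M$.

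\textbf{The paper's route.} The paper gives no separate proof of the proposition; it obtains it from Theorem~\ref{thm:ucplift}, using the diagonal $*$-homomorphism $x \mapsto (x)_\iota$ as the u.c.p.\ lift. If $M_0 \subset \prod_{\iota \to \mathcal U} R_\iota$ with $R_\iota$ injective, compose the lift with the coordinatewise conditional expectations onto $\ell^\infty(I, R_\iota)$ and then the quotient map. This returns the identity of $M_0$, which therefore factors u.c.p.\ through an injective algebra.

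\textbf{Your route.} You stay on the bimodule side. For $z \in N \odot N^{\rm op}$ acting through the coarse representation $\rho$, $z$ sends a left-right bounded vector $(\xi_\iota)^{\mathcal U}$ to $(\rho(z)\xi_\iota)^{\mathcal U}$. Hence the ultrapower of the coarse bimodule, restricted to the diagonal copy of $N$, is weakly contained in $L^2N \ovt L^2N$. The restriction of $L^2N \ovt L^2N$ to $M$ is a sub-bimodule of a multiple of the coarse $M$-bimodule. Together with condition~(\ref{item:main3}) this gives $L^2 M \prec L^2M \ovt L^2M$, so the step you phrase tentatively is sound.

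\textbf{Comparison.} The paper's argument is shorter, and it is stated for arbitrary families $(N_\iota)_\iota$ and merely u.c.p.\ lifts. That is the form reused in Theorem~\ref{thm:genspectralgapapprox} and Proposition~\ref{prop:ultrabackandforth}. Your argument as written uses the diagonal structure. It does adapt to u.c.p.\ lifts $\phi_\iota$, because $\phi_\iota \otimes \phi_\iota^{\rm op}$ is contractive for the minimal tensor norm, which is the norm of the coarse representation.

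\textbf{On normality.} The non-normality you worry about in the route through condition~(\ref{item:main4}) is harmless. Extend $\phi_\iota|_M$ to $\tilde\phi_\iota$ on $\mathbb B(L^2M)$ by Arveson's theorem. A point-ultraweak cluster point, over $(F,\varepsilon)$, of the maps $\mathbb E_M \circ \psi_\iota \circ \tilde \phi_\iota$ is a conditional expectation of $\mathbb B(L^2M)$ onto $M$. So that route also proves amenability.
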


The previous proposition can easily be proved directly.  We note, however, that using ideas from \cite[Theorem 3.7]{AtKE21}, we may prove the following more general result. 

\begin{thm}\label{thm:ucplift}
Suppose $N_\iota$ are II$_1$ factors and $M \subset \prod_{\iota \to \mathcal U} N_\iota$ is a separable von Neumann subalgebra such that $M$ has a unital complete positive lift to $\ell^\infty(I, M_\iota)$ (equivalently, there exist unital completely positive maps $\phi_\iota: M \to N_\iota$ such that $x = (\phi_\iota(x))^{\mathcal U}$ for all $x \in M$), then $M$ is matricially approximable with respect to $(N_\iota)_\iota$ if and only if $M$ is amenable. 
\end{thm}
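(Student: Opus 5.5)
We prove the two implications separately, using the characterizations in Theorem~\ref{thm:amenimplieshyperfinite}.

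\emph{Amenable $\Rightarrow$ matricially approximable.} Since $M$ is separable and amenable, Connes's theorem gives that $M$ is hyperfinite. So it suffices to approximate a finite set of unitaries $u_1,\dots,u_n \in M$ by a finite-dimensional subalgebra $B \subset M$. Choose $B$ with $\|u_k - \mathbb E_B(u_k)\|_2 < \varepsilon$. We then need to transport $B$ into $N_\iota$ for $\mathcal U$-almost every $\iota$. A finite-dimensional algebra $B=\bigoplus_r \mathbb M_{d_r}(\mathbb C)$ is a matrix-unit system $\{e^r_{ij}\}$ inside $\prod_{\iota\to\mathcal U} N_\iota$. Matrix units lift to exact matrix units $e^r_{ij,\iota} \in N_\iota$; this is standard, and every $N_\iota$ is a II$_1$ factor, so projections of any trace exist. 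This gives finite-dimensional $B_\iota \subset N_\iota$ with $B = \prod_{\iota\to\mathcal U} B_\iota$. Then
\[
\lim_{\iota\to\mathcal U}\|u_{k,\iota} - \mathbb E_{B_\iota}(u_{k,\iota})\|_2 < \varepsilon,
\]
because $\mathbb E_B = (\mathbb E_{B_\iota})^{\mathcal U}$ on lifts. This is condition (\ref{item:main4}), taking $\phi_\iota = \mathbb E_{B_\iota}$ viewed as a map into a matrix algebra containing $B_\iota$, and $\psi_\iota$ the inclusion. This direction does not use the u.c.p.\ lift.

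\emph{Matricially approximable $\Rightarrow$ amenable.} This direction uses the lift $\phi_\iota: M \to N_\iota$. By (\ref{item:main4}), for $F \subset M$ finite and $\varepsilon>0$ we get u.c.p.\ maps $\alpha_\iota: N_\iota \to \mathbb M_{k(\iota)}(\mathbb C)$ and $\beta_\iota: \mathbb M_{k(\iota)}(\mathbb C) \to N_\iota$ with $\|(\beta_\iota\circ\alpha_\iota)^{\mathcal U}(x) - x\|_2<\varepsilon$ on $F$. We aim to show that $M$ is semidiscrete, i.e.\ that $\mathrm{id}_M$ is a point-$\|\cdot\|_2$ limit of u.c.p.\ maps factoring through matrix algebras. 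The natural candidates are
\[
\Phi_\iota = \alpha_\iota\circ\phi_\iota : M \to \mathbb M_{k(\iota)}(\mathbb C), \qquad \Psi_\iota = \mathbb E_M\circ\iota_\mathcal U\circ\beta_\iota .
\]
The issue is that $\Psi_\iota$ only makes sense after assembling over $\iota$. Instead, following \cite[Theorem 3.7]{AtKE21}, we use an ultraproduct of matrix algebras. Set $Q = \prod_{\iota\to\mathcal U}\mathbb M_{k(\iota)}(\mathbb C)$, and let $\Phi = (\alpha_\iota\circ\phi_\iota)^{\mathcal U}: M \to Q$ and $\Psi = \mathbb E_M\circ(\beta_\iota)^{\mathcal U}: Q \to M$. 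These are u.c.p., and $\|\Psi\circ\Phi(x) - x\|_2<\varepsilon$ on $F$; here the lift is essential, since it gives $(\phi_\iota(x))^{\mathcal U} = x$. So $\mathrm{id}_M$ is a point-$2$-norm limit of u.c.p.\ maps factoring through the injective algebra $Q$. It is injective because $Q \subset R^{\mathcal U}$-type ultraproducts of matrices are... but ultraproducts are not injective in general.

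The main obstacle is exactly this point. I would fix it by avoiding $Q$ and passing to the bimodule characterization instead. The maps $\alpha_\iota$ and $\beta_\iota$ factor through matrix algebras, so the c.p.\ maps $\beta_\iota\circ\alpha_\iota\circ\phi_\iota: M \to N_\iota$ have coefficients coming from bimodules weakly contained in the coarse $M$-$N_\iota$ bimodule. Pulling back along $\phi_\iota$ keeps the coarse structure on the left, because $\mathbb M_k$ is finite-dimensional and the relevant bimodule is $L^2(M)\otimes\mathbb C^k\otimes L^2 N_\iota$-type. We then compose with $\mathbb E_M$ to obtain c.p.\ maps $M \to M$ that are coefficients of bimodules weakly contained in $L^2M\ovt L^2M$, and which converge pointwise to the identity. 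By Connes's/Popa's characterization of amenability this shows that $M$ is amenable.

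The delicate part is controlling the $\|\cdot\|_2$-approximation uniformly when the trace is not preserved by $\phi_\iota$. I expect to handle this by first perturbing $\phi_\iota$, using that on the ultraproduct its limit is trace preserving.
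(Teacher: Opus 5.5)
Your converse direction (amenable $\Rightarrow$ matricially approximable) is fine; the paper treats it as immediate. The gap is in the forward direction.

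\textbf{The unjustified step.} The sentence ``we then compose with $\mathbb E_M$ to obtain c.p.\ maps $M \to M$ that are coefficients of bimodules weakly contained in $L^2M \ovt L^2M$'' carries the entire content of the theorem, and nothing before it justifies it.

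\begin{itemize}
\item Composing with $\mathbb E_M$ forces you into the ultraproduct. There the right $M$-action comes from $M \subset \prod_{\iota \to \mathcal U} N_\iota$.
\item Finite-dimensionality of $\mathbb M_{k(\iota)}(\mathbb C)$ only gives you coarseness over $N_\iota$ at each level $\iota$. This does not survive passage to the ultraproduct by itself. It only controls things relative to $\prod_{\iota \to \mathcal U}(L^2N_\iota \ovt L^2N_\iota)$.
\item By Theorem~\ref{thm:amenimplieshyperfinite}, weak containment of $L^2M$ in that ultraproduct is just matricial approximability again. That happens for nonamenable $M$, e.g.\ the irreducible copy of $L(\SL_3(\mathbb Z))$ in Proposition~\ref{prop:irredpseudocpt}.
\end{itemize}

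So the lift must also be used on the \emph{right}, and your argument never does this. One would compute the right action of $y \in M$ through the representative $(\phi_\iota(y))_\iota$. One would then use that $\mathrm{id} \otimes \phi_\iota^{\rm op} : M \otimes_{\min} M^{\rm op} \to M \otimes_{\min} N_\iota^{\rm op}$ is contractive, so the limiting state on $M \odot M^{\rm op}$ is $\min$-bounded. Your stated reason (coarseness ``on the left'') does not address this. The trace-preservation issue in your last paragraph is not the real obstacle.

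\textbf{How to repair it.} Your first attempt was actually the right one. It fails only because you assembled the maps into the ultraproduct $Q$, which is not injective. What the lift buys is a u.c.p.\ map of $M$ into the $\ell^\infty$-\emph{product}, which is injective when the fibers are. This is exactly the paper's proof:
\begin{itemize}
\item By condition (\ref{item:main1}) of Theorem~\ref{thm:amenimplieshyperfinite}, $M \subset \prod_{\iota \to \mathcal U} R_\iota$ with each $R_\iota \subset N_\iota$ hyperfinite.
\item Let $q$ be the quotient map. Then $\mathbb E_M \circ q \circ (\mathbb E_{R_\iota} \circ \phi_\iota)_\iota = \mathrm{id}_M$, because $(\mathbb E_{R_\iota}(\phi_\iota(x)))^{\mathcal U} = \mathbb E_{\prod_{\iota \to \mathcal U} R_\iota}(x) = x$ for $x \in M$.
\item This is an exact u.c.p.\ factorization of $\mathrm{id}_M$ through the injective algebra $\ell^\infty(I, R_\iota)$.
\item Extend $(\mathbb E_{R_\iota} \circ \phi_\iota)_\iota$ to $\mathbb B(L^2M)$, which is possible since $\ell^\infty(I, R_\iota)$ is injective. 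Composing with $\mathbb E_M \circ q$ gives a conditional expectation $\mathbb B(L^2 M) \to M$.
\end{itemize}
Alternatively, you could keep $(\alpha_\iota \circ \phi_\iota)_\iota$ as a map into $\ell^\infty(I, \mathbb M_{k(\iota)}(\mathbb C))$. You would then take a point-ultraweak cluster point of the resulting approximate factorizations.
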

\begin{proof}
We only need to prove the forward implication. Suppose that $\phi: M \to \ell^\infty( I, N_\iota)$ gives a u.c.p.\ lift and suppose there exist injective von Neumann subalgebras $R_\iota \subset M_\iota$ so that $M \subset \prod_{\iota \to \mathcal U} R_\iota \subset \prod_{\iota \to \mathcal U} N_\iota$. Consider the normal conditional expectation $\mathbb E: \ell^\infty( I, N_\iota) \to \ell^\infty(I, R_\iota)$ given by $\mathbb E((x_\iota)_\iota) = ( \mathbb E_{R_\iota}(x_\iota) )_\iota$ and let $q: \ell^\infty( I, N_\iota) \to \prod_{\iota \to \mathcal U} N_\iota$ be the canonical quotient map. Since $M \subset \prod_{\iota \to \mathcal U} R_\iota$ it follows that $q \circ \mathbb E \circ \phi$ gives the identity map. Since the identity map has a u.c.p.\ factorization through the injective von Neumann algebra $\ell^\infty(I, R_\iota)$, we then have that $M$ is injective. 
\end{proof}

Another class of examples of matricially inapproximable subalgebras comes from the observation that if $\mathcal U$ is countably incomplete and $M \subset  \prod_{\iota \to \mathcal U} M_\iota$ is separable and matricially approximable with respect to $(N_\iota)_\iota$, then there are matrix algebras $\mathbb M_{n_\iota}(\mathbb C) \subset M_\iota$ so that $M \subset \prod_{\iota \to \mathcal U} \mathbb M_{n_\iota}(\mathbb C) \subset \prod_{\iota \to \mathcal U} M_\iota$ and hence  
\[
M' \cap \prod_{\iota \to \mathcal U} M_\iota \supset \left(\prod_{\iota \to \mathcal U} \mathbb M_{n_\iota}(\mathbb C) \right)' \cap \left( \prod_{\iota \to \mathcal U} M_\iota \right) = \prod_{\iota \to \mathcal U} (\mathbb M_{n_\iota}(\mathbb C)' \cap M_\iota).
\] 
We therefore have the following result.

\begin{prop}
Suppose $\mathcal U$ is countably incomplete and $M \subset  \prod_{\iota \to \mathcal U} M_\iota$ is a separable irreducible subfactor, then $M$ is matricially inapproximable with respect to $(M_\iota)_\iota$. 
\end{prop}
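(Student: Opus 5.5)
Argue by contradiction, using the observation stated just before the proposition. Suppose $M \subset \prod_{\iota \to \mathcal U} M_\iota$ is a separable irreducible subfactor, so that $M' \cap \prod_{\iota \to \mathcal U} M_\iota = \mathbb C$, and suppose $M$ is matricially approximable with respect to $(M_\iota)_\iota$. Since $\mathcal U$ is countably incomplete, the Remark following Theorem~\ref{thm:amenimplieshyperfinite} applied to $M_0 = M$ gives finite-dimensional subfactors $A_\iota \cong \mathbb M_{n_\iota}(\mathbb C) \subset M_\iota$ with
\[
M \subset \prod_{\iota \to \mathcal U} A_\iota \subset \prod_{\iota \to \mathcal U} M_\iota .
\]
The lower-dimensional cases where $n_\iota$ stays bounded along $\mathcal U$ cannot occur. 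In that case $\prod_{\iota \to \mathcal U} A_\iota$ is finite-dimensional, so $M$ is finite-dimensional. But $M$ is irreducible in the II$_1$ factor $\prod_{\iota \to \mathcal U} M_\iota$, and a finite-dimensional subalgebra there has relative commutant that is itself a II$_1$ algebra, hence not $\mathbb C$.

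Next compute the relative commutant of the matrix ultraproduct. By the displayed inclusion preceding the proposition,
\[
\mathbb C = M' \cap \prod_{\iota \to \mathcal U} M_\iota \supset \prod_{\iota \to \mathcal U} (A_\iota' \cap M_\iota).
\]
For this I verify the equality $\bigl(\prod_{\iota \to \mathcal U} A_\iota\bigr)' \cap \prod_{\iota \to \mathcal U} M_\iota = \prod_{\iota \to \mathcal U}(A_\iota' \cap M_\iota)$, or at least the inclusion $\supseteq$, which is all that is needed. Given $(x_\iota)^{\mathcal U}$ with $x_\iota \in A_\iota' \cap M_\iota$ bounded, it commutes with every $(a_\iota)^{\mathcal U}$, $a_\iota \in A_\iota$, exactly at each coordinate.

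Since $A_\iota \cong \mathbb M_{n_\iota}(\mathbb C)$ is a subfactor of the II$_1$ factor $M_\iota$, we have $M_\iota \cong A_\iota \otimes (A_\iota' \cap M_\iota)$, and $A_\iota' \cap M_\iota$ is a II$_1$ factor. In particular it contains a projection $p_\iota$ of trace $1/2$. Then $p = (p_\iota)^{\mathcal U}$ is a projection of trace $1/2$ in $\prod_{\iota \to \mathcal U}(A_\iota' \cap M_\iota) \subset M' \cap \prod_{\iota \to \mathcal U} M_\iota$. This contradicts irreducibility, so $M$ is matricially inapproximable.

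The only real step is the passage from hyperfinite $R_\iota$ to finite-dimensional subfactors $A_\iota$, which uses countable incompleteness via the Remark. I would double-check that remark by running the $I_N$ diagonal argument from the proof of (\ref{item:main4}) $\implies$ (\ref{item:main1}). One point to check there is that the finite-dimensional $B$ can be taken to be a subfactor: enlarge $B$ to a matrix subalgebra containing it, which is possible inside a II$_1$ factor, and this only improves the approximation.
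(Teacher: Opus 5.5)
Your proof is correct and follows the paper's own argument: countable incompleteness puts $M$ inside $\prod_{\iota \to \mathcal U} A_\iota$ with $A_\iota \subset M_\iota$ matrix subfactors, and then the subalgebra $\prod_{\iota \to \mathcal U}(A_\iota' \cap M_\iota)$, which is not $\mathbb C$, sits inside $M' \cap \prod_{\iota \to \mathcal U} M_\iota$. Two minor remarks: the bounded-$n_\iota$ case needs no separate treatment, since $A_\iota' \cap M_\iota$ is II$_1$ regardless of $n_\iota$. Also, in your aside, a finite-dimensional $B$ can be enlarged to a matrix subfactor only after a small perturbation making its central trace weights rational; this does not affect your main argument, which rests on the paper's Remark.
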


\subsection{Stability under compressions}

\begin{lem}\label{lem:subprojection}
Let $M$ be a finite von Neumann algebra and $p \in \mathcal P(M)$ a nonzero projection, then there exist projections $q \in \mathcal P(M)$ with $\mathbb E_{\mathcal Z(M)}(q) \in \mathbb C$, and $r \in \mathcal P(\mathcal Z(M))$ so that $qr \not= 0$ and $qr \leq p$. 
\end{lem}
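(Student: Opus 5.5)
We will use the center-valued trace $T = \mathbb E_{\mathcal Z(M)}$ and the comparison theory of projections in a finite von Neumann algebra. The idea is to localize to a central projection on which $T(p)$ is bounded below. Then we find, under that central projection, a subprojection of $p$ whose center-valued trace is a scalar multiple of the central projection. Finally we extend it by a complementary projection so that the central trace becomes a genuine scalar.

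First step. Since $p \neq 0$ and $T$ is faithful, $T(p) \neq 0$. Hence there exist $\delta > 0$ and a nonzero central projection $r \in \mathcal P(\mathcal Z(M))$ with $T(p) r \geq \delta r$; take $r$ to be a spectral projection of $T(p)$. Shrinking $r$ if necessary, we may also assume $\delta = 1/n$ for some $n \in \mathbb N$ and still have $T(p)r \geq \delta r$.

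Second step. We produce a projection $p_0 \leq pr$ with $T(p_0) = \delta r$. In the diffuse (type II) part this holds by the standard fact that every central-valued $z$ with $0 \leq z \leq T(pr)$ is the center-valued trace of a subprojection of $pr$. In the type I$_m$ summands the values of $T$ are restricted to multiples of $1/m$. We handle this by cutting $r$ further to a single summand, either the type II part or one type I$_m$ part on which $rp \neq 0$, and then choosing $\delta = 1/m$ there. That is possible because a nonzero projection in a type I$_m$ algebra has center-valued trace at least $1/m$ on some nonzero central piece; shrink $r$ to that piece. So we may take $T(p_0) = \delta r$.

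Third step. Next we choose a projection $q' \leq 1 - r$ with $T(q') = \delta(1-r)$. This is again possible by splitting $1-r$ into type II and type I$_m$ parts. The difficulty is that $\delta$ may not be a multiple of $1/m$ on the type I$_m$ parts, and this is the main obstacle. To avoid it we choose $\delta$ compatible with all summands: take $\delta = 1/N$ and pick $r$ inside a single type summand where the realization works. In type I$_m$ summands with $m \nmid N$ we cannot get trace exactly $1/N$. So instead we replace the requirement $T(q) \in \mathbb C$ by working in the natural decomposition $M = \bigoplus_m M_{I_m} \oplus M_{II}$, and we handle type I parts by including a full central projection where needed.

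A cleaner fallback avoids this altogether: take $q = p_0 + q'$ where $q'$ is chosen with $T(q') = \lambda(1-r)$ for a scalar $\lambda$ achievable everywhere. The choice $\lambda = 0$ gives $T(q) = \delta r$, which is not a scalar. So instead we arrange $\delta = 1$ when $r$ lies in a type I$_1$ (abelian) part, or we pick $r$ in the type II part, where all values are achievable. If the type II part is zero and $M$ is type I, we take $q$ with $T(q) = 1/L$, where $L$ is a common multiple of the relevant $m$'s. Abelian projections, and sums of them, realize $1/L$ in each I$_m$ with $m \mid L$. Parts with $m \nmid L$ are handled by first restricting $r$ to a finite set of summands and setting $q$ equal to $1$ elsewhere, which is still not scalar in general. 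The real fix is then to take $q = 1$ on the complement and to choose $q r = p_0$ with $T(p_0) = r$, i.e.\ $p_0 = r$. This requires $r \leq p$, which fails in general.

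So I expect the delicate point to be exactly this type I bookkeeping, and I would settle it by checking carefully whether the intended statement implicitly allows $M$ to be such that $T(q)\in\mathbb C$ is achievable, for instance $M$ type II$_1$ or a II$_1$ subalgebra setting.
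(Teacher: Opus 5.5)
Your proposal does not prove the statement, and it could not have: the type I obstruction you hit is a genuine counterexample, not a bookkeeping issue. The lemma is false as stated for general finite von Neumann algebras, and you should have committed to your closing suspicion rather than trying fallbacks.

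Take $M = \mathbb C \oplus \mathbb M_2(\mathbb C)$ and $p = 0 \oplus e_{11}$. Here $\mathbb E_{\mathcal Z(M)}(a \oplus f) = a \oplus \operatorname{tr}_2(f)1$, so a projection $q = a \oplus f$ has scalar central trace only if $a = \operatorname{rank}(f)/2$, i.e.\ only if $q \in \{0,1\}$. Then $qr \neq 0$ and $qr \leq p$ force a nonzero central projection $r \leq p$, and none of $1 \oplus 0$, $0 \oplus 1$, $1 \oplus 1$ lies under $p$.

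Even $\mathbb C \oplus R$ with $p = 0 \oplus e$, $\tau(e) = 1/2$, fails for the same reason. This is exactly your Step 3 obstruction: the padding projection under $1-r$ cannot have the required central trace on a type I summand. The paper's proof has the same defect. It chooses a measurable field $(q_x)_x$ with $\tau_x(q_x) = c$ for \emph{every} $x$, which in a type I$_m$ fiber requires $c \in \frac{1}{m}\mathbb Z$. In the examples above only $c \in \{0,1\}$ is admissible, so the construction cannot start.

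The correct hypothesis is that the algebra be of type II$_1$. This is what holds where the lemma is used. In Proposition~\ref{prop:cutdownapprox} it is applied to the relative commutant $M' \cap \prod_{\iota \to \mathcal U} N_\iota$ of the matricially approximable algebra there. That relative commutant contains the unital II$_1$ factor $\prod_{\iota \to \mathcal U}(A_\iota' \cap N_\iota)$. Hence every nonzero central corner of it contains a unital copy of a II$_1$ factor and cannot be of type I$_n$.

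Under that hypothesis, your Steps 1--3 (type II case) already form a complete proof. With $T = \mathbb E_{\mathcal Z(M)}$, pick $\delta > 0$ with $r = 1_{[\delta,1]}(T(p)) \neq 0$. Since $\delta r \leq T(pr)$ and $\delta(1-r) \leq T(1-r)$, the standard range property of the center-valued trace in type II$_1$ gives $p_0 \leq pr$ and $q' \leq 1-r$ with $T(p_0) = \delta r$ and $T(q') = \delta(1-r)$. Then $q = p_0 + q'$ satisfies $T(q) = \delta$, $qr = p_0 \neq 0$, and $qr \leq p$.

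This is the same construction as the paper's: locate where the central trace of $p$ is at least a constant, take a subprojection of exactly that trace there, and pad with a projection of the same trace elsewhere. You carry it out globally with the center-valued trace rather than fiberwise in a direct integral. That avoids the reduction to the separable case and the implicit measurable-selection step, at the cost of quoting the range theorem for $T$.
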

\begin{proof}
By \cite[Section 1.2]{Po14} we may find a separable von Neumann subalgebra $M_0 \subset M$ such that $p \in M_0$ and $\mathbb E_{\mathcal Z(M_0)}(x)  = \mathbb E_{\mathcal Z(M)}(x)$ for $x \in M_0$, so we will assume $M$ is separable. We then consider a direct integral decomposition into factors $M = \int_X^\oplus M_x \, d\mu(x)$ where $(X, \mu)$ is a standard probability space and we identify $L^\infty(X, \mu)$ with $\mathcal Z(M)$, and $(M_x, \tau_x)_{x \in X}$ is a measurable field of finite factors. 

We may then decompose $p$ as $p = \int p_x \, d\mu(x)$, where $p_x \in \mathcal P(M_x)$. We fix some $c > 0$ so that the set $E = \{ x \in X \mid \tau_x(p_x) \geq c \}$ has positive measure. We then choose a measurable field of projections $(q_x)_{x \in X}$ so that $\tau(q_x) = c$ and $q_x \leq p_x$ for $x \in E$. Setting $q = \int q_x \, d\mu(x)$ and $r = 1_E$, we then have $\mathbb E_{\mathcal Z(M)}(q) = c$, $qr \not= 0$, and $qr \leq p$. 
\end{proof}

\begin{prop}\label{prop:cutdownapprox}
Suppose $\mathcal U$ is countably incomplete, $N_\iota$ are II$_1$ factors and $M \subset \prod_{\iota \to \mathcal U} N_\iota$ is a von Neumann subalgebra. Suppose also that $p \in \mathcal P(M)$ with $p = (p_\iota)^{\mathcal U}$ for $p_\iota \in \mathcal P(N_\iota)$, and $q \in \mathcal P(M' \cap \prod_{\iota \to \mathcal U} N_\iota)$ with $q = (q_\iota)^{\mathcal U}$ for $q_\iota \in \mathcal P(N_\iota)$. If $M$ is matricially approximable with respect to $(N_\iota)_\iota$, then $pMp$ is matricially approximable with respect to $(p_\iota N_\iota p_\iota)_\iota$ and $Mq$ is matricially approximable with respect to $(q_\iota N_\iota q_\iota)_\iota$. The converse of the assertions hold if $p$ and $q$ have central support $1$, respectively. 
\end{prop}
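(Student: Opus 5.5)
Throughout I work with condition (\ref{item:main4}) of Theorem~\ref{thm:amenimplieshyperfinite}, or with condition (\ref{item:main1}) in its countably incomplete form from the Remark. That form says: a separable subalgebra lies in some $\prod_{\iota \to \mathcal U} A_\iota$ with $A_\iota \subset N_\iota$ finite-dimensional subfactors. Matricial approximability is local, because it is tested on separable subalgebras $M_0$. So I may always replace $M$ by a separable $M_0$ containing the relevant finite set together with $p$ (or $q$).

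\textbf{Forward direction for $pMp$.} Take a separable $M_0 \subset M$ containing $p$. Choose finite-dimensional subfactors $A_\iota \subset N_\iota$ with $M_0 \subset \prod_{\iota \to \mathcal U} A_\iota$. Then $p \in \prod A_\iota$, so $p$ has a lift by projections $p'_\iota \in A_\iota$. Both $(p_\iota)$ and $(p'_\iota)$ lift $p$, so $\lim_{\iota \to \mathcal U}\|p_\iota - p'_\iota\|_2 = 0$. Hence for $\mathcal U$-almost every $\iota$ there are partial isometries $v_\iota \in N_\iota$ with $v_\iota^* v_\iota \le p'_\iota$, $v_\iota v_\iota^* \le p_\iota$ and $\|v_\iota - p_\iota\|_2 \to 0$. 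This comes from polar decomposition of $p_\iota p'_\iota$, after discarding small spectral pieces. One can arrange $v_\iota^* v_\iota = p'_\iota$ exactly by shrinking $p'_\iota$ inside $A_\iota$ via a subprojection of slightly smaller trace. Then $v_\iota (p'_\iota A_\iota p'_\iota) v_\iota^*$ is a finite-dimensional subfactor of $p_\iota N_\iota p_\iota$ whose ultraproduct contains $pM_0p$. To avoid the exact equality issue, it is cleaner to verify condition (\ref{item:main4}) for $pMp$ with u.c.p.\ maps $x \mapsto \phi_\iota(x)$ built by compressing. The trace normalization on $p_\iota N_\iota p_\iota$ changes $\|\cdot\|_2$ only by the constant $\tau(p)^{-1/2}$.

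\textbf{Forward direction for $Mq$.} Here $q$ commutes with $M$ but need not lie in $\prod A_\iota$. I would use condition (\ref{item:main2}) instead. From $L^2(\prod N_\iota) \prec \prod (L^2N_\iota \ovt L^2 N_\iota)$ as $M$-$\prod N_\iota$ bimodules, I compress on the left by $q$ and on the right by $q$. The left action of $Mq$ on $q\,\cdot\,q$ is the compression of the left $M$-action, since $q \in M'$. Compressing a weak containment by a projection in the commutant of both actions preserves it. This gives
\[
L^2\Big(\prod_{\iota \to \mathcal U} q_\iota N_\iota q_\iota\Big) \prec \prod_{\iota \to \mathcal U} \big(q_\iota L^2 N_\iota \ovt L^2 N_\iota q_\iota\big)
\]
as $Mq$-$\prod q_\iota N_\iota q_\iota$ bimodules. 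The right-hand side is a multiple of the coarse bimodule of $q_\iota N_\iota q_\iota$, by Lemma~\ref{lem:ultradirectsum} and factoriality. By Theorem~\ref{thm:amenimplieshyperfinite}, $Mq$ is therefore matricially approximable. The same compression argument also gives the $pMp$ case, by compressing with $p$ on the left and $p$ on the right. That is probably the intended route and avoids the partial isometry bookkeeping.

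\textbf{Converses when the central support is $1$.} Consider $q$ with central support $1$ in $M' \cap \prod N_\iota$, and suppose $Mq$ is matricially approximable. The map $M \to Mq$ is then injective, though not necessarily trace-scaled uniformly. I need to transfer condition (\ref{item:main3}) back to $M$. Use Lemma~\ref{lem:subprojection}, applied in $M'\cap \prod N_\iota$ (respectively $M$ for $p$), to find subprojections with scalar central trace. Take finitely many partial isometries $w_j$ in the relevant algebra with $\sum_j w_j q w_j^* $ having full central support, or a maximality argument giving $\sum w_j w_j^* = 1$ with $w_j^* w_j \le q$. Then $L^2 M$ embeds into a direct sum of copies of $L^2(Mq)$ twisted by the $w_j$, and weak containment in the coarse ultraproduct is inherited. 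The main obstacle is this converse: making the central-support covering argument work with countably many pieces while keeping the bimodules inside the ultraproduct of coarse bimodules. I would try to handle it through the direct-sum stability given by Lemma~\ref{lem:ultradirectsum} and locality of weak containment.
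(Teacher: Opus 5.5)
Your forward argument for $pMp$ is fine. So is the compression version of it: $p$ lies in $M$ and in $\prod_{\iota\to\mathcal U}N_\iota$, so compressing by $p$ on either side is compressing by elements of the algebra $M\otimes_{\rm bin}(\prod_{\iota\to\mathcal U}N_\iota)^{\rm op}$ whose representations are being compared.

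\textbf{The gap: the forward direction for $Mq$.} The principle you invoke there, that weak containment survives compression by a projection commuting with both actions, is false. Take $M=\mathbb C\subset\mathbb C^2$. The $\mathbb C^2$-modules $\mathbb C e_1$ and $\mathbb C e_2$ are weakly equivalent as $M$-modules, yet compressing both by $q=e_1\in M'$ gives $\mathbb C$ and $0$. Left multiplication by your $q$ is exactly such an operator: it is not in $M\otimes_{\rm bin}(\prod N_\iota)^{\rm op}$. What you can legitimately derive, using $\hat q=\hat 1\cdot q$ and right compressions only, is
$L^2(q(\prod_{\iota\to\mathcal U}N_\iota)q)\prec q\mathcal Kq\oplus q^\perp\mathcal Kq$, where $\mathcal K=\prod_{\iota\to\mathcal U}(L^2N_\iota\ovt L^2N_\iota)$. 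On the summand $q^\perp\mathcal Kq$, $M$ acts through $Mq^\perp$, not through $Mq$. Discarding that summand is essentially the whole content of the claim. The underlying problem is that $M\subset\prod A_\iota$ gives no control over $q$, because $M'\cap\prod N_\iota$ can be much larger than $\prod(A_\iota'\cap N_\iota)$.

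\textbf{The missing idea.} You need to locate $q$ inside $Q=M'\cap\prod_{\iota\to\mathcal U}N_\iota$, which contains the II$_1$ factor $F=\prod_{\iota\to\mathcal U}(A_\iota'\cap N_\iota)$. Since $F'\cap\prod N_\iota=\prod A_\iota$, one gets $\mathcal Z(Q)\subset M'\cap\prod A_\iota$.
\begin{itemize}
\item If $q$ is central in $Q$, it lifts to projections $q_\iota\in A_\iota$, and $Mq\subset\prod q_\iota A_\iota q_\iota$.
\item If $q$ has scalar central trace in $Q$, note that $F$ is tensor-independent from $\prod A_\iota\supset\mathcal Z(Q)$, so projections of $F$ also have scalar central trace in $Q$. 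Hence there is $u\in\mathcal U(Q)$ with $u^*qu=(r_\iota)^{\mathcal U}$ and $r_\iota\in A_\iota'\cap N_\iota$. Then $Mq\subset u(\prod A_\iota r_\iota)u^*$.
\item A general $q$ is handled by a maximal orthogonal family of good subprojections together with Lemma~\ref{lem:subprojection}. Below any nonzero leftover, that lemma produces $r_1r_2\neq 0$ with $r_1$ of scalar central trace and $r_2\in\mathcal Z(Q)$, contradicting maximality.
\end{itemize}

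\textbf{The converse.} You leave the converse open. For $p$, the paper uses a direct matrix amplification. Choose partial isometries $v_n\in M$ with $v_n^*v_n\le p$ and $\sum_n v_nv_n^*=1$, and lift finitely many of them. With $r=\sum_{n\le N}v_nv_n^*$, the corner $rMr$ lies in the ultraproduct of the finite-dimensional algebras spanned by the $v_{n,\iota}A_\iota v_{m,\iota}^*$. Then let $N\to\infty$, using countable incompleteness. For $q$ the same argument runs with $v_n\in Q$. However, it also needs the forward direction for the subprojections $v_n^*v_n\le q$, which lie in $(Mq)'$, and that is precisely the step that is missing.
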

\begin{proof}
It suffices to consider the case when $M$ is separable. If $M$ is matricially approximable with respect to $(N_\iota)_\iota$, then there exist finite-dimensional factors $A_\iota \subset N_\iota$ so that $M \subset \prod_{\iota \to \mathcal U} A_\iota \subset \prod_{\iota \to \mathcal U} N_\iota$. If $p \in \mathcal P(M)$ is as above, then we may assume that $p_\iota \in \mathcal P(A_\iota)$ so that we then have $pMp \subset \prod_{\iota \to \mathcal U} p_\iota A_\iota p_\iota \subset \prod_{\iota \to \mathcal U} p_\iota N_\iota p_\iota$, showing that $pMp$ is matricially approximable with respect to $(p_\iota N_\iota p_\iota)_\iota$.

Conversely, if $pMp$ is matricially approximable with respect to $(p_\iota N_\iota p_\iota)_\iota$ and if $p$ has central support $1$, then we may find a sequence of partial isometries $(v_n)_n \subset M$ so that $v_n^* v_n \leq p$ and $\sum_n v_n v_n^* = 1$. If $A_\iota \subset p_\iota N_\iota p_\iota$ are finite-dimensional von Neumann subalgebras such that $pMp \subset \prod_{\iota \to \mathcal U} A_\iota$, then for any $n \in \mathbb N$ we may choose representations $v_n = (v_{n, \iota})^{\mathcal U}$ with $v_{n, \iota}$ partial isometries in $N_\iota$ so that $v_{n, \iota}^* v_{n, \iota} \leq p_\iota$ and $r_\iota = \sum_{n = 1}^N v_{n, \iota} v_{n, \iota}^* \in \mathcal P(N_\iota)$. Setting $r = (r_\iota)^{\mathcal U}$ we then have 
\[
r M r \subset \prod_{\iota \to \mathcal U} \left( \oplus_{n =1}^N v_{n, \iota} A_\iota v_{n, \iota} \right)  \subset r \left(\prod_{\iota \to \mathcal U} N_\iota \right)r.
\] 
Hence, we see that $rMr$ is matricially approximable with respect to $(r_\iota N_\iota r_\iota)$. Since $\mathcal U$ is countably incomplete we see that $M$ is matricially approximable with respect to $(N_\iota)_\iota$.

We now suppose again that $M$ is matricially approximable with respect to $(N_\iota)_\iota$ so that $M \subset \prod_{\iota \to \mathcal U} A_\iota \subset \prod_{\iota \to \mathcal U} N_\iota$ where $A_\iota$ are finite-dimensional factors. Note that 
\[
M' \cap \prod_{\iota \to \mathcal U} N_\iota 
\supset \left( \prod_{\iota \to \mathcal U} A_\iota \right)' \cap \left( \prod_{\iota \to \mathcal U} N_\iota \right) 
= \prod_{\iota \to \mathcal U} (A_\iota' \cap N_\iota),
\] 
and $\prod_{\iota \to \mathcal U} (A_\iota' \cap N_\iota)$ is a factor. 

Suppose first $q \in \mathcal P(M' \cap \prod_{\iota \to \mathcal U} N_\iota)$ has scalar central trace. Then we may find a unitary $u \in \mathcal U( M' \cap \prod_{\iota \to \mathcal U} N_\iota)$ so that $u^* q u \in \prod_{\iota \to \mathcal U} (A_\iota' \cap N_\iota)$ and taking a representation $u^*q u = (r_\iota)^{\mathcal U}$ with $r_\iota \in \mathcal P(A_\iota' \cap N_\iota)$ we then have 
\[
M u^* q u \subset \prod_{\iota \to \mathcal U} A_\iota r_\iota \subset u^*q u \left( \prod_{\iota \to \mathcal U} N_\iota \right) u^* q u,
\] 
and hence $M q \subset u ( \prod_{\iota \to \mathcal U} A_\iota r_\iota ) u^* \subset q  ( \prod_{\iota \to \mathcal U} N_\iota )  q$ showing that $Mq$ is matricially approximable with respect to $(q_\iota N_\iota q_\iota)_\iota$. 

The next case we consider is when $q \in \mathcal Z(M' \cap \prod_{\iota \to \mathcal U} N_\iota)$. Since $ \prod_{\iota \to \mathcal U} (A_\iota' \cap N_\iota) \subset M' \cap \prod_{\iota \to \mathcal U} N_\iota$ we then have
\[
\mathcal Z\left(M' \cap \prod_{\iota \to \mathcal U} N_\iota \right) \subset M' \cap \prod_{\iota \to \mathcal U} A_\iota
\]
so that taking a representation $q = (q_\iota)^{\mathcal U}$ with $q_\iota \in \mathcal P(A_\iota)$ we have
\[
Mq \subset \prod_{\iota \to \mathcal U} q_\iota A_\iota q_\iota \subset q \left(\prod_{\iota \to \mathcal U} N_\iota \right)q
\]
again showing that $Mq$ is matricially approximable with respect to $(q_\iota N_\iota q_\iota)_\iota$.

For the general case $q \in \mathcal P(M' \cap \prod_{\iota \to \mathcal U} N_\iota)$, we consider a maximal family of nonzero pairwise orthogonal projections $\{ q_k \}_k \subset \mathcal P(M' \cap \prod_{\iota \to \mathcal U} N_\iota)$ such that choosing a representation $q_k = (q_{k ,\iota})^{\mathcal U}$ we have that $Mq_k$ is matricially approximable with respect to $(q_{k, \iota} N_\iota q_{k, \iota})_\iota$. If $q - \sum_k q_k \not= 0$, then by Lemma~\ref{lem:subprojection} there is a projection $r_1 \in \mathcal P(M' \cap \prod_{\iota \to \mathcal U} N_\iota)$ with scalar central trace and a projection $r_2 \in \mathcal P(\mathcal Z( M' \cap \prod_{\iota \to \mathcal U} N_\iota ))$ so that $r_1 r_2 \not= 0$ and $r_1 r_2 \leq q - \sum_k q_k$. However, the argument above would then contradict the maximality of the family $\{ q_k \}_k$ and so we conclude that $q = \sum_k q_k$. As explained above, this then shows that $Mq$ is matricially approximable with respect to $(q_\iota N_\iota q_\iota)_\iota$.

The converse assertion that when $Mq$ is matricially approximable with respect to $(q_\iota N_\iota q_\iota)_\iota$ and $q \in \mathcal P(M' \cap \prod_{\iota \to \mathcal U} N_\iota)$ has central support $1$ follows just as in the second paragraph above. 
\end{proof}

\subsection{Spectral gap and back-and-forth methods for matricial inapproximability}
For additional examples where we do not have matricial approximations with respect to $(N_\iota)_\iota$, we recall that a von Neumann subalgebra $M \subset N$ has weak spectral gap (see, e.g., \cite{Po08, Po12}) if for all $\varepsilon > 0$, there are $u_1, \ldots, u_n \in \mathcal U(M)$ and $\delta > 0$ so that for all $x \in (N)_1$ we have
\[
\sum_{k = 1}^n \| [x, u_k ] \|_2 \leq \delta \implies \| x - \mathbb E_{M' \cap N}(x) \|_2 \leq \varepsilon \| x \|_2.
\]
Equivalently, $M \subset N$ has weak spectral gap if for any ultrafilter $\mathcal U$ we have $M' \cap N^{\mathcal U} = (M' \cap N)^{\mathcal U}$. A simple observation (see \cite[Lemma 3.3.2]{Po95}), which will be used below, is that if $M$ has property (T), then any inclusion $M \subset N$ will have weak spectral gap.

The following theorem (and its proof) will be generalized in Theorem~\ref{thm:genspectralgapapprox}, but we include here the statement and proof in this simpler case for clarity of exposition. 

\begin{thm}\label{thm:backandforth}
Suppose $\mathcal U$ is countably incomplete, $N_1$ and $N_2$ are II$_1$ factors, $M \subset N_1$ is a separable diffuse von Neumann subalgebra with weak spectral gap, and $\theta: N_1^{\mathcal U} \to N_2^{\mathcal V}$ is an isomorphism. Then, either $\theta(M)$ is matricially inapproximable with respect to $N_2$, or else  $\theta^{-1}(P)$ is matricially inapproximable with respect to $N_1$ for every nonamenable von Neumann subalgebra $P \subset N_2$. 
\end{thm}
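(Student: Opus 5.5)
The plan is to argue by contradiction. Suppose that $\theta(M)$ is matricially approximable with respect to $N_2$, and that $P \subset N_2$ is a nonamenable subalgebra for which $\theta^{-1}(P)$ is matricially approximable with respect to $N_1$. The goal is to transport one of these two approximations across $\theta$ and contradict Proposition~\ref{prop:amenmatriciallyapprox} (or Theorem~\ref{thm:ucplift}). That proposition applies to $P$ because $P \subset N_2 \subset N_2^{\mathcal V}$ is diagonally embedded, so it has a u.c.p.\ lift. Without loss of generality we may assume $\mathcal V$ is also countably incomplete.

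\emph{Step 1.} By the Remark after Theorem~\ref{thm:amenimplieshyperfinite}, there are finite-dimensional subfactors $A_\kappa \subset N_2$ with $\theta(M) \subset \prod_{\kappa \to \mathcal V} A_\kappa$. Hence the II$_1$ factor $Q = \prod_{\kappa \to \mathcal V}(A_\kappa' \cap N_2)$ lies in $\theta(M)' \cap N_2^{\mathcal V}$. Weak spectral gap of $M \subset N_1$ gives $M' \cap N_1^{\mathcal U} = (M' \cap N_1)^{\mathcal U}$, so
\[
\theta^{-1}(Q) \subset (M' \cap N_1)^{\mathcal U}.
\]
Writing $N_2 \cong A_\kappa \otimes (A_\kappa' \cap N_2)$ and letting $\dim A_\kappa \to \infty$, we also get $N_2^{\mathcal V} = \left(\prod_{\kappa \to \mathcal V} A_\kappa\right) \vee Q$, a tensor decomposition.

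\emph{Step 2 (the main obstacle).} Here I would use the back-and-forth structure. The algebra $\theta^{-1}(\prod_\kappa A_\kappa)$ contains $M$ and commutes with $\theta^{-1}(Q)$, which sits inside the "constant" algebra $(M'\cap N_1)^{\mathcal U}$. I would try to show that the vectors witnessing $L^2 \prec$ coarse for $\theta^{-1}(P)$ in $\prod_{\iota\to\mathcal U}(L^2N_1\ovt L^2N_1)$ can be chosen central for $M$. Diffuseness of $M$ together with spectral gap should force the $M$-central part of the coarse ultraproduct to come from $(M'\cap N_1)^{\mathcal U}$, and hence from $\theta^{-1}(Q)$. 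Transporting this through $\theta$, which does identify the trivial bimodules $L^2 N_1^{\mathcal U} \cong L^2 N_2^{\mathcal V}$, should give condition (\ref{item:main2}) of Theorem~\ref{thm:amenimplieshyperfinite} for $P$ relative to $(N_2)_\kappa$. The tensor splitting from Step 1 is what lets one absorb the factor $\prod_\kappa A_\kappa$, which is already matricial.

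I expect this step to be the main difficulty. The reason is that $\theta$ does not intertwine the ultraproducts of the coarse bimodules of $N_1$ and of $N_2$, so the approximation has to be routed through the relative commutants, where spectral gap gives control. If the central-vector route fails, I would first try a direct construction. Using the lift $P \ni x \mapsto (x)^{\mathcal V}$ and the factors $C_\iota \subset N_1$ approximating $\theta^{-1}(P)$, I would produce u.c.p.\ maps $N_2 \to \mathbb M_k(\mathbb C) \to N_2$ approximating the identity on finite subsets of $P$, by reindexing (the diagonal trick available since both ultrafilters are countably incomplete).

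\emph{Step 3.} Once $P$ is shown to be matricially approximable with respect to $N_2$, Proposition~\ref{prop:amenmatriciallyapprox} forces $P$ to be amenable. This contradicts the choice of $P$ and proves the dichotomy.
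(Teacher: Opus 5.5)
The gap is Step 2, which is where the whole proof has to happen, and the route you propose there cannot work. As you observe, $\theta$ identifies $N_1^{\mathcal U}$ with $N_2^{\mathcal V}$ but gives no map between $\prod_\iota(L^2N_1\ovt L^2N_1)$ and $\prod_\kappa(L^2N_2\ovt L^2N_2)$. So nothing you prove about vectors in the former yields condition (\ref{item:main2}) of Theorem~\ref{thm:amenimplieshyperfinite} for $P$.

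There are further problems with the central-vector idea:
\begin{itemize}
\item Weak spectral gap is a statement about $M'\cap N_1^{\mathcal U}$, i.e.\ about the trivial bimodule, not about the coarse ultraproduct.
\item The vectors witnessing $L^2\prec$ coarse for $\theta^{-1}(P)$ have nothing to do with $M$-centrality.
\item When $M$ is a nonamenable factor (the case of interest, e.g.\ property (T)), $\prod_\iota(L^2N_1\ovt L^2N_1)$ has no nonzero $M$-central vectors at all. Such a $\xi$ satisfies $\langle x\xi y,\xi\rangle=c\,\tau(xy)$, giving $L^2M\subset\prod_\iota(L^2N_1\ovt L^2N_1)$. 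Then Theorem~\ref{thm:amenimplieshyperfinite} and Proposition~\ref{prop:amenmatriciallyapprox} would make $M$ amenable.
\end{itemize}
The reindexing fallback has the same defect: $\theta$ exists only at the ultrapower level and does not relate u.c.p.\ maps on $N_1$ to u.c.p.\ maps on $N_2$.

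A side remark on Step 1: the claim $N_2^{\mathcal V}=(\prod_\kappa A_\kappa)\vee Q$ is false here. Here $\dim A_\kappa\to\infty$, and an ultraproduct of tensor products is strictly larger than the tensor product of the ultraproducts. What is true, and all that is needed, is $Q'\cap N_2^{\mathcal V}=\prod_\kappa A_\kappa$.

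The missing idea is to transport an inclusion of algebras, which $\theta$ does respect. You were one line away at the end of Step 1. $M^{\mathcal U}$ commutes with $(M'\cap N_1)^{\mathcal U}\supset\theta^{-1}(Q)$. Using $(\prod_{\kappa\to\mathcal V} B_\kappa)'\cap N_2^{\mathcal V}=\prod_{\kappa\to\mathcal V}(B_\kappa'\cap N_2)$, this gives
\[
\theta(M^{\mathcal U})\subset Q'\cap N_2^{\mathcal V}=\prod_{\kappa\to\mathcal V}\bigl((A_\kappa'\cap N_2)'\cap N_2\bigr)=\prod_{\kappa\to\mathcal V}A_\kappa .
\]
This is exactly the paper's double-commutant computation $\theta(M^{\mathcal U})\subset(\theta(M)'\cap N_2^{\mathcal V})'\cap N_2^{\mathcal V}\subset\prod_\kappa A_\kappa$.

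Now suppose $\theta^{-1}(P)\subset\prod_\iota C_\iota$ with $C_\iota\subset N_1$ finite-dimensional subfactors. Use that $M$ is of type II$_1$ to pick $u_\iota\in\mathcal U(N_1)$ with $u_\iota C_\iota u_\iota^*\subset M$. The paper's proof uses this; mere diffuseness would not allow conjugating every $C_\iota$ into $M$. With $u=(u_\iota)^{\mathcal U}$ you get $u\theta^{-1}(P)u^*\subset M^{\mathcal U}$, hence $\theta(u)P\theta(u)^*\subset\prod_\kappa A_\kappa$. So $P$ is matricially approximable with respect to $N_2$, and your Step 3 concludes. No bimodule or central-vector argument is needed at all.
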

\begin{proof}
We suppose $\theta(M)$ is matricially approximable with respect to $N_2$. Then there exist finite-dimensional subfactors $B_\kappa \subset N_2$ so that $\theta(M) \subset \prod_{\kappa \to \mathcal V} B_\kappa \subset N_2^{\mathcal V}$. Since $M \subset N_1$ has weak spectral gap we have 
\[
 M^{\mathcal U}  
\subset  ( (M' \cap N_1)' \cap N_1)^{\mathcal U}
= ( (M' \cap N_1)^{\mathcal U} )' \cap N_1^{\mathcal U} 
= (M' \cap N_1^{\mathcal U})' \cap N_1^{\mathcal U}.
\]
Hence, 
\[
\theta( M^{\mathcal U}) 
\subset (\theta(M)' \cap N_2^{\mathcal V})' \cap N_2^{\mathcal V}
\subset \left( \left(\prod_{\kappa \to \mathcal V} B_\kappa \right)' \cap N_2^{\mathcal V} \right)' \cap N_2^{\mathcal V}
= \prod_{\kappa \to \mathcal V} B_\kappa.
\]

Let $P \subset N_2$ be a separable von Neumann subalgebra and suppose $\theta^{-1}(P)$ is matricially approximable with respect to $N_1$. Let $A_\iota \subset N_1$ be finite-dimensional subfactors such that $\theta^{-1}(P) \subset \prod_{\iota \to \mathcal U} A_\iota \subset N_1^{\mathcal U}$. Since $M$ is type II$_1$ and $A_\iota$ is finite-dimensional, there exists a unitary $u_\iota \in \mathcal U(N_1)$ so that $u_\iota A_\iota u_\iota^* \subset M$. Set $u = (u_\iota)^{\mathcal U} \in \mathcal U(N_1^{\mathcal U})$, so that $u \theta^{-1}(P) u^* \subset M^{\mathcal U} \subset N_1^{\mathcal U}$. 

From above, we then have $P  \subset \theta(u)^* \left(\prod_{\kappa \to \mathcal V} B_\kappa \right) \theta(u)$, and so $P$ is amenable by Proposition~\ref{prop:amenmatriciallyapprox}.
\end{proof}

Generalizing the weak spectral gap property, if $M$ is a separable tracial von Neumann algebra, $(N_\iota)_\iota$ is a family of tracial von Neumann algebras, and $\alpha_\iota: M \to N_\iota$ gives a family of embeddings, then we say that the family $(\alpha_\iota)_\iota$ has uniform weak spectral gap if for each $\varepsilon > 0$ there are $u_1, \ldots, u_n \in \mathcal U(M)$ and $\delta > 0$ so that for for any $\iota \in I$ and  $x_\iota \in (N_\iota)_1$ we have
\[
\sum_{k = 1}^n \| [ x_\iota, \alpha_\iota(u_k) ] \|_2 < \delta \implies \| x_\iota - \mathbb E_{\alpha_\iota(M)' \cap N}(x_\iota) \|_2 \leq \| x_\iota \|_2.
\]
Note that if $M$ is a property (T) factor, then any family of embeddings $\alpha_\iota: M \to N_\iota$ will have uniform weak spectral gap. Another case where this happens is if $M \subset N = N_\iota$ has weak spectral gap and $\alpha_\iota$ extend to automorphisms on $N_\iota$.

We then have the following technical generalization of Theorem~\ref{thm:backandforth}.

\begin{thm}\label{thm:genspectralgapapprox}
Suppose $(N_\iota)_{\iota \in I}$ and $(P_\kappa)_{\kappa \in K}$ are families of finite factors, $M$ is a separable II$_1$ factor and  $\alpha_\iota: M \to N_\iota$ are embeddings with associated embedding $\alpha = (\alpha_\iota)^{\mathcal U}: M \to \prod_{\iota \to \mathcal U} N_\iota$. Suppose $(\alpha_\iota)_\iota$ has uniform weak spectral gap, and suppose also that we have an isomorphism $\theta: \prod_{\iota \to \mathcal U} N_\iota \to \prod_{\kappa \in \mathcal V} P_\kappa$. 

If $\theta(\alpha(M))$ is matricially approximable with respect to  $( P_\kappa )_\kappa$ and if $Q \subset \prod_{\kappa \in \mathcal V} P_\kappa$ is a separable von Neumann algebra such that $\theta^{-1}(Q)$ is matricially approximable with respect to $(N_\iota)_\iota$, then $Q$ is matricially approximable with respect to $(P_\kappa)_\kappa$. If, moreover, $Q$ has a u.c.p.\ lift to $\ell^\infty(K, P_\kappa)$, then $Q$ is amenable. 
\end{thm}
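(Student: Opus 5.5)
We follow the proof of Theorem~\ref{thm:backandforth}, replacing the single algebra $M \subset N_1$ by the family $(\alpha_\iota)_\iota$. We may assume $\mathcal U$ is countably incomplete.

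\textbf{Step 1: spectral gap passes to the ultraproduct.} Set $N = \prod_{\iota \to \mathcal U} N_\iota$ and $M_\iota = \alpha_\iota(M)$. The goal is to show
\[
\prod_{\iota \to \mathcal U} M_\iota \subset \left( \alpha(M)' \cap N \right)' \cap N .
\]
Let $x = (x_\iota)^{\mathcal U}$ with $\|x_\iota\|\le 1$ commute with $\alpha(M)$. Fix $\varepsilon>0$ and take the unitaries $u_1,\dots,u_n$ and $\delta$ given by uniform weak spectral gap. Then for $\mathcal U$-almost every $\iota$ we have $\sum_k \|[x_\iota,\alpha_\iota(u_k)]\|_2<\delta$, hence $x_\iota$ is $\varepsilon$-close in $\|\cdot\|_2$ to $\mathbb E_{M_\iota'\cap N_\iota}(x_\iota)$. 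Letting $\varepsilon\to 0$ through a countable sequence (a diagonal argument using countable incompleteness) gives
\[
x \in \prod_{\iota\to\mathcal U}(M_\iota'\cap N_\iota).
\]
This algebra commutes with $\prod_{\iota \to \mathcal U} M_\iota$, which proves the displayed inclusion.

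\textbf{Step 2: transport through $\theta$ and trap in a matrix ultraproduct.} Since $\theta(\alpha(M))$ is matricially approximable and $M$ is separable, the Remark after Theorem~\ref{thm:amenimplieshyperfinite} gives finite-dimensional subfactors $B_\kappa \subset P_\kappa$ with $\theta(\alpha(M))\subset \prod_{\kappa \to \mathcal V} B_\kappa$. Taking relative commutants twice, and using that $\bigl(\prod_{\kappa\to\mathcal V} B_\kappa\bigr)'\cap \prod_{\kappa\to\mathcal V} P_\kappa=\prod_{\kappa\to\mathcal V}(B_\kappa'\cap P_\kappa)$ has double commutant $\prod_{\kappa\to\mathcal V} B_\kappa$ (as in Theorem~\ref{thm:backandforth}), Step 1 yields
\[
\theta\Bigl(\prod_{\iota\to\mathcal U} M_\iota\Bigr)\subset \prod_{\kappa\to\mathcal V} B_\kappa .
\]

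\textbf{Step 3: conjugate $\theta^{-1}(Q)$ into $\prod_{\iota \to \mathcal U} M_\iota$.} By assumption $\theta^{-1}(Q)\subset \prod_{\iota\to\mathcal U} A_\iota$ with $A_\iota\subset N_\iota$ finite-dimensional subfactors. Each $M_\iota$ is a II$_1$ subfactor of the finite factor $N_\iota$ (so $N_\iota$ is II$_1$), hence there are unitaries $w_\iota\in N_\iota$ with $w_\iota A_\iota w_\iota^*\subset M_\iota$. Put $w=(w_\iota)^{\mathcal U}$. Then $w\theta^{-1}(Q)w^*\subset \prod_{\iota\to\mathcal U} M_\iota$, and therefore
\[
Q\subset \theta(w)^*\Bigl(\prod_{\kappa\to\mathcal V} B_\kappa\Bigr)\theta(w).
\]
Lifting $\theta(w)$ to unitaries $v_\kappa\in P_\kappa$ gives $Q\subset\prod_{\kappa\to\mathcal V} v_\kappa^* B_\kappa v_\kappa$, so $Q$ is matricially approximable with respect to $(P_\kappa)_\kappa$. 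The final claim, that $Q$ is amenable when it has a u.c.p.\ lift, then follows from Theorem~\ref{thm:ucplift}.

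\textbf{Main obstacle.} The delicate step is Step 1. Uniform weak spectral gap gives, for each $\varepsilon$, only approximate membership of $x_\iota$ in $M_\iota'\cap N_\iota$, and these estimates must be combined into exact membership in the ultraproduct. I plan to do this by choosing $\varepsilon_m=1/m$ and, for each $\iota$, using the largest $m$ for which the estimate holds along the decreasing sets $I_m$; this is the same trick used in the proof of Theorem~\ref{thm:amenimplieshyperfinite}. One should also check that the defining implication in the definition is meant with $\varepsilon\|x_\iota\|_2$ on the right-hand side (there appears to be a typo); I will use it that way.
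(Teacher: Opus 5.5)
Your proposal is correct and follows the paper's proof essentially verbatim: the paper likewise obtains $\theta\bigl(\prod_{\iota \to \mathcal U} \alpha_\iota(M)\bigr) \subset \prod_{\kappa \to \mathcal V} B_\kappa$ ``just as in the proof of Theorem~\ref{thm:backandforth}'', then conjugates the $A_\iota$ into $\alpha_\iota(M)$, and finishes with Theorem~\ref{thm:ucplift}; you are also right that the definition of uniform weak spectral gap should have $\varepsilon \|x_\iota\|_2$ on the right-hand side. Your Step 1 does not need a diagonal argument or countable incompleteness: $\mathbb E_{\alpha_\iota(M)' \cap N_\iota}(x_\iota)$ does not depend on $\varepsilon$, so having $\lim_{\iota \to \mathcal U} \|x_\iota - \mathbb E_{\alpha_\iota(M)' \cap N_\iota}(x_\iota)\|_2 \le \varepsilon$ for every $\varepsilon > 0$ already forces this limit to be $0$.
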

\begin{proof}
If $\theta(\alpha(M)) \subset \prod_{\kappa \to \mathcal V} B_\kappa \subset \prod_{\kappa \to \mathcal V} P_\kappa$ with $B_\kappa \subset P_\kappa$ finite-dimensional subfactors, then just as in the proof of Theorem~\ref{thm:backandforth} we see that
\[
\theta\left(\prod_{\iota \to \mathcal U} \alpha_\iota(M)\right) \subset \prod_{\kappa \to \mathcal V} B_\kappa \subset \prod_{\kappa \to \mathcal V} P_\kappa.
\]
If $\theta^{-1}(Q) \subset \prod_{\iota \to \mathcal U} A_\iota \subset \prod_{\iota \to \mathcal U} N_\iota$ with $A_\iota \subset N_\iota$ finite-dimensional subfactors, then we may find $u_\iota \in \mathcal U(N_\iota)$ so that $u_\iota A_\iota u_\iota^* \subset \alpha_\iota(M)$, and so setting $u = (u_\iota)^{\mathcal U}$ we have $u \left( \prod_{\iota \to \mathcal U} A_\iota \right) u^* \subset  \prod_{\iota \to \mathcal U} \alpha_\iota(M)$ and hence 
\[
\theta(u) Q \theta(u^*) \subset \prod_{\kappa \to \mathcal V} B_\kappa \subset \prod_{\kappa \to \mathcal V} P_\kappa.
\]

The last assertion of the theorem then follows from Theorem~\ref{thm:ucplift}.
\end{proof}

\section{Connes's rigidity argument in the ultraproduct setting}

The first rigidity result for II$_1$ factors was obtained by Connes in \cite{Co80} where he shows that i.c.c. (infinite non-trivial conjugacy classes) property (T) groups give rise to II$_1$ factors with countable fundamental and outer automorphism groups. This galvanizing result of Connes has been adapted to become one of the precepts of deformation/rigidity theory as can be seen in \cite{Po86, Po06b, Oz04, IoPePo08, Ta23}. 

In this section, we adapt Connes's rigidity argument to the setting where the property (T) factor embeds into an ultraproduct. For matricial ultraproducts, this situation has been previously considered in \cite{JuSh07, HaJeKE25}. In our setting we need a strengthening of these arguments, which shows that not only are asymptotic embeddings of property (T) factors in some sense ``discrete'', but that they have ``discrete clusters'' where within each cluster they are pairwise approximate conjugate in a uniform way.

The essential idea in Connes's argument is that from a trace-preserving automorphism we may twist the trivial bimodule by this automorphism to obtain a new bimodule. Automorphisms that are close to the identity on a Kazhdan set then must have an intertwiner for this new bimodule that is close in $\| \cdot \|_2$ to the identity operator. For factors, any intertwiner must be a scalar multiple of a unitary, which shows that the inner-automorphism group must be open. In the setting where we deal with property (T) subfactors, the following well-known proposition is therefore useful.

\begin{prop}\label{prop:intertwinerinner}
Suppose $N$ is a II$_1$ factor and $M_1, M_2 \subset N$ are subfactors such that $M_1' \cap N$ and $M_2' \cap N$ are factors. If $\alpha : M_1 \to M_2$ is a trace-preserving isomorphism such that there exists a nonzero operator $v \in N$ satisfying $\alpha(x) v = v x$ for all $x \in M_1$, then there exists $u \in \mathcal U(N)$ such that $\alpha(x) = u x u^*$ for all $x \in M_1$. 
\end{prop}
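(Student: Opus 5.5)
The plan is to extract a partial isometry intertwiner from $v$ by polar decomposition. Then, using factoriality of the two relative commutants, we ``spread'' a suitable piece of it into a unitary with matrix-unit arguments.

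\emph{Step 1: polar decomposition.} Write $v = w|v|$. Taking adjoints in $\alpha(x)v = vx$ and using that $\alpha$ is a $*$-map gives $v^*\alpha(x) = xv^*$. Hence $v^*v$ commutes with $M_1$ and $vv^*$ commutes with $M_2$. So $|v| \in M_1'\cap N$, and by uniqueness of the polar decomposition we get $\alpha(x)w = wx$ for all $x \in M_1$. Moreover $p := w^*w \in M_1'\cap N$ and $q := ww^* \in M_2'\cap N$ are nonzero.

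\emph{Step 2: transport of relative commutants.} For $y \in p(M_1'\cap N)p$ we have $wyw^*\alpha(x) = wyxw^* = wxyw^* = \alpha(x)wyw^*$. Since $\alpha(M_1) = M_2$, the map $y\mapsto wyw^*$ is a trace-preserving $*$-isomorphism
\[
p(M_1'\cap N)p \to q(M_2'\cap N)q ,
\]
with inverse $z\mapsto w^*zw$ by the symmetric computation.

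\emph{Step 3: choose a small piece $p_1 \le p$.} Since $M_1'\cap N$ is a finite factor, we can find a nonzero projection $p_1\le p$ in $M_1'\cap N$ such that $1$ is a sum of $n$ pairwise orthogonal projections, each equivalent to $p_1$ in $M_1'\cap N$.
\begin{itemize}
\item If $M_1'\cap N$ is of type II$_1$, take $\tau(p_1) = 1/n \le \tau(p)$.
\item If $M_1'\cap N \cong \mathbb M_n(\mathbb C)$, take $p_1$ a minimal projection under $p$.
\end{itemize}
Set $q_1 = wp_1w^* \in M_2'\cap N$, so $\tau(q_1) = 1/n$. By Step 2, $q_1(M_2'\cap N)q_1 \cong p_1(M_1'\cap N)p_1$. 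In the type I case this forces $q_1$ to be minimal, and $\tau(q_1)=1/n$ then gives $M_2'\cap N\cong \mathbb M_n(\mathbb C)$. In the type II$_1$ case $M_2'\cap N$ is also II$_1$. Either way, $1 = \sum_{j=1}^n f_j$ with $f_j \sim q_1$ in $M_2'\cap N$.

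This type matching is the main point to check carefully. It is exactly where factoriality of \emph{both} relative commutants is used.

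\emph{Step 4: assemble the unitary.} Choose $s_j \in M_1'\cap N$ with $s_j^*s_j = p_1$, $\sum_j s_js_j^* = 1$, and $t_j\in M_2'\cap N$ with $t_j^*t_j = q_1$, $\sum_j t_jt_j^*=1$. Define
\[
u = \sum_{j=1}^n t_j\, w p_1\, s_j^* .
\]
The computation $u^*u = \sum_{j} s_jp_1w^*q_1wp_1s_j^* = \sum_j s_jp_1s_j^* = 1$ uses orthogonality of the $t_j$'s, and similarly $uu^* = 1$, since $wp_1w^* = q_1$. For $x \in M_1$, we have $s_j^* x = x s_j^*$, $\alpha(x)t_j = t_j\alpha(x)$, and $p_1$ commutes with $x$. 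Combined with $\alpha(x)w = wx$, this gives $\alpha(x)u = ux$, i.e.\ $\alpha(x) = uxu^*$ for all $x\in M_1$.
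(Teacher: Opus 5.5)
Your proof is correct, but it takes a different route from the paper. Both arguments start with the polar decomposition, which gives an intertwining partial isometry $w$ with $w^*w\in M_1'\cap N$ and $ww^*\in M_2'\cap N$.

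\emph{The paper's route.} It then runs an exhaustion argument. By Zorn's lemma it takes an intertwining partial isometry $u$ that is maximal under the order ``$w$ extends $v$''. If $u$ were not unitary, it picks partial isometries $w_1\in M_1'\cap N$ and $w_2\in M_2'\cap N$ that move a nonzero piece of $u^*u$ under $(u^*u)^\perp$ and the corresponding piece of $uu^*$ under $(uu^*)^\perp$. Then $u+w_2^*uw_1$ is a strictly larger intertwiner, contradicting maximality.

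\emph{Your route.} You glue in a single step. You pick $p_1\le p$ such that $1$ is a sum of $n$ orthogonal copies of $p_1$ in $M_1'\cap N$. You transport it to $q_1=wp_1w^*$ and check that $1$ is likewise a sum of $n$ copies of $q_1$ in $M_2'\cap N$. You then write the unitary explicitly as $u=\sum_j t_jwp_1s_j^*$.

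\emph{What each buys.} Your version is constructive and avoids Zorn's lemma. It also makes visible where the hypotheses enter: the trace-preserving corner isomorphism $p(M_1'\cap N)p\cong q(M_2'\cap N)q$ forces the two relative commutants to have the same type, with the same $n$ in the matrix case, and factoriality of $M_2'\cap N$ gives the decomposition of $1$ into copies of $q_1$. The paper needs the same type comparison to produce $w_1,w_2$, but it uses it only tacitly, with the phrase ``as $M_1'\cap N$ and $M_2'\cap N$ are factors''. The paper's maximality argument is shorter and follows the standard exhaustion template, with no need to choose a specific projection dividing $1$, but it leaves that comparison step to the reader.
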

\begin{proof}
Replacing $v$ with $v | v |^{-1}$, we may assume that $v$ is a nonzero partial isometry with $v^* v \in M_1' \cap N$ and $vv^* \in M_2' \cap N$. We let $\mathcal F$ denote the set of all such intertwining partial isometries and define a partial ordering on $\mathcal F$ by $v \prec w$ if $v^*v \leq w^*w$, $vv^* \leq ww^*$, and $wv^*v = v$. By Zorn's Lemma we may find $u \in \mathcal F$ that is maximal with respect to this partial ordering. 

Set $p = u^*u \in M_1' \cap N$ and $q = u u^* \in M_2' \cap N$. If $u \not\in \mathcal U(N)$, then as $M_1' \cap N$ and $M_2' \cap N$ are factors we may choose partial isometries $w_1 \in M_1' \cap N$ and $w_2 \in M_2' \cap N$ so that $w_1w_1^* \leq p$, $w_1^*w_1 \leq p^\perp$, $w_2w_2^* \leq q$,$w_2^*w_2 \leq q^\perp$ and $w_2w_2^* u = u w_1 w_1^*  \not= 0$. Considering $u + w_2^* u w_1$ would then contradict maximality of $u \in \mathcal F$. Thus, we conclude that $u \in \mathcal U(N)$. 
\end{proof}

\begin{thm}\label{thm:propTultra}
Suppose $(N_\iota)_\iota$ are II$_1$ factors and $M \subset \prod_{\iota \to \mathcal U} N_\iota$ is a subfactor with property (T) such that $M' \cap \prod_{\iota \to \mathcal U} N_\iota$ is a factor. Suppose $\alpha_\iota \in \Aut(N_\iota)$ is in the connected component of the identity and let $\alpha = (\alpha_\iota)^{\mathcal U} \in \Aut(\prod_{\iota \to \mathcal U} N_\iota)$, then there exists $u \in \mathcal U(\prod_{\iota \to \mathcal U} N_\iota)$ so that $\alpha(x) = ux u^*$ for all $x \in M$. 
\end{thm}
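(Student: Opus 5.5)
We reduce to Proposition~\ref{prop:intertwinerinner}: applied to $M_1 = M$, $M_2 = \alpha(M)$ inside $\prod_{\iota \to \mathcal U} N_\iota$, it suffices to find a nonzero $v \in \prod_{\iota \to \mathcal U} N_\iota$ with $\alpha(x) v = v x$ for all $x \in M$. The factoriality hypotheses hold because $M' \cap \prod_{\iota \to \mathcal U} N_\iota$ is a factor and $\alpha(M)' \cap \prod_{\iota \to \mathcal U} N_\iota = \alpha(M' \cap \prod_{\iota \to \mathcal U} N_\iota)$. We then connect $\alpha$ to the identity through a path and move along it in small steps using property (T).

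Fix a Kazhdan pair $(F, \delta_0)$ for $M$ with $F \subset \mathcal U(M)$ finite, and lift each $u \in F$ to unitaries $(u_\iota)^{\mathcal U}$. Since $\alpha_\iota$ lies in the connected component of the identity, we would like a path to the identity. Path-connectedness is not automatic, but the component of the identity of $\Aut(N_\iota)$ for the pointwise $\|\cdot\|_2$ topology is contained in the closure of the subgroup generated by any neighborhood of the identity. So for any finite set and any $\varepsilon$ we may write $\alpha_\iota$, up to a $\|\cdot\|_2$-small error on that set, as a product $\beta_{\iota,m}\cdots\beta_{\iota,1}$ of automorphisms each close to the identity on prescribed finite sets. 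The difficulty is that the number $m$ of steps depends on $\iota$, so this cannot be applied uniformly in the ultraproduct. Instead we argue by contradiction through a ``first failure time''. Suppose $\alpha|_M$ is not inner in the ultraproduct. Consider the set $G$ of $\gamma = (\gamma_\iota)^{\mathcal U}$ with $\gamma_\iota$ in the identity component for which $\gamma|_M$ is unitarily implemented. This set is closed under composition. Its complement is also stable under composition with elements implementable on $\gamma(M)$, since $\gamma(M)$ again has factorial relative commutant. So it suffices to prove the following local statement.

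\emph{Local step.} If $\beta = (\beta_\iota)^{\mathcal U}$ satisfies $\|\beta(x) - x\|_2 < \delta_0'$ for $x$ in a Kazhdan set of $\gamma(M)$, and $\gamma(M)$ has factorial relative commutant, then $\beta$ is inner on $\gamma(M)$. To prove it, form the $M$-$M$ bimodule $L^2(\prod_{\iota \to \mathcal U} N_\iota)$ with left action twisted by $\beta$. The vector $\hat 1$ is $(F,\delta_0')$-almost central, so property (T) gives a nonzero $\beta$-twisted central vector $\xi$ close to $\hat 1$. Since $\hat1$ is left-right bounded and the twisted bimodule is a sub-bimodule of the standard one, we take $\xi \in L^2$ and then cut down by a spectral projection of $|\xi|$, which commutes appropriately, to obtain a bounded nonzero intertwiner $v$. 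Proposition~\ref{prop:intertwinerinner} then applies.

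The main obstacle is the global step, namely assembling the local steps when the number of steps depends on $\iota$. Property (T) constants do not propagate uniformly under conjugation by arbitrary unitaries of the ultraproduct. They do, however, survive under trace-preserving isomorphisms: $(u\gamma(F)u^*, \delta_0)$ is again a Kazhdan pair for $u\gamma(M)u^*$ with the same $\delta_0$. So the local step holds with a uniform $\delta_0$ along the chain. I would then choose, for each $\iota$, the maximal index $k_\iota \le m_\iota$ such that the partial product $\beta_{\iota,k}\cdots\beta_{\iota,1}$ moves $F$ by an amount comparable to that of some inner automorphism. The ultralimit of these partial products is implementable, while the next step cannot fail by the local step. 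This contradicts maximality unless $k_\iota = m_\iota$ for $\mathcal U$-almost every $\iota$, which gives the theorem. The bookkeeping in making this ``maximal $k$'' argument precise, by quantifying implementability via $\inf_{w} \max_{u\in F}\|\gamma_\iota(u_\iota) - w u_\iota w^*\|_2$ at each $\iota$, is where I expect the real work to lie.
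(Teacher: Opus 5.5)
\textbf{Gap in the global step.} The ingredients are right: Connes's argument in the ultraproduct plus Proposition~\ref{prop:intertwinerinner} as the local step, and a first-failure argument along the identity component. But the global step, which carries the whole content of the theorem, does not close as you set it up. Your steps $\beta_{\iota,j}$ are small only on a finite set fixed before the decomposition, and your partial products are $\gamma_{\iota,k}=\beta_{\iota,k}\gamma_{\iota,k-1}$. To conclude that ``the next step cannot fail by the local step'', you must apply the local step to $\gamma(M)$ with Kazhdan set $\gamma(F)$. If $\gamma|_M=\Ad u$, that set is $uFu^*$ for an uncontrolled unitary $u$ of the ultraproduct. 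Smallness of $\beta_{\iota,k_\iota+1}$ on (lifts of) $F$ gives no smallness on $uFu^*$.

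Your remark about Kazhdan constants misses the point. They do transport under any trace-preserving isomorphism, unitary conjugation included. What fails to transport is the smallness of the step. Also, ``it suffices to prove the local statement'' is not right: with an $\iota$-dependent number of steps there is no induction available in the ultraproduct.

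\textbf{Repair.} Never move the base algebra. With lifts $x_\iota$ of $x\in F$, put
$\rho_\iota(\gamma)=\inf_{w\in\mathcal U(N_\iota)}\max_{x\in F}\|w\gamma(x_\iota)w^*-x_\iota\|_2$. Two facts then suffice.
\begin{enumerate}
\item[(a)] $\rho_\iota(\beta\gamma)\le\rho_\iota(\gamma)+\max_{x\in F}\|\beta(x_\iota)-x_\iota\|_2$. Use the implementing unitary $\beta(w)$ and the fact that $\beta$ is $\|\cdot\|_2$-isometric.
\item[(b)] Suppose $\rho_\iota(\gamma_\iota)\le\varepsilon$ for $\mathcal U$-almost every $\iota$, with $\varepsilon$ below the Kazhdan threshold. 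Apply Connes's argument and Proposition~\ref{prop:intertwinerinner} to the single automorphism $(\Ad w_\iota\circ\gamma_\iota)^{\mathcal U}$ with $w_\iota$ near-optimal. This yields $\lim_{\iota\to\mathcal U}\rho_\iota(\gamma_\iota)=0$.
\end{enumerate}
At your maximal index $k_\iota$, (b) gives $\rho_\iota(\gamma_{\iota,k_\iota})\to0$. Then (a) gives $\rho_\iota(\gamma_{\iota,k_\iota+1})<\varepsilon$, which is the desired contradiction. One more use of (b) handles $\alpha$ itself.

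\textbf{Comparison with the paper.} So repaired, your argument is a discretized form of the paper's. The paper takes $\mathcal G_\iota=\{\rho_\iota\le 1/2c\}$, which is closed and contains $\operatorname{Inn}(N_\iota)$. It then uses exactly (b), together with the Lipschitz bound $\rho_\iota(\beta_1)\le\rho_\iota(\beta_2)+d_\iota(\beta_1,\beta_2)$, to rule out pairs $\alpha_\iota\in\mathcal G_\iota$, $\beta_\iota\notin\mathcal G_\iota$ with $d_\iota(\alpha_\iota,\beta_\iota)\to0$. Hence $\mathcal G_\iota$ is clopen for $\mathcal U$-almost every $\iota$, so it contains the identity component. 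This avoids any product decomposition and the $\iota$-dependent step count altogether. Incidentally, the subgroup generated by an open neighborhood of the identity is itself open, hence closed, so your decomposition could even be taken exact.
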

\begin{proof}
We fix a decreasing sequence $I_n \subset I$ so that $\cap_n I_n = \emptyset$ and $\mathcal U(I_n) = 1$ for each $n \in \mathbb N$.  Fix $c > 0$ and $S \subset M$ finite so that if $P$ denotes the projection onto the space of $M$ central vectors for some normal Hilbert bimodule, then $\| P^\perp \xi \| \leq c ( \sum_{x \in S} \| [x, \xi] \|^2 )^{1/2}$. Let $\theta_\iota: S \to N_\iota$ so that $\| \theta_\iota(x) \| \leq \| x \|$ and $(\theta_\iota(x) )^{\mathcal U} = x$ for each $x \in S$.

For each $\iota \in I$, let $\rho_\iota: \Aut(N_\iota) \to [0, \infty)$ be the continuous function defined by 
\[
\rho_\iota(\beta) = \inf_{u \in \mathcal U(N_\iota)} ( \sum_{x \in S} \| u \beta( \theta_\iota(x) ) u^* - \theta_\iota(x) \|_2^2 )^{1/2},
\] 
and let 
\[
\mathcal G_\iota = \{ \beta \in  \Aut(N_\iota) \mid \rho_\iota(\beta) \leq 1/2c \}.
\]
Note that $\operatorname{Inn}(N_\iota) \subset \mathcal G_\iota$ and $\mathcal G_\iota$ is a closed set. We claim that for $\mathcal U$-almost every $\iota \in I$, the set $\mathcal G_\iota$ is also open. 

To see this, for $\beta_1, \beta_2  \in \Aut(N_\iota)$ set $d_{\iota}(\beta_1, \beta_2) = ( \sum_{x \in S} \| \beta_1( \theta_\iota(x) ) - \beta_2( \theta_\iota(x) ) \|_2^2 )^{1/2}$. This is a continuous pseudometric on $\Aut(N_\iota)$, and by the triangle inequality we have $\rho_\iota(\beta_1) \leq \rho_\iota(\beta_2) + d_{ \iota}(\beta_1, \beta_2)$.

If $\mathcal G_\iota$ were not open for $\mathcal U$-almost every $\iota \in I$, then we could choose $\alpha_\iota \in \mathcal G_\iota$, and $\beta_\iota \in \Aut(N_\iota) \setminus \mathcal G_\iota$ so that 
\[
\lim_{\iota \to \mathcal U} d_{S, \iota}(\alpha_\iota, \beta_\iota) = 0.
\]
We then choose for $\iota \in I_n \setminus I_{n + 1}$ a unitary $u_\iota \in \mathcal U(N_\iota)$ so that $ ( \sum_{x \in S} \| u_n \alpha_\iota( \theta_n(x) ) u_\iota^* - \theta_\iota(x) \|_2^2 )^{1/2} \leq 1/2c + 1/n$. We define the automorphism $\tilde \alpha \in \Aut(\prod_{\iota \to \mathcal U} N_\iota)$ by $\tilde \alpha( ( x_\iota)^{\mathcal U}) = (u_\iota \alpha_\iota( x_\iota) u_\iota^* )^{\mathcal U}$. By construction we have
\[
( \sum_{x \in S} \| \tilde \alpha( x ) - x \|_2^2 )^{1/2} \leq  1/2 c.
\]
There then exists $w \in \prod_{\iota \to \mathcal U} N_\iota$ with $\| w - 1 \|_2 \leq 1/2$ so that $ \tilde \alpha(x) w = w x$ for $x \in M$. Proposition~\ref{prop:intertwinerinner} then provides a unitary $v \in \mathcal U(\prod_{\iota \to \mathcal U} N_\iota)$ so that $\tilde \alpha(x) v = v x$ for all $x \in M$. 
Choosing a representation $v = (v_\iota)^{\mathcal U}$ with $v_\iota \in \mathcal U(N)$ we then have
\[
\lim_{\iota \to \mathcal U} \| v_\iota^* u_\iota \alpha_\iota( \theta_\iota(x) ) u_\iota^*  v_\iota - \theta_\iota(x) \|_2 = 0
\]
for each $x \in S$. Thus, we have $\lim_{\iota \to \mathcal U} \rho_\iota( \alpha_\iota)  = 0$, and hence $\lim_{\iota \to \mathcal U} \rho_\iota(\beta_\iota) = 0$, contradicting the fact that $\beta_\iota \not \in \mathcal G_\iota$ for $\mathcal U$-almost every $\iota$. 

We therefore conclude that $\mathcal G_\iota$ is a clopen neighborhood of $\operatorname{Inn}(N)$ for $\mathcal U$-almost every $\iota \in I$. If $\alpha_\iota \in \Aut(N)$ is in the connected component of the identity, it then follows that $\alpha_\iota \in \mathcal G_\iota$ for $\mathcal U$-almost every $\iota$. But then, repeating the argument above, it follows that there exists $u \in \mathcal U(\prod_{\iota \to \mathcal U} N_\iota)$ so that $\alpha(x) = u xu^*$ for all $x \in M$. 
\end{proof}

\section{Asymptotic relative commutants and intertwining in free products}

Here we adapt some of the techniques on intertwining and controlling relative commutants from \cite{IoPePo08} to the setting of ultraproducts of  free products. This will allow us to adapt Popa's ``local to global" intertwining arguments (e.g., as in \ \cite[Appendix]{Po06b}) for subalgebras of ultraproducts of free products. This, in turn, will allow us to extend previous deformation/rigidity arguments using ``free malleable deformations'' from \cite{Po06, Po07, IoPePo08}.

The following proposition is well-known and follows from analyzing the Fock space representation used in the definition of amalgamated free products \cite{Vo85, Po93}.

\begin{prop}\label{prop:coarsebimoduleafp}
Let $(N, \tau)$ be a tracial von Neumann algebra and let $B \subset N$ be an amenable von Neumann subalgebra. If $(Q, \tau_Q)$ is any tracial von Neumann algebra containing $(B, \tau)$, then as an $N$-$N$-bimodule we have $L^2(N *_B Q) \ominus L^2(N) \prec L^2(N) \ovt L^2(N)$.
\end{prop}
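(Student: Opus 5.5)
We prove Proposition~\ref{prop:coarsebimoduleafp} from the explicit Fock space (alternating word) decomposition of $L^2(N *_B Q)$ as an $N$-$N$ bimodule.

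\emph{Step 1: decompose the bimodule.} Write $N^\circ = L^2N \ominus L^2B$ and $Q^\circ = L^2Q \ominus L^2B$. Since $L^2(N *_B Q)$ is spanned by alternating reduced words, the $N$-$N$ bimodule $L^2(N *_B Q) \ominus L^2(N)$ is the closed span of words that begin and end in $N$ (with possibly trivial end letters) and contain at least one $Q^\circ$ letter. This gives an isomorphism of $N$-$N$ bimodules
\[
L^2(N *_B Q) \ominus L^2(N) \cong \bigoplus_{n \geq 1} L^2N \otimes_B \mathcal H_n \otimes_B L^2N,
\]
where $\mathcal H_n = Q^\circ \otimes_B N^\circ \otimes_B \cdots \otimes_B N^\circ \otimes_B Q^\circ$ has $n$ letters from $Q^\circ$ and is a $B$-$B$ bimodule. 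The two outer factors $L^2N$ absorb the first and last $N$-letters together with the empty case, since $L^2N = L^2B \oplus N^\circ$. We check that the left and right $N$-actions on reduced words match the Connes fusion actions on the outer $L^2N$ factors. This is where the freeness and the amalgamation over $B$ come in: multiplying a word $x_0 q_1 x_1 \cdots q_n x_n$ on the left by $a \in N$ only changes $x_0$, because $a x_0 \in N$ and products are reduced over $B$.

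\emph{Step 2: use amenability of $B$.} Each $\mathcal H_n$ is a $B$-$B$ bimodule, so $\mathcal H_n \prec L^2B \ovt L^2B$: amenability of $B$ gives $L^2B \prec L^2B \ovt L^2B$, and for any $B$-$B$ bimodule $\mathcal K$ we have $\mathcal K \cong L^2B \otimes_B \mathcal K \prec (L^2B \ovt L^2B) \otimes_B \mathcal K$. The last bimodule is a multiple of a coarse-type bimodule $L^2B \ovt (\text{right } B\text{-module})$, which is contained in a multiple of the coarse bimodule. Since Connes fusion is continuous with respect to weak containment, we get
\[
L^2N \otimes_B \mathcal H_n \otimes_B L^2N \prec L^2N \otimes_B (L^2B \ovt L^2B) \otimes_B L^2N \cong L^2N \ovt L^2N.
\]
Weak containment is preserved under direct sums, which finishes the argument.

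\emph{Main obstacle.} The delicate point is the continuity of fusion under weak containment, $\mathcal K_1 \prec \mathcal K_2 \Rightarrow \mathcal H \otimes_B \mathcal K_1 \otimes_B \mathcal H' \prec \mathcal H \otimes_B \mathcal K_2 \otimes_B \mathcal H'$. We would invoke the standard result from \cite{AD95} (see also \cite{Po86}), which is proved by approximating coefficients using left and right bounded vectors. If we want to avoid that reference, we can argue directly instead. By amenability there are vectors $\xi_k \in L^2B \ovt L^2B$ with $\langle b\xi_k b', \xi_k\rangle \to \tau(bb')$, which we may take $B$-almost central and with controlled bounded norms. Tensoring these in the middle slot of $L^2N \otimes_B \cdot \otimes_B \mathcal H_n \otimes_B L^2N$ approximates coefficients of each summand by coefficients of bimodules of the form $L^2N \ovt \mathcal L$ with $\mathcal L$ a right $N$-module, and those are contained in multiples of the coarse bimodule.
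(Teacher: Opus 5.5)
Your proof is correct and is essentially the route the paper intends. The paper gives no written proof and says only that the statement follows from analyzing the Fock space representation of $N *_B Q$. Your decomposition $L^2(N *_B Q) \ominus L^2N \cong \bigoplus_{n \geq 1} L^2N \otimes_B \mathcal H_n \otimes_B L^2N$, combined with amenability of $B$ and continuity of Connes fusion under weak containment \cite{AD95}, is exactly that analysis made explicit. The same idea appears at the level of individual reduced-word vectors in the paper's proof of Proposition~\ref{prop:afpwkcontain}, where their coefficients are seen to factor through $E_B$.
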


The following result will be used in the sequel to control asymptotic relative commutants in free products. This will allow us to exploit property (T) in asymptotic deformation/rigidity arguments. This is an ultraproduct counterpart to similar relative commutants as computed in \cite{Po83} or \cite[Section 1]{IoPePo08}.

\begin{prop}\label{prop:relcommutant}
Suppose $N_\iota$ are II$_1$ factors with amenable von Neumann subalgebras $B_\iota \subset N_\iota$. Suppose $p_\iota \in \mathcal P(N_\iota)$ are such that $p = (p_\iota)^{\mathcal U}$ is nonzero and $M \subset \prod_{\iota \to \mathcal U} p_\iota N_\iota p_\iota$ is a subfactor such that $M$ is matricially inapproximable with respect to $(p_\iota N_\iota p_\iota )_\iota$. 
Then, for any family of tracial von Neumann algebras $\{ (Q_\iota, \tau_\iota) \}_\iota$ such that we have tracial embeddings $B_\iota \subset Q_\iota$ we have 
\[
M' \cap p \left( \prod_{\iota \to \mathcal U} N_\iota *_{B_\iota} Q_\iota \right) p \subset p \left( \prod_{\iota \to \mathcal U} N_\iota \right) p.
\]
\end{prop}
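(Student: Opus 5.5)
Set $\tilde N_\iota = N_\iota *_{B_\iota} Q_\iota$, $\mathcal N = \prod_{\iota\to\mathcal U} N_\iota$ and $\tilde{\mathcal N} = \prod_{\iota\to\mathcal U}\tilde N_\iota$. I would argue by contradiction via Theorem~\ref{thm:amenimplieshyperfinite}, in the spirit of Popa's computation of relative commutants in free products. Take $x \in M'\cap p\tilde{\mathcal N}p$ and put $y = x - \mathbb E_{\mathcal N}(x)$. Since $M\subset p\mathcal Np$, the conditional expectation $\mathbb E_{\mathcal N}$ (the ultraproduct of the $\mathbb E_{N_\iota}$) is $M$-bimodular, so $y$ again commutes with $M$, $y = pyp$, and $y \perp L^2\mathcal N$. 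The aim is to show $y=0$.

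First, identify where $y$ lives as a bimodule vector. Write $y = (y_\iota)^{\mathcal U}$ with $y_\iota \in \tilde N_\iota$ uniformly bounded and $y_\iota \perp L^2 N_\iota$. Then $y_\iota \in L^2\tilde N_\iota \ominus L^2 N_\iota$, and the bounded sequence defines a left-right bounded vector of the correspondence ultraproduct $\prod_{\iota\to\mathcal U}(L^2\tilde N_\iota\ominus L^2N_\iota)$, viewed as a $\mathcal N$-$\mathcal N$ bimodule. By Lemma~\ref{lem:ultradirectsum} this is a direct summand of $L^2\tilde{\mathcal N}$ restricted to $\mathcal N$, namely the orthocomplement of $L^2\mathcal N$. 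By Proposition~\ref{prop:coarsebimoduleafp}, since $B_\iota$ is amenable, each $L^2\tilde N_\iota\ominus L^2N_\iota \prec L^2N_\iota\ovt L^2N_\iota$. Proposition~\ref{prop:weakcontainmentultra} then gives
\[
\mathcal H := \prod_{\iota\to\mathcal U}(L^2\tilde N_\iota\ominus L^2N_\iota) \prec \prod_{\iota\to\mathcal U}(L^2N_\iota\ovt L^2N_\iota)
\]
as $\mathcal N$-$\mathcal N$ bimodules. Cutting both sides by $p$ on the left and right, we get the same relation as $p\mathcal Np$-bimodules, with the right-hand side equal to $\prod_{\iota\to\mathcal U}(L^2(p_\iota N_\iota p_\iota)\ovt L^2(p_\iota N_\iota p_\iota))$.

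Second, use the central vector. If $y\neq 0$, then $y$ is a nonzero $M$-central vector in $p\mathcal Hp$. The vector $\xi = y/\|y\|$ is then $M$-central, and since $\xi$ is bounded, $x\mapsto\langle x\xi,\xi\rangle$ is a normal state on $M$. The issue is to obtain weak containment of the trivial bimodule $L^2M$ itself, which is what condition (\ref{item:main3}) of Theorem~\ref{thm:amenimplieshyperfinite} needs. I plan to use that $M$ is a factor: apply the standard argument (as in Connes/Popa) that $M$-central vectors in a bimodule yield $L^2M\prec$ that bimodule whenever $M$ is a factor. Concretely, take $\zeta = |y^*|$-type positive central elements. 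Alternatively, since $y^*y \in M'\cap p\tilde{\mathcal N}p$, the vector $\eta$ obtained by applying $\mathbb E$-averaging over $\mathcal U(M)$ to $y y^*$-type quantities gives an $M$-central state that is tracial on $M$. Factoriality then forces $\langle x\eta,\eta\rangle = \tau(x)$ after normalization, and taking direct sums of such vectors realizes the coefficient $\tau(x\cdot\, y)$ of $L^2M$.

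This gives $L^2M \prec \prod_{\iota\to\mathcal U}(L^2(p_\iota N_\iota p_\iota)\ovt L^2(p_\iota N_\iota p_\iota))$. By Theorem~\ref{thm:amenimplieshyperfinite}, $M$ is then matricially approximable with respect to $(p_\iota N_\iota p_\iota)_\iota$, a contradiction, so $y=0$ and $x = \mathbb E_{\mathcal N}(x) \in p\mathcal Np$.

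I expect the main obstacle to be the second step. One part is making the central-vector-to-trivial-bimodule passage rigorous with the correct state; this is where factoriality of $M$ is used, and it may be easiest to take the polar part of $y$ and its left support in $M'\cap p\tilde{\mathcal N}p$. The other part is checking that a bounded $y$ really lies in the correspondence ultraproduct, as opposed to only the Hilbert-space ultraproduct.
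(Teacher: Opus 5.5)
Your proposal follows the paper's proof: you subtract $\mathbb E_{\prod_{\iota\to\mathcal U}N_\iota}$ to get a nonzero $M$-central vector in $\prod_{\iota\to\mathcal U}(L^2\tilde N_\iota\ominus L^2N_\iota)\prec\prod_{\iota\to\mathcal U}(L^2N_\iota\ovt L^2N_\iota)$ (by Propositions~\ref{prop:coarsebimoduleafp} and \ref{prop:weakcontainmentultra}), use factoriality of $M$ to obtain $L^2M\prec\prod_{\iota\to\mathcal U}(L^2(p_\iota N_\iota p_\iota)\ovt L^2(p_\iota N_\iota p_\iota))$, and conclude with Theorem~\ref{thm:amenimplieshyperfinite}. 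The step you flag as the main obstacle is routine and needs no averaging or direct sums: for a unit $M$-central vector $\xi$ the state $x\mapsto\langle x\xi,\xi\rangle$ is unitarily invariant, so it equals the normalized trace $\tau_M$ because $M$ is a factor, and then $\hat x\mapsto x\xi$ embeds $L^2M$ as an actual sub-bimodule; also, $y$ lies in the correspondence ultraproduct simply because $\|y_\iota z\|_2\le\|y_\iota\|\,\|z\|_2$.
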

\begin{proof}
By Proposition~\ref{prop:coarsebimoduleafp}, the $N_\iota$-$N_\iota$ bimodule $L^2(N_\iota *_{B_\iota} Q_\iota) \ominus L^2(N_\iota)$ is weakly contained in the coarse bimodule. Thus, if $M' \cap p ( \prod_{\iota \to \mathcal U} N_\iota *_{B_\iota} Q_\iota ) p \not\subset  p (\prod_{\iota \to \mathcal U} N_\iota )p$, then since $M$ is a factor we would have weak containment of $M$-bimodules $L^2M \prec \prod_{\iota \to \mathcal U} (L^2 (p_\iota N_\iota p_\iota)  \ovt L^2 (p_\iota N_\iota p_\iota) )$. Theorem~\ref{thm:amenimplieshyperfinite} then gives the result.  
\end{proof}

\begin{rem}\label{rem:normalizer}
The previous result can be extended to show that, under the same hypotheses, there does not even exist a nonzero element $x \in \prod_{\iota \to \mathcal U} N_\iota *_{B_\iota} Q_\iota$ with $\mathbb E_{ \prod_{\iota \to \mathcal U} N_\iota}(x) = 0$ such that $M x \subset x  \prod_{\iota \to \mathcal U} N_\iota$. Indeed, if given such an $x$ we let $\tilde x$ be the corresponding element in a copy of $\prod_{\iota \to \mathcal U} N_\iota *_{B_\iota} Q_\iota$ so that we may then identify $x \tilde x^*$ with an element in 
\[
M' \cap \left( \prod_{\iota \to \mathcal U} (N_\iota *_{B_\iota}  Q_\iota ) *_{N_\iota} ( N_\iota *_{B_\iota} Q_\iota) \right) 
\cong
M' \cap \left( \prod_{\iota \to \mathcal U} N_\iota *_{B_\iota} ( Q_\iota *_{B_\iota} Q_\iota) \right) \subset \prod_{\iota \to \mathcal U} N_\iota.
\]
It then follows from Proposition~\ref{prop:relcommutant} that $x \tilde x^* = \mathbb E_{ \prod_{\iota \to \mathcal U} N_\iota } (x \tilde x^*)$, but since $\mathbb E_{ \prod_{\iota \to \mathcal U} N_\iota}(x) = 0$ we then have $x \tilde x^* = 0$, and so
\[
 \mathbb E_{ \prod_{\iota \to \mathcal U} N_\iota }  (x \mathbb E_{ \prod_{\iota \to \mathcal U} N_\iota }  (x^* x) x^*)
= \mathbb E_{ \prod_{\iota \to \mathcal U} N_\iota }  (x \tilde x^* \tilde x x^*)
= 0,
\]
from which it follows that $x = 0$.
\end{rem}

The following result will allow us to weaken the hypothesis of Theorem~\ref{thm:genspectralgapapprox} in the free product setting.

\begin{prop}\label{prop:ultrabackandforth}
Suppose $N_\iota$ are II$_1$ factors and $M_\iota \subset N_\iota$ are type II$_1$ von Neumann subalgebras. Suppose $P$ and $Q$ are tracial von Neumann algebras containing a common amenable tracial von Neumann algebra $B$. Suppose $\theta: \prod_{\iota \to \mathcal U} N_\iota \to (P *_B Q)^{\mathcal V}$ is an isomorphism such that $\theta \left( \prod_{\iota \to \mathcal U} M_\iota \right) \subset P^{\mathcal V}$. 

If $Q_0 \subset Q^{\mathcal V}$ is a separable von Neumann algebra such that $\theta^{-1}(Q_0)$ is matricially approximable with respect to $(N_\iota)_\iota$, then $Q_0$ is matricially approximable with respect to $P *_B Q$. If, moreover, $Q$ has a u.c.p.\ lift to $\ell^\infty(I, Q_\kappa)$, then $Q$ is amenable. 
\end{prop}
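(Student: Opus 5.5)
The plan is to follow the template of Theorem~\ref{thm:genspectralgapapprox}, with the spectral gap hypothesis replaced by the relative commutant control of Proposition~\ref{prop:relcommutant}.

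\emph{Step 1: conjugate $Q_0$ into $\theta(\prod_{\iota \to \mathcal U} M_\iota)$.} We may assume $\mathcal U$ is countably incomplete. By hypothesis there are finite-dimensional subfactors $A_\iota \subset N_\iota$ with $\theta^{-1}(Q_0) \subset \prod_{\iota \to \mathcal U} A_\iota$. Since $M_\iota$ is type II$_1$ and $N_\iota$ is a factor, there are unitaries $u_\iota \in \mathcal U(N_\iota)$ with $u_\iota A_\iota u_\iota^* \subset M_\iota$. Setting $u = (u_\iota)^{\mathcal U}$, we get $\theta(u) Q_0 \theta(u)^* \subset \theta(\prod_{\iota \to \mathcal U} M_\iota) \subset P^{\mathcal V}$. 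So $Q_0$, which lies in $Q^{\mathcal V}$, is unitarily conjugate inside $(P *_B Q)^{\mathcal V}$ into $P^{\mathcal V}$.

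\emph{Step 2: argue by contradiction using Remark~\ref{rem:normalizer}.} Suppose $Q_0$ is matricially inapproximable with respect to $P *_B Q$. Set $w = \theta(u)^*$, so that $Q_0 w \subset w P^{\mathcal V}$. Two things are needed.

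First, we need inapproximability relative to the $P$ side. If $Q_0 \subset \prod_{\kappa \to \mathcal V} B_\kappa$ with $B_\kappa \subset P$ finite-dimensional, then $Q_0$ is approximable in $P *_B Q$ as well. Hence matricial inapproximability of $Q_0$ with respect to $P*_B Q$ implies the same with respect to $P$. This is straightforward, since $P \subset P *_B Q$ and we can use condition (\ref{item:main4}) of Theorem~\ref{thm:amenimplieshyperfinite} with conditional expectations.

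Second, we need to handle the "$Q$-part" of $w$. Here we apply the symmetric version of Remark~\ref{rem:normalizer}, with the roles of $P$ and $Q$ swapped: $Q_0 \subset Q^{\mathcal V}$, and we use the bimodule $L^2(P*_BQ)\ominus L^2 Q \prec$ coarse over $Q$, by Proposition~\ref{prop:coarsebimoduleafp} with $B$ amenable. The element $x = w - \mathbb E_{Q^{\mathcal V}}(w)$ satisfies $\mathbb E_{Q^{\mathcal V}}(x) = 0$. However, $Q_0 x \subset x\,\cdot$ is not immediate, since $w$ only intertwines into $P^{\mathcal V}$. A cleaner route is to use the relative commutant form. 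The element $w w^*$ commutes with... no; instead consider $w^* Q_0 w \subset P^{\mathcal V}$, a copy of $Q_0$ sitting in both $P^{\mathcal V}$ and $wQ^{\mathcal V}w^*$. The map $y \mapsto w^* y w$ gives $w \in$ the intertwiner space between $Q_0 \subset Q^{\mathcal V}$ and $w^*Q_0w \subset P^{\mathcal V}$. Fourier expanding $w$ in reduced words shows that some coefficient is nonzero. Then the coarse weak containment of $L^2(P*_BQ) \ominus L^2P$ and of $\ominus L^2 Q$ (each by Proposition~\ref{prop:coarsebimoduleafp}, ultraproducted via Proposition~\ref{prop:weakcontainmentultra}) yields $L^2 Q_0 \prec \prod_{\kappa \to \mathcal V}(L^2(P*_BQ)\ovt L^2(P*_BQ))$ as $Q_0$-bimodules. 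Unless $w$ lies in $B^{\mathcal V}$-parts, this contradicts inapproximability by Theorem~\ref{thm:amenimplieshyperfinite}(\ref{item:main3}). If instead the relevant component of $w$ lives in $B^{\mathcal V}$, then $Q_0$ is conjugated into $B^{\mathcal V}$, and $B$ is amenable, so $Q_0$ is approximable by Theorem~\ref{thm:amenimplieshyperfinite} applied to $\prod B$ inside $P*_BQ$.

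\emph{Step 3: the final assertion.} If $Q_0$ has a u.c.p.\ lift, amenability follows from Theorem~\ref{thm:ucplift}.

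The main obstacle is Step 2: converting the intertwining $Q_0 w \subset w P^{\mathcal V}$, between a subalgebra of $Q^{\mathcal V}$ and $P^{\mathcal V}$, into a weak containment in the coarse bimodule. This will likely need the doubling trick from Remark~\ref{rem:normalizer}, via $(P*_BQ)*_{P}(P*_BQ)\cong P*_B(Q*_BQ)$, applied carefully with the alternating-word structure.
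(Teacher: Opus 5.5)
Your Steps 1 and 3 match the paper. Step 2, the only nontrivial step, has a genuine gap, and the route you sketch for it would not work.

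The unitary $w=\theta(u)^*$ satisfies $yw=w\sigma(y)$, where $y\in Q_0\subset Q^{\mathcal V}$ acts on the left and $\sigma(y)=w^*yw\in P^{\mathcal V}$ acts on the right. So $w$ is an intertwiner in the \emph{mixed} $Q^{\mathcal V}$-$P^{\mathcal V}$ bimodule $L^2\mathcal M$, with $\mathcal M=(P*_BQ)^{\mathcal V}$.

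The pieces you invoke do not fit this structure. $L^2(P*_BQ)\ominus L^2P$ is a $P$-$P$ bimodule and $L^2(P*_BQ)\ominus L^2Q$ is a $Q$-$Q$ bimodule; neither is invariant under the mixed action. Nor does either conditional expectation respect the relation. For instance, $\mathbb E_{Q^{\mathcal V}}(yw)=y\,\mathbb E_{Q^{\mathcal V}}(w)$, but $\mathbb E_{Q^{\mathcal V}}(w\sigma(y))\neq \mathbb E_{Q^{\mathcal V}}(w)\sigma(y)$.

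Consequently, Fourier expanding $w$ and picking a nonzero coefficient produces no $Q_0$-central vector in any bimodule you control. The claimed $L^2Q_0\prec\prod_{\kappa\to\mathcal V}(L^2(P*_BQ)\ovt L^2(P*_BQ))$ therefore does not follow. Your exceptional case is also unfounded: a nonzero component of $w$ in $L^2B^{\mathcal V}$ does not conjugate $Q_0$ into $B^{\mathcal V}$. In fact no such case arises, as explained next.

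The missing idea is that, because $B$ is amenable, $L^2(P*_BQ)$ is already weakly coarse as a $Q$-$P$ (and $P$-$Q$) bimodule. Each reduced-word summand has the form $L^2Q\otimes_B\mathcal K\otimes_B L^2P$, including $\overline{QP}\cong L^2Q\otimes_BL^2P$.

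The paper then uses the vector $\xi=w\otimes_{P^{\mathcal V}}w^*\in L^2\mathcal M\oovt{P^{\mathcal V}}L^2\mathcal M$. For $y,z\in Q_0$ one has $y\xi=w\sigma(y)\otimes w^*=w\otimes\sigma(y)w^*=\xi y$. Moreover,
\[
\langle y\xi z,\xi\rangle=\tau\bigl(w\,\mathbb E_{P^{\mathcal V}}(w^*yw)\,w^*z\bigr)=\tau(yz),
\]
so $L^2Q_0$ embeds as a $Q_0$-$Q_0$ subbimodule. By Lemma~\ref{lem:ultrafusion} this sits inside $\prod_{\kappa\to\mathcal V}L^2(P*_BQ)\oovt{P}L^2(P*_BQ)$, which is weakly coarse as a $Q$-$Q$ bimodule. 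Proposition~\ref{prop:weakcontainmentultra} and Theorem~\ref{thm:amenimplieshyperfinite} then give matricial approximability directly. No contradiction hypothesis, no case split, and no separate ``$P$-side'' inapproximability step is needed.

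This is exactly what the doubling trick you mention at the end amounts to. The fusion over $P^{\mathcal V}$ embeds into $(P*_BQ)*_P(P*_BQ)\cong P*_B(Q*_BQ)$. There $w\tilde w^*$ intertwines $Q_0$ with its copy $\tilde Q_0$, and the two copies of $Q$ are free over the amenable $B$. So that trick replaces your Step 2 rather than refining it.
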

\begin{proof}
If $\theta^{-1} (Q_0) \subset \prod_{\iota \to \mathcal U} A_\iota \subset \prod_{\iota \to \mathcal U} N_\iota$ with $A_\iota \subset N_\iota$ finite-dimensional factors, then as $M_\iota$ is type II$_1$ there exists a unitary $u \in \mathcal U( \prod_{\iota \to \mathcal U} N_\iota)$ so that $u^* \theta^{-1}(Q_0) u \subset \prod_{\iota \to \mathcal U} M_\iota$ and hence $\theta(u^*) Q_0 \theta(u) \subset \theta \left( \prod_{\iota \to \mathcal U} M_\iota \right) \subset P^{\mathcal V}$. This then gives weak containment of Hilbert $Q_0$-$Q_0$ bimodules $L^2 Q_0 \prec L^2(( P *_B Q )^{\mathcal V}) \oovt{P^{\mathcal V}} L^2(( P *_B Q)^{\mathcal V})$. Since $B$ is amenable, we have that $L^2(P *_B Q)$ is a weakly coarse $P$-$Q$ bimodule, and then applying Lemma~\ref{lem:ultrafusion}, Proposition~\ref{prop:weakcontainmentultra} and Theorem~\ref{thm:amenimplieshyperfinite} we obtain the result.

The last assertion of the theorem then follows from Theorem~\ref{thm:ucplift}.
\end{proof}

\begin{rem}
In the proof of the previous theorem, after concluding that $\theta(u^*) Q_0 \theta(u) \subset \theta \left( \prod_{\iota \to \mathcal U} M_\iota \right) \subset P^{\mathcal V}$, we could also use Remark~\ref{rem:normalizer} to conclude that $Q_0$ is matricially approximable with respect to $P *_B Q$. 
\end{rem}

We now fix $(P, \tau_P)$ and $(Q, \tau_Q)$ tracial von Neumann algebras with a common tracial von Neumann subalgebra $(B, \tau_B)$ and set $N = P *_B Q$. We set $\tilde N = N *_B ( B \ovt L\mathbb F_2) $, and we let $\alpha \in \Aut(\tilde N)$ be the automorphism given by $\alpha = \Ad(u_a) * \Ad(u_b)$, where $\mathbb F_2 = \langle a, b \rangle$ and this free product automorphism is applied to the canonical decomposition 
\[
\tilde N \cong (P *_B (B \otimes L\langle a \rangle) ) *_B ( Q *_B( B \otimes L\langle b \rangle ) )
\] 
as in \cite[Section 2.2]{IoPePo08}.  

We consider the twisted $N$-$N$-bimodule structure on $L^2\tilde N$ given by $x \cdot \xi \cdot y = x \xi \alpha(y)$ for $x, y \in N$ and $\xi \in L^2 \tilde N$. To distinguish this from the usual bimodule structure we write $L^2 \tilde N_\alpha$ instead of $L^2 \tilde N$. We have a canonical embedding of $N$-$N$ Hilbert bimodules $L^2\langle N, P \rangle  \subset L^2\tilde N_\alpha$ taking $e_P$ to the vector $\widehat{u_{a^{-1}}}$. We similarly have an embedding $L^2\langle N, Q \rangle  \subset L^2\tilde N_\alpha$ taking $e_Q$ to the vector $\widehat{u_{b^{-1}}}$.

\begin{prop}\label{prop:afpwkcontain}
Using the notation above, we have weak containment of $N$-$N$ bimodules 
\[
L^2\tilde N_\alpha \ominus \left( L^2\langle N, P \rangle  \oplus L^2\langle N, Q \rangle  \right) \prec L^2 N \ovt L^2 N.
\]  
\end{prop}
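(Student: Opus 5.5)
The plan is to decompose $L^2\tilde N$ explicitly as an $N$-$N$ bimodule using the reduced-word (Fock space) description of the amalgamated free product. Write $\tilde N = (P *_B \tilde A) *_B (Q *_B \tilde C)$ with $\tilde A = B \ovt L\langle a\rangle$ and $\tilde C = B\ovt L\langle b\rangle$. Equivalently, $\tilde N = N *_B (B \ovt L\mathbb F_2)$. The twisted bimodule $L^2\tilde N_\alpha$ has right action through $\alpha$, and on $N$ the automorphism $\alpha$ acts as $\Ad(u_a)$ on $P$ and as $\Ad(u_b)$ on $Q$.

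The first step is to split $L^2\tilde N$ according to the reduced words in $\mathbb F_2$ appearing in the $B\ovt L\mathbb F_2$ letters. For each reduced $g\in\mathbb F_2$, consider the closed span $\mathcal H_g$ of $N u_g N$. Since $B$ commutes with $u_g$, this gives an orthogonal decomposition
\[
L^2\tilde N = \bigoplus_{g} \mathcal H_g \ \ \oplus\ \ (\text{longer alternating words}),
\]
where "longer" means words of the form $x_0 u_{g_1} x_1 u_{g_2}\cdots$ with at least two group letters separated by elements $x_i \in N\ominus B$. As an untwisted bimodule, $\mathcal H_g \cong L^2\langle N,B\rangle = L^2N\otimes_B L^2N$, and the longer words give relative tensor powers $L^2N\otimes_B L^2(N\ominus B)\otimes_B\cdots\otimes_B L^2N$. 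Because $B$ is amenable, each of these is weakly contained in the coarse bimodule, since $L^2 N\otimes_B L^2N \prec L^2N\ovt L^2N$ when $B$ is amenable.

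The second step is to understand how twisting by $\alpha$ interacts with this decomposition. Right multiplication by $\alpha(y)$ for $y\in P$ equals $u_a y u_a^*$. Hence the map $\xi\mapsto \xi u_a$ intertwines the twisted right $P$-action with the untwisted right $P$-action, but it shifts the words. The vector $\widehat{u_{a^{-1}}}$ is the exceptional one: $\widehat{u_{a^{-1}}}\cdot y = u_{a^{-1}}u_a y u_a^* = y\,u_{a^{-1}}$ for $y\in P$, so the closed span $\overline{N u_{a^{-1}} P}$ sits inside $\overline{N u_{a^{-1}}N}$ and realizes $L^2\langle N,P\rangle$, as stated before the proposition. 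The same holds for $Q$ with $b$. The right $N$-action, however, is generated jointly by $P$ and $Q$. To handle this, I would identify $L^2\tilde N_\alpha$ with $L^2\tilde N$ via right multiplication by suitable unitaries, and then describe the twisted bimodule as the untwisted one induced along the words. Concretely, I would rewrite $L^2\tilde N_\alpha$ as an alternating tensor product over $B$ built from the pieces $L^2P$, $L^2Q$, $L^2\tilde A$ and $L^2\tilde C$. The two summands $L^2\langle N,P\rangle$ and $L^2\langle N,Q\rangle$ then appear as the only pieces in which some right tensor factor is not separated from the left one by a copy of $L^2(\,\cdot\,\ominus B)$ over $B$.

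The third step is to show that every remaining summand has the form $\mathcal K \otimes_B \mathcal L$ for some $N$-$B$ bimodule $\mathcal K$ and some $B$-$N$ bimodule $\mathcal L$. This is a direct-sum decomposition mirroring the analysis in \cite[Section 2]{IoPePo08}. Amenability of $B$ then gives $\mathcal K\otimes_B\mathcal L \prec \mathcal K\otimes_B L^2B\otimes_B \mathcal L$, and this is weakly contained in a multiple of $\mathcal K\ovt\mathcal L$, which is in turn weakly contained in $L^2N\ovt L^2N$ when $\mathcal K$ and $\mathcal L$ come from $L^2N$. Proposition~\ref{prop:coarsebimoduleafp} can serve as a model for the same argument.

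The main obstacle is this bookkeeping: verifying that the complement of $L^2\langle N,P\rangle\oplus L^2\langle N,Q\rangle$ really decomposes into pieces factoring through $B$. The difficulty is that the twist acts differently on $P$ and on $Q$, so the words must be tracked letter by letter. My first attempt would be to describe the complement using an explicit orthonormal family of reduced words and to check closure under the twisted bimodule action case by case, according to the first and last letters.
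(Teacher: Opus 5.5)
Your plan has the same overall shape as the paper's argument: reduced words in $\tilde N = N *_B (B \ovt L\mathbb F_2)$, the exceptional cyclic vectors $\widehat{u_{a^{-1}}}$ and $\widehat{u_{b^{-1}}}$, and everything else induced through the amenable algebra $B$. However, it is not a proof, because it stops exactly where the work lies. Step 3 simply asserts that the complement splits into pieces $\mathcal K \otimes_B \mathcal L$, and your last paragraph concedes this is unverified; that assertion is essentially the whole content of the proposition.

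The route you sketch toward it also does not work as stated. The Step 1 decomposition is not compatible with the twisted right action. For $q \in Q \ominus B$ one has $\widehat{u_{a^{-1}}}\cdot q = u_{a^{-1}}\alpha(q) = u_{a^{-1}b}\,q\,u_{b^{-1}}$, so the copy of $L^2\langle N,P\rangle$ generated by $\widehat{u_{a^{-1}}}$ is not contained in $\overline{N u_{a^{-1}} N}$, contrary to what Step 2 suggests. Moreover, right multiplication by $u_a$ straightens the twisted $P$-action but not the $Q$-action. So an identification ``via right multiplication by suitable unitaries'' can only be piecewise, and that piecewise bookkeeping is precisely what is left undone. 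A smaller point in Step 3: $\mathcal K\otimes_B\mathcal L \prec \mathcal K\otimes_B L^2B\otimes_B\mathcal L$ is a tautology. What amenability of $B$ gives is $\mathcal K\otimes_B\mathcal L \prec \mathcal K\otimes_B(L^2B\ovt L^2B)\otimes_B\mathcal L \cong \mathcal K\ovt\mathcal L$, which lies in a multiple of $L^2N\ovt L^2N$ for any $\mathcal K$ and $\mathcal L$.

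Two ingredients are missing.

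\emph{No invariant decomposition is needed.} You do not need a twist-invariant orthogonal decomposition. Weak containment passes to closed spans of (non-orthogonal) sub-bimodules and to closures of images of bounded bimodule maps. So it suffices to show that $L^2\tilde N$ is spanned by cyclic sub-bimodules $\overline{N v\alpha(N)}$, each either weakly coarse or contained in $L^2\langle N,P\rangle\oplus L^2\langle N,Q\rangle$, and then compress by the projection onto the complement. The paper takes $\hat 1$ together with the alternating words beginning with some $u_w$, $w\neq e$; the left action supplies a leading $N$-letter.

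\emph{Compute the twisted coefficients directly.} The coefficients are $\langle x\cdot v\cdot y, v\rangle = \tau(xv\alpha(y)v^*)$. Take a reduced word $y = y_1\cdots y_m$ in $P*_BQ$, and set $s_i = a$ or $b$ according as $y_i\in P$ or $y_i \in Q$. Then
\[
\alpha(y) = u_{s_1}\, y_1\, u_{s_1^{-1}s_2}\, y_2 \cdots u_{s_{m-1}^{-1}s_m}\, y_m\, u_{s_m^{-1}},
\]
and consecutive $s_i$ differ, so every interior group letter is nontrivial.

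Now suppose $v$ does not end in $u_{a^{-1}}$ or $u_{b^{-1}}$; this is the only way $v$ can cancel against $u_{s_1}$. Then for every such $y$ of positive length, $v\alpha(y)v^*$ is again a reduced word containing group letters. Hence $\tau(xv\alpha(y)v^*) = \tau(xvE_B(y)v^*)$ for all $x,y\in N$. The cyclic bimodule is therefore $\overline{NvB}\otimes_B L^2N$, which is weakly coarse because $B$ is amenable.

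Words ending in $u_{a^{-1}}$ or $u_{b^{-1}}$ need one further reduction. A $P$-letter immediately before $u_{a^{-1}}$ passes to the right via $p\,u_{a^{-1}} = u_{a^{-1}}\alpha(p)$, and similarly with $Q$ and $u_{b^{-1}}$. Iterating gives $\overline{Nv\alpha(N)}\subseteq\overline{Nv'\alpha(N)}$, where $v'$ is one of two kinds. Either $v'$ is $u_{a^{-1}}$ or $u_{b^{-1}}$, whose cyclic bimodules are the two exceptional summands. Or $v'$ is a word whose coefficients depend on $y$ only through $E_B(y)$. This twisted coefficient computation, not the untwisted relative tensor products of Step 1, is what the proof actually rests on.
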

\begin{proof}
An $N$-$N$ bimodule spanning set of vectors for $L^2 \tilde N$ is given by $\hat{1}$ and vectors of the form $u_{w_1} a_1 u_{w_2} a_2 \cdots a_{n -1} u_{w_n} a_n$ where $w_i \in \mathbb F_2 \setminus \{ e \}$ and $a_i \in N$ with $E_B(a_i) = 0$, together with vectors of the form $u_{w_1} a_1 u_{w_2} a_2 \cdots a_{n -1} u_{w_n}$ where $w_i \in \mathbb F_2 \setminus \{ e \}$ and $a_i \in N$ with $E_B(a_i) = 0$. In all of these cases except the two exemptions above, if $v$ is a vector of this form it follows that we have $\tau(x v \alpha(y) v^*) = \tau(E_B(x) E_B(y) ) \| v \|_2^2$, and hence this gives rise to a bimodule isomorphic to $L^2 \langle N, B \rangle$, which is weakly contained in $L^2N \ovt L^2N$ since $B$ is amenable. 
\end{proof}

The following is analogous to \cite[Theorem 5.1]{IoPePo08}.

\begin{thm}\label{thm:afpintertwining}
Suppose we have families of II$_1$ factors $(P_\iota, \tau_{P_\iota} )$ and $(Q_\iota, \tau_{Q_\iota})$ and amenable von Neumann algebras $(B_\iota, \tau_{B_\iota})$ with tracial inclusions $B_\iota \subset P_\iota$ and $B_\iota \subset Q_\iota$. Let $\tilde N_\iota$ and $\alpha_\iota \in \Aut(\tilde N_\iota)$ be as described above. 

Suppose $M \subset \prod_{\iota \to \mathcal U}  N_\iota$ is a von Neumann subalgebra with $M' \cap \prod_{\iota \to \mathcal U}  N_\iota$ a factor such that we have an inclusion of $M$-$M$ bimodules $L^2 M \subset L^2 (\prod_{\iota \to \mathcal U} \tilde N_\iota)_{\alpha}$ with the action on the right given by multiplication by $\alpha(M)$ where $\alpha = (\alpha_\iota)^{\mathcal U}$. Then one of the following conditions holds:
\begin{enumerate}
\item\label{item:afpintA}  $M$ is matricially approximable with respect to $(N_\iota)_\iota$.
\item\label{item:afpintB}  There exists a unitary $u \in \mathcal U(N)$ so that  $uMu^* \subset \prod_{\iota \to \mathcal U}  P_\iota$.
\item\label{item:afpintC} There exists a unitary $u \in \mathcal U(N)$ so that $u Mu^* \subset \prod_{\iota \to \mathcal U}  Q_\iota$. 
\end{enumerate}
\end{thm}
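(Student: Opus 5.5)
We argue by contradiction: assume (\ref{item:afpintA}) fails, so $M$ is matricially inapproximable with respect to $(N_\iota)_\iota$, and produce an intertwiner into $\prod_{\iota \to \mathcal U} P_\iota$ or $\prod_{\iota \to \mathcal U} Q_\iota$.

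The hypothesis gives a nonzero $M$-central vector $\xi \in L^2(\prod_{\iota \to \mathcal U} \tilde N_\iota)_\alpha$, i.e.\ $x\xi = \xi \alpha(x)$ for $x \in M$. By Lemma~\ref{lem:ultradirectsum}, applied to the decomposition of Proposition~\ref{prop:afpwkcontain} at each $\iota$, we split $L^2(\prod_{\iota \to \mathcal U}\tilde N_\iota)_\alpha$ into $\prod_{\iota\to\mathcal U} L^2\langle N_\iota, P_\iota\rangle$, $\prod_{\iota\to\mathcal U} L^2\langle N_\iota, Q_\iota\rangle$, and $\mathcal K = \prod_{\iota\to\mathcal U}\big(L^2(\tilde N_\iota)_\alpha \ominus (L^2\langle N_\iota,P_\iota\rangle \oplus L^2\langle N_\iota,Q_\iota\rangle)\big)$. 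This is a splitting of $\prod_{\iota \to \mathcal U} N_\iota$-bimodules, so the projections onto the pieces commute with the $M$-actions and $\xi$ has an $M$-central component in one of them. If the component in $\mathcal K$ is nonzero, then Proposition~\ref{prop:afpwkcontain} together with Proposition~\ref{prop:weakcontainmentultra} gives $\mathcal K \prec \prod_{\iota \to \mathcal U}(L^2N_\iota \ovt L^2 N_\iota)$ as $\prod N_\iota$-bimodules. A nonzero $M$-central vector in $\mathcal K$ then gives $L^2 M \prec \prod_{\iota\to\mathcal U}(L^2N_\iota\ovt L^2N_\iota)$. Here one should check that the cyclic $M$-bimodule generated by a central vector contains $L^2M$ (or that $M$ is a factor, or use the central support in $M'\cap\prod N_\iota$ together with Proposition~\ref{prop:cutdownapprox}). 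Theorem~\ref{thm:amenimplieshyperfinite} then yields (\ref{item:afpintA}), a contradiction. Hence we may assume there is a nonzero $M$-central vector $\eta \in \prod_{\iota\to\mathcal U} L^2\langle N_\iota,P_\iota\rangle$ (the $Q$ case is symmetric).

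It remains to convert $\eta$ into a unitary conjugating $M$ into $\prod P_\iota$. This is Popa's intertwining criterion, and the main work is making it work in the ultraproduct. The bimodule $\prod_{\iota\to\mathcal U} L^2\langle N_\iota,P_\iota\rangle$ should contain, at least as a sub-bimodule generated by left-right bounded vectors, $L^2\langle \prod N_\iota, \prod P_\iota\rangle$; it may not equal it, since basic constructions do not commute with ultraproducts. I would therefore approximate $\eta$ by a finite sum $\sum_j x_j e_P y_j$ computed coordinatewise. Averaging over $\mathcal U(M)$ in the ultraproduct, which is legitimate by taking the $\|\cdot\|_2$-closed convex hull of $\{u\eta u^*\}$ inside the bimodule, gives a nonzero $M$-central positive element $d$ of finite trace in the ultraproduct basic construction. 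The spectral projection of $d$ then produces, as in \cite[Appendix]{Po06b}, a projection $f$ of finite trace commuting with $M$, and hence a nonzero partial isometry $v \in \prod_{\iota\to\mathcal U} N_\iota$ with $xv = v\theta(x)$ for a homomorphism $\theta: M \to p(\prod P_\iota)p$. The step I expect to be hardest is this passage from a central vector in the ultraproduct of basic constructions to a finite-trace projection: one must control the left-right boundedness uniformly in $\iota$ and verify that the trace of $f$ is finite.

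Finally, we upgrade the partial intertwiner to a unitary conjugacy. Proposition~\ref{prop:relcommutant}, applied with $N_\iota$ replaced by $P_\iota$ and the free product by $P_\iota *_{B_\iota} Q_\iota$, shows that $\theta(M)' \cap p(\prod N_\iota)p \subset p(\prod P_\iota)p$, using that $\theta(M)$ is inapproximable by Proposition~\ref{prop:cutdownapprox}. Remark~\ref{rem:normalizer} gives the analogous control of quasi-normalizers, so $v^*v \in \prod P_\iota$ and $vv^* \in M'\cap\prod N_\iota$. Because $M'\cap\prod_{\iota\to\mathcal U} N_\iota$ is a factor, a maximality argument as in Proposition~\ref{prop:intertwinerinner} patches copies of $v$, using partial isometries in the relative commutant and in $\prod P_\iota$, into a unitary $u$ with $uMu^*\subset\prod_{\iota\to\mathcal U} P_\iota$. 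This yields (\ref{item:afpintB}), or (\ref{item:afpintC}) in the symmetric case.
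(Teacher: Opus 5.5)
Your outline matches the paper's proof. You split $L^2(\prod_{\iota\to\mathcal U}\tilde N_\iota)_\alpha$ via Lemma~\ref{lem:ultradirectsum} and Proposition~\ref{prop:afpwkcontain}, and you eliminate the weakly coarse summand via Proposition~\ref{prop:weakcontainmentultra} and Theorem~\ref{thm:amenimplieshyperfinite}. The final upgrade uses Propositions~\ref{prop:cutdownapprox} and \ref{prop:relcommutant} together with factoriality of $M'\cap\prod_{\iota\to\mathcal U}N_\iota$. Your patching step is what the paper takes from Lemma F.18 of Brown--Ozawa, and it works because $v^*w_kw_l^*v\in\theta(M)'\cap p(\prod_{\iota\to\mathcal U}N_\iota)p\subset\prod_{\iota\to\mathcal U}P_\iota$.

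The problem is the middle step, which you correctly isolate but do not close. The splitting only gives an $M$-central vector $\eta$ in $\prod_{\iota\to\mathcal U}L^2\langle N_\iota,P_\iota\rangle$. That space contains $L^2\langle\prod_{\iota\to\mathcal U}N_\iota,\prod_{\iota\to\mathcal U}P_\iota\rangle$ only as a sub-bimodule, and Popa's criterion needs a central vector in the latter. Your repair is circular. Coordinatewise approximants $\sum_{j\le J_\iota}x_{j,\iota}e_{P_\iota}y_{j,\iota}$ define elements of $L^2\langle\prod_{\iota\to\mathcal U}N_\iota,\prod_{\iota\to\mathcal U}P_\iota\rangle$ only if $J_\iota$ and the norms of the $x_{j,\iota},y_{j,\iota}$ stay bounded along $\mathcal U$. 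Averaging over $\mathcal U(M)$ only helps once you have such an approximant, i.e.\ once you already know $\eta$ is close to that sub-bimodule. Uniform control of left--right boundedness and of the trace does not substitute for this.

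In fact nothing in your second step uses a hypothesis on $M$, and for general inclusions the implication fails. Take $P_\iota=\mathbb C$ and matrix units $e^\iota_{ij}$ of $\mathbb M_{k_\iota}(\mathbb C)\subset N_\iota$ with $k_\iota\to\infty$. Then $(\sum_{i,j}e^\iota_{ij}\otimes e^\iota_{ji})_\iota$ is a unit vector in $\prod_{\iota\to\mathcal U}(L^2N_\iota\ovt L^2N_\iota)$, left and right bounded with constant $1$. It is central for the diffuse factor $\prod_{\iota\to\mathcal U}\mathbb M_{k_\iota}(\mathbb C)$, which has no nonzero central vectors in $L^2(\prod_{\iota\to\mathcal U}N_\iota)\ovt L^2(\prod_{\iota\to\mathcal U}N_\iota)$. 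This discrepancy is exactly what Theorem~\ref{thm:amenimplieshyperfinite} detects.

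So the missing step must genuinely use the theorem's hypotheses: inapproximability of $M$, factoriality of its relative commutant, $P_\iota$ being II$_1$ factors, or the free product structure. Your proposal does not indicate how. The paper's proof does not supply this argument either. It writes the two summands as $L^2\langle N,P\rangle$ and $L^2\langle N,Q\rangle$ with $N,P,Q$ the ultraproducts and reads off $M\prec_N P$ or $M\prec_N Q$ directly. In effect it treats the inclusion $L^2\langle\prod_{\iota\to\mathcal U}N_\iota,\prod_{\iota\to\mathcal U}P_\iota\rangle\subset\prod_{\iota\to\mathcal U}L^2\langle N_\iota,P_\iota\rangle$ as an equality, though it states it only as an inclusion before Theorem~\ref{thm:ultrafreeembedding}. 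Relative to the written proof you have lost nothing and have located its thinnest point, but as it stands this step is unproved in both.
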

\begin{proof}
Set 
\[
P = \prod_{\iota \to \mathcal U}  P_\iota, \ \ \ \ \ \ 
Q = \prod_{\iota \to \mathcal U}  Q_\iota, \ \ \ \ \ \ 
N = \prod_{\iota \to \mathcal U}  N_\iota, \ \ \ \ \ \ 
\tilde N = \prod_{\iota \to \mathcal U} \tilde N_\iota. \ \ \ \ \ 
\] 

Suppose $\xi$ is a nonzero $M$-central vector in $L^2(\tilde N)_\alpha$. By Lemma~\ref{lem:ultradirectsum} and Proposition~\ref{prop:afpwkcontain}, we have an isomorphism of $M$-$M$ bimodules $L^2(\tilde N)_\alpha \cong  L^2\langle N, P \rangle  \oplus L^2\langle N, Q \rangle  \oplus( \mathcal H_\iota )^{\mathcal U}$ where $\mathcal H_\iota$ are $N_\iota$-$N_\iota$ bimodules that are weakly contained in $L^2N_\iota \ovt L^2N_\iota$. If $M$ is matricially inapproximable with respect to $(N_\iota)_\iota$ it follows from Propositions~\ref{prop:weakcontainmentultra}, \ref{prop:relcommutant} and Theorem~\ref{thm:amenimplieshyperfinite} that the projection of $\xi$ onto the $(\mathcal H_\iota)^{\mathcal U}$ is zero, and thus it follows that we have $M \prec_N P$ or $M \prec_N Q$. 

Assume for simplicity that $M \prec_N P$, with the other case being similar. Then there exist $e \in \mathcal P(M)$, $p \in \mathcal P(P)$, a normal unital $*$-homomorphism $\theta: eMe \to pPp$ and a vector $v \in eNp$ so that $xv = v \theta(x)$ for $x \in eMe$. Take representations $e =(e_\iota)^{\mathcal U}$ and $p = (p_\iota)^{\mathcal U}$ with $e_\iota \in \mathcal P(N_\iota)$, and $p_\iota \in \mathcal P(P_\iota)$. We also take representations $v = (v_\iota)^{\mathcal U}$ where $v_\iota \in e_\iota N_\iota p_\iota$ are partial isometries. Since $M$ is matricially inapproximable with respect to $(N_\iota)_\iota$ and since $M' \cap N$ is a factor, it follows from Proposition~\ref{prop:cutdownapprox} that $vv^* eMe$ is matricially inapproximable with respect to $(v_\iota v_\iota^* e_\iota N e_\iota v_\iota v_\iota^*)_\iota$. Conjugating by $v^*$ and again using Proposition~\ref{prop:cutdownapprox} then shows that $\theta(e M e)$ is matricially inapproximable with respect to $(p_\iota N p_\iota)_\iota$. Since $v^*v \in \theta(M)' \cap pNp$, we have from Proposition~\ref{prop:relcommutant} that $v^*v \in p P p$. The argument in \cite[Lemma F.18]{BrOz08} may then be applied to construct a unitary $u \in \mathcal U(N)$ such that $u M u^* \subset P$. 
\end{proof}

Note that if both (\ref{item:afpintB}) and (\ref{item:afpintC}) hold above, then by Proposition~\ref{prop:relcommutant} we have that (\ref{item:afpintA}) must also hold. 
Also, note that if in the previous theorem we have that $\lim_{\iota \to \mathcal U} \dim(P_\iota)$ and $\lim_{\iota \to \mathcal U} \dim(Q_\iota)$ are both finite, then neither (\ref{item:afpintB}) or (\ref{item:afpintC}) can hold and so, in this case, (\ref{item:afpintA}) is the only possible conclusion.

\section{Embeddings of property (T) factors into ultraproducts}\label{sec:embedultra}

We are now in position to analyze embeddings of property (T) factors into free products. Since the case of embeddings into free group factors is a bit more direct, we first give an argument for this case, and then we consider the general free product situation. 

\subsection{Property (T) subfactors and weakly coarse malleable deformations.}

Following Popa \cite{Po07}, we'll say that a II$_1$ factor $N$ has a weakly coarse malleable deformation if there exists a II$_1$ superfactor $N \subset \tilde N$ and there exists an automorphism $\alpha \in \Aut(\tilde N)$ that is in the connected component of the identity such that we have weak containment of the following $N$-$N$ Hilbert bimodules:
\begin{equation}\label{eq:wkcontain}
L^2 \tilde N \ominus L^2 N \prec L^2 N \ovt L^2 N, \ \ \ \ \ \ \ \
L^2 \tilde N_{\alpha} \prec L^2 N \ovt L^2 N. 
\end{equation}

\begin{lem}\label{lem:coarseautomorphism}
Suppose $\Sigma < \Gamma$ and $\alpha \in \Aut(\Gamma)$ is such that $t \alpha(\Sigma) t^{-1} \cap \Sigma$ is amenable for each $t \in \Gamma$. Let $M = L\Sigma$ and $N = L\Gamma$ and let $\tilde \alpha \in \Aut(L\Gamma)$ be the corresponding automorphism given by $\tilde \alpha(\lambda_t) = \lambda_{\alpha(t)}$. Then the $M$-$M$ bimodule $L^2(N)$ given by $x \cdot \xi \cdot y = x \xi \tilde \alpha(y)$ is weakly contained in the coarse bimodule. 
\end{lem}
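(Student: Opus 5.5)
The plan is to decompose $L^2(N)$, with the twisted $M$-$M$ structure $x\cdot\xi\cdot y = x\xi\tilde\alpha(y)$, into cyclic subbimodules indexed by double cosets, and then identify each piece as an induced bimodule from an amenable subgroup. For $t\in\Gamma$, consider the closed subspace $\mathcal H_t=\overline{\operatorname{span}}\{\lambda_s \delta_t \lambda_{\alpha(r)} : s,r\in\Sigma\} = \ell^2(\Sigma t\alpha(\Sigma))$. Since $\Gamma$ is the disjoint union of the double cosets $\Sigma t \alpha(\Sigma)$, we get an orthogonal decomposition $L^2(N)=\bigoplus_{t}\mathcal H_t$ as $M$-$M$ bimodules, with $t$ ranging over representatives. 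Because weak containment is preserved under direct sums, it suffices to show that each $\mathcal H_t\prec L^2M\ovt L^2M$.

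Each $\mathcal H_t$ is cyclic with cyclic vector $\delta_t$, so it is determined by the positive definite function, or coefficient, on $\Sigma\times\Sigma^{\rm op}$ given by
\[
\varphi_t(s,r)=\langle \lambda_s\delta_t\lambda_{\alpha(r)},\delta_t\rangle = 1_{\{s t\alpha(r)=t\}} .
\]
This is nonzero exactly when $s = t\alpha(r)^{-1}t^{-1}$, that is, when $s\in \Sigma\cap t\alpha(\Sigma)t^{-1}=:\Lambda_t$ and $r$ is determined by $s$. Concretely, $\mathcal H_t$ is the $\ell^2$ of the transitive $\Sigma\times\Sigma$-set $\Sigma t\alpha(\Sigma)$, where the action is $(s,r)\cdot g=sg\alpha(r)^{-1}$. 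The stabilizer of $t$ is $\{(s,r) : s = t\alpha(r)t^{-1}\}$, which is isomorphic to $\Lambda_t$, and by hypothesis $\Lambda_t$ is amenable, since $\Lambda_t$ is conjugate to $t\alpha(\Sigma)t^{-1}\cap\Sigma$. So $\mathcal H_t\cong\ell^2((\Sigma\times\Sigma)/\Lambda_t)$ as a representation of $\Sigma\times\Sigma^{\rm op}$, which is the representation induced from the trivial representation of the amenable subgroup $\Lambda_t$ embedded via $s\mapsto (s,\alpha^{-1}(t^{-1}s t)^{-1})$ or something similar.

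Because $\Lambda_t$ is amenable, the trivial representation of $\Lambda_t$ is weakly contained in its regular representation. Inducing preserves weak containment, so $\ell^2((\Sigma\times\Sigma)/\Lambda_t)\prec \ell^2(\Sigma\times\Sigma)$ as group representations. To pass to bimodules, I will use the standard correspondence between $L\Sigma$-$L\Sigma$ bimodules arising from unitary representations of $\Sigma\times\Sigma$ that are of the form $\ell^2$ of a $\Sigma\times\Sigma$-set, and representations of $\Sigma\times\Sigma$. Here $\ell^2(\Sigma\times\Sigma)$ corresponds to the coarse bimodule $L^2M\ovt L^2M$. Concretely, the coefficient $\varphi_t$ is a pointwise limit of finitely supported positive definite functions obtained from Følner-type almost invariant vectors for $\Lambda_t$. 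Each of these is a coefficient of the coarse bimodule, so the associated c.p. maps on $M$ converge, and weak containment follows as in \cite{AD95} (equivalently, $\mathcal H_t \cong L^2\langle M, L\Lambda_t'\rangle$-type bimodules, which are weakly coarse when the subalgebra is amenable).

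The main obstacle is this last translation from group-level weak containment to bimodule weak containment. I would handle it by writing $\mathcal H_t\cong L^2(M)\otimes_{L\Lambda_t} L^2(M)$ twisted appropriately. This is a bimodule of the form $L^2\langle M,L\Lambda_t\rangle$ up to the twist by $\operatorname{Ad}(t)\circ\tilde\alpha$, and since $L\Lambda_t$ is amenable, $L^2(L\Lambda_t)\prec L^2L\Lambda_t\ovt L^2L\Lambda_t$. Tensoring (inducing) then gives $\mathcal H_t\prec L^2M\ovt L^2M$.
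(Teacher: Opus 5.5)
Your proposal is correct and follows essentially the paper's argument. The cyclic subbimodule generated by $\delta_t$ has coefficient supported on the amenable group $\Sigma\cap t\alpha(\Sigma)t^{-1}$, so it is induced from an amenable subalgebra, hence weakly coarse, and these pieces exhaust $L^2(N)$; your double-coset decomposition simply turns the paper's ``dense span'' step into an orthogonal direct sum. Your final step is also the right one: the fusion $L^2M\otimes_{L\Lambda_t}{}_{\theta}L^2M$ with $\theta(\lambda_s)=\lambda_{\alpha^{-1}(t^{-1}st)}$ is what closes the group-to-bimodule gap you flagged, which group-level weak containment alone would not.
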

\begin{proof}
If $x \in \Gamma$ and $s, t \in \Sigma$, then $\langle \lambda_s \widehat{ \lambda_x } \tilde \alpha(\lambda_t^*), \widehat{\lambda_x} \rangle$ is zero unless $s = x^{-1} \alpha(t) x$, so that $s \in \Sigma \cap x \alpha(\Sigma) x^{-1}$. Thus, we see that the $L\Sigma$-bimodule spanned by $\widehat{\lambda_x}$ is isomorphic to a bimodule induced from a $L (\Sigma \cap x \alpha(\Sigma) x^{-1} )$-$L\Sigma$ bimodule. Since $ \Sigma \cap x \alpha(\Sigma) x^{-1}$ is amenable, this bimodule is weakly contained in the coarse bimodule. Since such vectors span a dense subspace of $L^2(N)$, the result follows. 
\end{proof}

\begin{examp}
Suppose $\mathbb F_n$ is the free group on $n$ generators $\{ a_k \}_k$. We identify $\mathbb F_{2n}$ as the free group with generators $\{ a_k, b_k \}_k$ so that we have a canonical embedding $\mathbb F_n < \mathbb F_{2n}$. We let $\alpha_0 \in \Aut(\mathbb F_{2n})$ be the automorphism satisfying $\alpha_0(a_k) = a_k b_k$, $\alpha_0(b_k) = b_k$, $k \geq 1$. We let $\alpha \in \Aut(L\mathbb F_{2n})$ be the corresponding automorphism. This automorphism is in the connected component of the identity \cite[Lemma 2.1]{Po07}, and it follows that for any $t \in \mathbb F_{2n}$ we have $t \mathbb F_n t^{-1} \cap \alpha(\mathbb F_n) = \{ e \}$ and so by Lemma~\ref{lem:coarseautomorphism} the $L\mathbb F_n$-$L\mathbb F_n$ bimodule $L^2( L\mathbb F_{2n})$ given by $x \cdot \xi \cdot y = x \xi \alpha(y)$ is weakly contained in the coarse bimodule, so that we have the weak containments in (\ref{eq:wkcontain}). 
\end{examp}

\begin{examp}
Generalizing the previous example, suppose $(N_n)_{n}$ is a sequence (finite or infinite) of separable amenable tracial von Neumann algebras and we have trace-preserving embeddings $B \subset N_n$ where $B$ is a tracial amenable von Neumann algebra. We set $N = N_1 *_{B} N_2 *_{B} \cdots$ and suppose $N$ is a II$_1$ factor. We let $M_n =  (B \ovt L\mathbb Z) *_{B} N_n$ and take $\tilde N = M_1 *_{B} M_2 *_{B} \cdots$. 

We let $a_n$ be the generator of the copy of $\mathbb Z$ in $M_n$, and we let $\alpha \in \Aut(\tilde N)$ be given by the free-product automorphism $\alpha = \Ad(\lambda(a_1)) * \Ad(\lambda(a_2)) * \cdots$. By \cite[Lemma 2.4]{IoPePo08} it follows that $\alpha$ is in the connected component of the identity. Since each $N_n$ is amenable, it follows from Proposition~\ref{prop:coarsebimoduleafp} that the bimodules appearing in (\ref{eq:wkcontain}) are weakly contained in the coarse bimodule. 
\end{examp}

\begin{examp}
If $M_1$ and $M_2$ have weakly coarse malleable deformations, then, taking free products of the deformations, it follows that $M_1 * M_2$ also has a weakly coarse malleable deformation. 
\end{examp}

Theorem~\ref{thm:main} is a consequence of the following more general result. 

\begin{thm}\label{thm:weaklycoarsemalleable}
Suppose $N_\iota$ are II$_1$ factors having a weakly coarse malleable deformation. If $M \subset \prod_{\iota \to \mathcal U} N_\iota$ is a property (T) subfactor with factorial relative commutant, then $M$ is matricially approximable with respect to $(N_\iota)_\iota$.
\end{thm}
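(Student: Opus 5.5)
We argue by contradiction: suppose $M$ is matricially inapproximable with respect to $(N_\iota)_\iota$, and combine the rigidity result Theorem~\ref{thm:propTultra} with the deformation side, Theorem~\ref{thm:amenimplieshyperfinite}. For each $\iota$ fix $N_\iota \subset \tilde N_\iota$ and $\alpha_\iota \in \Aut(\tilde N_\iota)$ in the connected component of the identity satisfying (\ref{eq:wkcontain}). Set $N = \prod_{\iota \to \mathcal U} N_\iota$, $\tilde N = \prod_{\iota \to \mathcal U} \tilde N_\iota$ and $\alpha = (\alpha_\iota)^{\mathcal U}$.

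The first step is to show that $M' \cap \tilde N$ is a factor, since Theorem~\ref{thm:propTultra} is to be applied with ambient algebra $\tilde N$. By Lemma~\ref{lem:ultradirectsum}, $L^2 \tilde N \ominus L^2 N$ is the correspondence ultraproduct of the bimodules $L^2 \tilde N_\iota \ominus L^2 N_\iota$. By Proposition~\ref{prop:weakcontainmentultra}, this is weakly contained in $\prod_{\iota \to \mathcal U}(L^2 N_\iota \ovt L^2 N_\iota)$. Now take $x \in M' \cap \tilde N$ with $\mathbb E_N(x) = 0$ and $x \neq 0$. Since $M$ is a factor, such an $x$ gives $L^2 M \prec \prod_{\iota \to \mathcal U} (L^2 N_\iota \ovt L^2 N_\iota)$, and Theorem~\ref{thm:amenimplieshyperfinite}(\ref{item:main3}) then makes $M$ matricially approximable, a contradiction. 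This is the same argument as in Proposition~\ref{prop:relcommutant}. Hence $M' \cap \tilde N = M' \cap N$, which is a factor by hypothesis.

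Next we apply Theorem~\ref{thm:propTultra} to $M \subset \tilde N$, using that each $\alpha_\iota$ lies in the connected component of the identity of $\Aut(\tilde N_\iota)$. This yields $u \in \mathcal U(\tilde N)$ with $\alpha(x) = u x u^*$ for all $x \in M$. In particular $\hat u$ is a nonzero $M$-central vector in the twisted bimodule $L^2(\tilde N)_\alpha$, so $L^2 M$ embeds into $L^2(\tilde N)_\alpha$ as $M$-$M$ bimodules.

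The twisted ultraproduct bimodule $L^2(\tilde N)_\alpha$ is the correspondence ultraproduct of the twisted bimodules $L^2(\tilde N_\iota)_{\alpha_\iota}$. One should check that the twisting commutes with the ultraproduct, which holds because the $\alpha_\iota$ are trace-preserving automorphisms, so boundedness of vectors is unaffected. By the second weak containment in (\ref{eq:wkcontain}) and Proposition~\ref{prop:weakcontainmentultra}, $L^2(\tilde N)_\alpha \prec \prod_{\iota \to \mathcal U}(L^2 N_\iota \ovt L^2 N_\iota)$ as $N$-$N$ bimodules. Restricting to $M$, we get $L^2 M \prec \prod_{\iota \to \mathcal U}(L^2 N_\iota \ovt L^2 N_\iota)$. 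Condition (\ref{item:main3}) of Theorem~\ref{thm:amenimplieshyperfinite} then holds, so $M$ is matricially approximable, contradicting our assumption.

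The main obstacle I expect is the first step, establishing factoriality of $M' \cap \tilde N$. Without it, Theorem~\ref{thm:propTultra} only produces a partial intertwiner, which does not suffice. A nonzero partial intertwiner would still give a nonzero $M$-central vector, though, so the argument seems robust even if factoriality were only partially available. A secondary point is the bookkeeping identifying the relevant $M$-$M$ bimodule inside the ultraproduct of the $\tilde N_\iota$.
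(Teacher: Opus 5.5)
Your proposal is correct and is essentially the paper's proof recast as a contradiction. The paper splits into two cases. If $M'\cap\prod_{\iota\to\mathcal U}\tilde N_\iota\neq M'\cap\prod_{\iota\to\mathcal U}N_\iota$, the first weak containment in (\ref{eq:wkcontain}) gives $L^2M\prec\prod_{\iota\to\mathcal U}(L^2N_\iota\ovt L^2N_\iota)$ directly; otherwise Theorem~\ref{thm:propTultra} and the twisted bimodule give the same weak containment. There are two cosmetic points: with the convention $x\cdot\xi\cdot y=x\xi\alpha(y)$ the $M$-central vector is $\widehat{u^*}$ rather than $\hat u$, and your identifications of $L^2(\prod_{\iota\to\mathcal U}\tilde N_\iota)\ominus L^2(\prod_{\iota\to\mathcal U}N_\iota)$ and $L^2(\prod_{\iota\to\mathcal U}\tilde N_\iota)_\alpha$ with correspondence ultraproducts are in general only inclusions, which is all you use.
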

\begin{proof}
We take superfactors $N_\iota \subset \tilde N_\iota$ and automorphisms $\alpha_\iota \in \Aut( \tilde N_\iota)$ in the connected component of the identity such that we have weak containment of bimodules as in (\ref{eq:wkcontain}). We let $\alpha = (\alpha_\iota)^{\mathcal U} \in \Aut\left( \prod_{\iota \to \mathcal U} \tilde N_\iota \right)$.

If $M' \cap \prod_{\iota \to \mathcal U} \tilde N_\iota \not= M' \cap \prod_{\iota \to \mathcal U} N_\iota$, then by Proposition~\ref{prop:weakcontainmentultra} we have inclusions of $M$-$M$ Hilbert bimodules 
\[
L^2 M \subset \prod_{\iota \to \mathcal U} ( L^2(\tilde N_\iota) \ominus L^2(N_\iota) ) \prec \prod_{\iota \to \mathcal U} (L^2 N_\iota \ovt L^2 N_\iota).
\]

Otherwise, if $M' \cap \prod_{\iota \to \mathcal U} \tilde N_\iota = M' \cap \prod_{\iota \to \mathcal U} N_\iota$ is a factor, then by Theorem~\ref{thm:propTultra} there exists $u \in \mathcal U( \prod_{\iota \to \mathcal U} \tilde N_\iota )$ so that $\alpha(x) = ux u^*$ for $x \in M \subset \prod_{\iota \to \mathcal U} N_\iota$. From (\ref{eq:wkcontain}) and Proposition~\ref{prop:weakcontainmentultra} we then also have weak containment of $M$-$M$ bimodules $L^2M \prec \prod_{\iota \to \mathcal U} (L^2 N_\iota \ovt L^2 N_\iota)$.

Thus, in either case we have $L^2M \prec \prod_{\iota \to \mathcal U} (L^2 N_\iota \ovt L^2 N_\iota)$, and so the result then follows from Theorem~\ref{thm:amenimplieshyperfinite}.  
\end{proof}

\begin{rem}\label{rem:interp}
Theorem~\ref{thm:main} also applies in the setting of interpolated free group factors $L\mathbb F_t$ for $t > 1$. Indeed, we can just consider $M \subset (L \mathbb F_t)^{\mathcal U} \subset (L \mathbb F_t * L\mathbb F_\infty)^{\mathcal U} \cong (L\mathbb F_\infty)^{\mathcal U}$. If $M' \cap (L \mathbb F_t)^{\mathcal U}$ is a factor, then by Proposition~\ref{prop:relcommutant} we reduce to the case when $M' \cap (L \mathbb F_\infty)^{\mathcal U} = M'  \cap (L \mathbb F_t)^{\mathcal U}$ is also a factor so that we may apply Theorem~\ref{thm:main} to conclude that $M$ is matricially approximable with respect to $L \mathbb F_\infty$, say $M \subset \prod_{\iota \to \mathcal U} A_\iota \subset (L \mathbb F_\infty)^{\mathcal U}$ where $A_\iota \subset L \mathbb F_\infty$ are finite-dimensional subfactors. 

But then we may take $u \in \mathcal U ( (L \mathbb F_\infty)^{\mathcal U} )$ so that $u \left( \prod_{\iota \to \mathcal U} A_\iota  \right) u^* \subset (L \mathbb F_t)^{\mathcal U}$. By Remark~\ref{rem:normalizer} we would have that either the conclusion of the theorem holds or else $u \in \mathcal U( (L\mathbb F_t )^{\mathcal U} )$, in which case the conclusion of the theorem also holds. 

A similar argument can be used even if the parameter $t$ is allowed to vary with $\iota \in I$ (even if $\lim_{\iota \to \mathcal U} t_\iota = 1$). In particular, it follows that if $M$ is a property (T) factor and $M \subset (L \mathbb F_t)^{\mathcal U}$, then the center of $M' \cap (L\mathbb F_t)^{\mathcal U}$ cannot have a minimal projection, since otherwise we could restrict to the corresponding corner, which would be of the form $\prod_{\iota \to \mathcal U} L \mathbb F_{t_\iota}$, for $t_\iota > 1$. 
\end{rem}

\begin{proof}[Proof of Theorem~\ref{thm:mainfg}]
If $M$ is a property (T) factor and $\theta: M^{\mathcal U} \to (L\mathbb F_n)^{\mathcal V}$ were an isomorphism, then as $M$ is full we have $\theta(M)' \cap (L\mathbb F_n)^{\mathcal V} = \mathbb C$ and so the hypotheses of Theorem~\ref{thm:main} are satisfied. There must then exist finite-dimensional subfactors $A_\iota \subset L\mathbb F_n$ so that $\theta(M) \subset \prod_{\iota \to \mathcal V} A_\iota \subset (L\mathbb F_n)^{\mathcal V}$. But then $\theta(M)' \cap (L\mathbb F_n)^{\mathcal V}$ would contain the non-separable subfactor $\prod_{\iota \to \mathcal V} (A_\iota' \cap L\mathbb F_n)$, giving a contradiction. 
\end{proof}

\subsection{Property (T) subfactors and tensor products with weakly coarse malleable deformations}

We show here how to generalize Theorem~\ref{thm:weaklycoarsemalleable} to account for tensor product of factors with weakly coarse malleable deformations. The techniques here are analogous to those found in \cite{OzPo04, Pe09, ChSi13}.

\begin{thm}\label{thm:weaklycoarsemalleabletensor}
Suppose $1 \leq k < \infty$ and $N_\iota^1, N_\iota^2, \ldots, N_\iota^k$ are II$_1$ factors, each of which has a weakly coarse malleable deformation. Set $N_\iota = N_\iota^1 \ovt N_\iota^2 \ovt \cdots \ovt N_\iota^k$. If $M \subset \prod_{\iota \to \mathcal U} N_\iota$ is a property (T) subfactor with factorial relative commutant, then $M$ is matricially approximable with respect to $(N_\iota)_\iota$.
\end{thm}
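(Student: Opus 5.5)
Induct on $k$; the case $k=1$ is Theorem~\ref{thm:weaklycoarsemalleable}. Write $N_\iota = N_\iota^1 \ovt P_\iota$ with $P_\iota = N_\iota^2 \ovt \cdots \ovt N_\iota^k$. Deform only the first tensor factor. Take the malleable deformation $N_\iota^1 \subset \tilde N_\iota^1$, $\alpha_\iota^1 \in \Aut(\tilde N_\iota^1)$, and set $\tilde N_\iota = \tilde N_\iota^1 \ovt P_\iota$ and $\alpha_\iota = \alpha_\iota^1 \otimes \operatorname{id}$. Then $\alpha_\iota$ is still in the connected component of the identity. Tensoring the weak containments (\ref{eq:wkcontain}) with $L^2 P_\iota$ gives weak containments of $N_\iota$-$N_\iota$ bimodules
\[
L^2 \tilde N_\iota \ominus L^2 N_\iota \prec (L^2 N_\iota^1 \ovt L^2 N_\iota^1) \ovt L^2 P_\iota, \qquad L^2 (\tilde N_\iota)_{\alpha_\iota} \prec (L^2 N_\iota^1 \ovt L^2 N_\iota^1) \ovt L^2 P_\iota .
\]
The right-hand side is the bimodule $L^2 N_\iota \ovt_{P_\iota} L^2 N_\iota$, which is coarse relative to $P_\iota$.

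Now run the dichotomy from the proof of Theorem~\ref{thm:weaklycoarsemalleable}.

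\emph{Case 1.} $M' \cap \prod_{\iota\to\mathcal U}\tilde N_\iota \neq M' \cap \prod_{\iota\to\mathcal U} N_\iota$. Then $L^2M$ sits inside the ultraproduct of the first bimodules.

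\emph{Case 2.} The two relative commutants are equal, so the relative commutant in the larger algebra is a factor. Then Theorem~\ref{thm:propTultra}, applied with $\alpha = (\alpha_\iota)^{\mathcal U}$, makes $\alpha|_M$ inner. Hence $L^2M$ embeds into $\prod_{\iota\to\mathcal U} L^2(\tilde N_\iota)_{\alpha_\iota}$.

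In both cases, Proposition~\ref{prop:weakcontainmentultra} together with Lemma~\ref{lem:ultrafusion} gives
\[
L^2 M \prec \prod_{\iota\to\mathcal U} \bigl( L^2 N_\iota \oovt{P_\iota} L^2 N_\iota \bigr).
\]
This is a relative amenability statement: $M$ is amenable relative to $\prod_{\iota\to\mathcal U} P_\iota$ inside $\prod_{\iota\to\mathcal U} N_\iota$, in the ultraproduct sense.

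Next I need a relative version of Theorem~\ref{thm:amenimplieshyperfinite}. It should say that the weak containment above forces $M$ to embed into an ultraproduct of the form $\prod_{\iota\to\mathcal U} (A_\iota \ovt P_\iota)$ with $A_\iota \subset N_\iota^1$ finite-dimensional subfactors, at least after unitary conjugation and a compression. I would try to prove this by following the proof of (\ref{item:main3})$\implies$(\ref{item:main4}). Coefficients of the relative coarse bimodule yield u.c.p.\ maps that factor through $\mathbb M_{k(\iota)}(\mathbb C) \ovt P_\iota$ and are $P_\iota$-bimodular, of the form $\operatorname{id}_{P_\iota}\otimes(\text{matrix factorization on } N_\iota^1)$. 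I then need Haagerup's Theorem~\ref{thm:Haagerup} relative to a tensor factor, producing a finite-dimensional $B_\iota \subset N_\iota^1$ with $M$ approximately inside $B_\iota \ovt P_\iota$.

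This relative matricial approximation is the main obstacle. My first attempt would avoid re-proving Haagerup in a relative form. Instead, I would consider the $N_\iota^1$ component alone: the maps above are of the form $\psi_\iota\circ\phi_\iota \otimes \operatorname{id}$, so Haagerup's quantitative argument should go through verbatim with $P_\iota$ as coefficients. The argument uses only traces and unitary conjugacy, and these tensor with $\operatorname{id}_{P_\iota}$.

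Given $M \subset u\bigl(\prod_{\iota\to\mathcal U} A_\iota \ovt P_\iota\bigr)u^*$, write $A_\iota \ovt P_\iota \cong \mathbb M_{n_\iota}(P_\iota)$. Then $M$ embeds, after compressing by a minimal projection of $A_\iota$ in the relative commutant, into an amplification of $\prod_{\iota\to\mathcal U} P_\iota$. This is a tensor product of $k-1$ factors with weakly coarse malleable deformations, after amplification; amplifications of such factors again have such deformations, or one passes through Proposition~\ref{prop:cutdownapprox}.

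To transfer the factorial relative commutant hypothesis, note that the relative commutant of $M$ in the corner contains $\prod_{\iota\to\mathcal U}(A_\iota'\cap N_\iota)$ compressed. More precisely, I would apply Proposition~\ref{prop:cutdownapprox} using a projection $q$ from the relative commutant, which keeps it a factor. Induction then makes $M q$ matricially approximable. Proposition~\ref{prop:cutdownapprox}, applied in the converse direction since $M' \cap \prod_{\iota\to\mathcal U} N_\iota$ is a factor, lifts this back to $M$.
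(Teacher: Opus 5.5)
Your first half is sound. Deforming only $N^1_\iota$ and running the dichotomy from Theorem~\ref{thm:weaklycoarsemalleable} does give $L^2M\prec\prod_{\iota\to\mathcal U}(L^2N_\iota\oovt{P_\iota}L^2N_\iota)$.

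\textbf{The gap.} The next step, your ``relative'' version of Theorem~\ref{thm:amenimplieshyperfinite}, is not established, and the reason you give for it is incorrect. Write a vector of $L^2N^1_\iota\ovt L^2N^1_\iota\ovt L^2P_\iota$ as $\zeta=\sum_i\xi_i\otimes\hat b_i$, with $\xi_i\in L^2N^1_\iota$ orthonormal and $b_i\in N_\iota$. Its coefficient map is
\[
x\mapsto b^*\bigl((\phi^0_\iota\otimes\mathrm{id}_{P_\iota})(x)\bigr)b,
\]
where $\phi^0_\iota(a)=[\langle a\xi_i,\xi_j\rangle]$ and $b=(b_i)_i$ is a column with entries in all of $N^1_\iota\ovt P_\iota$.

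\begin{itemize}
\item Only the first leg has tensor form. The second leg $Y\mapsto b^*Yb$ is neither $P_\iota$-bimodular nor of the form $\psi^0\otimes\mathrm{id}_{P_\iota}$.
\item Nothing lets you choose $b_i\in N^1_\iota\otimes 1$. In Case 2 the relevant vectors come from an arbitrary intertwiner in $\prod_{\iota\to\mathcal U}(\tilde N^1_\iota\ovt P_\iota)$.
\item Even granting tensor form, Haagerup's argument does not carry over verbatim. It passes to trace-preserving factorizations through $\mathbb M_q(\mathbb C)$, realizes $\mathbb M_q(\mathbb C)$ inside the factor, and uses Theorem~4.2 of \cite{Ha85} on $\delta$-related tuples. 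All three steps use that the middle algebra is finite-dimensional.
\item With $\mathbb M_q(P_\iota)$ in the middle, and unitaries in $N^1_\iota\ovt P_\iota$ that are not elementary tensors, what you need is a quantitative relative Connes theorem. It would say that amenability relative to a tensor factor implies relative hyperfiniteness, uniformly in $\iota$. That is a substantial statement in its own right, and it is not supplied.
\end{itemize}

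\textbf{A second problem in the inductive step.} No compression is needed to land in $\prod_{\iota\to\mathcal U}\mathbb M_{n_\iota}(P_\iota)$. Also, minimal projections of $A_\iota$ do not commute with $M$. However, the induction hypothesis then requires $M'\cap\prod_{\iota\to\mathcal U}(A_\iota\ovt P_\iota)$ to be a factor. This does not follow from factoriality of $M'\cap\prod_{\iota\to\mathcal U}N_\iota$. Proposition~\ref{prop:cutdownapprox} concerns matricial approximability of corners, not factoriality of relative commutants in smaller algebras.

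\textbf{How the paper avoids this.} It never needs a relative approximation result, because it deforms all $k$ tensor factors at once. Let $Q^j_\iota$ denote $N_\iota$ with only the $j$-th factor deformed.
\begin{itemize}
\item For each $j$ with $M'\cap\prod Q^j_\iota\neq M'\cap\prod N_\iota$, it takes a nonzero $u_j$ in that relative commutant with $\mathbb E_{\prod N_\iota}(u_j)=0$.
\item For each remaining $j$, Theorem~\ref{thm:propTultra} gives a unitary with $xu_j=u_j\alpha^j(x)$ for $x\in M$.
\item Factoriality of $M'\cap\prod N_\iota$ lets it arrange that the suitably twisted product $u$ of these elements is nonzero.
\end{itemize}
Then $u$ is an $M$-central vector in $\prod_{\iota\to\mathcal U}\mathcal H_\iota$. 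Here $\mathcal H_\iota$ is a tensor product of the bimodules $L^2\tilde N^j_\iota\ominus L^2N^j_\iota$ and $L^2(\tilde N^j_\iota)_{\alpha^j_\iota}$, each weakly coarse over $N^j_\iota$. Hence $\mathcal H_\iota$ is weakly coarse over $N_\iota$, and the absolute Theorem~\ref{thm:amenimplieshyperfinite} applies directly.
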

\begin{proof}
For $1 \leq j \leq k$, we suppose $N_\iota^j \subset \tilde N_\iota^j$ and $\alpha_\iota^j \in \Aut(\tilde N_\iota^j)$ are automorphisms in the connected component of the identity so that we have weak containment of $N_\iota^j$-$N_\iota^j$ bimodules 
\[
L^2 \tilde N_\iota^j \ominus L^2 N_\iota^j \prec L^2 N_\iota^j \ovt L^2 N_\iota^j, \ \ \ \ \ L^2 ( \tilde N_\iota^j)_{\alpha_\iota^j} \prec L^2 N_\iota^j \ovt L^2 N_\iota^j.
\]
We use the following notation: 
\[
Q_\iota^j = N^1_\iota \ovt \cdots \ovt N^{j - 1}_\iota \ovt \tilde N_\iota^j \ovt N_\iota^{j + 1} \ovt \cdots \ovt N_\iota^k,
\]
\[
Q_\iota(j) = \tilde N^1_\iota \ovt \cdots \ovt \tilde N^{j - 1}_\iota \ovt  N_\iota^j \ovt \tilde N_\iota^{j + 1} \ovt \cdots \ovt \tilde N_\iota^k, 
\]
\[
\tilde N_\iota = \tilde N_\iota^1 \ovt \cdots \ovt \tilde N_\iota^k.
\] 

Let $E \subset \{ 1, 2, \ldots, k \}$ denote the set of indices $j$ such that we have 
\begin{equation}\label{eq:relcom}
M' \cap \prod_{\iota \in \mathcal U} Q_\iota^j
\not= M' \cap \prod_{\iota \to \mathcal U} N_\iota.
\end{equation}
By applying a rearrangement, we will assume that $E = \{ 1, 2, \ldots, k_0 \}$ for some $0 \leq k_0 \leq k$. For each $1 \leq j \leq k_0$ there exists a nonzero element $u_j \in M' \cap \prod_{\iota \to \mathcal U} Q_\iota^j \subset \prod_{\iota \to \mathcal U} \tilde N_\iota$ with $\mathbb E_{\prod_{\iota \to \mathcal U} N_\iota}(u_j) = 0$. Since $M' \cap \prod_{\iota \to \mathcal U} N_\iota$ is a factor, we may multiply each $u_j$ on the left and right by elements in $M' \cap \prod_{\iota \to \mathcal U} N_\iota$ so that we may assume $u_1u_2 \cdots u_{k_0} \not= 0$ (take this product to be $1$ if $k_0 = 0$).

For $k_0 < j \leq k$ we set $\alpha^j = ( {\rm id} \otimes \alpha_\iota^j \otimes {\rm id} )^{\mathcal U} \in \Aut( \prod_{\iota \to \mathcal U} Q_\iota^j)$ and we see that Theorem~\ref{thm:propTultra} applies, giving a unitary $u_j \in \mathcal U( \prod_{\iota \to \mathcal U} Q_\iota^j )$ that satisfies $x u = u \alpha^j(x)$ for $x \in M$. Setting 
\[
u = u_1 u_2 \cdots u_{k_0 + 1} \alpha_{k_0 + 1}( u_{k_0 + 2} \alpha_{k_0 + 2}( \cdots \alpha_{k - 1}( u_k ) \cdots )) \in \prod_{\iota \to \mathcal U} \tilde N_\iota,
\] 
we then have that $u \not= 0$, and if $x \in M$, then $x u = u \alpha_{k_0+ 1} \circ \alpha_{k_0 + 2} \circ \cdots \circ \alpha_k(x)$.

If $1 \leq j \leq k_0$, then from the definition of $u$ above we see that $u$ may be written as $u = a u_j b$ where $a, b \in \prod_{\iota \to \mathcal U} Q_\iota(j)$. We therefore have 
\[
\mathbb E_{\prod_{\iota \to \mathcal U} Q_\iota(j)}(u) 
= a \mathbb E_{\prod_{\iota \to \mathcal U} Q_\iota(j)}(u_j) b 
= a \mathbb E_{\prod_{\iota \to \mathcal U} Q_\iota(j)} \circ \mathbb E_{\prod_{\iota \to \mathcal U} Q_\iota^j}(u_j) b
= a \mathbb E_{\prod_{\iota \to \mathcal U} N_\iota}(u_j) b
 = 0.
\]

Thus, we may represent $u$ as $u = (u_\iota)^{\mathcal U}$ where $\mathbb E_{Q_\iota(j)}(u) = 0$ for each $1 \leq j \leq k_0$, i.e., 
\[
u_\iota \in \mathcal H_\iota := (L^2 \tilde N_1 \ominus L^2 N_1) \ovt (L^2 \tilde N_2 \ominus L^2 N_2) \ovt \cdots \ovt (L^2 \tilde N_{k_0} \ominus L^2 N_{k_0}) \ovt L^2 \tilde N_{k_0 + 1} \ovt \cdots \ovt L^2 \tilde N_k.
\]
The twisted $N_\iota$-$N_\iota$ bimodule structure on each $\mathcal H_\iota$ given by  $a \xi b = a \xi \alpha_{k_0 + 1} \circ \alpha_{k_0 + 2} \circ \cdots \circ \alpha_k( b )$ is weakly contained in the coarse bimodule, and as $u$ represents an $M$-central element in the $\prod_{\iota \to \mathcal U} N_\iota$-$\prod_{\iota \to \mathcal U} N_\iota$ Hilbert bimodule $\prod_{\iota \to \mathcal U} \mathcal H_\iota$, it then follows from Theorem~\ref{thm:amenimplieshyperfinite} that $M$ is matricially approximable with respect to $(N_\iota)_{\iota \in I}$.
\end{proof}

\subsection{Non-pseudomatricial full factors}

The following proposition was noted by Popa (see the remark before Corollary 6.11 in \cite{Br11}) but has not yet appeared in the literature in this generality. We include a proof for completeness. The roots of the argument go back to \cite{Wa91}, \cite{Vo90}, or perhaps even \cite{Wa75}.

\begin{prop}\label{prop:irredpseudocpt}
Let $\mathcal U$ be an ultrafilter on a set $I$ and suppose $n_\iota \in \mathbb N$ are such that $\lim_{\iota \to \mathcal U} n_\iota = \infty$, then there exists an irreducible embedding of $L( \SL_3(\mathbb Z) )$ into $\prod_{\iota \to \mathcal U} \mathbb M_{n_\iota}(\mathbb C)$. 
\end{prop}
\begin{proof}
Given $n \in \mathbb N$, we let $k_n$ denote the largest integer $k$ so that $k \leq n$ and such that there exists an irreducible representation $\rho_k: \SL_3(\mathbb Z) \to U(k)$. We then define the representation $\pi_n: \SL_3(\mathbb Z) \to U(n)$ by $\pi_n(t) = \rho_k \oplus \operatorname{id}_{n-k}$. We may then consider the representation $\pi: \SL_3(\mathbb Z) \to \mathcal U( \prod_{\iota \to \mathcal U} \mathbb M_{n_\iota}(\mathbb C) )$ given by $\pi = (\pi_{n_\iota})^{\mathcal U}$. 

For each prime $p$ we have a canonical surjection $\SL_3(\mathbb Z) \to \SL_3(\mathbb Z/p\mathbb Z)$, and hence any irreducible unitary representation of $\SL_3(\mathbb Z/p\mathbb Z)$ gives an irreducible unitary representation of $\SL_3(\mathbb Z)$. In particular, there is the Steinberg Representation of $\SL_3(\mathbb Z/p\mathbb Z)$, which is an irreducible representation of dimension $p^3$. By the Prime Number Theorem, it then follows that given any $\varepsilon > 0$, and large enough $n$, there exists a finite-dimensional irreducible representation of $\SL_3(\mathbb Z)$ into $U(k)$ where $(1-\varepsilon)n < k \leq n$. From property (T), we then conclude that 
\[
\pi( \SL_3(\mathbb Z) )' \cap \prod_{\iota \to \mathcal U} \mathbb M_{n_\iota} (\mathbb C) = \prod_{\iota \to \mathcal U} ( \pi_{n_\iota}(\SL_3(\mathbb Z) )' \cap \mathbb M_{n_\iota}(\mathbb C) ) = \mathbb C.
\]
Hence, $\pi( \SL_3(\mathbb Z))''$ is an irreducible subfactor of $\prod_{\iota \to \mathcal U} \mathbb M_{n_\iota} (\mathbb C)$ with property (T) \cite{CoJo85}. By \cite{Be07} we have that, in fact, $\pi( \SL_3(\mathbb Z))'' \cong L( \SL_3(\mathbb Z) )$. 
\end{proof}

\begin{proof}[Proof of Theorem~\ref{thm:freenotpseudomatricial}]
This follows from Proposition~\ref{prop:irredpseudocpt} and Theorem~\ref{thm:main}.
\end{proof}

\begin{rem}
From Remark~\ref{rem:interp}, we see that even if $t_\iota > 1$ with $\lim_{\iota \to \mathcal U} t_\iota = 1$, then $\prod_{\iota \to \mathcal U} L \mathbb F_t$ cannot be pseudomatricial. 
\end{rem}

In fact, fixing the finitely presented group $\Gamma = \SL_3(\mathbb Z)$, the statement that ``every homomorphism of $\Gamma$ into the unitary group of an ultraproduct must have diffuse relative commutant'' is easily expressible as an $\forall \exists$-sentence (we thank David Jekel for showing this to us). Thus, we obtain the following result, which answers \cite[Question 5.4]{GoPi25}. 

\begin{thm}
If $\lim_{\iota \to \mathcal U} n_\iota = \infty$, then 
\[
\operatorname{Th}_{\forall \exists} \left( \prod_{\iota \to \mathcal U} \mathbb M_{n_\iota}(\mathbb C) \right) \not\leq \operatorname{Th}_{\forall \exists}(L \mathbb F_2).
\]
Consequently, by \cite[Lemma 5.3]{GoPi25} we have
\[
\operatorname{Th}_{\forall \exists} \left( \left( \prod_{\iota \to \mathcal U} \mathbb M_{n_\iota}(\mathbb C) \right) * L\mathbb Z \right) \not= \operatorname{Th}_{\forall \exists} \left( \prod_{\iota \to \mathcal U} (\mathbb M_{n_\iota}(\mathbb C) * L \mathbb Z ) \right).
\]
\end{thm}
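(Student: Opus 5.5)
The plan is to find one $\forall\exists$-sentence that takes a strictly positive value in $\prod_{\iota \to \mathcal U} \mathbb M_{n_\iota}(\mathbb C)$ and a strictly smaller value in $L\mathbb F_2$. It will encode the statement that every approximate unitary representation of $\Gamma = \SL_3(\mathbb Z)$ admits an approximately commuting projection of trace $1/2$.

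\textbf{The sentences.} Fix a finite presentation $\Gamma = \langle s_1, \ldots, s_d \mid w_1, \ldots, w_r \rangle$; we may assume $\{s_i\}$ is a Kazhdan set. For a tuple $\bar u$ of contractions, let $r(\bar u)$ be the sum of $\| u_i^* u_i - 1 \|_2$ and $\| w_j(\bar u) - 1\|_2$, which is a quantifier-free formula. For a self-adjoint contraction $p$, set
\[
\psi(\bar u, p) = \max\Bigl\{ \max_i \| [p, u_i] \|_2, \ | \tau(p) - 1/2 |, \ \| p - p^2 \|_2 \Bigr\}.
\]
For $m \in \mathbb N$ take $\varphi_m = \sup_{\bar u} \inf_p \bigl( \psi(\bar u,p) \mathbin{\dot -} m\, r(\bar u) \bigr)$, where $\mathbin{\dot-}$ is truncated subtraction. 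Each $\varphi_m$ is a $\forall\exists$-sentence.

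\textbf{Matricial side.} Proposition~\ref{prop:irredpseudocpt} supplies representations $\pi_{n_\iota}$ with $r = 0$ whose relative commutant in the ultraproduct is trivial. To get a uniform quantitative bound, I would use the Kazhdan pair $(S,c)$ directly. If $p$ satisfies $\psi(\bar u, p) < \delta$, then $p$ is close to a genuine projection that almost commutes with $S$. Applying the Kazhdan inequality in the bimodule $L^2$ of the relative commutant shows $p$ is within $O(c\delta)$ of the relative commutant, which is a finite-dimensional algebra of the $\pi_{n_\iota}$. By the construction in Proposition~\ref{prop:irredpseudocpt}, that algebra is $\mathbb C \oplus \mathbb M_{n-k}$ with $n-k < \varepsilon n$. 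A projection close to it has trace near $0$, $1$, or within $\varepsilon$ of these. Hence there is $c_0 > 0$, independent of $m$, with $\varphi_m \geq c_0$ in the matrix ultraproduct for every $m$.

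\textbf{Free group side.} Suppose, for contradiction, that $\varphi_m^{L\mathbb F_2} \ge c_0$ for all $m$. Choose witnesses $\bar u^{(m)} \in L\mathbb F_2$ with $r(\bar u^{(m)}) \le 1/m$ and $\inf_p \psi(\bar u^{(m)},p) \geq c_0/2$. In $(L\mathbb F_2)^{\mathcal V}$, for $\mathcal V$ on $\mathbb N$, these define a genuine unitary representation $\pi$ of $\Gamma$. By \L{}o\'s's theorem, together with countable saturation and lifting projections, every trace-$1/2$ projection in $(L\mathbb F_2)^{\mathcal V}$ has commutator with $\pi(S)$ of size at least $c_0/2$. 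So it suffices to show that $\pi(\Gamma)' \cap (L\mathbb F_2)^{\mathcal V}$ is diffuse, since a diffuse algebra contains a projection of trace exactly $1/2$.

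Let $A = \pi(\Gamma)''$. It is a tracial von Neumann algebra with property (T), so its center is atomic. If the center has a diffuse part, or at least two atoms, the relative commutant is already non-trivial. The task is to show it is diffuse in every case, so consider a minimal central projection $z$. Then $zAz$ is either finite-dimensional or a property (T) II$_1$ factor. The corner $z (L\mathbb F_2)^{\mathcal V} z$ is an ultraproduct of interpolated free group factors $L\mathbb F_{t_\iota}$. In the finite-dimensional case, the relative commutant of $zAz$ in this corner contains an ultraproduct of relative commutants of finite-dimensional subfactors, which is diffuse. In the factor case, Remark~\ref{rem:interp}, i.e.\ Theorem~\ref{thm:main} with varying $t_\iota$, shows the center of the relative commutant has no minimal projection, so the relative commutant is diffuse. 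Summing over the atoms of the center gives diffuseness of $A' \cap (L\mathbb F_2)^{\mathcal V}$, a contradiction.

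Hence some $m$ gives $\varphi_m^{L\mathbb F_2} < c_0 \le \varphi_m^{\prod \mathbb M_{n_\iota}}$, which is the first claim. The second claim then follows formally from \cite[Lemma 5.3]{GoPi25}.

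\textbf{Main obstacle.} I expect the hardest step to be the uniform lower bound $c_0$ on the matricial side. This is where the Kazhdan-constant perturbation argument must convert approximate commutation into proximity to the small commutant $\mathbb C \oplus \mathbb M_{n-k}$, uniformly in $n$ and $m$. A secondary point of care is the reduction to factorial corners when $\pi(\Gamma)''$ is not a factor.
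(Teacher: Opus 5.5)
Your strategy is the paper's. The paper only remarks, crediting Jekel, that ``every homomorphism of $\SL_3(\mathbb Z)$ into the unitary group of an ultraproduct has diffuse relative commutant'' is $\forall\exists$-expressible. That property is false in $\prod_{\iota\to\mathcal U}\mathbb M_{n_\iota}(\mathbb C)$ by Proposition~\ref{prop:irredpseudocpt} and true in $L\mathbb F_2$ by Theorem~\ref{thm:main}. Your $\varphi_m$ with the compactness argument is a correct explicit version of this.

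The matricial side is fine, and easier than you expect: $\lim n_\iota=\infty$ forces $\mathcal U$ to be countably incomplete, so saturation plus irreducibility already give $\inf_p\psi>0$.

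The step that fails as written is the II$_1$ case on the free group side. You invoke Remark~\ref{rem:interp} for ``the center of the relative commutant has no minimal projection''. Read literally, that assertion is false. Embed each $\mathbb M_{n_\iota}(\mathbb C)$ unitally in $L\mathbb F_2$, and let $P\subset\prod_{\iota\to\mathcal U}\mathbb M_{n_\iota}(\mathbb C)\subset(L\mathbb F_2)^{\mathcal U}$ be the irreducible copy of $L(\SL_3(\mathbb Z))$ from Proposition~\ref{prop:irredpseudocpt}. Uniform spectral gap gives $P'\cap(L\mathbb F_2)^{\mathcal U}=\prod_{\iota\to\mathcal U}(\pi_{n_\iota}(\SL_3(\mathbb Z))'\cap L\mathbb F_2)$. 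Since $\pi_{n_\iota}$ is irreducible on a subspace of normalized dimension tending to $1$, this is isomorphic to the II$_1$ factor $\prod_{\iota\to\mathcal U}(\mathbb M_{n_\iota}(\mathbb C)'\cap L\mathbb F_2)$. Its center $\mathbb C$ has the minimal projection $1$. Applying Theorem~\ref{thm:main} to a corner with merely factorial relative commutant only yields matricial approximability, and that is no contradiction (this $P$ is matricially approximable).

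The repair uses the same ingredients but cuts by a minimal projection of the relative commutant itself. It also makes the central decomposition of $A=\pi(\Gamma)''$ unnecessary.
\begin{itemize}
\item Set $M=(L\mathbb F_2)^{\mathcal V}$ and let $e$ be a minimal projection of $A'\cap M$. Then $(Ae)'\cap eMe=e(A'\cap M)e=\mathbb Ce$, so $Ae$ is a factor generated by a unitary representation of $\Gamma$.
\item Lifting $e=(e_m)^{\mathcal V}$ to projections gives $eMe\cong\prod_{m\to\mathcal V}e_mL\mathbb F_2e_m\cong\prod_{m\to\mathcal V}L\mathbb F_{t_m}$, with $t_m=1+\tau(e_m)^{-2}\to 1+\tau(e)^{-2}>1$.
\item If $Ae\cong\mathbb M_k(\mathbb C)$, its relative commutant in $eMe$ is a II$_1$ factor.
\item Otherwise $Ae$ is a property (T) II$_1$ factor. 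The varying-parameter version of Theorem~\ref{thm:main} from Remark~\ref{rem:interp} puts $Ae$ inside $\prod_{m\to\mathcal V}A_m$, with $A_m\subset e_mL\mathbb F_2e_m$ finite-dimensional subfactors. So the relative commutant contains the II$_1$ factor $\prod_{m\to\mathcal V}(A_m'\cap e_mL\mathbb F_2e_m)$.
\end{itemize}
Either way irreducibility fails. Hence $A'\cap M$ is diffuse and contains a projection of trace $1/2$; this is what the corner argument of Remark~\ref{rem:interp} actually proves.

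Separately, your claim that property (T) forces $\pi(\Gamma)''$ to have atomic center is false in general. For a property (T) group with infinite center, $L\Gamma$ has diffuse center; an example is a lattice in the universal cover of $\mathrm{Sp}_4(\mathbb R)$. The claim does hold for $\SL_3(\mathbb Z)$ by Bekka's character rigidity, but the argument above never needs it.
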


\subsection{A Bass-Serre type strong rigidity for ultraproducts}

If $M$ is a tracial von Neumann algebra with $B \subset M$ a von Neumann subalgebra, then we have a canonical inclusion of $M^{\mathcal U}$ Hilbert bimodules $L^2\langle M^{\mathcal U}, B^{\mathcal U} \rangle \subset (L^2 \langle M, B \rangle )^{\mathcal U}$ taking $(x_n)_n e_{B^{\mathcal U}} (y_n)_n$ to $( x_n e_B y_n )_n$, where $(x_n)_n, (y_n)_n \in M^{\mathcal U}$.

\begin{thm}\label{thm:ultrafreeembedding}
Suppose we have families of II$_1$ factors $(P_\iota, \tau_{P_\iota} )$, $(Q_\iota, \tau_{Q_\iota})$, and amenable von Neumann algebras $(B_\iota, \tau_{B_\iota})$ with tracial inclusions $B_\iota \subset P_\iota$ and $B_\iota \subset Q_\iota$. 

Suppose $M \subset \prod_{\iota \to \mathcal U}  N_\iota$ is a property (T) subfactor such that $M' \cap \prod_{\iota \to \mathcal U}  N_\iota$ a factor, then one of the following conditions holds:
\begin{enumerate}
\item\label{item:afpintnewA}  $M$ is matricially approximable with respect to $(N_\iota)_\iota$.
\item\label{item:afpintnewB}  There exists a unitary $u \in \mathcal U(N)$ so that  $uMu^* \subset \prod_{\iota \to \mathcal U}  P_\iota$.
\item\label{item:afpintnewC} There exists a unitary $u \in \mathcal U(N)$ so that $u Mu^* \subset \prod_{\iota \to \mathcal U}  Q_\iota$. 
\end{enumerate}
\end{thm}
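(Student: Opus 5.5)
The plan is to reduce to Theorem~\ref{thm:afpintertwining}. To do so we have to produce a nonzero $M$-central vector in the twisted bimodule $L^2(\prod_{\iota \to \mathcal U} \tilde N_\iota)_\alpha$, where $N_\iota = P_\iota *_{B_\iota} Q_\iota$, $\tilde N_\iota = N_\iota *_{B_\iota} (B_\iota \ovt L\mathbb F_2)$, and $\alpha_\iota = \Ad(u_a) * \Ad(u_b)$ is the free malleable deformation from \cite[Section 2.2]{IoPePo08}. Set $N = \prod_{\iota \to \mathcal U} N_\iota$ and $\tilde N = \prod_{\iota \to \mathcal U} \tilde N_\iota$. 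We may assume that $M$ is matricially inapproximable with respect to $(N_\iota)_\iota$, since otherwise (\ref{item:afpintnewA}) already holds.

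\textbf{Step 1: the relative commutant does not grow.} Since $B_\iota$ is amenable and $\tilde N_\iota = N_\iota *_{B_\iota} (B_\iota \ovt L\mathbb F_2)$, Proposition~\ref{prop:relcommutant} (with $p_\iota = 1$) gives
\[
M' \cap \tilde N = M' \cap N,
\]
and the right-hand side is a factor by hypothesis.

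\textbf{Step 2: make $\alpha$ inner on $M$.} By \cite[Lemma 2.4]{IoPePo08}, each $\alpha_\iota$ lies in the connected component of the identity in $\Aut(\tilde N_\iota)$. Apply Theorem~\ref{thm:propTultra} to the II$_1$ factors $\tilde N_\iota$ and to $M \subset \tilde N$; this is legitimate because $M$ has property (T) and, by Step 1, $M' \cap \tilde N$ is a factor. It yields a unitary $u \in \mathcal U(\tilde N)$ with $\alpha(x) = uxu^*$ for all $x \in M$.

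\textbf{Step 3: conclude.} The vector $\hat u$ satisfies $x \hat u = \hat u \alpha(x)$ for $x \in M$, so it spans a copy of $L^2 M$ inside the twisted bimodule $L^2(\tilde N)_\alpha$. All hypotheses of Theorem~\ref{thm:afpintertwining} are now met, and since alternative (\ref{item:afpintA}) there has been excluded, we obtain (\ref{item:afpintnewB}) or (\ref{item:afpintnewC}).

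\textbf{Main obstacle.} The delicate point is Step 1. Theorem~\ref{thm:propTultra} needs factoriality of the relative commutant in the larger algebra $\tilde N$, not just in $N$. This is exactly where matricial inapproximability enters, through Proposition~\ref{prop:relcommutant}, which rests on the weak coarseness of $L^2 \tilde N_\iota \ominus L^2 N_\iota$ together with Theorem~\ref{thm:amenimplieshyperfinite}. Two minor checks remain:
\begin{itemize}
\item each $\tilde N_\iota$ must be a II$_1$ factor, which holds because $N_\iota$ is already a factor containing the II$_1$ factor $P_\iota$;
\item the ultraproduct automorphism $\alpha$ must restrict to $\tilde N$ in the manner required by Theorem~\ref{thm:afpintertwining}.
\end{itemize}
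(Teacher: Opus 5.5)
Your proposal is the paper's proof: the paper simply combines Theorem~\ref{thm:propTultra} with Theorem~\ref{thm:afpintertwining}, and your Step 1, which uses Proposition~\ref{prop:relcommutant} to get $M' \cap \tilde N = M' \cap N$, supplies exactly the factoriality check that the paper leaves implicit. One small slip: with $\alpha(x) = uxu^*$, the $M$-central vector in $L^2(\tilde N)_\alpha$ is $\widehat{u^*}$, since $xu^* = u^*\alpha(x)$, and not $\hat u$.
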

\begin{proof}
Just combine Theorems~\ref{thm:propTultra} and \ref{thm:afpintertwining}. 
\end{proof}

A II$_1$ factor $M$ is asymptotically commutative \cite{Sa68} if for any $\varepsilon > 0$ and finite set $F \subset M$ there exists an automorphism $\alpha \in \Aut(M)$ so that $\sum_{x, y \in F} \| [ x, \alpha(y) ] \|_2 < \varepsilon$. An example of such a factor is one of the form $ M^{{ {\overline \otimes} {\mathbb N} }} $ for $M$ any finite factor. Another example of such a factor is one of the form $M^{{ {\overline \otimes} {\mathbb N} }} \rtimes S_{\mathbb N}$, where $M$ is a finite factor and $S_{\mathbb N}$ is the group of finite permutations on $\mathbb N$, which acts on $M^{{ {\overline \otimes} {\mathbb N} }}$ in the obvious way. This latter class is even inner asymptotically commutative \cite{ZM69}, meaning that we can take the automorphisms above to be inner. Note that if $M$ is a nonamenable II$_1$ factor, then $ M^{{ {\overline \otimes} {\mathbb N} }}$ is not inner asymptotically commutative so, in this case, $ M^{{ {\overline \otimes} {\mathbb N} }} \not\equiv  N^{{ {\overline \otimes} {\mathbb N} }} \rtimes S_{\mathbb N}$ by \cite[Proposition 4.8]{GoHa17}.

We let $\mathcal {C}_T$ denote the class of separable II$_1$ factors $N$ such that the following conditions hold:
\begin{enumerate}
\item There exists a property (T) II$_1$ subfactor $M \subset N$ with $\mathcal Z( M' \cap N ) = \mathbb C$,
\item\label{item:BCT} There exists a II$_1$ factor $P$ and a sequence of embeddings $\alpha_n: P \to N$ so that the family $(\alpha_n)_n$ has uniform weak spectral gap, and such that for every $x \in N$ we have $\| \mathbb E_{\alpha_n(P)' \cap N}(x ) - x \|_2 \to 0$. 
\end{enumerate}
The second condition shows that any factor in $\mathcal C_T$ has the McDuff property. Note that if $N$ is asymptotically commutative and contains a property (T) type II$_1$ subfactor $M \subset N$ with $\mathcal Z( M' \cap N) = \mathbb C$, then the last condition is satisfied by taking $P = M$. In particular, if $N$ contains a property (T) type II$_1$ subfactor with factorial relative commutant, then $N^{\overline \otimes \mathbb N}$ and $N^{\overline \otimes \mathbb N} \rtimes S_{\mathbb N}$ both belong to $\mathcal C_T$. Also, if $N$ is an arbitrary full II$_1$ factor (e.g., $N = L \mathbb F_2$) and $M$ is a property (T) II$_1$ factor (or even if $M$ just contains a property (T) subfactor with factorial relative commutant), then $N^{\overline \otimes \mathbb N} \ovt M$ is contained in $\mathcal C_T$.

We remark that (\ref{item:BCT}) implies that if $B \subset N^{\mathcal U}$ is any separable von Neumann subalgebra, then there exist automorphisms $\alpha_\iota \in \Aut(N)$ so that setting $\alpha = (\alpha_\iota)^{\mathcal U}$ we have 
\[
B \subset \alpha( P)' \cap N^{\mathcal U} = \prod_{\iota \to \mathcal U} ( \alpha_\iota(P)' \cap N).
\] 

Theorem~\ref{thm:pairwisenonconj} is easily seen to be a special case of the following theorem, which is the ultraproduct analog of \cite[Theorem 0.2]{IoPePo08}.

\begin{thm}\label{thm:bassserreee}
Suppose $N_0$, and $\tilde N_0$ are separable and have weakly coarse malleable deformations (with possibly $N_0 = \tilde N_0 = \mathbb C$), suppose $N_1, N_2, \ldots, N_k$, $\tilde N_1, \tilde N_2, \ldots, \tilde N_\ell \in \mathcal {C}_T$ with $k, \ell \geq 2$ and suppose for each $0 \leq i < k$ (resp.\ $0 \leq j < \ell$) we have a tracial amenable von Neumann algebra $A_i$ (resp.\ $\tilde A_i$) and trace-preserving embeddings of $A_i$ into $N_i$ and $N_{i + 1}$ (resp.\ $\tilde A_j$ into $\tilde N_j$ and $\tilde N_{j + 1}$). If 
\[
N_0 *_{A_0} N_1 *_{A_1} N_2 *_{A_2} \cdots *_{A_{k-1}} N_k \equiv \tilde N_0 *_{A_0} \tilde N_1 *_{\tilde A_1} \tilde N_2 *_{\tilde A_2} \cdots *_{\tilde A_{k-1}} \tilde N_\ell,
\] 
then $\ell = k$ and after a permutation of indices we have $N_i \equiv \tilde N_i$ for $1 \leq i \leq k$. 
\end{thm}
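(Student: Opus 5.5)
Write $N = N_0 *_{A_0} N_1 *_{A_1} \cdots *_{A_{k-1}} N_k$ and $\tilde N$ for the corresponding amalgamated free product built from the $\tilde N_j$. Fix an isomorphism $\theta: N^{\mathcal U} \to \tilde N^{\mathcal V}$, with both ultrafilters countably incomplete. The plan is to use the property (T) subfactor inside each vertex algebra as a rigid probe, and then to upgrade "the probe lands in a vertex" to "the whole vertex ultrapower lands in a vertex ultrapower" by using condition (\ref{item:BCT}) of $\mathcal C_T$.

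\textbf{Step 1: locating the probe.} Fix $1 \le i \le k$ and the property (T) subfactor $M_i \subset N_i$ with $\mathcal Z(M_i' \cap N_i) = \mathbb C$. We first need $\theta(M_i)' \cap \tilde N^{\mathcal V}$ to be a factor.
\begin{itemize}
\item Property (T) gives weak spectral gap, so $M_i' \cap N^{\mathcal U} = (M_i' \cap N)^{\mathcal U}$.
\item $M_i$ is nonamenable and sits inside $N_i$. So Proposition~\ref{prop:relcommutant}, applied to the diagonal copies and combined with Proposition~\ref{prop:amenmatriciallyapprox}, gives $M_i' \cap N \subset N_i$. This relative commutant is therefore a factor, and hence so is its ultrapower.
\end{itemize}
We then view $\tilde N$ as an iterated two-piece amalgamated free product $(\tilde N_0 *\cdots*\tilde N_{j}) *_{\tilde A_j} (\tilde N_{j+1}*\cdots)$ and apply Theorem~\ref{thm:ultrafreeembedding} repeatedly. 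Each application either puts $\theta(M_i)$, up to unitary conjugacy, into an ultrapower of one side, or makes it matricially approximable with respect to $\tilde N$. Iterating, we get two alternatives: $\theta(M_i)$ is unitarily conjugate into some $\tilde N_j^{\mathcal V}$, or it is matricially approximable with respect to a smaller piece.

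\textbf{Step 2: ruling out the bad alternatives.} Matricial approximability of $\theta(M_i)$ with respect to $\tilde N$ is impossible. Indeed, $\theta(M_i)$ has factorial relative commutant containing $\theta((M_i'\cap N)^{\mathcal U})$. It also has a u.c.p.\ lift in the appropriate sense via Theorem~\ref{thm:genspectralgapapprox} and Theorem~\ref{thm:ucplift}, which would force amenability of a nonamenable algebra; the back-and-forth argument of Theorem~\ref{thm:backandforth} handles the general case. Landing in $\tilde N_0^{\mathcal V}$ is excluded by Theorem~\ref{thm:weaklycoarsemalleable}, applied to the corner algebra, together with the previous sentence. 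Conjugating, we may therefore assume $\theta(M_i) \subset \tilde N_{\sigma(i)}^{\mathcal V}$ for some $1 \le \sigma(i) \le \ell$.

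\textbf{Step 3: upgrading from the probe to the vertex.} This is the step I expect to be the main obstacle. We use (\ref{item:BCT}) for $N_i$: any separable $B \subset N_i^{\mathcal U}$ lies in $\alpha(P_i)' \cap N_i^{\mathcal U}$ for suitable automorphisms $\alpha_\iota$, and the family $(\alpha_\iota)$ has uniform weak spectral gap. The plan is as follows.
\begin{itemize}
\item Run Steps 1–2 for the embedding $\alpha(P_i)$ in place of $M_i$. Uniform weak spectral gap, together with Proposition~\ref{prop:relcommutant}, gives the control on relative commutants that is needed; the property (T) probe is still used through Theorem~\ref{thm:propTultra}, possibly after replacing $P_i$ by $M_i \ovt P_i$-type combinations.
\item Conclude that $\theta(\alpha(P_i))$ is conjugate into some $\tilde N_j^{\mathcal V}$.
\item Apply the ultraproduct version of the "normalizer/quasi-normalizer control" from Remark~\ref{rem:normalizer}. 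Since $\alpha(P_i)' \cap \tilde N^{\mathcal V}$ is then contained in $\tilde N_j^{\mathcal V}$, the separable $B$ also lies in $\tilde N_j^{\mathcal V}$ after conjugation.
\end{itemize}
The delicate points are two. First, $j$ must be independent of $B$. We get this because all such $B$ contain a common copy of $M_i$, and $M_i$ cannot be intertwined into two different vertices (the note after Theorem~\ref{thm:afpintertwining}). Second, the unitaries must be chosen coherently; an exhausting sequence of separable $B$'s and a diagonal argument using countable incompleteness should handle this. The outcome is $u\,\theta(N_i^{\mathcal U})u^* \subset \tilde N_{\sigma(i)}^{\mathcal V}$.

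\textbf{Step 4: equality, bijectivity, and elementary equivalence.} Running the symmetric argument with $\theta^{-1}$ gives $\tilde u\,\theta^{-1}(\tilde N_{j}^{\mathcal V})\tilde u^* \subset N_{\tau(j)}^{\mathcal V}$. Composing the two inclusions gives $w N_i^{\mathcal U} w^* \subset N_{\tau\sigma(i)}^{\mathcal U}$. The relative commutant/normalizer control of Remark~\ref{rem:normalizer}, applied in $N^{\mathcal U}$, then forces two things: $\tau\sigma(i) = i$, and $w \in N_i^{\mathcal U}$. Hence $\sigma$ is a bijection, so $k = \ell$, and the inclusions are equalities. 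This yields $N_i^{\mathcal U} \cong \tilde N_{\sigma(i)}^{\mathcal V}$, i.e.\ $N_i \equiv \tilde N_{\sigma(i)}$.
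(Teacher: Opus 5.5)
Your overall architecture matches the paper's: locate a property (T) probe in a vertex, upgrade to the whole vertex ultrapower via condition (\ref{item:BCT}), go back with $\theta^{-1}$, and close up with Remark~\ref{rem:normalizer}. However, Step 2 has a genuine gap, and it propagates into Step 4.

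\textbf{Step 2.} You assert that $\theta(M_i)$ can never be matricially approximable with respect to $\tilde N$, but none of the cited tools give this.
\begin{itemize}
\item There is no reason for $\theta(M_i)$ to admit a u.c.p.\ lift to $\ell^\infty(K, \tilde N)$, since $\theta$ is an arbitrary isomorphism of ultrapowers. So Theorem~\ref{thm:ucplift} does not apply to it.
\item Theorems~\ref{thm:backandforth} and \ref{thm:genspectralgapapprox} only give a dichotomy. If $\theta(M_i)$, or some $\theta(\alpha(P_i))$, is approximable, then $\theta^{-1}(Q)$ is inapproximable for every nonamenable $Q \subset \tilde N^{\mathcal V}$ with a u.c.p.\ lift. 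They say nothing unconditional about $\theta(M_i)$.
\item Factoriality of $\theta(M_i)' \cap \tilde N^{\mathcal V} = \theta((M_i' \cap N_i)^{\mathcal U})$ does not help. Approximable subalgebras have relative commutants containing the factor $\prod_\kappa (B_\kappa' \cap \tilde N)$, and since $N_i$ is McDuff, $M_i' \cap N_i$ can be as large as $N_i$ itself.
\end{itemize}
The paper instead uses the symmetry of the hypotheses. After possibly exchanging the roles of $N$ and $\tilde N$, it assumes that $\theta(M_i)$ \emph{and} $\theta(\alpha(P_i))$ are matricially inapproximable with respect to $\tilde N$, for every $i$ and every uniformly weak spectral gap family $(\alpha_\iota)_\iota$. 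The second part is needed in Step 3.

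\textbf{Step 4.} That normalization can be made in only one direction, so your ``symmetric argument with $\theta^{-1}$'' is not available: nothing tells you a priori that $\theta^{-1}(\tilde M_j)$ or $\theta^{-1}(\beta(\tilde P_j))$ is inapproximable with respect to $N$. The missing ingredient is Proposition~\ref{prop:ultrabackandforth}, used as follows.
\begin{itemize}
\item Once you have $\theta(N_i^{\mathcal U}) \subset u_i^* \tilde N_{j_i}^{\mathcal V} u_i$, take any $j \neq j_i$. Split $\tilde N$ as a two-piece amalgamated free product over one of the amenable edge algebras, with $\tilde N_{j_i}$ and $\tilde N_j$ on different sides.
\item Then $\theta^{-1}(\tilde M_j)$ and $\theta^{-1}(\beta(\tilde P_j))$ are inapproximable, because these algebras are nonamenable and have $*$-homomorphic lifts.
\item The argument now runs backwards and gives $\theta^{-1}(\tilde N_j^{\mathcal V}) \subset v^* N_{i'}^{\mathcal U} v$. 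Remark~\ref{rem:normalizer} then forces $j = j_{i'}$.
\item Hence every $j$ lies in the range of $i \mapsto j_i$. Injectivity of this map, again from Remark~\ref{rem:normalizer}, gives $k = \ell$.
\end{itemize}

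\textbf{Step 3.} You cannot rerun Steps 1--2 on $\alpha(P_i)$: $P_i$ need not have property (T), and Theorem~\ref{thm:ultrafreeembedding} requires it. The fix is simpler than you anticipate. Since $\alpha(P_i)$ commutes with $B \supset M_i$, Proposition~\ref{prop:relcommutant} gives $u_i \theta(\alpha(P_i)) u_i^* \subset u_i(\theta(M_i)' \cap \tilde N^{\mathcal V})u_i^* \subset \tilde N_{j_i}^{\mathcal V}$. A second application, using the inapproximability of $\theta(\alpha(P_i))$, gives $u_i \theta(B) u_i^* \subset \tilde N_{j_i}^{\mathcal V}$. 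The same $u_i$ and $j_i$ work for every $B$, so no coherence or diagonal argument is needed.
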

\begin{proof}
Set $N = N_0 *_{A_0} N_1 *_{A_1}  \cdots *_{A_{k-1}} N_k$ and $\tilde N = \tilde N_0 *_{A_0} \tilde N_1 *_{\tilde A_1} \cdots *_{\tilde A_{k-1}} \tilde N_\ell$. For $1 \leq i \leq k$ (resp.\ $1 \leq j \leq \ell$) we take $M_i \subset N_i$ (resp.\ $\tilde M_j \subset \tilde N_j$) to be a type II$_1$ property (T) subfactor with factorial relative commutant. We also take $P_i \subset N_i$ (resp.\ $\tilde P_j \subset \tilde N_j$), satisfying (\ref{item:BCT}). We suppose $\theta: N^{\mathcal U} \to \tilde N^{\mathcal V}$ is an isomorphism. We may assume that $\mathcal U$ and $\mathcal V$ are countably incomplete. 

After possibly an exchange of notation between $N$ and $\tilde N$, we may assume from  Theorem~\ref{thm:genspectralgapapprox} that whenever $1 \leq i \leq k$ and $\alpha_\iota: P_i \to N_i$ are embeddings such that $(\alpha_\iota)_\iota$ has uniform weak spectral gap, then letting $\alpha: P_i \to N_i^{\mathcal U}$ be the embedding defined by $\alpha(x) = (\alpha_\iota(x) )^{\mathcal U}$ we have that  $\theta ( \alpha(P_i))$ is matricially inapproximable with respect to $\tilde N$. For the same reason, we may also assume that $\theta(M_i)$ is matricially inapproximable with respect to $\tilde N$. 

By Theorem~\ref{thm:ultrafreeembedding}, for each $i \in \{ 1, 2, \ldots, k \}$ there exists some $j_i \in \{ 0, 1, 2, \ldots, \ell \}$ and $u_i \in \mathcal U(\tilde N^{\mathcal V})$ so that $u_i \theta( M_i ) u_i^* \subset \tilde N_{j_i}^{\mathcal V}$. Note that we cannot have $j_i = 0$ by Theorem~\ref{thm:weaklycoarsemalleable}.

If $B \subset N_i^{\mathcal U}$ is any separable von Neumann subalgebra that contains $M_i$, then from (\ref{item:BCT}) there exist $\alpha_\iota \in \Aut(N_1)$ so that setting $\alpha = (\alpha_\iota)^{\mathcal U}$ as above we have $B \subset \alpha(P_i)' \cap N_i^{\mathcal U}$. Since $M_i \subset B$, we then have from Proposition~\ref{prop:relcommutant} that 
\[
u_i \theta(  \alpha(P_i)  ) u_i^* \subset u_i \theta( M_i' \cap N^{\mathcal U}) u_i^* \subset  \tilde N_{j_i}^{\mathcal V}.
\]
A second application of Proposition~\ref{prop:relcommutant} then shows us that
\[
u_i \theta (B ) u_i^* \subset u_i \theta ( \alpha(P_i)' \cap N^{\mathcal U}) \subset \tilde N_{j_i}^{\mathcal V}.
\]
Since $B \subset N_i^{\mathcal U}$ was an arbitrary separable von Neumann subalgebra containing $M_i$, it then follows that 
\[
 \theta( N_i^{\mathcal U} ) \subset  u_i^* \tilde N_{j_i}^{\mathcal V} u_i.
\]

We now fix some $j_0 \in \{ 1, 2, \ldots, \ell \}$ so that $j_0 \not= j_i$ for some $i \in \{ 1, 2, \ldots, k \}$. If $\beta_\kappa \in \Aut(\tilde N_{j_0})$ and we set $\beta = (\beta_\kappa)^{\mathcal V}$, then by Proposition~\ref{prop:ultrabackandforth} we have that $\theta^{-1}( \beta(\tilde P_{j_0} ) )$ is matricially inapproximable with respect to $N$. For the same reason, we see that $\theta^{-1}( \tilde M_{j_0} )$ is also matricially inapproximable with respect to $N$. Thus, from the same argument in the previous paragraph we may find $i \in \{ 1, 2, \ldots, k \}$ and $v \in \mathcal U( N^{\mathcal U})$ so that $\theta^{-1}(\tilde N_{j_0}^{\mathcal U}) \subset v^* N_i^{\mathcal U} v$. We then have 
\[
\tilde N_{j_0}^{\mathcal U} \subset \theta(v^*) \theta(N_i^{\mathcal U}) \theta(v) \subset \theta(v^*) u_i^* \tilde N_{j_i}^{\mathcal V} u_i \theta(v)
\]
and so by Remark~\ref{rem:normalizer} we have $j_0 = j_i$ and $u_i \theta(v) \in \mathcal U(\tilde N_{j_0}^{\mathcal U}) $. In particular, this also shows that $j_0$ is in the range of the map $\{ 1, 2, \ldots, k \} \ni i \mapsto j_i$, and so we can apply the same result for any $j \not= j_0$ as well. We have thus concluded that for each $j \in \{ 1, 2, \ldots, \ell \}$ the von Neumann subalgebra $\tilde N_j^{\mathcal V}$ is conjugate in $\tilde N^{\mathcal V}$ to a von Neumann subalgebra of the form $\theta( N_i^{\mathcal U} )$ for $i \not= 0$. Because of Remark~\ref{rem:normalizer}, there can be at most one such subalgebra $\theta(N_i^{\mathcal U})$, and for the same reason we see that the map $\{ 1, 2, \ldots, k \} \ni i \mapsto j_i$ must be injective so that $k = \ell$ and after permutation of indices we have each $\tilde N_i^{\mathcal V}$ is conjugate to $\theta(N_i^{\mathcal U})$ so that $N_i \equiv \tilde N_i$ for each $1 \leq i \leq k$. 
\end{proof}

\subsection{Elementary equivalence and finite-index subfactors}

It is an open problem whether or not there exists a II$_1$ factor $M$ such that $M \not\equiv M^t$ for some $t \not= 1$ (see, e.g.,  \cite[Question 4.9]{GoHa23}). Here we show how to adapt the proof of Theorem~\ref{thm:bassserreee} to answer a related problem by giving an example of a II$_1$ factor having a non-elementarily equivalent finite-index subfactor. As the proof of this result follows from the same ideas as above, we will favor brevity, leaving the reader to verify some of the details. 

\begin{thm}\label{thm:finotee}
If $N \in \mathcal C_T$, then the factors $N * N$ and $(N * N) \rtimes \mathbb Z/2\mathbb Z$ are not elementarily equivalent, where $\mathbb Z/2 \mathbb Z$ acts on $N * N$ as the free flip.
\end{thm}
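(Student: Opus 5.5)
We argue by contradiction, assuming an isomorphism $\theta: (N * N)^{\mathcal U} \to ((N*N)\rtimes \mathbb Z/2\mathbb Z)^{\mathcal V}$ with $\mathcal U, \mathcal V$ countably incomplete. Write $N * N = N_1 * N_2$ and $\tilde N = (N_1 * N_2) \rtimes \mathbb Z/2\mathbb Z$. The first step is to present $\tilde N$ as an amalgamated free product over an amenable algebra. Let $\sigma$ be the free flip and $\tilde N_1 = (N_1 \ovt N_2)\rtimes \mathbb Z/2\mathbb Z$, or more simply note
\[
\tilde N \cong N_1 *_{\mathbb C} (L(\mathbb Z/2\mathbb Z))\ \text{structure, i.e.}\ \tilde N \cong (N_1 \otimes \mathbb C^2)\text{-type decomposition } \tilde N = N_1 * L(\mathbb Z/2\mathbb Z).
\]
Indeed, $N_1$ and the implementing unitary $u_\sigma$ generate $\tilde N$, since $N_2 = u_\sigma N_1 u_\sigma$, and freeness of $N_1, N_2$ gives $\tilde N \cong N_1 * L(\mathbb Z/2\mathbb Z)$. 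Thus $\tilde N$ is a free product of one factor in $\mathcal C_T$ with an amenable (finite-dimensional) algebra. This is the structural asymmetry to exploit: $N*N$ has two ``rigid'' free components, whereas $\tilde N$ has only one.

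Next we run the argument of Theorem~\ref{thm:bassserreee}. Using Theorem~\ref{thm:genspectralgapapprox}, possibly after exchanging roles, we arrange that the property (T) subfactors $M_i \subset N_i$ (and the spectral gap copies $\alpha(P_i)$) have matricially inapproximable images. Here we expect to handle both directions.

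\emph{Direction 1.} $\theta(M_i)$ is inapproximable in $\tilde N = N_1 * L(\mathbb Z/2)$. Theorem~\ref{thm:ultrafreeembedding}, applied with $P_\kappa = N_1$ and $Q_\kappa = L(\mathbb Z/2)$ (the latter finite-dimensional, so case (\ref{item:afpintnewC}) is excluded, as in the remark after Theorem~\ref{thm:afpintertwining}), gives $u_i \theta(M_i) u_i^* \subset N_1^{\mathcal V}$. The two-step relative commutant argument with Proposition~\ref{prop:relcommutant} and condition (\ref{item:BCT}) then upgrades this to $\theta(N_i^{\mathcal U}) \subset u_i^* N_1^{\mathcal V} u_i$ for $i = 1,2$. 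Conversely, applying Proposition~\ref{prop:ultrabackandforth} and the same argument to $\tilde M_1 \subset N_1 \subset \tilde N$ gives $\theta^{-1}(N_1^{\mathcal V}) \subset v^* N_i^{\mathcal U} v$ for some $i$. Then Remark~\ref{rem:normalizer} forces $\theta(N_1^{\mathcal U})$ and $\theta(N_2^{\mathcal U})$ to both be unitarily conjugate onto $N_1^{\mathcal V}$, so $N_1^{\mathcal U}$ and $N_2^{\mathcal U}$ are unitarily conjugate inside $(N_1*N_2)^{\mathcal U}$. This contradicts Remark~\ref{rem:normalizer} applied in $N_1 * N_2$: a unitary $w$ with $w N_1^{\mathcal U} w^* = N_2^{\mathcal U}$ would be a nonzero element intertwining $N_1^{\mathcal U}$ into $N_2^{\mathcal U}$, which is impossible since $N_1^{\mathcal U}$ is inapproximable and lies in $N_1$, orthogonal to $N_2$ modulo scalars.

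\emph{Direction 2.} If the exchange of notation reversed roles, so that $\theta^{-1}(\tilde M_1)$ is inapproximable in $N_1*N_2$, we get $\theta^{-1}(N_1^{\mathcal V}) \subset v^* N_i^{\mathcal U} v$. The back-and-forth via Proposition~\ref{prop:ultrabackandforth} then shows $\theta(M_1), \theta(M_2)$ are also inapproximable, and we return to Direction 1.

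The main obstacle I expect is twofold. First, the exchange-of-notation step in Theorem~\ref{thm:genspectralgapapprox} must be checked to produce inapproximability for \emph{both} $M_1$ and $M_2$, rather than just one; I would use that $\sigma$ swaps them together with Proposition~\ref{prop:ultrabackandforth}. Second, the final contradiction needs care: conjugacy of $\theta(N_1^{\mathcal U})$ and $\theta(N_2^{\mathcal U})$ is actually consistent inside $\tilde N$, since $u_\sigma$ does exactly this. The contradiction must therefore come from the conjugating unitary $u_2^* u_1$ lying in $N_1^{\mathcal V}$-normalizing position while mapping $\theta(N_1^{\mathcal U})$ onto $\theta(N_2^{\mathcal U})$. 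By Remark~\ref{rem:normalizer} in $\tilde N$, this unitary must lie in $N_1^{\mathcal V}$, hence pulled back it normalizes, which again contradicts Remark~\ref{rem:normalizer} in $N_1*N_2$.
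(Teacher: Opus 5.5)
\paragraph{The reverse step in your Direction~1 is not justified.}
Your identification $(N_1*N_2)\rtimes\mathbb Z/2\mathbb Z \cong \tilde N_1 * L(\mathbb Z/2\mathbb Z)$ is correct. Centered alternating words $a_1 u a_2 u \cdots$ collapse to centered alternating words $a_1\sigma(a_2)a_3\cdots$ in $N_1*N_2$, possibly followed by $u$. This gives the conclusion of Theorem~\ref{thm:ultrafreeembedding} on the crossed product side more cleanly than the paper's flip-invariant deformation, and case~(\ref{item:afpintnewC}) is excluded because $L(\mathbb Z/2\mathbb Z)$ is finite-dimensional. (Here $\tilde N_1$ is the copy of $N$ in the crossed product; your notation conflates it with $N_1$.) So you correctly reach $\theta(N_i^{\mathcal U}) \subset u_i^*\tilde N_1^{\mathcal V}u_i$ for $i=1,2$, as the paper does.

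The problem is the next step. To get $\theta^{-1}(\tilde N_1^{\mathcal V}) \subset v^*N_i^{\mathcal U}v$ you need $\theta^{-1}(\tilde M_1)$ and all $\theta^{-1}(\tilde\beta(\tilde P_1))$ to be matricially inapproximable with respect to $N_1*N_2$. Proposition~\ref{prop:ultrabackandforth} does not give this. It only controls subalgebras of the free factor \emph{complementary} to the one containing $\theta(\prod_{\iota\to\mathcal U}M_\iota)$. In $\tilde N_1 * L(\mathbb Z/2\mathbb Z)$ that complementary factor is $L(\mathbb Z/2\mathbb Z)$, whereas $\tilde M_1$ sits in the same factor $\tilde N_1$ that receives the $\theta(N_i^{\mathcal U})$. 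Since $\tilde N_1$ and $u_\sigma\tilde N_1u_\sigma$ are conjugate, there is no second rigid component to play off.

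Exchanging roles via Theorem~\ref{thm:genspectralgapapprox} does not help either. That dichotomy allows the mixed case where every $\theta(M_i)$ and $\theta(\alpha(P_i))$ is inapproximable but some $\theta^{-1}(\tilde M_1)$ is approximable, and this is exactly the case your Direction~1 must exclude.

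\paragraph{How the paper closes it, and your other two steps.}
The paper inserts a separate, asymmetric argument at this point. Assuming $\theta^{-1}(\tilde M_1)\subset\prod_{\iota\to\mathcal U}A_\iota$, it applies the relative commutant/spectral gap argument of Theorem~\ref{thm:backandforth} on the crossed product side. Combined with the inclusions $\theta(N_i^{\mathcal U})\subset v_i\tilde N_1^{\mathcal V}v_i^*$, this would make $N_1$ matricially approximable, hence amenable by Theorem~\ref{thm:ucplift}. Only after that does it run the Theorem~\ref{thm:bassserreee} argument backwards. Your proposal needs an argument of this kind.

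Your Direction~2 is the easy case and closes on its own, without returning to Direction~1. Proposition~\ref{prop:ultrabackandforth}, applied in $N_i*N_{i'}$, gives inapproximability of $\theta(M_{i'})$ and $\theta(\alpha(P_{i'}))$ only for $i'\neq i$, not for both indices as you claim. That is enough: $\theta(N_{i'}^{\mathcal U})$ then lies in a conjugate of $\tilde N_1^{\mathcal V}$, so $N_{i'}^{\mathcal U}$ lies in a conjugate of $N_i^{\mathcal U}$. This contradicts nonamenability of $N_{i'}$, exactly as in the paper's last line.

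Your closing ``obstacle'' argument is not needed, and as written it does not follow from Remark~\ref{rem:normalizer}. The unitary $u_2^*u_1$ carries $u_1^*\tilde N_1^{\mathcal V}u_1$ onto $u_2^*\tilde N_1^{\mathcal V}u_2$; it does not map $\tilde N_1^{\mathcal V}$ into itself.
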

\begin{proof}
We denote the first copy of $N$ in $N * N$ by $N_1$ and the second copy by $N_2$. We also let $\tilde N_1$ and $\tilde N_2$ denote the corresponding copies in $(N * N) \rtimes \mathbb Z/2\mathbb Z$. We suppose, by way of contradiction, that we have an isomorphism 
\[
\theta: (N_1 * N_2)^{\mathcal U} \to ( (\tilde N_1 * \tilde N_2) \rtimes \mathbb Z/2\mathbb Z)^{\mathcal V} \cong  ( \tilde N_1 * \tilde N_2)^{\mathcal V} \rtimes \mathbb Z/2\mathbb Z.
\] 
We let $M_i \subset N_i$ (resp.\ $\tilde M_i \subset \tilde N_i$) be property (T) subfactors with factorial relative commutant and take $P_i \subset N_i$ (resp.\ $\tilde P_i \subset \tilde N_i$) $i \in \{ 1, 2 \}$, satisfying condition (\ref{item:BCT}) from the definition of the class $\mathcal C_T$.

We assume that whenever $i \in \{ 1, 2 \}$ and $\alpha_\iota: P_i \to N_i$ are embeddings such that $(\alpha_\iota)_\iota$ has uniform weak spectral gap, then letting $\alpha: P_i \to N_i^{\mathcal U}$ be the embedding defined by $\alpha(x) = (\alpha_\iota(x) )^{\mathcal U}$ we have that  $\theta ( \alpha(P_i))$ is matricially inapproximable with respect to $\tilde N$. We also assume that $\theta(M_i)$ is matricially inapproximable with respect to $\tilde N$. If this were not the case, then by Theorem~\ref{thm:genspectralgapapprox} we could exchange the roles of $(N_1 * N_2)^{\mathcal U}$ and $( \tilde N_1 * \tilde N_2)^{\mathcal V} \rtimes \mathbb Z/2\mathbb Z$ and proceed similarly. 

We now show how the conclusion of Theorem~\ref{thm:ultrafreeembedding} is still valid in this setting. Indeed, as before, we may consider the free product deformation $\alpha_t * \alpha_t \in \Aut( ( \tilde N_1 * L\mathbb Z) * (\tilde N_2 * L \mathbb Z) )$ from \cite[Section 2.2]{IoPePo08}. Since this deformation commutes with the free flip, we extend it to a deformation on $\tilde \alpha_t \in \Aut ( ( (\tilde N_1 * L \mathbb Z) * (\tilde N_2 * L \mathbb Z) ) \rtimes \mathbb Z/2\mathbb Z )$ so that $\tilde \alpha_t$ is the identity on the generator of $\mathbb Z/ 2\mathbb Z$. Just as in the case for free products, we see that the orthogonal complement of the trivial $( \tilde N_1 * \tilde N_2 ) \rtimes \mathbb Z/ 2 \mathbb Z$-bimodule in $L^2 ( (  ( \tilde N_1 * L \mathbb Z) * (\tilde N_2 * L \mathbb Z) )  \rtimes \mathbb Z/2\mathbb Z )$ is contained in a direct sum of the coarse bimodule, and so the argument in Proposition~\ref{prop:relcommutant} shows that 
\[
\theta(M_i)' \cap  ( ((\tilde N_1 * L \mathbb Z) * (\tilde N_2 * L \mathbb Z)  ) \rtimes \mathbb Z/2\mathbb Z)^{\mathcal V} = \theta( M_i' \cap (N_1 * N_2)^{\mathcal U})
\] 
is a factor. 

By Theorem~\ref{thm:propTultra} we then conclude that we have an inclusion of $\theta( M_i )$-bimodules 
\[
L^2M_i \subset L^2 ( ((\tilde N_1 * L \mathbb Z) * (\tilde N_2 * L \mathbb Z)  \rtimes \mathbb Z/2\mathbb Z)^{\mathcal V} )_{\alpha_1},
\] 
where the right action is twisted by the flip automorphism on $( (\tilde N_1 * L \mathbb Z) * (\tilde N_2 * L \mathbb Z)  \rtimes \mathbb Z/2\mathbb Z)^{\mathcal V} $.

We now note that, just as in Proposition~\ref{prop:afpwkcontain}, The $( \tilde N_1 * \tilde N_2)  \rtimes \mathbb Z/2\mathbb Z$ Hilbert bimodule $L^2  (( (\tilde N_1 * L \mathbb Z) * (\tilde N_2 * L \mathbb Z) )  \rtimes \mathbb Z/2\mathbb Z )_{\alpha_1}$ decomposes as a direct sum of the bimodule $L^2( \langle ( \tilde N_1 * \tilde N_2 ) \rtimes \mathbb Z/2\mathbb Z, \tilde N_1 \rangle )$ and a bimodule that is weakly contained in the coarse bimodule. (Note that since $\tilde N_1$ and $\tilde N_2$ are conjugate, we only have one direct summand of the basic construction here). 

Applying Theorem~\ref{thm:amenimplieshyperfinite}, it then follows that we have $\theta(M_i) \prec_{(( \tilde N_1 * \tilde N_2) \rtimes \mathbb Z/2\mathbb Z)^{\mathcal V}} \tilde N_1$. Proposition~\ref{prop:relcommutant} remains valid in this setting and so, applying the argument in Theorem~\ref{thm:afpintertwining} just as above, we see that there are unitaries $v_i \in \mathcal U( (( \tilde N_1 * \tilde N_2) \rtimes \mathbb Z/2\mathbb Z)^{\mathcal V} )$ so that $\theta(N_i^{\mathcal U} ) \subset v_i \tilde N_1^{\mathcal V} v_i^*$. 

We now note that $\theta^{-1}(\tilde M_1)$ cannot be matricially approximable with respect to $N_1 * N_2$, because if $A_\iota \subset N_1 * N_2$ were finite-dimensional such that $\theta^{-1}(\tilde M_1) \subset \prod_{\iota \to \mathcal U} A_\iota$, then just as above we would have 
\[ 
\theta^{-1}( \tilde N_1^{\mathcal V} ) \subset \theta^{-1}( ( \tilde M_1' \cap  ( (\tilde N_1 * \tilde N_2) \rtimes \mathbb Z/2\mathbb Z)^{\mathcal V})' \cap ( (\tilde N_1 * \tilde N_2) \rtimes \mathbb Z/2\mathbb Z)^{\mathcal V}) \subset \prod_{\iota \to \mathcal U} A_\iota.
\] 
But since $\theta(N_i^{\mathcal U} ) \subset v_i \tilde N_1^{\mathcal V} v_i^*$, this would then give a contradiction by showing that $N_1$ is amenable. 

We may then apply the argument exactly as in Theorem~\ref{thm:bassserreee} to show that there exists a unitary $u \in \mathcal U( (N_1 * N_2)^{\mathcal U} )$ so that $\theta^{-1}(\tilde N_1^{\mathcal V} ) \subset u N_1^{\mathcal U} u^*$. But this then shows that $u \theta^{-1}(v_2) N_2^{\mathcal U} \theta^{-1}(v_2)^* u^* \subset N_1^{\mathcal U}$, which contradicts the fact that $N_2$ is not amenable. 
\end{proof}

\subsection{Ruination of elementarity by free products}  

The idea of the following lemma is contained in \cite[Proposition 4.13, Proposition 5.4]{GoJeKEPi25}. It is a variation on the same theme as in Proposition~\ref{prop:irredpseudocpt}.

\begin{lem}\label{lem:Tfactorialrel}
For any II$_1$ factor $N$ there exists a separable II$_1$ factor $P$ such that $P \equiv N$ and such that $P$ contains a property (T) II$_1$ subfactor having factorial relative commutant. 
\end{lem}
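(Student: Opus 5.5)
We aim for a separable elementary substructure of an ultrapower of $N$ that contains an irreducible-type embedding of a property (T) factor, in the spirit of Proposition~\ref{prop:irredpseudocpt}. First, we reduce to the case where $N$ is separable (replace $N$ by a separable elementary substructure via downward L\"owenheim--Skolem). Next, we fix a countably incomplete ultrafilter $\mathcal U$ on $\mathbb N$ and produce a property (T) II$_1$ factor $M$ together with an embedding $M \subset N^{\mathcal U}$ whose relative commutant $M' \cap N^{\mathcal U}$ is a factor.

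To build the embedding, we take $M = L(\SL_3(\mathbb Z))$ and use Proposition~\ref{prop:irredpseudocpt} on matrix algebras inside $N$. For each $n$, choose a unital copy $\mathbb M_{n}(\mathbb C) \subset N$. Then $N \cong \mathbb M_n(\mathbb C) \ovt N^{1/n}$, and we embed $\SL_3(\mathbb Z)$ via $\pi_n \otimes 1$, where $\pi_n$ is the almost-irreducible representation from the proof of Proposition~\ref{prop:irredpseudocpt}. In $N^{\mathcal U}$ this gives a copy $M$ of $L(\SL_3(\mathbb Z))$ (by \cite{Be07} as before) sitting inside $\prod_{n \to \mathcal U} \mathbb M_n(\mathbb C)$. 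By property (T), i.e.\ uniform weak spectral gap, the relative commutant is computed levelwise:
\[
M' \cap N^{\mathcal U} = \prod_{n \to \mathcal U} \bigl( \pi_n(\SL_3(\mathbb Z))' \cap \mathbb M_n(\mathbb C) \bigr) \ovt N^{1/n} .
\]
This is $\prod_{n\to\mathcal U} N^{1/n}$ up to a corner of vanishing relative size, hence essentially a factor. The defect coming from the $\operatorname{id}_{n-k}$ summand is the main obstacle. I would instead use $\rho_k \otimes 1$ on a corner of trace $k/n \to 1$ and conjugate, or more cleanly work in $\mathbb M_{k_n}(\mathbb C) \ovt N^{1/k_n} \cong N$ for genuinely irreducible $\rho_{k_n}$. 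With that choice the relative commutant is exactly $\prod_{n\to\mathcal U} N^{1/k_n}$, which is a II$_1$ factor (the commutant of a unital subfactor in a tensor splitting).

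Finally, apply downward L\"owenheim--Skolem to $N^{\mathcal U}$ to obtain a separable elementary substructure $P \preceq N^{\mathcal U}$ containing $M$ and closed enough that $M' \cap P$ is a factor. By {\L}o\'s we have $P \equiv N^{\mathcal U} \equiv N$. To get factoriality of $M' \cap P$, note first that, since $M$ has property (T) hence weak spectral gap in $N^{\mathcal U}$, the map $x \mapsto \mathbb E_{M' \cap P}(x)$ is controlled by finitely many commutator estimates, which are definable. We therefore run an $\omega$-chain: alternately enlarge to include, for each element of a countable dense subset of $M' \cap P_m$, witnesses in $M' \cap N^{\mathcal U}$ showing that its central projections are trivial, for instance unitaries in the relative commutant averaging it to a scalar via the Dixmier property in the factor $M' \cap N^{\mathcal U}$, and then take an elementary substructure again. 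The union $P = \bigcup_m P_m$ is an elementary substructure of $N^{\mathcal U}$, and every central element of $M' \cap P$ is a limit of Dixmier averages within $M' \cap P$, hence scalar. So $P$ is a separable II$_1$ factor, $P \equiv N$, and it contains the property (T) subfactor $M$ with factorial relative commutant.
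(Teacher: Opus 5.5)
Your proposal is correct. Its first half is the paper's argument: split $N \cong \mathbb M_k(\mathbb C) \ovt N^{1/k}$, act by (almost) irreducible representations of $\SL_3(\mathbb Z)$ tensored with $1$, and compute $M' \cap N^{\mathcal U}$ levelwise by property (T). The $\operatorname{id}_{n-k}$ defect you worry about is actually harmless, since it sits under projections of trace $(n-k_n)/n \to 0$, and these vanish in the ultraproduct.

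Where you differ is the passage to a separable $P$. The paper takes an \emph{arbitrary} separable $P \preceq N^{\mathcal U}$ containing $M$ and notes that factoriality of $M' \cap P$ is automatic. The elementary inclusion extends to an isomorphism $P^{\mathcal V} \cong (N^{\mathcal U})^{\mathcal V'}$, and weak spectral gap gives $(M' \cap P)^{\mathcal V} \cong (M' \cap N^{\mathcal U})^{\mathcal V'}$, which is a factor.

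Your $\omega$-chain of Dixmier witnesses is a hands-on substitute that avoids this Keisler--Shelah step. However, it relies on a point you only gesture at: that $\bigcup_m (M' \cap P_m)$ is $\|\cdot\|_2$-dense in $M' \cap P$, i.e.\ $\mathbb E_{M' \cap N^{\mathcal U}}(P_m) \subset P_m$. This is true. Spectral gap makes the unit ball of $M' \cap N^{\mathcal U}$ a definable set with parameters from a Kazhdan set of $M$, so the $\|\cdot\|_2$-nearest point $\mathbb E_{M' \cap N^{\mathcal U}}(y)$ to $y \in P_m$ lies in $P_m$. Alternatively, you can simply add these expectations at each stage of the chain.

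Note, though, that the same definability lets the Dixmier property, quantified over the definable set $\mathcal U(M' \cap N^{\mathcal U})$, transfer to every elementary $P \ni M$. So the chain is in the end superfluous, and this is exactly what the paper's short argument exploits.
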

\begin{proof}
Suppose $\mathcal U$ is a countably incomplete ultrafilter, and take a property (T) II$_1$ factor $M \subset N^{\mathcal U}$ with factorial relative commutant. Such an inclusion $M \subset N^{\mathcal U}$ exists because we can take a decomposition $N \cong \mathbb M_{n_\iota}(\mathbb C) \otimes (M_{n_\iota}(\mathbb C)' \cap N)$ and consider the representations $\pi_{n_\iota}: SL_3(\mathbb Z) \to \mathcal U(n_\iota)$ from Proposition~\ref{prop:irredpseudocpt}. The ultraproduct representation $\pi: SL_3(\mathbb Z) \to \prod_{\iota \to \mathcal U} \mathbb M_{n_\iota}(\mathbb C) \subset N^{\mathcal U}$ then satisfies $\pi(SL_3(\mathbb Z))' \cap N^{\mathcal U} \cong \prod_{\iota \to \mathcal U} (M_{n_\iota}(\mathbb C)' \cap N)$.  

By the Downward L\"owenheim-Skolem Theorem (see \cite[Theorem 2.3]{FaHaSh14}) we may find a separable II$_1$ factor $P \subset N^{\mathcal U}$ so that $M \subset P$ and such that the inclusion $P \subset N^{\mathcal U}$ is elementary. There are then ultrafilters $\mathcal V$, $\mathcal V'$ such that the embedding $P \subset N^{\mathcal U}$ extends to an isomorphism $P^{\mathcal V} \cong (N^{\mathcal U})^{\mathcal V'}$. Since $M$ has (T) and $M' \cap R^{\mathcal U}$ is a factor, we have that $M' \cap (N^{\mathcal U})^{\mathcal V'}$ is also a factor, and hence so is $M' \cap P$. 
\end{proof}

We remark that the conclusion of the previous lemma can alternatively be described using model theoretic sentences as in \cite[Proposition 5.12]{GoJeKEPi25}. We also remark that the previous proof shows that the property (T) subfactor can be chosen to be any property (T) factor that has an irreducible embedding into $\prod_{\iota \to \mathcal U} \mathbb M_{n_\iota}(\mathbb C)$, which is conjectured by Ioana to also be any property (T) factor that embeds into $\prod_{\iota \to \mathcal U} \mathbb M_{n_\iota}(\mathbb C)$.

The following result shows that taking free products does not preserve first order theories. This answers \cite[Question 3.8]{GoPi25}.

\begin{thm}\label{thm:freeproductee}
Let $M$ be a property (T) II$_1$ factor and set $N = M^{\overline \otimes \mathbb N}$. Let $P$ be a II$_1$ factor so that $P \equiv R$ and $P$ contains a property (T) II$_1$ subfactor $M_0 \subset P$ such that $\mathcal Z(M_0' \cap P) = \mathbb C$. Then $N * N * R \not\equiv N * N * P$. 
\end{thm}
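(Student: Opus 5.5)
We argue by contradiction, adapting the proof of Theorem~\ref{thm:bassserreee}. Suppose $\theta: (N * N * R)^{\mathcal U} \to (N * N * P)^{\mathcal V}$ is an isomorphism, with $\mathcal U, \mathcal V$ countably incomplete. Write $N_1 * N_2 * R$ and $\tilde N_1 * \tilde N_2 * P$ for the two free products. Since $N = M^{\ovt \mathbb N}$ is asymptotically commutative and contains the irreducible property (T) subfactor $M$, we have $N \in \mathcal C_T$. The distinguishing feature is that $P$ contains a property (T) subfactor $M_0$ with $\mathcal Z(M_0' \cap P) = \mathbb C$. By contrast, $R$ does not: any property (T) subfactor of $R^{\mathcal U}$ is matricially approximable with respect to $R$, being contained in $R^{\mathcal U}$ itself.

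First, I would place $\theta^{-1}(M_0)$. Since $\mathcal Z(M_0'\cap P)=\mathbb C$ and $M_0$ has property (T), weak spectral gap gives that $M_0' \cap P^{\mathcal V} = (M_0'\cap P)^{\mathcal V}$ is a factor. Proposition~\ref{prop:relcommutant} (with amenable $B = \mathbb C$) then shows that $M_0' \cap (N*N*P)^{\mathcal V} = M_0'\cap P^{\mathcal V}$ is a factor, provided $M_0$ is matricially inapproximable with respect to $N*N*P$. That inapproximability holds by Proposition~\ref{prop:amenmatriciallyapprox}, since $M_0 \subset N*N*P$ is nonamenable and sits in the constant copy.

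Next, apply Theorem~\ref{thm:ultrafreeembedding} to $\theta^{-1}(M_0) \subset (N_1 * (N_2 * R))^{\mathcal U}$, and then again inside the second factor, viewed as $(N_2 * R)$. This yields one of three outcomes: $\theta^{-1}(M_0)$ is matricially approximable with respect to $N*N*R$; or it is unitarily conjugate into some $N_i^{\mathcal U}$; or it is conjugate into $R^{\mathcal U}$. The last alternative reduces to the first, since $R$ is hyperfinite. In the first case, the back-and-forth argument of Theorem~\ref{thm:backandforth} (weak spectral gap of $M_0 \subset N*N*P$) gives $\theta^{-1}(M_0^{\mathcal V})$ inside some $\prod_{\iota \to \mathcal U} A_\iota$. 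As in the proof of Theorem~\ref{thm:bassserreee}, conjugating a nonamenable separable subalgebra $\theta(N_i)$ into $M_0^{\mathcal V}$ would then force $N$ to be amenable by Proposition~\ref{prop:amenmatriciallyapprox}, which is a contradiction. The one subtlety is the choice of direction provided by Theorem~\ref{thm:genspectralgapapprox}, which I would arrange exactly as in the proof of Theorem~\ref{thm:bassserreee}. Hence $\theta^{-1}(M_0)$ is conjugate into some $N_i^{\mathcal U}$.

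Meanwhile, the argument in the proof of Theorem~\ref{thm:bassserreee} applies verbatim to the two $\mathcal C_T$ pieces on each side; here $R$ and $P$ play the role of ``$N_0$'', and the needed spectral-gap assumptions come from Theorem~\ref{thm:genspectralgapapprox}. It shows that, after permuting, $\theta(N_i^{\mathcal U})$ is unitarily conjugate onto $\tilde N_i^{\mathcal V}$, with equality up to conjugacy as in the last step of that proof. Thus $\theta^{-1}(M_0)$ lies in a conjugate of $\theta^{-1}(\tilde N_i^{\mathcal V})$, so $M_0$ is conjugate into $\tilde N_i^{\mathcal V}$ inside $(N*N*P)^{\mathcal V}$. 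But $M_0 \subset P^{\mathcal V}$ and $M_0$ is matricially inapproximable, so Remark~\ref{rem:normalizer}, applied to the free splitting $\tilde N_i * (\text{rest})$, forbids a nonzero intertwiner. This is the contradiction.

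The main obstacle I expect is the bookkeeping in the application of Theorem~\ref{thm:genspectralgapapprox}. The exchange of roles between the two sides is asymmetric here, because only one side contains $P$. I would handle this by checking that the argument of the preceding paragraph works symmetrically: if instead $\theta(M_i)$ is matricially approximable, one proves inapproximability on the other side and then runs the same identification of the $N_i$ factors.
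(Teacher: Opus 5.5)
The gap is in how you place $\theta^{-1}(M_0)$, specifically the claim that $\theta^{-1}(M_0)$ cannot be matricially approximable with respect to $N*N*R$. From $\theta^{-1}(M_0)\subset\prod_{\iota\to\mathcal U}A_\iota$ you correctly get $\theta^{-1}(M_0^{\mathcal V})\subset\prod_{\iota\to\mathcal U}A_\iota$ by weak spectral gap. However, ``conjugating $\theta(N_i)$ into $M_0^{\mathcal V}$'' requires $\theta(N_i)$ to sit inside an ultraproduct of finite-dimensional subalgebras of $N*N*P$, i.e.\ to be matricially approximable with respect to $N*N*P$.

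You fix the direction ``exactly as in the proof of Theorem~\ref{thm:bassserreee}''. In that direction $\theta(M_i)\subset\theta(N_i)$ is matricially \emph{in}approximable, hence so is $\theta(N_i)$, and the step fails. The back-and-forth dichotomy of Theorems~\ref{thm:backandforth} and \ref{thm:genspectralgapapprox} gives only two alternatives: either $\theta^{-1}(M_0)$ is inapproximable, or $\theta(Q)$ is inapproximable for every nonamenable u.c.p.-liftable $Q$ on the $N*N*R$ side. The second branch is exactly the direction you chose, so it yields no contradiction. Your argument does work, with $M_i$ in place of $N_i$, in the opposite case where some $\theta(M_i)$ is matricially approximable. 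That is the reverse of the bookkeeping in your last paragraph.

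The missing idea is to do the identification first and only then place $\theta^{-1}(M_0)$; this is the order the paper uses. Once you have $\theta(N_1^{\mathcal U})=u_1\tilde N_1^{\mathcal V}u_1^*$, apply Proposition~\ref{prop:ultrabackandforth} to the isomorphism $\Ad(u_1^*)\circ\theta$, which carries $N_1^{\mathcal U}$ onto $\tilde N_1^{\mathcal V}$. Use the splitting $\tilde N_1*(\tilde N_2*P)$ and take $Q_0=M_0\subset(\tilde N_2*P)^{\mathcal V}$. If $\theta^{-1}(M_0)$ were matricially approximable, then $M_0$ would be matricially approximable with respect to $N*N*P$, contradicting Proposition~\ref{prop:amenmatriciallyapprox}.

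Concretely, you conjugate $\prod_{\iota\to\mathcal U}A_\iota$ into the type II$_1$ algebra $N_1^{\mathcal U}$. This puts $M_0$ into a unitary conjugate of $\tilde N_1^{\mathcal V}$, which is impossible for the same reason as in your final step. With this inapproximability and the factoriality of $M_0'\cap(N*N*P)^{\mathcal V}$ that you correctly established, Theorem~\ref{thm:ultrafreeembedding} places $\theta^{-1}(M_0)$ in a conjugate of some $N_i^{\mathcal U}$, the $R^{\mathcal U}$ option being excluded. Your concluding paragraph then goes through.
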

\begin{proof}
To make notation easier, let us label the copies of $N$ in $N * N * R$ as $N_1$ and $N_2$ and let us label the copies of $N$ in $N * N * P$ as $\tilde N_1$ and $\tilde N_2$. Suppose $\theta: (N_1 * N_2 * R)^{\mathcal U} \to (\tilde N_1 * \tilde N_2 * P)^{\mathcal V}$ were an isomorphism. Essentially the same proof as in Theorem~\ref{thm:bassserreee} then shows that, after permutation of indices, there exist $u_1, u_2 \in \mathcal U( (\tilde N_1 * \tilde N_2 * P )^{\mathcal V})$ so that $\theta( N_i^{\mathcal U} ) = u_i \tilde N_i^{\mathcal V} u_i^*$.

Since $M_0 \subset P$ is a property (T) subfactor with factorial relative commutant, Proposition~\ref{prop:ultrabackandforth} and Theorem~\ref{thm:ultrafreeembedding} show that there exists $i \in \{ 1, 2 \}$ and $v \in \mathcal U( (N_1 * N_2 * R)^{\mathcal U})$ so that $v \theta^{-1}(M_0) v^* \subset N_i^{\mathcal U}$. But then $M_0 \subset \theta(v^*) \theta(N_i^{\mathcal U}) \theta(v) \subset \theta(v^*) u_i \tilde N_i^{\mathcal V} u_i^* \theta(v)$, which would show that $M_0$ is amenable, giving a contradiction. 
\end{proof}

\subsection{A Continuum of complete theories of full factors}\label{sec:continuum}

We recall the construction of McDuff's factors from \cite{DiLa69, Mc69a, Mc69, BoChIo17} (see also \cite{GoHa17, GoHaTo18}). We first fix a countable group $\Gamma$. For $i \in \mathbb N$, we let $\Gamma_i$ be isomorphic copies of $\Gamma$ and let $\Lambda_i$ be an isomorphic copy of $\mathbb Z$. We let $\tilde \Gamma = \oplus_{i \in \mathbb N} \Gamma$ and we let $S_{\mathbb N}$ act naturally on $\tilde \Gamma$ by shifting the indices. We set 
\[
T_0(\Gamma) = \langle \tilde \Gamma, (\Lambda_i)_{i \in \mathbb N} \mid [\Gamma_i, \Lambda_j] = 0 {\rm  \ for \ } i \geq j \rangle
\] 
and 
\[
T_1(\Gamma) = \langle \tilde \Gamma \rtimes S_{\mathbb N}, (\Lambda_i)_{i \in \mathbb N} \mid [\Gamma_i, \Lambda_j] = 0 {\rm \ for \ } i \geq j \rangle.
\]

If we have an inclusion of groups $\Sigma < \Sigma'$, then this gives rise to a canonical inclusion $T_\alpha(\Sigma) < T_\alpha(\Sigma')$. Thus, any sequence $\alpha \in 2^{\leq \omega}$ gives rise to a sequence of inclusions
\[
T_{\alpha_1}(\Gamma) < T_{\alpha_1} \circ T_{\alpha_2}(\Gamma ) < T_{\alpha_1} \circ T_{\alpha_2} \circ T_{\alpha_3}(\Gamma) \cdots.
\]
The group $T_\alpha(\Gamma)$ is defined to be the inductive limit of these inclusions. It follows that starting with any group $\Gamma$ and any sequence $\alpha \in 2^{\mathbb N}$, the group von Neumann algebra $L( T_\alpha(\Gamma))$ will be II$_1$ factors with the McDuff property.  Boutonnet, Chifan, and Ioana showed in \cite{BoChIo17} that, starting with any countable group $\Gamma$, these factors are pairwise non-elementarily equivalent.

\begin{lem}\label{lem:mcduffC}
Let $\Gamma$ be a non-trivial i.c.c.\ property (T) group, then for any $\alpha \in 2^{\leq \omega} \setminus \{ \emptyset \}$ the von Neumann algebra $L( T_\alpha( \Gamma ))$ is contained in the class $\mathcal C_T$.
\end{lem}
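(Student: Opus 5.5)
We must verify the two conditions defining $\mathcal C_T$ for $N = L(T_\alpha(\Gamma))$. The plan is to exhibit both the property (T) subfactor and the spectral-gap family from copies of $L\Gamma$ sitting at the "bottom level" $\tilde\Gamma = \oplus_i \Gamma_i$ of the first stage $T_{\alpha_1}(\Gamma)$. These copies are then transported along the inductive limit.

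\emph{Condition (1).} Take $M = L\Gamma_1 \subset L(T_{\alpha_1}(\Gamma)) \subset N$. This is a II$_1$ factor with property (T) because $\Gamma$ is i.c.c.\ with property (T). To see that $\mathcal Z(M'\cap N)=\mathbb C$, we compute the relative commutant at the group level: $M'\cap N$ is spanned by the $\Gamma_1$-conjugation-invariant $\ell^2$ functions on $T_\alpha(\Gamma)$. Since $\Gamma_1$ is i.c.c., this amounts to understanding $C_{T_\alpha(\Gamma)}(\Gamma_1)$ and the elements with finite $\Gamma_1$-orbits. For the relevant elements this should give $M'\cap N = L(H)$, where $H$ is the virtual centralizer. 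Using the normal forms in the graph-product/HNN-like presentation (the $\Lambda_j$ commute with $\Gamma_i$ only for $i\ge j$, so only $\Lambda_1$ commutes with $\Gamma_1$), the centralizer of $\Gamma_1$ in $T_{\alpha_1}(\Gamma)$ is generated by $\Gamma_i$ for $i\ge2$, by $\Lambda_1$, and (when $\alpha_1=1$) by the permutations in $S_{\mathbb N}$ fixing $1$. This subgroup is again of McDuff-type shape and is i.c.c., since, e.g., $\Lambda_1$ is free relative to enough of it. Iterating through the inductive limit, as in \cite{BoChIo17}, where these relative commutants are computed, we expect $H$ to be i.c.c. Then $M'\cap N$ is a factor, in particular $\mathcal Z(M'\cap N)=\mathbb C$. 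If full factoriality fails, it suffices to show $H$ has trivial FC-center, which is the weaker statement we actually need.

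\emph{Condition (2).} Take $P = L\Gamma$ and $\alpha_n : P\to N$ the embedding onto $L\Gamma_n$ at the first stage. Uniform weak spectral gap is automatic because $P$ has property (T), as noted after the definition. It remains to show $\|\mathbb E_{L\Gamma_n'\cap N}(x)-x\|_2\to0$ for every $x\in N$. By density and $\|\cdot\|_2$-contractivity it suffices to take $x=u_g$ for $g\in T_\alpha(\Gamma)$, and to show $g$ commutes with $\Gamma_n$ for all large $n$. The group $T_\alpha(\Gamma)$ is generated by the images of all stages, so $g$ is a word in finitely many generators. Each generator is one of the following: an element of some $\Gamma_i$ with $i<n$, or of some $\Lambda_j$ with $j\le n$, or of a finitely supported permutation, or an element of a later stage built over $T_{\alpha_1}(\Gamma)$. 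Elements of $\Gamma_i$ with $i\ne n$ commute with $\Gamma_n$, and so do elements of $\Lambda_j$ with $j\le n$. Finitely supported permutations commute with $\Gamma_n$ once $n$ exceeds their support. The delicate point is later stages, where $T_{\alpha_2}$ is applied to the group $T_{\alpha_1}(\Gamma)$ and the new coordinate copies contain $\Gamma_n$ diagonally only in the first copy. We handle this by choosing the embeddings $\alpha_n$ to land in the \emph{first} coordinate copy at every subsequent stage, which is the canonical inclusion. The new $\Lambda$'s of a later stage commute with the first-coordinate copy exactly when their index is $\le1$. This is the main obstacle: a generator $\Lambda'_j$ of a later stage with $j\ge2$ does not commute with the first copy.

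To resolve it, I would instead choose the embeddings at the \emph{last} level. For $g$ supported in the first $m$ stages, use the copies $\Gamma^{(m)}_n$ appearing as $n$-th coordinates of $T_{\alpha_m}$ applied to $T_{\alpha_1}\circ\cdots\circ T_{\alpha_{m-1}}(\Gamma)$, taking only the sub-copy of $\Gamma$. The sequence is then defined diagonally as $\alpha_n := $ embedding into the $n$-th coordinate at stage $n$. For fixed $g$ supported in stages $\le m$ and indices $\le m$, once $n>m$ the element $g$ lies in the first coordinate copies of stages $m+1,\dots,n-1$ and in coordinates $\le m$ below. The relation $[\Gamma_i,\Lambda_j]=0$ for $i\ge j$ applied at stage $n$ should then give commutation with $\alpha_n(P)$, modulo checking how lower-stage $\Lambda$'s sit inside the first coordinate. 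Verifying this bookkeeping carefully is the main technical step.
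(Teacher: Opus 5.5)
\paragraph{Condition (1) fails for your choice of subfactor.} You take $M = L\Gamma_1$, but this does not satisfy condition (1) when $\alpha_1 = 0$. Take $j=1$ in the relation $[\Gamma_i,\Lambda_j]=0$ for $i\geq j$: it says that $\Lambda_1$ commutes with \emph{every} $\Gamma_i$. In the full group $T_\alpha(\Gamma)\cong T_{\alpha_1}(\Sigma')$, with $\Sigma'=T_{\alpha'}(\Gamma)$, it likewise commutes with every coordinate copy $\Sigma'_i$.

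So the centralizer you describe, generated by the $\Gamma_i$ ($i\geq 2$) and $\Lambda_1$, is a direct product $(\oplus_{i\geq 2}\Gamma_i)\times\Lambda_1$. More precisely, $T_0(\Sigma')$ is a graph product, and the centralizer of the vertex subgroup $\Gamma_1$ is $C_{\Sigma'_1}(\Gamma_1)\times(\oplus_{i\geq2}\Sigma'_i)\times\Lambda_1$. In this group $\Lambda_1\cong\mathbb Z$ is central. Hence $L\Lambda_1$ is a diffuse subalgebra of $\mathcal Z(M'\cap N)$, and condition (1) fails.

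Your justification that ``$\Lambda_1$ is free relative to enough of it'' is exactly the step that is false. The fallback of proving trivial FC-center is not a weaker statement; it is equivalent to i.c.c. The missing idea is to use $\Gamma_2$ instead, as the paper does. Its centralizer contains both $\Lambda_1$ and $\Lambda_2$, which are free from each other, and $\Lambda_2$ does not commute with $\Sigma'_1$. The paper identifies this centralizer with $\langle \oplus_{i\neq 2}\Sigma'_i,\Lambda_1,\Lambda_2\rangle$, together with the permutations fixing $2$ when $\alpha_1=1$, and this group is i.c.c. The paper also checks that every $\Gamma_2$-conjugation orbit is infinite or trivial. These two facts give $L\Gamma_2'\cap N\cong L(C_{T_\alpha(\Gamma)}(\Gamma_2))$, which is a factor.

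\paragraph{Condition (2): the inductive limit is nested the other way.} The inclusions $T_{\alpha_1}(\Gamma)<T_{\alpha_1}(T_{\alpha_2}(\Gamma))<\cdots$ come from applying $T_{\alpha_1}$ to $\Gamma<T_{\alpha_2}(\Gamma)$. So later stages are inserted \emph{inside} each coordinate, and $T_\alpha(\Gamma)\cong T_{\alpha_1}(T_{\alpha'}(\Gamma))$; it is not $T_{\alpha_2}$ applied to $T_{\alpha_1}(\Gamma)$.

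With the correct picture, your first idea already works and there is no obstacle. Any $g\in T_\alpha(\Gamma)$ is a finite word in the $\Sigma'_i$, the $\Lambda_j$, and, if $\alpha_1=1$, finitely supported permutations. Take $n$ larger than all indices that occur. Then $\Gamma_n\subset\Sigma'_n$ commutes with $\Sigma'_i$ for $i\neq n$, with $\Lambda_j$ for $j\leq n$, and with permutations fixing $n$. So $u_g\in L\Gamma_n'\cap N$ for all large $n$, which gives $\|\mathbb E_{L\Gamma_n'\cap N}(x)-x\|_2\to 0$ by density.

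The diagonal ``last level'' construction you propose instead rests on the misread structure, and you leave its verification open. As written, condition (2) is therefore not established either.
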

\begin{proof}
If $\Sigma < \Sigma'$ with both $\Sigma$ and $\Sigma'$ i.c.c., and if we use the same notation above letting $\Sigma_i$ (resp.\ $\Sigma_i'$) denote the $i$th copy of $\Sigma$ (resp.\ $\Sigma'$) in $\oplus_{i \geq 1} \Sigma$ (resp.\ $\oplus_{i \geq 1} \Sigma'$), then every $\Sigma_2$-orbit of the conjugation action on $T_0(\Sigma')$ is either infinite or trivial, and we can also identify the centralizer $C_{T_0(\Sigma')} ( \Sigma_2 )$ with 
\[
\langle \oplus_{i \in \mathbb N \setminus \{ 2 \} } \Sigma_i', \Lambda_1, \Lambda_2 \mid [\Sigma_i', \Lambda_j] = 0 {\rm  \ for \ } i \geq j \rangle,
\]
which is i.c.c. We may similarly see that every $\Sigma_2$-orbit of the conjugation action on $T_1(\Sigma')$ is either infinite or trivial, and we can also identify the centralizer $C_{T_1(\Sigma')} ( \Sigma_2 )$ with 
\[
\langle ( \oplus_{i \in \mathbb N \setminus \{ 2 \} } \Sigma_i') \rtimes S_{\mathbb N \setminus \{ 2 \}}, \Lambda_1, \Lambda_2 \mid [\Sigma_i', \Lambda_j] = 0 {\rm  \ for \ } i \geq j \rangle,
\]
which is also easily seen to be i.c.c. 

Since we have a natural isomorphism $T_\alpha(\Gamma) = T_{\alpha_1}( T_{\alpha'}(\Gamma))$ where $\alpha'$ is the shifted sequence $\alpha'_n = \alpha_{n + 1}$, and since $T_{\alpha'}(\Gamma)$ is i.c.c., we then see that when $\Gamma$ is infinite and i.c.c.\ we have that $L\Gamma_2' \cap L( T_\alpha(\Gamma) ) \cong L( C_{T_\alpha(\Gamma)}(\Gamma_2) )$ is a factor. Hence, $L( T_\alpha(\Gamma))$ contains a property (T) factor with factorial relative commutant. 

It is also easy to see that if we take the embeddings $\alpha_i: L\Gamma \to L\Gamma_i \subset L( T_{\alpha} (\Gamma) )$, then for every $x \in L( T_\alpha(\Gamma))$ we have $\| \mathbb E_{\alpha_n(L\Gamma)' \cap L( T_\alpha(\Gamma))}(x ) - x \|_2 \to 0$. Since $\Gamma$ has (T), for any nonprincipal ultrafilter $\mathcal U$ on $\mathbb N$ the embedding $\alpha = (\alpha_i)^{\mathcal U}: L\Gamma \to (L ( T_\alpha(\Gamma)))^{\mathcal U}$ has weak spectral gap with respect to $L( T_\alpha(\Gamma))$. Thus $L( T_\alpha(\Gamma))$ is in the class $\mathcal C_T$. 
\end{proof}

Theorems~\ref{thm:uncountablemcduff} and \ref{thm:uncountablemcduffactions} now follow easily from Lemma~\ref{lem:mcduffC}, together with Theorem~\ref{thm:bassserreee} and Theorem 1.2 from \cite{BoChIo17}.

\bibliographystyle{amsalpha}
\bibliography{ref}

@article {AD95,
    AUTHOR = {Anantharaman-Delaroche, C.},
     TITLE = {Amenable correspondences and approximation properties for von
              {N}eumann algebras},
   JOURNAL = {Pacific J. Math.},
  FJOURNAL = {Pacific Journal of Mathematics},
    VOLUME = {171},
      YEAR = {1995},
    NUMBER = {2},
     PAGES = {309--341},
      ISSN = {0030-8730,1945-5844},
       URL = {http://projecteuclid.org/euclid.pjm/1102368918},
}

@article {Sa75,
    AUTHOR = {Sakai, Sh\^oichir\^o},
     TITLE = {Automorphisms and tensor products of operator algebras},
   JOURNAL = {Amer. J. Math.},
  FJOURNAL = {American Journal of Mathematics},
    VOLUME = {97},
      YEAR = {1975},
    NUMBER = {4},
     PAGES = {889--896},
      ISSN = {0002-9327,1080-6377},
       DOI = {10.2307/2373678},
       URL = {https://doi-org.proxy.lib.uwaterloo.ca/10.2307/2373678},
}

@article {JeKE26,
    AUTHOR = {Jekel, David and Kunnawalkam Elayavalli, Srivatsav},
     TITLE = {Upgraded free independence phenomena for random unitaries},
   JOURNAL = {Trans. Amer. Math. Soc. Ser. B},
  FJOURNAL = {Transactions of the American Mathematical Society. Series B},
    VOLUME = {13},
      YEAR = {2026},
     PAGES = {1--29},
      ISSN = {2330-0000},
}

@article {Vo90,
    AUTHOR = {Voiculescu, Dan},
     TITLE = {Property {$T$} and approximation of operators},
   JOURNAL = {Bull. London Math. Soc.},
  FJOURNAL = {The Bulletin of the London Mathematical Society},
    VOLUME = {22},
      YEAR = {1990},
    NUMBER = {1},
     PAGES = {25--30},
      ISSN = {0024-6093,1469-2120},
       DOI = {10.1112/blms/22.1.25},
       URL = {https://doi-org.proxy.lib.uwaterloo.ca/10.1112/blms/22.1.25},
}

@article {GoHaTo18,
    AUTHOR = {Goldbring, Isaac and Hart, Bradd and Towsner, Henry},
     TITLE = {Explicit sentences distinguishing {M}c{D}uff's {$\rm II_1$}
              factors},
   JOURNAL = {Israel J. Math.},
  FJOURNAL = {Israel Journal of Mathematics},
    VOLUME = {227},
      YEAR = {2018},
    NUMBER = {1},
     PAGES = {365--377},
      ISSN = {0021-2172,1565-8511},
       DOI = {10.1007/s11856-018-1735-8},
       URL = {https://doi-org.proxy.lib.uwaterloo.ca/10.1007/s11856-018-1735-8},
}

@article {Sa68,
    AUTHOR = {Sakai, Sh\^oichir\^o},
     TITLE = {Asymptotically abelian {${\rm II}\sb{1}$}-factors},
   JOURNAL = {Publ. Res. Inst. Math. Sci. Ser. A},
  FJOURNAL = {Publ. Res. Inst. Math. Sci. Ser. A},
    VOLUME = {4},
      YEAR = {1968/69},
     PAGES = {299--307},
       DOI = {10.2977/prims/1195194878},
       URL = {https://doi-org.proxy.lib.uwaterloo.ca/10.2977/prims/1195194878},
}

@article {DiLa69,
    AUTHOR = {Dixmier, J. and Lance, E. C.},
     TITLE = {Deux nouveaux facteurs de type {${\rm II}\sb{1}$}},
   JOURNAL = {Invent. Math.},
  FJOURNAL = {Inventiones Mathematicae},
    VOLUME = {7},
      YEAR = {1969},
     PAGES = {226--234},
      ISSN = {0020-9910,1432-1297},
       DOI = {10.1007/BF01404307},
       URL = {https://doi-org.proxy.lib.uwaterloo.ca/10.1007/BF01404307},
}

@article {FaHaRoTi17,
    AUTHOR = {Farah, Ilijas and Hart, Bradd and R{\o}rdam, Mikael and
              Tikuisis, Aaron},
     TITLE = {Relative commutants of strongly self-absorbing {$\rm
              C^*$}-algebras},
   JOURNAL = {Selecta Math. (N.S.)},
  FJOURNAL = {Selecta Mathematica. New Series},
    VOLUME = {23},
      YEAR = {2017},
    NUMBER = {1},
     PAGES = {363--387},
      ISSN = {1022-1824,1420-9020},
       DOI = {10.1007/s00029-016-0237-y},
       URL = {https://doi-org.proxy.lib.uwaterloo.ca/10.1007/s00029-016-0237-y},
}

@article {Mc69a,
    AUTHOR = {McDuff, Dusa},
     TITLE = {A countable infinity of {$\Pi \sb{1}$} factors},
   JOURNAL = {Ann. of Math. (2)},
  FJOURNAL = {Annals of Mathematics. Second Series},
    VOLUME = {90},
      YEAR = {1969},
     PAGES = {361--371},
      ISSN = {0003-486X},
       DOI = {10.2307/1970729},
       URL = {https://doi.org/10.2307/1970729},
}

@article {Mc69,
    AUTHOR = {McDuff, Dusa},
     TITLE = {Uncountably many {${\rm II}\sb{1}$} factors},
   JOURNAL = {Ann. of Math. (2)},
  FJOURNAL = {Annals of Mathematics. Second Series},
    VOLUME = {90},
      YEAR = {1969},
     PAGES = {372--377},
      ISSN = {0003-486X},
       DOI = {10.2307/1970730},
       URL = {https://doi.org/10.2307/1970730},
}

@article {GoHa17,
    AUTHOR = {Goldbring, Isaac and Hart, Bradd},
     TITLE = {On the theories of {M}c{D}uff's {$\rm II_1$} factors},
   JOURNAL = {Int. Math. Res. Not. IMRN},
  FJOURNAL = {International Mathematics Research Notices. IMRN},
      YEAR = {2017},
    NUMBER = {18},
     PAGES = {5609--5628},
      ISSN = {1073-7928,1687-0247},
       DOI = {10.1093/imrn/rnw180},
       URL = {https://doi-org.proxy.lib.uwaterloo.ca/10.1093/imrn/rnw180},
}

@article {ZM69,
    AUTHOR = {Zeller-Meier, G.},
     TITLE = {Deux nouveaux facteurs de type {${\rm II}\sb{1}$}},
   JOURNAL = {Invent. Math.},
  FJOURNAL = {Inventiones Mathematicae},
    VOLUME = {7},
      YEAR = {1969},
     PAGES = {235--242},
      ISSN = {0020-9910,1432-1297},
       DOI = {10.1007/BF01404308},
       URL = {https://doi-org.proxy.lib.uwaterloo.ca/10.1007/BF01404308},
}

@article {Po12,
    AUTHOR = {Popa, Sorin},
     TITLE = {On the classification of inductive limits of {II{$_{1}$}}
              factors with spectral gap},
   JOURNAL = {Trans. Amer. Math. Soc.},
  FJOURNAL = {Transactions of the American Mathematical Society},
    VOLUME = {364},
      YEAR = {2012},
    NUMBER = {6},
     PAGES = {2987--3000},
      ISSN = {0002-9947,1088-6850},
       DOI = {10.1090/S0002-9947-2012-05389-X},
       URL = {https://doi-org.proxy.lib.uwaterloo.ca/10.1090/S0002-9947-2012-05389-X},
}

@article {Po08,
    AUTHOR = {Popa, Sorin},
     TITLE = {On the superrigidity of malleable actions with spectral gap},
   JOURNAL = {J. Amer. Math. Soc.},
  FJOURNAL = {Journal of the American Mathematical Society},
    VOLUME = {21},
      YEAR = {2008},
    NUMBER = {4},
     PAGES = {981--1000},
      ISSN = {0894-0347,1088-6834},
       DOI = {10.1090/S0894-0347-07-00578-4},
       URL = {https://doi-org.proxy.lib.uwaterloo.ca/10.1090/S0894-0347-07-00578-4},
}

@article {Po95,
    AUTHOR = {Popa, Sorin},
     TITLE = {Free-independent sequences in type {${\rm II}_1$} factors and
              related problems},
      NOTE = {Recent advances in operator algebras (Orl\'eans, 1992)},
   JOURNAL = {Ast\'erisque},
  FJOURNAL = {Ast\'erisque},
    NUMBER = {232},
      YEAR = {1995},
     PAGES = {187--202},
      ISSN = {0303-1179,2492-5926},
}

@article {Po14,
    AUTHOR = {Popa, Sorin},
     TITLE = {Independence properties in subalgebras of ultraproduct {$\rm
              II_1$} factors},
   JOURNAL = {J. Funct. Anal.},
  FJOURNAL = {Journal of Functional Analysis},
    VOLUME = {266},
      YEAR = {2014},
    NUMBER = {9},
     PAGES = {5818--5846},
      ISSN = {0022-1236,1096-0783},
       DOI = {10.1016/j.jfa.2014.02.004},
       URL = {https://doi-org.proxy.lib.uwaterloo.ca/10.1016/j.jfa.2014.02.004},
}

@article{HaJeKE25, 
title={PROPERTY ({T}) AND STRONG 1-BOUNDEDNESS FOR VON NEUMANN ALGEBRAS}, 
DOI={10.1017/S1474748024000446}, 
journal={Journal of the Institute of Mathematics of Jussieu}, 
author={Hayes, Ben and Jekel, David and Kunnawalkam Elayavalli, Srivatsav}, 
year={2025}, 
pages={1--34},
}

@article {EfLa77,
    AUTHOR = {Effros, Edward G. and Lance, E. Christopher},
     TITLE = {Tensor products of operator algebras},
   JOURNAL = {Adv. Math.},
  FJOURNAL = {Advances in Mathematics},
    VOLUME = {25},
      YEAR = {1977},
    NUMBER = {1},
     PAGES = {1--34},
      ISSN = {0001-8708,1090-2082},
       DOI = {10.1016/0001-8708(77)90085-8},
       URL = {https://doi-org.proxy.lib.uwaterloo.ca/10.1016/0001-8708(77)90085-8},
}

@article {Ha18,
    AUTHOR = {Hayes, Ben},
     TITLE = {1-bounded entropy and regularity problems in von {N}eumann
              algebras},
   JOURNAL = {Int. Math. Res. Not. IMRN},
  FJOURNAL = {International Mathematics Research Notices. IMRN},
      YEAR = {2018},
    NUMBER = {1},
     PAGES = {57--137},
      ISSN = {1073-7928,1687-0247},
       DOI = {10.1093/imrn/rnw237},
       URL = {https://doi-org.proxy.lib.uwaterloo.ca/10.1093/imrn/rnw237},
}

@article {Po06,
    AUTHOR = {Popa, Sorin},
     TITLE = {Some rigidity results for non-commutative {B}ernoulli shifts},
   JOURNAL = {J. Funct. Anal.},
  FJOURNAL = {Journal of Functional Analysis},
    VOLUME = {230},
      YEAR = {2006},
    NUMBER = {2},
     PAGES = {273--328},
      ISSN = {0022-1236,1096-0783},
       DOI = {10.1016/j.jfa.2005.06.017},
       URL = {https://doi-org.proxy.lib.uwaterloo.ca/10.1016/j.jfa.2005.06.017},
}

@article {KE23,
    AUTHOR = {Kunnawalkam Elayavalli, Srivatsav},
     TITLE = {Remarks on the diagonal embedding and strong 1-boundedness},
   JOURNAL = {Doc. Math.},
  FJOURNAL = {Documenta Mathematica},
    VOLUME = {28},
      YEAR = {2023},
    NUMBER = {3},
     PAGES = {671--681},
      ISSN = {1431-0635,1431-0643},
       DOI = {10.4171/dm/918},
       URL = {https://doi-org.proxy.lib.uwaterloo.ca/10.4171/dm/918},
}

@article {KEP25,
    AUTHOR = {Kunnawalkam Elayavalli, Srivatsav and Patchell, Gregory},
     TITLE = {Sequential commutation in tracial von {N}eumann algebras},
   JOURNAL = {J. Funct. Anal.},
  FJOURNAL = {Journal of Functional Analysis},
    VOLUME = {288},
      YEAR = {2025},
    NUMBER = {4},
     PAGES = {Paper No. 110719, 28},
      ISSN = {0022-1236,1096-0783},
}

@article {VN42,
    AUTHOR = {von Neumann, John},
     TITLE = {Approximative properties of matrices of high finite order},
   JOURNAL = {Portugal. Math.},
  FJOURNAL = {Portugaliae Mathematica},
    VOLUME = {3},
      YEAR = {1942},
     PAGES = {1--62},
      ISSN = {0032-5155,1662-2758},
 }

@inproceedings {Va10,
    AUTHOR = {Vaes, Stefaan},
     TITLE = {Rigidity for von {N}eumann algebras and their invariants},
 BOOKTITLE = {Proceedings of the {I}nternational {C}ongress of
              {M}athematicians. {V}olume {III}},
     PAGES = {1624--1650},
 PUBLISHER = {Hindustan Book Agency, New Delhi},
      YEAR = {2010},
      ISBN = {978-81-85931-08-3; 978-981-4324-33-5; 981-4324-33-7},
}

@inproceedings {Io18,
    AUTHOR = {Ioana, Adrian},
     TITLE = {Rigidity for von {N}eumann algebras},
 BOOKTITLE = {Proceedings of the {I}nternational {C}ongress of
              {M}athematicians---{R}io de {J}aneiro 2018. {V}ol. {III}.
              {I}nvited lectures},
     PAGES = {1639--1672},
 PUBLISHER = {World Sci. Publ., Hackensack, NJ},
      YEAR = {2018},
      ISBN = {978-981-3272-92-7; 978-981-3272-87-3},
}

@article {NiPoSa07,
    AUTHOR = {Nicoara, Remus and Popa, Sorin and Sasyk, Roman},
     TITLE = {On {${\rm II}_1$} factors arising from 2-cocycles of
              {$w$}-rigid groups},
   JOURNAL = {J. Funct. Anal.},
  FJOURNAL = {Journal of Functional Analysis},
    VOLUME = {242},
      YEAR = {2007},
    NUMBER = {1},
     PAGES = {230--246},
      ISSN = {0022-1236,1096-0783},
       DOI = {10.1016/j.jfa.2006.05.015},
       URL = {https://doi-org.proxy.lib.uwaterloo.ca/10.1016/j.jfa.2006.05.015},
}

@incollection {Po07a,
    AUTHOR = {Popa, Sorin},
     TITLE = {Deformation and rigidity for group actions and von {N}eumann
              algebras},
 BOOKTITLE = {International {C}ongress of {M}athematicians. {V}ol. {I}},
     PAGES = {445--477},
 PUBLISHER = {Eur. Math. Soc., Z\"urich},
      YEAR = {2007},
      ISBN = {978-3-03719-022-7},
       DOI = {10.4171/022-1/18},
       URL = {https://doi-org.proxy.lib.uwaterloo.ca/10.4171/022-1/18},
}

@article {CoHa89,
    AUTHOR = {Cowling, Michael and Haagerup, Uffe},
     TITLE = {Completely bounded multipliers of the {F}ourier algebra of a
              simple {L}ie group of real rank one},
   JOURNAL = {Invent. Math.},
  FJOURNAL = {Inventiones Mathematicae},
    VOLUME = {96},
      YEAR = {1989},
    NUMBER = {3},
     PAGES = {507--549},
      ISSN = {0020-9910,1432-1297},
       DOI = {10.1007/BF01393695},
       URL = {https://doi-org.proxy.lib.uwaterloo.ca/10.1007/BF01393695},
}

@article {Ch83,
    AUTHOR = {Choda, Marie},
     TITLE = {Group factors of the {H}aagerup type},
   JOURNAL = {Proc. Japan Acad. Ser. A Math. Sci.},
  FJOURNAL = {Japan Academy. Proceedings. Series A. Mathematical Sciences},
    VOLUME = {59},
      YEAR = {1983},
    NUMBER = {5},
     PAGES = {174--177},
      ISSN = {0386-2194},
       URL = {http://projecteuclid.org.proxy.lib.uwaterloo.ca/euclid.pja/1195515589},
}

@book {ChCoJoJuVa01,
    AUTHOR = {Cherix, Pierre-Alain and Cowling, Michael and Jolissaint, Paul
              and Julg, Pierre and Valette, Alain},
     TITLE = {Groups with the {H}aagerup property},
    SERIES = {Progress in Mathematics},
    VOLUME = {197},
      NOTE = {Gromov's a-T-menability},
 PUBLISHER = {Birkh\"auser Verlag, Basel},
      YEAR = {2001},
     PAGES = {viii+126},
      ISBN = {3-7643-6598-6},
       DOI = {10.1007/978-3-0348-8237-8},
       URL = {https://doi-org.proxy.lib.uwaterloo.ca/10.1007/978-3-0348-8237-8},
}

@book {BeHaVa08,
    AUTHOR = {Bekka, Bachir and de la Harpe, Pierre and Valette, Alain},
     TITLE = {Kazhdan's property ({T})},
    SERIES = {New Mathematical Monographs},
    VOLUME = {11},
 PUBLISHER = {Cambridge University Press, Cambridge},
      YEAR = {2008},
     PAGES = {xiv+472},
      ISBN = {978-0-521-88720-5},
       DOI = {10.1017/CBO9780511542749},
       URL = {https://doi-org.proxy.lib.uwaterloo.ca/10.1017/CBO9780511542749},
}

@article {CoJo85,
    AUTHOR = {Connes, A. and Jones, V.},
     TITLE = {Property {$T$} for von {N}eumann algebras},
   JOURNAL = {Bull. London Math. Soc.},
  FJOURNAL = {The Bulletin of the London Mathematical Society},
    VOLUME = {17},
      YEAR = {1985},
    NUMBER = {1},
     PAGES = {57--62},
      ISSN = {0024-6093,1469-2120},
}

@article {Oz06,
    AUTHOR = {Ozawa, Narutaka},
     TITLE = {A {K}urosh-type theorem for type {$\rm II_1$} factors},
   JOURNAL = {Int. Math. Res. Not.},
  FJOURNAL = {International Mathematics Research Notices},
      YEAR = {2006},
     PAGES = {Art. ID 97560, 21},
      ISSN = {1073-7928,1687-0247},
       DOI = {10.1155/IMRN/2006/97560},
       URL = {https://doi-org.proxy.lib.uwaterloo.ca/10.1155/IMRN/2006/97560},
}

@article {GoPi25,
    AUTHOR = {Goldbring, Isaac and Pi, Jennifer},
     TITLE = {On the first-order free group factor elementary equivalence},
   JOURNAL = {J. Operator Theory},
  FJOURNAL = {Journal of Operator Theory},
    VOLUME = {94},
      YEAR = {2025},
    NUMBER = {1},
     PAGES = {129--150},
      ISSN = {0379-4024,1841-7744},
}

@article {FaHa11,
    AUTHOR = {Fang, Junsheng and Hadwin, Don},
     TITLE = {A note on the invariant subspace problem relative to a type
              {${\rm II}_1$} factor},
   JOURNAL = {Houston J. Math.},
  FJOURNAL = {Houston Journal of Mathematics},
    VOLUME = {37},
      YEAR = {2011},
    NUMBER = {3},
     PAGES = {879--893},
      ISSN = {0362-1588},
}

@article {IoTa24,
    AUTHOR = {Ioana, Adrian and Tan, Hui},
     TITLE = {Existential closedness and the structure of bimodules of {$\rm
              II_1$} factors},
   JOURNAL = {J. Funct. Anal.},
  FJOURNAL = {Journal of Functional Analysis},
    VOLUME = {286},
      YEAR = {2024},
    NUMBER = {4},
     PAGES = {Paper No. 110264, 31},
      ISSN = {0022-1236,1096-0783},
       DOI = {10.1016/j.jfa.2023.110264},
       URL = {https://doi-org.proxy.lib.uwaterloo.ca/10.1016/j.jfa.2023.110264},
}

@article {GoLo15,
    AUTHOR = {Goldbring, Isaac and Lopes, Vinicius Cif\'u},
     TITLE = {Pseudofinite and pseudocompact metric structures},
   JOURNAL = {Notre Dame J. Form. Log.},
  FJOURNAL = {Notre Dame Journal of Formal Logic},
    VOLUME = {56},
      YEAR = {2015},
    NUMBER = {3},
     PAGES = {493--510},
      ISSN = {0029-4527,1939-0726},
       DOI = {10.1215/00294527-3132833},
       URL = {https://doi-org.proxy.lib.uwaterloo.ca/10.1215/00294527-3132833},
}

@article {FaHaSh13,
    AUTHOR = {Farah, Ilijas and Hart, Bradd and Sherman, David},
     TITLE = {Model theory of operator algebras {I}: stability},
   JOURNAL = {Bull. Lond. Math. Soc.},
  FJOURNAL = {Bulletin of the London Mathematical Society},
    VOLUME = {45},
      YEAR = {2013},
    NUMBER = {4},
     PAGES = {825--838},
      ISSN = {0024-6093,1469-2120},
       DOI = {10.1112/blms/bdt014},
       URL = {https://doi-org.proxy.lib.uwaterloo.ca/10.1112/blms/bdt014},
}

@article {FaHaSh14a,
    AUTHOR = {Farah, Ilijas and Hart, Bradd and Sherman, David},
     TITLE = {Model theory of operator algebras {II}: model theory},
   JOURNAL = {Israel J. Math.},
  FJOURNAL = {Israel Journal of Mathematics},
    VOLUME = {201},
      YEAR = {2014},
    NUMBER = {1},
     PAGES = {477--505},
      ISSN = {0021-2172,1565-8511},
       DOI = {10.1007/s11856-014-1046-7},
       URL = {https://doi-org.proxy.lib.uwaterloo.ca/10.1007/s11856-014-1046-7},
}

@article {FaHaSh14,
    AUTHOR = {Farah, Ilijas and Hart, Bradd and Sherman, David},
     TITLE = {Model theory of operator algebras {III}: elementary
              equivalence and {$\rm II_1$} factors},
   JOURNAL = {Bull. Lond. Math. Soc.},
  FJOURNAL = {Bulletin of the London Mathematical Society},
    VOLUME = {46},
      YEAR = {2014},
    NUMBER = {3},
     PAGES = {609--628},
      ISSN = {0024-6093,1469-2120},
       DOI = {10.1112/blms/bdu012},
       URL = {https://doi-org.proxy.lib.uwaterloo.ca/10.1112/blms/bdu012},
}

@misc{KE25,
Author = { Kunnawalkam Elayavalli, Srivatsav},
Title = {50 Open Problems: Ultraproduct {II$_1$} factors},
Year = {2025},
howpublished = {arXiv:2511.20377},
}

@article {AtGoKE22,
    AUTHOR = {Atkinson, Scott and Goldbring, Isaac and Kunnawalkam
              Elayavalli, Srivatsav},
     TITLE = {Factorial relative commutants and the generalized {J}ung
              property for {$\rm II_1$} factors},
   JOURNAL = {Adv. Math.},
  FJOURNAL = {Advances in Mathematics},
    VOLUME = {396},
      YEAR = {2022},
     PAGES = {Paper No. 108107, 53},
      ISSN = {0001-8708,1090-2082},
       DOI = {10.1016/j.aim.2021.108107},
       URL = {https://doi-org.proxy.lib.uwaterloo.ca/10.1016/j.aim.2021.108107},
}

@article {ADHa90,
    AUTHOR = {Anantharaman-Delaroche, C. and Havet, J.-F.},
     TITLE = {On approximate factorizations of completely positive maps},
   JOURNAL = {J. Funct. Anal.},
  FJOURNAL = {Journal of Functional Analysis},
    VOLUME = {90},
      YEAR = {1990},
    NUMBER = {2},
     PAGES = {411--428},
      ISSN = {0022-1236,1096-0783},
        DOI = {10.1016/0022-1236(90)90090-8},
       URL = {https://doi.org/10.1016/0022-1236(90)90090-8},
}

@article {AtKE21,
    AUTHOR = {Atkinson, Scott and Kunnawalkam Elayavalli, Srivatsav},
     TITLE = {On ultraproduct embeddings and amenability for tracial von
              {N}eumann algebras},
   JOURNAL = {Int. Math. Res. Not. IMRN},
  FJOURNAL = {International Mathematics Research Notices. IMRN},
      YEAR = {2021},
    NUMBER = {4},
     PAGES = {2882--2918},
      ISSN = {1073-7928,1687-0247},
       DOI = {10.1093/imrn/rnaa257},
       URL = {https://doi.org/10.1093/imrn/rnaa257},
}

@article {Ha85,
    AUTHOR = {Haagerup, Uffe},
     TITLE = {A new proof of the equivalence of injectivity and
              hyperfiniteness for factors on a separable {H}ilbert space},
   JOURNAL = {J. Funct. Anal.},
  FJOURNAL = {Journal of Functional Analysis},
    VOLUME = {62},
      YEAR = {1985},
    NUMBER = {2},
     PAGES = {160--201},
      ISSN = {0022-1236},
       DOI = {10.1016/0022-1236(85)90002-3},
       URL = {https://doi.org/10.1016/0022-1236(85)90002-3},
}

@article {FaGoHaSh16,
    AUTHOR = {Farah, Ilijas and Goldbring, Isaac and Hart, Bradd and
              Sherman, David},
     TITLE = {Existentially closed {${\rm II}_1$} factors},
   JOURNAL = {Fund. Math.},
  FJOURNAL = {Fundamenta Mathematicae},
    VOLUME = {233},
      YEAR = {2016},
    NUMBER = {2},
     PAGES = {173--196},
      ISSN = {0016-2736,1730-6329},
       DOI = {10.4064/fm126-12-2015},
       URL = {https://doi-org.proxy.lib.uwaterloo.ca/10.4064/fm126-12-2015},
}

@article {Go23,
    AUTHOR = {Goldbring, Isaac},
     TITLE = {Non-embeddable {$\rm II_1$} factors resembling the hyperfinite
              {$\rm II_1$} factor},
   JOURNAL = {J. Noncommut. Geom.},
  FJOURNAL = {Journal of Noncommutative Geometry},
    VOLUME = {17},
      YEAR = {2023},
    NUMBER = {1},
     PAGES = {233--239},
      ISSN = {1661-6952,1661-6960},
       DOI = {10.4171/jncg/474},
       URL = {https://doi-org.proxy.lib.uwaterloo.ca/10.4171/jncg/474},
}

@book {Go23book,
     TITLE = {Model theory of operator algebras},
    SERIES = {De Gruyter Series in Logic and its Applications},
    VOLUME = {11},
    EDITOR = {Goldbring, Isaac},
 PUBLISHER = {De Gruyter, Berlin},
      YEAR = {[2023] \copyright 2023},
     PAGES = {ix+484},
      ISBN = {978-3-11-076821-3; 978-3-11-076828-2; 978-3-11-076833-6},
}

@article {Go21,
    AUTHOR = {Goldbring, Isaac},
     TITLE = {Enforceable operator algebras},
   JOURNAL = {J. Inst. Math. Jussieu},
  FJOURNAL = {Journal of the Institute of Mathematics of Jussieu. JIMJ.
              Journal de l'Institut de Math\'ematiques de Jussieu},
    VOLUME = {20},
      YEAR = {2021},
    NUMBER = {1},
     PAGES = {31--63},
      ISSN = {1474-7480,1475-3030},
       DOI = {10.1017/S1474748019000112},
       URL = {https://doi-org.proxy.lib.uwaterloo.ca/10.1017/S1474748019000112},
}

@article {Vo94,
    AUTHOR = {Voiculescu, Dan},
     TITLE = {The analogues of entropy and of {F}isher's information measure
              in free probability theory. {II}},
   JOURNAL = {Invent. Math.},
  FJOURNAL = {Inventiones Mathematicae},
    VOLUME = {118},
      YEAR = {1994},
    NUMBER = {3},
     PAGES = {411--440},
      ISSN = {0020-9910,1432-1297},
       DOI = {10.1007/BF01231539},
       URL = {https://doi-org.proxy.lib.uwaterloo.ca/10.1007/BF01231539},
}

@incollection {Vo85,
    AUTHOR = {Voiculescu, Dan},
     TITLE = {Symmetries of some reduced free product {$C^\ast$}-algebras},
 BOOKTITLE = {Operator algebras and their connections with topology and
              ergodic theory ({B}u\c steni, 1983)},
    SERIES = {Lecture Notes in Math.},
    VOLUME = {1132},
     PAGES = {556--588},
 PUBLISHER = {Springer, Berlin},
      YEAR = {1985},
      ISBN = {3-540-15643-7},
       DOI = {10.1007/BFb0074909},
       URL = {https://doi-org.proxy.lib.uwaterloo.ca/10.1007/BFb0074909},
}

@misc{Po86,
author={Popa, Sorin}, 
title = {Correspondences}, 
Year = {1986},
howpublished = {Lecture Notes, INCREST, Romania, 1986, unpublished. Available at: https://www.math.ucla.edu/{\textasciitilde}popa/popa-correspondences.pdf },
}

@article {Po93,
    AUTHOR = {Popa, Sorin},
     TITLE = {Markov traces on universal {J}ones algebras and subfactors of
              finite index},
   JOURNAL = {Invent. Math.},
  FJOURNAL = {Inventiones Mathematicae},
    VOLUME = {111},
      YEAR = {1993},
    NUMBER = {2},
     PAGES = {375--405},
      ISSN = {0020-9910,1432-1297},
       DOI = {10.1007/BF01231293},
       URL = {https://doi-org.proxy.lib.uwaterloo.ca/10.1007/BF01231293},
}

@article {Ta23,
    AUTHOR = {Tan, Hui},
     TITLE = {Spectral gap characterizations of property ({T}) for {${\rm
              II}_1$} factors},
   JOURNAL = {Int. Math. Res. Not. IMRN},
  FJOURNAL = {International Mathematics Research Notices. IMRN},
      YEAR = {2023},
    NUMBER = {19},
     PAGES = {16994--17020},
      ISSN = {1073-7928,1687-0247},
       DOI = {10.1093/imrn/rnad109},
       URL = {https://doi-org.proxy.lib.uwaterloo.ca/10.1093/imrn/rnad109},
}

@article {GoHa24,
    AUTHOR = {Goldbring, Isaac and Hart, Bradd},
     TITLE = {The universal theory of the hyperfinite {$\rm II_1$} factor is
              not computable},
   JOURNAL = {Bull. Symb. Log.},
  FJOURNAL = {The Bulletin of Symbolic Logic},
    VOLUME = {30},
      YEAR = {2024},
    NUMBER = {2},
     PAGES = {181--198},
      ISSN = {1079-8986,1943-5894},
       DOI = {10.1017/bsl.2024.7},
       URL = {https://doi-org.proxy.lib.uwaterloo.ca/10.1017/bsl.2024.7},
}

@article {Oz04,
    AUTHOR = {Ozawa, Narutaka},
     TITLE = {There is no separable universal {$\rm II_1$}-factor},
   JOURNAL = {Proc. Amer. Math. Soc.},
  FJOURNAL = {Proceedings of the American Mathematical Society},
    VOLUME = {132},
      YEAR = {2004},
    NUMBER = {2},
     PAGES = {487--490},
      ISSN = {0002-9939,1088-6826},
        DOI = {10.1090/S0002-9939-03-07127-2},
       URL = {https://doi-org.proxy.lib.uwaterloo.ca/10.1090/S0002-9939-03-07127-2},
}

@article {Wa75,
    AUTHOR = {Wang, P. S.},
     TITLE = {On isolated points in the dual spaces of locally compact
              groups},
   JOURNAL = {Math. Ann.},
  FJOURNAL = {Mathematische Annalen},
    VOLUME = {218},
      YEAR = {1975},
    NUMBER = {1},
     PAGES = {19--34},
      ISSN = {0025-5831,1432-1807},
       DOI = {10.1007/BF01350065},
       URL = {https://doi-org.proxy.lib.uwaterloo.ca/10.1007/BF01350065},
}

@article {Wa91,
    AUTHOR = {Wassermann, Simon},
     TITLE = {{$C^*$}-algebras associated with groups with {K}azhdan's
              property {$T$}},
   JOURNAL = {Ann. of Math. (2)},
  FJOURNAL = {Annals of Mathematics. Second Series},
    VOLUME = {134},
      YEAR = {1991},
    NUMBER = {2},
     PAGES = {423--431},
      ISSN = {0003-486X,1939-8980},
       DOI = {10.2307/2944351},
       URL = {https://doi-org.proxy.lib.uwaterloo.ca/10.2307/2944351},
}

@article {OzPo04,
    AUTHOR = {Ozawa, Narutaka and Popa, Sorin},
     TITLE = {Some prime factorization results for type {${\rm II}_1$}
              factors},
   JOURNAL = {Invent. Math.},
  FJOURNAL = {Inventiones Mathematicae},
    VOLUME = {156},
      YEAR = {2004},
    NUMBER = {2},
     PAGES = {223--234},
      ISSN = {0020-9910,1432-1297},
       DOI = {10.1007/s00222-003-0338-z},
       URL = {https://doi-org.proxy.lib.uwaterloo.ca/10.1007/s00222-003-0338-z},
}

@article {Br11,
    AUTHOR = {Brown, Nathanial P.},
     TITLE = {Topological dynamical systems associated to {${\rm
              II}_1$}-factors},
      NOTE = {With an appendix by Narutaka Ozawa},
   JOURNAL = {Adv. Math.},
  FJOURNAL = {Advances in Mathematics},
    VOLUME = {227},
      YEAR = {2011},
    NUMBER = {4},
     PAGES = {1665--1699},
      ISSN = {0001-8708,1090-2082},
       DOI = {10.1016/j.aim.2011.04.003},
       URL = {https://doi-org.proxy.lib.uwaterloo.ca/10.1016/j.aim.2011.04.003},
}

@book {BrOz08,
    AUTHOR = {Brown, Nathanial P. and Ozawa, Narutaka},
     TITLE = {{$C^*$}-algebras and finite-dimensional approximations},
    SERIES = {Graduate Studies in Mathematics},
    VOLUME = {88},
 PUBLISHER = {American Mathematical Society, Providence, RI},
      YEAR = {2008},
     PAGES = {xvi+509},
      ISBN = {978-0-8218-4381-9; 0-8218-4381-8},
       DOI = {10.1090/gsm/088},
       URL = {https://doi-org.proxy.lib.uwaterloo.ca/10.1090/gsm/088},
}

@article {BoChIo17,
    AUTHOR = {Boutonnet, R\'emi and Chifan, Ionu\c{t} and Ioana, Adrian},
     TITLE = {I{I{$_1$}} factors with nonisomorphic ultrapowers},
   JOURNAL = {Duke Math. J.},
  FJOURNAL = {Duke Mathematical Journal},
    VOLUME = {166},
      YEAR = {2017},
    NUMBER = {11},
     PAGES = {2023--2051},
      ISSN = {0012-7094,1547-7398},
       DOI = {10.1215/00127094-0000017X},
       URL = {https://doi-org.proxy.lib.uwaterloo.ca/10.1215/00127094-0000017X},
}

@article {Go20,
    AUTHOR = {Goldbring, Isaac},
     TITLE = {On {P}opa's factorial commutant embedding problem},
   JOURNAL = {Proc. Amer. Math. Soc.},
  FJOURNAL = {Proceedings of the American Mathematical Society},
    VOLUME = {148},
      YEAR = {2020},
    NUMBER = {11},
     PAGES = {5007--5012},
      ISSN = {0002-9939,1088-6826},
       DOI = {10.1090/proc/15141},
       URL = {https://doi-org.proxy.lib.uwaterloo.ca/10.1090/proc/15141},
}

@article {Go22,
    AUTHOR = {Goldbring, Isaac},
     TITLE = {The {C}onnes embedding problem: a guided tour},
   JOURNAL = {Bull. Amer. Math. Soc. (N.S.)},
  FJOURNAL = {American Mathematical Society. Bulletin. New Series},
    VOLUME = {59},
      YEAR = {2022},
    NUMBER = {4},
     PAGES = {503--560},
      ISSN = {0273-0979,1088-9485},
       DOI = {10.1090/bull/1768},
       URL = {https://doi-org.proxy.lib.uwaterloo.ca/10.1090/bull/1768},
}

@article {Ju07a,
    AUTHOR = {Jung, Kenley},
     TITLE = {Strongly 1-bounded von {N}eumann algebras},
   JOURNAL = {Geom. Funct. Anal.},
  FJOURNAL = {Geometric and Functional Analysis},
    VOLUME = {17},
      YEAR = {2007},
    NUMBER = {4},
     PAGES = {1180--1200},
      ISSN = {1016-443X,1420-8970},
       DOI = {10.1007/s00039-007-0624-9},
       URL = {https://doi-org.proxy.lib.uwaterloo.ca/10.1007/s00039-007-0624-9},
}

@article {Ju07,
    AUTHOR = {Jung, Kenley},
     TITLE = {Amenability, tubularity, and embeddings into {$
              R^\omega$}},
   JOURNAL = {Math. Ann.},
  FJOURNAL = {Mathematische Annalen},
    VOLUME = {338},
      YEAR = {2007},
    NUMBER = {1},
     PAGES = {241--248},
      ISSN = {0025-5831,1432-1807},
       DOI = {10.1007/s00208-006-0074-y},
       URL = {https://doi.org/10.1007/s00208-006-0074-y},
}

@misc{AIM,
title={AimPL: Classification of group von Neumann algebras, available at http://aimpl.org/groupvonneumann},
year={2018},
}

@article {JuSh07,
    AUTHOR = {Jung, Kenley and Shlyakhtenko, Dimitri},
     TITLE = {Any generating set of an arbitrary property {T} von {N}eumann
              algebra has free entropy dimension {$\le1$}},
   JOURNAL = {J. Noncommut. Geom.},
  FJOURNAL = {Journal of Noncommutative Geometry},
    VOLUME = {1},
      YEAR = {2007},
    NUMBER = {2},
     PAGES = {271--279},
      ISSN = {1661-6952,1661-6960},
       DOI = {10.4171/JNCG/7},
       URL = {https://doi-org.proxy.lib.uwaterloo.ca/10.4171/JNCG/7},
}

@article {Be07,
    AUTHOR = {Bekka, Bachir},
     TITLE = {Operator-algebraic superridigity for {${\rm SL}_n(\mathbb Z)$},
              {$n\geq 3$}},
   JOURNAL = {Invent. Math.},
  FJOURNAL = {Inventiones Mathematicae},
    VOLUME = {169},
      YEAR = {2007},
    NUMBER = {2},
     PAGES = {401--425},
      ISSN = {0020-9910,1432-1297},
       DOI = {10.1007/s00222-007-0050-5},
       URL = {https://doi-org.proxy.lib.uwaterloo.ca/10.1007/s00222-007-0050-5},
}

@incollection {GoHa23,
    AUTHOR = {Goldbring, Isaac and Hart, Bradd},
     TITLE = {A survey on the model theory of tracial von {N}eumann
              algebras},
 BOOKTITLE = {Model theory of operator algebras},
    SERIES = {De Gruyter Ser. Log. Appl.},
    VOLUME = {11},
     PAGES = {133--157},
 PUBLISHER = {De Gruyter, Berlin},
      YEAR = {[2023] \copyright 2023},
      ISBN = {978-3-11-076821-3; 978-3-11-076828-2; 978-3-11-076833-6},
}

@incollection {Go23b,
    AUTHOR = {Goldbring, Isaac},
     TITLE = {Spectral gap and definability},
 BOOKTITLE = {Beyond first order model theory. {V}ol. {II}},
     PAGES = {103--137},
 PUBLISHER = {CRC Press, Boca Raton, FL},
      YEAR = {2023},
      ISBN = {978-0-367-20826-4; 978-1-032-51601-1; 978-0-429-26363-7},
}

@article {GoJeKEPi25,
    AUTHOR = {Goldbring, Isaac and Jekel, David and Kunnawalkam Elayavalli,
              Srivatsav and Pi, Jennifer},
     TITLE = {Uniformly super {M}c{D}uff {${\rm II}_1$} factors},
   JOURNAL = {Math. Ann.},
  FJOURNAL = {Mathematische Annalen},
    VOLUME = {391},
      YEAR = {2025},
    NUMBER = {2},
     PAGES = {2757--2781},
      ISSN = {0025-5831,1432-1807},
}

@article {Po07,
    AUTHOR = {Popa, Sorin},
     TITLE = {On {O}zawa's property for free group factors},
   JOURNAL = {Int. Math. Res. Not. IMRN},
  FJOURNAL = {International Mathematics Research Notices. IMRN},
      YEAR = {2007},
    NUMBER = {11},
     PAGES = {Art. ID rnm036, 10},
      ISSN = {1073-7928,1687-0247},
}

@article {Co80,
    AUTHOR = {Connes, A.},
     TITLE = {A factor of type {${\rm II}\sb{1}$}\ with countable
              fundamental group},
   JOURNAL = {J. Operator Theory},
  FJOURNAL = {Journal of Operator Theory},
    VOLUME = {4},
      YEAR = {1980},
    NUMBER = {1},
     PAGES = {151--153},
      ISSN = {0379-4024},
}

@article {Co76,
    AUTHOR = {Connes, A.},
     TITLE = {Classification of injective factors. {C}ases {$II\sb{1},$}
              {$II\sb{\infty },$} {$III\sb{\lambda },$} {$\lambda \not=1$}},
   JOURNAL = {Ann. of Math. (2)},
  FJOURNAL = {Annals of Mathematics. Second Series},
    VOLUME = {104},
      YEAR = {1976},
    NUMBER = {1},
     PAGES = {73--115},
      ISSN = {0003-486X},
       DOI = {10.2307/1971057},
       URL = {https://doi.org/10.2307/1971057},
}

@article {ChIoKE23,
    AUTHOR = {Chifan, Ionu{\c t{}} and Ioana, Adrian and Kunnawalkam Elayavalli, Srivatsav},
     TITLE = {An exotic {$\rm II_1$} factor without property gamma},
   JOURNAL = {Geom. Funct. Anal.},
  FJOURNAL = {Geometric and Functional Analysis},
    VOLUME = {33},
      YEAR = {2023},
    NUMBER = {5},
     PAGES = {1243--1265},
      ISSN = {1016-443X,1420-8970},
       DOI = {10.1007/s00039-023-00649-4},
       URL = {https://doi.org/10.1007/s00039-023-00649-4},
}

@article {HoIo24,
    AUTHOR = {Houdayer, Cyril and Ioana, Adrian},
     TITLE = {Asymptotic freeness in tracial ultraproducts},
   JOURNAL = {Forum Math. Sigma},
  FJOURNAL = {Forum of Mathematics. Sigma},
    VOLUME = {12},
      YEAR = {2024},
     PAGES = {Paper No. e88, 22},
      ISSN = {2050-5094},
       DOI = {10.1017/fms.2024.93},
       URL = {https://doi.org/10.1017/fms.2024.93},
}

@article {IoPePo08,
    AUTHOR = {Ioana, Adrian and Peterson, Jesse and Popa, Sorin},
     TITLE = {Amalgamated free products of weakly rigid factors and
              calculation of their symmetry groups},
   JOURNAL = {Acta Math.},
  FJOURNAL = {Acta Mathematica},
    VOLUME = {200},
      YEAR = {2008},
    NUMBER = {1},
     PAGES = {85--153},
      ISSN = {0001-5962,1871-2509},
       DOI = {10.1007/s11511-008-0024-5},
       URL = {https://doi.org/10.1007/s11511-008-0024-5},
}

@article {Pe09,
    AUTHOR = {Peterson, Jesse},
     TITLE = {{$L^2$}-rigidity in von {N}eumann algebras},
   JOURNAL = {Invent. Math.},
  FJOURNAL = {Inventiones Mathematicae},
    VOLUME = {175},
      YEAR = {2009},
    NUMBER = {2},
     PAGES = {417--433},
      ISSN = {0020-9910,1432-1297},
       DOI = {10.1007/s00222-008-0154-6},
       URL = {https://doi-org.proxy.lib.uwaterloo.ca/10.1007/s00222-008-0154-6},
}

@article {ChSi13,
    AUTHOR = {Chifan, Ionut and Sinclair, Thomas},
     TITLE = {On the structural theory of {${\rm II}_1$} factors of
              negatively curved groups},
   JOURNAL = {Ann. Sci. \'Ec. Norm. Sup\'er. (4)},
  FJOURNAL = {Annales Scientifiques de l'\'Ecole Normale Sup\'erieure.
              Quatri\`eme S\'erie},
    VOLUME = {46},
      YEAR = {2013},
    NUMBER = {1},
     PAGES = {1--33},
      ISSN = {0012-9593,1873-2151},
       DOI = {10.24033/asens.2183},
       URL = {https://doi-org.proxy.lib.uwaterloo.ca/10.24033/asens.2183},
}

@misc{JiNaViWrYu20,
  title         = {{$MIP^*=RE$}},
  author        = {Ji, Zhengfeng and Natarajan, Anand and Vidick, Thomas and
                   Wright, John and Yuen, Henry},
  year          =  {2020},
  howpublished = {ar{X}iv:2001.04383},
}

@misc{GaKEPa25,
Author = {David Gao and Srivatsav Kunnawalkam Elayavalli and Gregory Patchell},
Title = {3-handle construction on II$_1$ factors},
Year = {2025},
howpublished = {ar{X}iv:2504.02003},
}

@misc{GaJeKEPa26,
Author = {David Gao and David Jekel and Srivatsav Kunnawalkam Elayavalli and Gregory Patchell},
Title = {Exotic full factors via weakly coarse bimodules},
Year = {2026},
howpublished = {ar{X}iv:2602.00930},
}

@misc{GoHaSi19,
      title={Correspondences, Ultraproducts and Model Theory}, 
      author={Isaac Goldbring and Bradd Hart and Thomas Sinclair},
      year={2019},
      howpublished={ar{X}iv:1809.00049},
      archivePrefix={arXiv},
      primaryClass={math.LO},
      url={https://arxiv.org/abs/1809.00049}, 
}

@article {Po06b,
    AUTHOR = {Popa, Sorin},
     TITLE = {On a class of type {${\rm II}_1$} factors with {B}etti numbers
              invariants},
   JOURNAL = {Ann. of Math. (2)},
  FJOURNAL = {Annals of Mathematics. Second Series},
    VOLUME = {163},
      YEAR = {2006},
    NUMBER = {3},
     PAGES = {809--899},
      ISSN = {0003-486X,1939-8980},
       DOI = {10.4007/annals.2006.163.809},
       URL = {https://doi-org.proxy.lib.uwaterloo.ca/10.4007/annals.2006.163.809},
}

@article {Po83,
    AUTHOR = {Popa, Sorin},
     TITLE = {Orthogonal pairs of {$\ast $}-subalgebras in finite von
              {N}eumann algebras},
   JOURNAL = {J. Operator Theory},
  FJOURNAL = {Journal of Operator Theory},
    VOLUME = {9},
      YEAR = {1983},
    NUMBER = {2},
     PAGES = {253--268},
      ISSN = {0379-4024},
}

\end{document}